\newtheorem{theorem}{Theorem}[section]
\newtheorem{lemma}[theorem]{Lemma}
\newtheorem{corollary}[theorem]{Corollary}
\newtheorem{proposition}[theorem]{Proposition}
\theoremstyle{definition}
\newtheorem{definition}[theorem]{Definition}
\newtheorem{definition-proposition}[theorem]{Definition-Proposition}
\newtheorem{example}[theorem]{Example}
\theoremstyle{hypothesis}
   \newtheorem{letterthm}{Theorem}
\theoremstyle{remark}
\newcounter{Step}
\newenvironment{step}[0]{\bigskip\addtocounter{Step}{1}\noindent\textbf{Step \theStep~:} }{\begin{flushright}\tiny \end{flushright}}
\numberwithin{equation}{section}
\DeclareFontFamily{U}{rcjhbltx}{}
\DeclareFontShape{U}{rcjhbltx}{m}{n}{<->rcjhbltx}{}
\DeclareSymbolFont{hebrewletters}{U}{rcjhbltx}{m}{n}
\DeclareMathSymbol{\lamed}{\mathord}{hebrewletters}{108}
\DeclareMathSymbol{\mem}{\mathord}{hebrewletters}{109}
\DeclareMathSymbol{\ayin}{\mathord}{hebrewletters}{96}
\DeclareMathSymbol{\tsadi}{\mathord}{hebrewletters}{118}
\DeclareMathSymbol{\qof}{\mathord}{hebrewletters}{114}
\DeclareMathSymbol{\shin}{\mathord}{hebrewletters}{152}
\def\o{\otimes }
\def\R{\mbox{I\hspace{-.15em}R} }
\def\Q{\mbox{l\hspace{-.47em}Q} }
\def\C{\hspace{.17em}\mbox{l\hspace{-.47em}C} }
\def\N{\mbox{I\hspace{-.15em}N} }
\newcommand{\gre}{}
\begin{document}

\title[Laplace Principle for Convex Functionals Of Hermitian Brownian Motion]{A Laplace Principle for Hermitian Brownian Motion and Free Entropy I: the convex functional case.}


\author{Yoann Dabrowski}
\address{\gre Universit\'{e} de Lyon\\ 
Universit\'{e} Lyon 1\\
Institut Camille Jordan UMR 5208\\
43 blvd. du 11 novembre 1918\\
F-69622 Villeurbanne cedex\\
France
}
\email{\gre dabrowski@math.univ-lyon1.fr}
\subjclass[2010]{Primary 46L54, 60F10; Secondary 60B20, 60G15,  46M07}
\date{}

\begin{abstract}
This paper is part of a series aiming at proving that the $\limsup$ and $\liminf$ variants of Voiculescu's free entropy coincide. This is based on a Laplace principle (implying a large deviation principle) for hermitian brownian motion on $[0,1]$. In the current paper, we show that microstates free entropy $\chi(X_1,...,X_m)$ and non-microstate free entropy  $\chi^*(X_1,...,X_m)$ coincide for self-adjoint variables $(X_1,...,X_m)$ satisfying a Schwinger-Dyson equation for subquadratic, bounded below, strictly convex potentials with Lipschitz derivative sufficiently approximable by non-commutative polynomials. 
Our results are based on Dupuis-Ellis weak convergence approach to large deviations 
where one shows a Laplace principle in obtaining a stochastic control formulation for exponential functionals. In the non-commutative context, ultrapoduct analysis replaces weak-convergence of the stochastic control problems.  
\end{abstract}
\maketitle
\section{Introduction}
In a fundamental series of papers \cite{V2,V3,Voi4,V5}, Voiculescu introduced analogues of entropy and Fisher information in the context of free probability theory. A first microstates free entropy $\chi(X_{1},...,X_{m})$ is defined as a normalized limit of the volume of sets of microstates i.e. matricial approximants (in moments) of the n-tuple of self-adjoints $X_{i}$ living in a (tracial) $W^{*}$-probability space $M$. Starting from a definition of a free Fisher information \cite{V5}, Voiculescu also defined a non-microstate free entropy $\chi^{*}(X_{1},...,X_{m})$, known by the fundamental work \cite{BCG} not to be smaller than the previous microstates entropy, and believed to be equal (at least modulo Connes' embedding conjecture). For more details, we refer the reader to the survey \cite{VoS} for a list of properties as well as applications of free entropies in the theory of von Neumann algebras. The technical definitions are recalled later in subsection \ref{FreeEntropyDef}.

As pointed out in the review article \cite{VoS}, the study of free entropy has been faced with several technical questions, among which the two most famous are the equality of microstates and non-microstate definitions (the so-called unification problem \cite[p22]{VoS}) and the equality of two variants of free entropy with a $\limsup$, we will call $\chi$, or a $\liminf$ over the size of matrix approximations $N$ \cite[Rmk (a) p9]{VoS}.  We will call $\underline{\chi}$ the $\liminf$ variant. 
Our goal in the current series of papers is to solve completely this second question about limits and partially the unification problem.

Our approach  is based on large deviations for hermitian brownian motion as in \cite{GuionnetCD1,GuionnetCD2,BCG,GZ2}. In the several variable case, the best result in \cite{BCG} gave a large deviation upper bound and a large deviation lower bound with two (a priori) different rate functions. Our main improvement in the first paper of the series is to find two (a priori) different rate functions for the lower and upper bound that can be proved equal for certain natural states. The second paper of this series will then obtain more general equalities of our lower bound and upper bound using different techniques. 
For the terminology and methodology of large deviation Theory we refer to \cite{DupuisEllis}. Since we follow their weak-convergence approach (that should maybe be called stochastic control approach), we also follow their terminology ``Laplace principle". We will refer to \cite{DZ} for more technical results. We refer to Theorem \ref{LaplaceNonConvex} for the Large deviation principles obtained in this paper.

 The key part in its proof is to work harder on the Large Deviation upper bound in order to get the rate function as a minimization problem on a smaller set in order to get more easily a lower bound with a similar rate function that can be proved equal, here in some cases, and in the sequel in all cases. It does not use any approximation of variables as in the final result in the one variable case \cite{GZ2}. Apart from the exponential tightness and the identification of the brownian bridge as reaching an infimum, it does not use much of the argument in \cite{BCG}, but the setting is similar. We obtain a variational formula for the rate function in the spirit of a free analogue of pressure defined recently by Hiai \cite{Hi}, but in a non-convex setting with the use of Bryc's inverse Varadhan lemma of large deviation theory (and this is necessary since Voiculescu's entropy is not concave and we don't use its concavification \cite{BianeD}). The formula is based on a free analogue of Bou\'e-Dupuis formula \cite{BD} for free pressure and is based on a recent improvement by \"Ust\"unel \cite{Ustunel} of the original formula, better suited for convex analysis, and applied to hermitian brownian motion (see also \cite{Lehec} for a nice introduction and other applications of this formula, and \cite{Hartman} for an extension to diffusions). This is our starting point for our use of the weak-convergence approach of Dupuis and Ellis (that motivated the discovery of their formula in \cite{BD}) that shows a Laplace principle (equivalent to large deviation principle, LDP for short, under appropriate assumptions) based on limits of a stochastic control problem. In our version, the weak-convergence part is replaced by ultraproduct analysis (see Subsection 2.9 for a short introduction and references).

Technically, our result is strongly based on convex analysis, this is why we can still get a partial answer about the unification problem under a convexity assumption. Note that beyond the general inequality in \cite{BCG}, and the free product of the single variable case, not much was known about the equality $\chi=\chi^*$. Even when an explicit computation of microstates free entropy was known as for small perturbation of semicircular variables \cite{GuionnetMaurelSegala}, or when implicit formulas are known thanks to the recent developments of free transport \cite{GS12}, no equality cases was known even for perturbations of semi-circular variables, to the best of our knowledge. This is due to the technicality of the definition of non-microstate free entropy in \cite{V5} for which no non-linear change of variable is known. Much more cases of equality were known for the related but much cruder microstates free entropy dimension \cite{ShMineyev,Dab10}. In the case of finite free entropy, it identically equals $n$, the number of variables.
 
We are in a setting close to \cite{GuionnetS07} (with a much less constraining notion of convexity defined in subsection \ref{NCFun}). The exact formulation of the function space for the potential $\mathcal{E}^{1,1}_{app}(  \mathcal{T}_{2}(\mathcal{F}^m_{1}),d_{2} )$ will be given in this subsection. A basic example of function, linear in the trace, is for instance for $\lambda$ small enough to get convexity:
$$g(\tau)=D+C \sum_{j=1}^k \sum_{l=1}^m\tau ((4i\frac{u_j^l+1}{u_j^l-1})^*(4i\frac{u_j^l+1}{u_j^l-1}))+\sum_\epsilon\Re(\lambda_\epsilon\tau((u_{j_1}^{l_1})^{\epsilon_1}...(u_{j_{m}}^{l_{m}})^{\epsilon_{m}}))$$
for the trace expressed in unitary variables $u_j^l=u(X_{t_j}^i)=\frac{X_{t_j}^i+4i}{X_{t_j}^i-4i}$ $t_j\in [0,1]$, $\epsilon_i\in\{+1,-1\}$

In words, this is a space of convex bounded below subquadratic functions with lipschitz derivative and a natural approximation of the derivative by non-commutative polynomials. 

Finally, we also deduce various other conjectures for free entropy for semi-circular perturbation in this case, most notably the technical equality of entropy to entropy in presence $\chi(X_1+\sqrt{t} S_1,...,X_m+\sqrt{t} S_m:\sqrt{t} S_1,...,\sqrt{t} S_m)$, appearing in the definition of microstates free entropy dimension, and the continuity of free Fisher information $\Phi^*$ under semicircular perturbation (see the question in \cite[p23]{VoS}). Our main result is thus the following:

\begin{letterthm}\label{ThmC}
Let $m\geq 2$ and $V\in \mathcal{E}^{1,1}_{app}(  \mathcal{T}_{2}(\mathcal{F}^m_{1}),d_{2} ).$ 

Let $X_1,...,X_m$ having law $\tau_V$, the unique solution of $(SD_V)$ obtained in Theorem \ref{SDVg} (or equivalently a conjugate variable in the sense of \cite{V5}  given by usual cyclic derivatives $X_i+\mathcal{D}_iV$). Then, we have the equality: $$\chi(X_1,...,X_m)= \underline{\chi}(X_1,...,X_m)= \chi^*(X_1,...,X_m).$$
If moreover $S_1,...,S_m$ is a free semicircular system free from $\{X_1,...,X_m\}$, then for any $t> 0$: $$\chi(X_1+\sqrt{t} S_1,...,X_m+\sqrt{t} S_m:\sqrt{t} S_1,...,\sqrt{t} S_m)=\chi^*(X_1+\sqrt{t} S_1,...,X_m+\sqrt{t} S_m),$$ the free Fisher information $t\mapsto \Phi^*(X_1+\sqrt{t} S_1,...,X_m+\sqrt{t} S_m)$ is H\"older continuous (of exponent $1/2$) on $[0,\infty)$ and for $(\xi_{1,t},...,\xi_{m,t})$ the conjugate variables in $W^*(X_1+\sqrt{t} S_1,...,X_m+\sqrt{t} S_m)$ and $\delta_t$ the corresponding free difference quotient, we have the integral formula for $0\leq s<t$:
$$\Phi^*(X_1+\sqrt{t} S_1,...,X_m+\sqrt{t} S_m)=\Phi^*(X_1+\sqrt{s} S_1,...,X_m+\sqrt{s} S_m)-\int_s^t\sum_{i=1}^m||\delta_u(\xi_{i,u})||_2^2du.$$
\end{letterthm}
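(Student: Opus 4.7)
The strategy is to prove the chain $\chi^*(X_1,\ldots,X_m) \leq \underline{\chi}(X_1,\ldots,X_m) \leq \chi(X_1,\ldots,X_m) \leq \chi^*(X_1,\ldots,X_m)$, in which the middle inequality is tautological (liminf versus limsup) and the rightmost is the Biane--Capitaine--Guionnet inequality. The whole content of the first identity is therefore the new inequality $\chi^*(X_1,\ldots,X_m) \leq \underline{\chi}(X_1,\ldots,X_m)$, to be extracted from the Laplace principle of Theorem \ref{LaplaceNonConvex}.

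I would apply the Laplace principle to the exponential functional built from $V$ evaluated along Hermitian Brownian motion at time one. In the spirit of Bou\'e--Dupuis and \"Ust\"unel, the associated normalised free energy admits a stochastic control representation as an infimum of entropy plus potential over adapted drifts. Passing to the limit $N\to\infty$ via the ultraproduct framework of Subsection 2.9, in lieu of weak convergence, yields a free stochastic control problem whose value is a variational formula involving the free action of an adapted process plus a linear pairing with $V$. The strict convexity of $V$ (and the regularity encoded in $\mathcal{E}^{1,1}_{app}(\mathcal{T}_2(\mathcal{F}^m_1),d_2)$) should guarantee uniqueness of the optimiser; it coincides with the free SDE driven by Hermitian Brownian motion with drift involving $\mathcal{D}_iV$, whose time-one marginal is $\tau_V$ by uniqueness in Theorem \ref{SDVg}. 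Matching the value at this minimiser with Voiculescu's characterisation of $\chi^*$ as an integral of Fisher information along a semicircular path then produces $\chi^*(X_1,\ldots,X_m) \leq \underline{\chi}(X_1,\ldots,X_m)$.

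Once $\chi=\underline{\chi}=\chi^*$ is established at $\tau_V$, the assertions concerning semicircular perturbations follow by applying the same machinery to the enlarged family $(X_i+\sqrt{t}S_i,\sqrt{t}S_i)$: its distribution is still governed by a convex potential inherited from $V$ (degenerate in the semicircular directions), and conditioning on the $\sqrt{t}S_i$ in the LDP corresponds precisely to the definition of entropy in presence, giving the claimed identity $\chi(X_1+\sqrt{t}S_1,\ldots:\sqrt{t}S_1,\ldots)=\chi^*(X_1+\sqrt{t}S_1,\ldots)$. Finiteness of $\Phi^*$ at each $t\geq 0$ then propagates from the first part, and the integral formula for $t\mapsto \Phi^*(X_1+\sqrt{t}S_1,\ldots,X_m+\sqrt{t}S_m)$ is the differentiated form of Voiculescu's $\chi^*$-identity along the semicircular flow, valid once finiteness of Fisher information and existence of conjugate variables are known. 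The $1/2$-H\"older continuity of the same function then follows by Cauchy--Schwarz on the integral representation, exploiting the $L^2$-control of $\delta_u(\xi_{i,u})$ provided by the convex potential.

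The principal obstacle is the convex-analytic/ultraproduct step identifying the upper and lower LDP rate functions at $\tau_V$. One must transplant the Dupuis--Ellis weak-convergence argument into a free probabilistic ultraproduct, and then exploit strict convexity of $V$, Lipschitz continuity of its derivative, and the polynomial-approximability condition packaged into $\mathcal{E}^{1,1}_{app}(\mathcal{T}_2(\mathcal{F}^m_1),d_2)$ to extract strong convergence of the minimising drifts and attainment of the infimum at the free SDE solution. This is where the convexity hypothesis of the theorem is used essentially, and it explains why the result requires $V$ in precisely this function space.
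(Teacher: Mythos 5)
Your outline of the first equality $\chi=\underline{\chi}=\chi^*$ at $\tau_V$ is essentially correct and matches the paper: the new inequality $\underline{\chi}\geq\chi^*$ is extracted from the Laplace principle lower bound plus identification of the optimal drift with a conjugate variable of the free Brownian bridge (Theorem \ref{chichistar} via Corollary \ref{OptimalFree}), while the converse direction is BCG and the middle inequality is trivial.

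However, your proposed route to the three remaining assertions — entropy in presence, H\"older continuity of $\Phi^*$, and the integral formula for $\Phi^*$ — has a genuine gap. You suggest running the same machinery on the enlarged family $(X_i+\sqrt{t}S_i,\sqrt{t}S_i)$, claiming its law is still governed by a convex potential, and that conditioning on the $\sqrt{t}S_i$ in the LDP "corresponds precisely to the definition of entropy in presence." Neither claim holds. The paper explicitly records that there is \emph{no} function $g_t$ in $\mathcal{E}^{1,1}_{app}$ whose Schwinger--Dyson state is the law of $X+\sqrt{t}S$, so Theorem \ref{SDVg}/\ref{chichistar} cannot simply be re-applied; instead, $\chi(X+\sqrt{t}S)=\chi^*(X+\sqrt{t}S)$ is proved by a nontrivial deterministic time-change and linear-change of variables that maps the bridge $sY_t+(1-s)S_{s/(1-s)}$ to a reparametrized bridge, allowing the argument of Theorem \ref{chichistar} to be repeated for the conjugate variable at the intermediate time. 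The entropy-in-presence identity likewise does not come out of the LDP — the paper notes entropy in presence "does not seem to be studied at all by usual large deviation techniques." It is obtained from the much more delicate observation that the time-reversed SDE has a \emph{strong} solution in the reversed free filtration (this is the whole point of the free forward--backward SDE machinery in Section 6 combined with Theorem \ref{FreeMonotone}), so that $\sqrt{T}S_i=\overline{X}_0-\overline{X}_T\in W^*(X+\sqrt{T}S,\,\overline{S})$, which reduces the assertion to standard facts about free entropy in presence of a free subalgebra (\cite[Prop 3.4]{EntPow}, \cite[Prop 1.7, Cor 1.8]{V2}).

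Similarly, your derivation of the H\"older continuity of $\Phi^*$ and the integral formula is inverted. The $1/2$-H\"older continuity does not follow "by Cauchy--Schwarz on the integral representation"; it is established \emph{directly} from the Lipschitz/H\"older regularity of the gradient $\nabla h_t^\omega$ of the limiting value function (estimates \eqref{Holderht}, \eqref{Holderhtspace}) applied to the conjugate variables of the time-reversed process. The integral formula is then a separate — and harder — statement: it is proved by showing that the martingale $\xi_{i,s}-\xi_{i,T}-\int_0^{T-s}\delta_{T-u}\xi_{i,T-u}\#d\overline{S}_u$ from \cite{Dab14} is itself a stochastic integral with respect to $\overline{S}$ in the smaller reversed filtration (via Clark--Ocone and the adaptedness furnished by Theorem \ref{FreeMonotone}), forcing it to vanish and yielding the $L^2$ identity after taking norms. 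It is not the differentiated form of Voiculescu's $\chi^*$-integral, which would only give an inequality, as the paper itself emphasises in its comparison with \cite{Dab14}. The strong-solution/time-reversal input is the essential new ingredient for the whole second half of the theorem and is absent from your proposal.
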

The last equation is an improvement of an inequality in \cite{Dab14}. The equality version enables to compute the derivative of free Fisher information along free brownian motion in the convex potential case.

Let us finally give an overview of the paper. Section 2 deals with the various preliminaries and notation. The notation strongly needed in the paper about tracial state spaces is in subsection 2.1, the classes of non-commutative convex functions are defined in 2.2 and preliminaries on hermitian brownian motion in 2.4. Subsection 2.5 explains the two kinds of concentration of measure results we will use, the well-known one coming from convexity via logarithmic Sobolev inequality useful for the large deviation upper bound, and the less well-known one coming from a Poulsen Simplex property from \cite{BianeD} useful for the large deviation lower bound. 

 The other preliminaries are less critical, subsection 2.3 deals with Malliavin calculus needed to understand the statements of \cite{Ustunel}, the classical expression of regularity of derivatives of convex functions via second order difference quotients (better suited in stochastic control estimates) is explained in subsection 2.6, background on classical and free entropy are gathered in 2.7 and 2.8. References on ultraproduct analysis are given in subsection 2.9 with preliminaries on processes in ultraproduct filtrations.

Section 3 recalls the version of the Bou\'e-Dupuis formula due to \"Ust\"unel that we will need. As usual in stochastic control, the value function at time $t$ will be used to characterize the minimiser to the optimization problem and it is expressed with an optimization problem given in our case by an application of the Bou\'e-Dupuis-\"Ust\"unel formula. We use standard ideas from optimal control to give regularity properties in time and space. If the terminal cost function $g$ we start with is convex, the value function stay convex and if this cost function $g$ has lipschitz first derivative, one gets upper bounds on second order difference quotients of the value function $h_t$. They enable to get some H\"older continuity in time for the derivative that will give us the crucial uniform regularity to use ultraproduct techniques. This section emphasizes uniformity in the matrix size $N$ of constants.

Section 4 recalls solutions of Stochastic differential equations (SDE) under monotone drift assumptions we will use. It also gives the corresponding version in free probability which is new and of independent interest when using convex analysis jointly with free SDE techniques. We hope it may have future applications to free transport. The uniqueness and existence of strong solutions in the free case will be crucial to obtain a value function independent of the ultrafilter in our ultraproduct analysis and thus a real limit in our Laplace deviation with a rate function not dependent on any ultrafilter. This lack of uniqueness of potential solutions of a free SDE was the issue in \cite{BCG} since the (a priori) different values of the lower bound rate function was mainly associated with a supplementary uniqueness assumption of such a solution not known for the upper bound. The key point of our use of convex analysis is to get this uniqueness.

Section 5 uses the solution in \cite{Ustunel}  of the minimization problem appearing in Bou\'e-Dupuis formula. This is the second place where convexity is used crucially. We apply it in the case of hermitian brownian motion and check again that our estimates are sufficiently uniform in the matrix size $N$. The minimizer is expressed in terms of a solution of the SDE with the drift coming from the gradient of the value function. We estimated it earlier as Lipschitz in space and H\"older continuous in time. An alternative formula is given in subsection 5.1 in terms of a solution of an SDE. This will be this formula that will insure that we control the von Neumann algebra in which lives the drift of the SDE. One of the reasons of the failure of obtaining equality of $\chi=\chi^*$ previously was that some limit of matricial analogues of conjugate variables may still depend on entries of matrices and may not be a non-commutative functional calculus of the solution. In our framework, they could be in an ultraproduct of random matrix spaces rather than in a smaller von Neumann algebra generated by the solution. The formula in this section is the key to obtain the smaller von Neumann algebra via an existence and uniqueness of some free (forward-backward) SDE which is explained in section 6.

Section 7 then explains the Laplace principle for convex functions of the type of those appearing in Theorem \ref{ThmC}. Section 8 gives a formula for ultrafilter limits in the non-convex regular case and deduces our general Laplace lower and upper bounds, which is key to this series of papers. Note that following \cite{BD}, the lower bound in our Laplace principle corresponds to the usual large deviation upper bound. Section 9 gives our applications to free entropy including the proof of Theorem \ref{ThmC}.

 \bigskip\textbf{Acknowledgments :} The author wants to thank Ivan Gentil for showing him \cite{Kolesnikov} and discussing \cite{Gentil}
 , which motivated the use of convex analysis to the present large deviation problem. The author is grateful to the organizers of the ``Conference on von Neumann algebras and related topics" in January 2012 at RIMS, Kyoto University, Japan, where started this research on the applications of ultraproducts to free entropy theory.  The author also wants to thank Yoshimichi Ueda for interesting conversations on orbital free entropy, which started at that time too, and for communicating \cite{UedaRmq} pointing out on this basis an issue in the statements on orbital free entropy in the first preprint version  of this series of papers that led the author to discover a significant gap which enabled the current corrected version. He also thanks Alice Guionnet for numerous discussions and Jonathan Husson for pointing out later in a different way  the same gap and various more minor issues in the first preprint.

     \section{Preliminaries and Notation}

\subsection{Tracial states with second moments}
 \label{prelimStates}    
    We fix a setting similar to \cite{BCG}, except that we exploit convexity more crucially. We call $\mathcal{F}^m_{[0,1]}$ the group universal $C^*$-algebra in the free group with generators $\{u_t^i, i=1,...,m, t\in [0,1]\cap\Q\}.$ (note they considered only the algebra with the same notation, and we will need the tracial state space of a $C^*$-algebra and we prefer countably many generators, which won't matter because of the continuity restriction on states.) $\mathcal{F}^m_{[0,1],\R}$ will be the variant with generators $\{u_t^i, i=1,...,m, t\in [0,1]\}.$
    
    
    We call $\mathcal{T}(\mathcal{F}^m_{[0,1]})
   $ the set of tracial states on this $C^*$ algebra.
Consider also   $\mathcal{F}^m_{n}$ the group universal $C^*$-algebra in the free group with generators $\{u_j^i, i=1,...,m, j=2,...,n+1\}.$
We start indexing at $2$ to differentiate from time indices in $[0,1]$.  

We will also consider the universal $C^*$-algebra free product for instance $\mathcal{F}^m_{[0,1]}*\mathcal{F}^{\mu}_{n}$ for $\mu\in \N^*$, of course we have for instance $\mathcal{F}^{m}_{1}*\mathcal{F}^{\mu}_{1}\simeq \mathcal{F}^{\mu+m}_{1}$. In case of $\mathcal{F}^{m}_{k}*\mathcal{F}^{\mu}_{\nu}\subset \mathcal{F}^{m+\mu}_{k+\nu}$ we consider the $C^*$ algebra variable with generators $\{u_j^i:\ i=1,...,m, j=2,...,k+1, \}\cup \{u_j^i:\ i=m+1,...,m+\mu, j=k+2,...,k+\nu+1 \}.$

 For any $\{t_1,...,t_n\}$, we have a canonical $*$-homomorphism $ I_{t_1,...,t_n}:\mathcal{F}^m_{n}\to \mathcal{F}^m_{[0,1]}$ with the following action on generators $$I_{t_1,...,t_n}(u^i_j)=u_{t_{j-1}}^i.$$
We denote  $\mathcal{T}^c(\mathcal{F}^m_{[0,1]})
   $  the set of uniformly continuous  tracial states namely those states $\tau$ such that for any $n$ the map $(t_1,...,t_n)\in ([0,1]\cap \Q)^n\to  \tau\circ I_{t_1,...,t_n}(f)$ is uniformly continuous for any $f\in \mathcal{F}^m_{n}$. Similarly we consider $\mathcal{T}^c(\mathcal{F}^m_{[0,1]}*\mathcal{F}^{\mu}_{n})
   $  with continuity on the uniformly continuous time variables and using $I_{t_1,...,t_k}*Id:\mathcal{F}^{m}_{k}*\mathcal{F}^{\mu}_{n}\to \mathcal{F}^m_{[0,1]}*\mathcal{F}^{\mu}_{n}$ instead of $I_{t_1,...,t_n}$ in the continuity requirement. If we adopt the same for the real variant, we have $\mathcal{T}^c(\mathcal{F}^m_{[0,1],\R}*\mathcal{F}^{\mu}_{n})\simeq\mathcal{T}^c(\mathcal{F}^m_{[0,1]}*\mathcal{F}^{\mu}_{n})$
   since uniform continuity enables to extend the moments to  the universal $C^*$-algebras in finitely many generators indexed by $[0,1]$ and the general one is obtained by inductive limit. We will use this extension without further notice.
   
Let $(M,\tau)$ be a (tracial) $W^*$-probability space, namely a von Neumann algebra $ M$ with a faithful normal trace $\tau$.  We will assume implicitly all our $W^*$-probability spaces to be tracial. We consider self-adjoint processes $X=(X_t^i,i=1,...,m,t\in [0,1])$, with  $X_t^i=(X_t^i)^*\in L^{2}(M,\tau)$, $i=1,...,m$, $t\in [0,1].$
   We write $\tau_X\in \mathcal{T}(\mathcal{F}^m_{[0,1]})$ the law of $$(u(X_t^l):=\frac{X_t^l+4i}{X_t^l-4i}, l=1,...,m,t\in [0,1]).$$

It is not hard to see that if $t\mapsto X_t^i$ are continuous in $ L^2_{sa}(M,\tau)$, then $\tau_X\in \mathcal{T}^c(\mathcal{F}^m_{[0,1]}).$  We even have $\tau_X\in \mathcal{T}^c_2(\mathcal{F}^m_{[0,1]})$ the space where for any $l$, 
$$t\in [0,1]\mapsto\tau ((4i\frac{u_t^l+1}{u_t^l-1})^*(4i\frac{u_t^l+1}{u_t^l-1}))<\infty$$ 
is continuous (since $\tau_X ((4i\frac{u_t^l+1}{u_t^l-1})^*(4i\frac{u_t^l+1}{u_t^l-1}))=\tau((X_t^l)^2)$. We use a similar notation $\tau_X$ for state in $\mathcal{T}(\mathcal{F}^m_{n})$. For $X$ self-adjoint variables.
Similarly, we call $\mathcal{T}_{2}(\mathcal{F}^m_{n})$ the set of tracial states with $\tau ((4i\frac{u_j^l+1}{u_j^l-1})^*(4i\frac{u_j^l+1}{u_j^l-1}))<\infty$, $j=1,...,k.$ 


We first consider on $\mathcal{T}^c(\mathcal{F}^m_{[0,1]})$ the same topology as in \cite{BCG}, namely the one given by the distance:
$$d(\tau_1,\tau_2)=\sum_{k=1}^\infty 2^{-k}\sup_{(i_1,...,i_m) \in [\![1,m]\!]^k}\sup_{(\epsilon_1,...,\epsilon_m) \in \{-1,1\}^k}\sup_{(t_1,...,t_k)\in[0,1]^k}|(\tau_1-\tau_2)((u_{t_1}^{i_1})^{\epsilon_1}...(u_{t_k}^{i_k})^{\epsilon_k})|.$$
   
We also consider on    $\mathcal{T}_{2}^c(\mathcal{F}^m_{[0,1]})
$ the topologies given by the distances: 
   \begin{align*}&d_1(\tau_1,\tau_2)=d(\tau_1,\tau_2)
   \\&+\sum_{k=1}^\infty 2^{-k} \sup_{(l,i_1,...,i_m) \in [\![1,m]\!]^{k+1}}\sup_{(\epsilon_1,...,\epsilon_m) \in \{-1,1\}^k}\sup_{(t,t_1,...,t_k)\in[0,1]^{k+1}}|(\tau_1-\tau_2)((\frac{u_t^l+1}{u_t^l-1})^*(u_{t_1}^{i_1})^{\epsilon_1}...(u_{t_k}^{i_k})^{\epsilon_k})|,
 \\&d_{2}(\tau_1,\tau_2)=d_1(\tau_1,\tau_2) +\sup_{l,\lambda=1,...,m}\sup_{(s,t)\in [0,1]^2} \left|(\tau_1-\tau_2) \left((\frac{u_t^l+1}{u_t^l-1})^*(\frac{u_s^\lambda+1}{u_s^\lambda-1})\right)\right|
 .\end{align*}

We put similar distances $d_1,d_{2}$ on $\mathcal{T}_{2}(\mathcal{F}^m_{n})$.


Finally we have a variant space $\mathcal{T}^c_{2,0}(\mathcal{F}^m_{[0,1]}*\mathcal{F}^{m}_{\mu}),\mathcal{T}^c_{2,0,}(\mathcal{F}^m_{[0,1]}*\mathcal{F}^{m}_{\mu})$, (resp. $\mathcal{T}^c_{2,0}(\mathcal{F}^m_{n}*\mathcal{F}^{\nu}_{\mu}) $) where only the variables in $\mathcal{F}^m_{[0,1]}$ are associated with $X^i_t$ with finite second moment in the GNS representation (resp. only variables in the first copy $\mathcal{F}^m_{n}$). We put e.g. on the first space the distances:
\begin{align*}&
d_{2,0}(\tau_1,\tau_2)=d_{1,0}(\tau_1,\tau_2)+\sup_{l,\lambda=1,...,m}\sup_{(s,t)\in [0,1]^2} \left|(\tau_1-\tau_2) \left((\frac{u_t^l+1}{u_t^l-1})^*(\frac{u_s^\lambda+1}{u_s^\lambda-1})\right)\right|,\\&
d_{1,0}(\tau_1,\tau_2)=d_{0,0}(\tau_1,\tau_2)+\sum_{k=1}^\infty 2^{-k}\\&\sup_{t\in [0,1]}\sup_{(l,i_1,...,i_m) \in [\![1,m]\!]^{k+1}}\sup_{(\epsilon_1,...,\epsilon_m) \in \{-1,1\}^k}\sup_{(j_1,...,j_k)\in(\{2,....,\mu+1\}\cup[0,1])^k}|(\tau_1-\tau_2)((\frac{u_t^l+1}{u_t^l-1})^*(u_{j_1}^{i_1})^{\epsilon_1}...(u_{j_k}^{i_k})^{\epsilon_k})|.\end{align*}
where 
\begin{align*}&d_{0,0}(\tau_1,\tau_2)=\sum_{k=1}^\infty 2^{-k}\\&\sup_{(i_1,...,i_m) \in [\![1,m]\!]^k}\sup_{(\epsilon_1,...,\epsilon_m) \in \{-1,1\}^k}\sup_{(j_1,...,j_k)\in(\{2,....,\mu+1\}\cup[0,1])^k}|(\tau_1-\tau_2)((u_{j_1}^{i_1})^{\epsilon_1}...(u_{j_k}^{i_k})^{\epsilon_k})|.\end{align*}

 The extra unitary variables will be used 
 as variables approximated by non-random matrices. We will only consider  classes of convex or regular functions in the next subsection to be uniform in any respect over these extra variables. For $X$ a process in $L^2_{sa}(M,\tau)^m$ continuous as before and $u\in \mathcal{U}(M)^{\mu\nu}$ (we call $\mathcal{U}(M)$ the set of unitaries), we write $\tau_{X,u}\in \mathcal{T}^c_{2,0}(\mathcal{F}^m_{[0,1]}*\mathcal{F}^\nu_{\mu})$ for the joint law and similar notations for laws on $\mathcal{T}_{2,0}(\mathcal{F}^m_{n}*\mathcal{F}^{\nu}_{\mu}) $.



\subsection{Some non-commutative continuous (semi)-convex functions}\label{NCFun}
 We fix $\mu,\nu\in\N,m\geq 1$.  We will consider two kinds of convex (or semi-convex) functions on $\mathcal{T}_{2,0}^c(\mathcal{F}^m_{[0,1]}*\mathcal{F}^\nu_{\mu})$ bounded from below and with subquadratic growth in the continuous time variables uniformly over the supplementary unitary variables. Of course, this includes the case $\mathcal{T}_2^c(\mathcal{F}^m_{[0,1]})$ if $\nu=\mu=0$. To deal with the growth condition, we will need to consider functions depending only on finitely many times. We thus define a subspace of the space of continuous functions $C^0(   \mathcal{T}_2^c(\mathcal{F}^m_{[0,1]}*\mathcal{F}^{\nu}_{\mu}),d_{i,0}),i=0,1,2$ defined as :
     \begin{align*}C^0_{(k)}(   \mathcal{T}_{2,0}^c(\mathcal{F}^m_{[0,1]}*\mathcal{F}^{\nu}_{\mu}),d_{i,0})= \{ f\in &C^0(   \mathcal{T}_{2,0}^c(\mathcal{F}^m_{[0,1]}*\mathcal{F}^{\nu}_{\mu}),d_{i,0}) : \exists t_1,...,t_k \in ]0,1]\\& \exists g\in  C^0(   \mathcal{T}_{2,0}(\mathcal{F}^m_{k}*\mathcal{F}^{\nu}_{\mu}),d_{i,0}),\ 
  f(\tau)= g(\tau\circ (I_{t_1,...,t_k}*Id))\}.
  \end{align*}

For $f\in C^0_{(k)}(   \mathcal{T}_{2,0}^c(\mathcal{F}^m_{[0,1]}*\mathcal{F}^{\nu}_{\mu}),d_{i,0})$. we then write $\mathbf{t}(f)$, $k(f)$ the minimal choice of time sets and index $k$.

It is convenient to define for $\mathbf{t}=(0<t_1<t_2<...<t_k)$ the functions (for $\alpha,\epsilon\geq 0$) \begin{align*}g_{2,\mathbf{t}}(\tau)&=  \frac{1}{2}\sum_{l=1}^m\left(\frac{16}{t_1}\tau\left((\frac{u_1^l+1}{u_1^l-1})^*(\frac{u_1^l+1}{u_1^l-1})\right)\right.\\&\left.+\sum_{L=2}^k \frac{16}{t_L-t_{L-1}}\tau\left((\frac{u_L^l+1}{u_L^l-1}-\frac{u_{L-1}^l+1}{u_{L-1}^l-1})^*(\frac{u_L^l+1}{u_L^l-1}-\frac{u_{L-1}^l+1}{u_{L-1}^l-1})\right)\right)
.\end{align*}
   We have $g_{2,\mathbf{t}}\in C^0(   \mathcal{T}_{2}(\mathcal{F}^m_{k}),d_2)
   .$ It corresponds to the potential giving the density of the finite dimensional distribution of hermitian brownian motion.
   
   \begin{definition}
A (real valued) continuous function $g\in  C^0(   \mathcal{T}_{2,0}(\mathcal{F}^m_{k}*\mathcal{F}^{\nu}_{\mu}),d_{2,0})$ 
 is said to be \textbf{universally convex} if  for any tracial $W^*$ probability space $(M,\tau)$, $X\mapsto f(\tau_{X,u})$ is convex on $(L^2(M,\tau)^{mk})_{sa} $ for any unitaries $u\in \mathcal{U}(M)^{\mu\nu}$. It is said \textbf{matricially convex} if this holds only for $M\subset R^\omega$, some (countable) ultrapower of the hyperfinite $II_1$ factor $R$. A function $f\in C^0_{(k)}(   \mathcal{T}_{2,0}^c(\mathcal{F}^m_{[0,1]}*\mathcal{F}^{\nu}_{\mu}),d_{2,0})$ is said to be matricially or universally convex if $f(\tau)= g(\tau\circ (I_{t_1,...,t_k}*Id))
$ and if so is $g$. 
   \end{definition}

For a convex function    $g\in  C^0(   \mathcal{T}_2(\mathcal{F}^m_{k}*\mathcal{F}^{\nu}_{\mu}),d_{2,0}),$ 
 we may sometimes call, for $u\in\mathcal{U}(M)^{\mu\nu},$ $g_\tau(u):X\mapsto f(\tau_{X,u})$ the convex function on $(L^2(M,\tau)^{mk})_{sa} $. We can consider its subdifferential, a multivalued map: $$\partial(g_\tau(u)):(L^2(M,\tau)^{mk})_{sa} \to P((L^2(M,\tau)^{mk})_{sa} )$$ (see e.g \cite{Ekeland} or \cite[ex 2.1.4]{BrezisMonotone})  which is defined by: 
\begin{align*}\partial(g_\tau(u))(X):=\{G\in (L^2(M,\tau)^{mk})_{sa} : &\forall Y\in L^2_{sa}(M,\tau)^{mk}, \\&\tau(\sum_{i=1}^n\sum_{j=1}^k(G_j^i)^*(Y_j^i-X_j^i))\leq g(\tau_{Y,u})-g(\tau_{X,u})\}.\end{align*}

Note that $g_\tau(u)$ is continuous on $L^2_{sa}(M,\tau)^{mk}.$ 
 Indeed, for any sequence $X_n\to X$, it is easy to see that $d_{2,0}(\tau_{X_n,u},\tau_{X,u})\to 0$. Thus, from \cite[ex 2.3.4]{BrezisMonotone}, $\partial(g_\tau(u))$ is a maximal monotone operator so that we will have available the theory of \cite{BrezisMonotone}. 

  \begin{definition}We fix $i\in\{0,1,2\},\alpha\in]0,1].$
  We call $\mathcal{C}(  \mathcal{T}_{2,0}(\mathcal{F}^m_{k}*\mathcal{F}^{\nu}_{\mu}),d_{i,0} )$, (resp. $\mathcal{E}(  \mathcal{T}_{2,0}(\mathcal{F}^m_{k}*\mathcal{F}^{\nu}_{\mu}),d_{i,0} )$) the set of  continuous (resp. universally convex, convex in $\tau$ and continuous )  functions $g\in C^0(   (  \mathcal{T}_{2,0}(\mathcal{F}^m_{k}*\mathcal{F}^{\nu}_{\mu}),d_{i,0} )$, bounded below, { always at least lower semi-continuous for $d_{1,0}$} 
and \textbf{subquadratic} in the sense that there is a $C>0$ such that for any $\tau\in \mathcal{T}_{2,0}(\mathcal{F}^m_{k}*\mathcal{F}^{\nu}_{\mu})$:
\begin{equation}\label{subquadorder0} g(\tau)\leq C (1+ \sum_{l=1}^m\sum_{j=1}^k \tau ((4i\frac{u_j^l+1}{u_j^l-1})^*(4i\frac{u_j^l+1}{u_j^l-1}))).\end{equation}

We call $\mathcal{E}^{1,\alpha}(  \mathcal{T}_{2,0}(\mathcal{F}^m_{k}*\mathcal{F}^{\nu}_{\mu}),d_{i,0} )$ (resp. $\mathcal{C}^{1,\alpha}(  \mathcal{T}_{2,0}(\mathcal{F}^m_{k}*\mathcal{F}^{\nu}_{\mu}),d_{i,0} )$) the subset of functions $g$ of $\mathcal{E}(  \mathcal{T}_{2,0}(\mathcal{F}^m_{k}*\mathcal{F}^{\nu}_{\mu}),d_{i,0} )$ (resp. $\mathcal{C}(  \mathcal{T}_{2,0}(\mathcal{F}^m_{k}*\mathcal{F}^{\nu}_{\mu}),d_{i,0} )$)
such that there is a constant $C$ such that for all  $(M,\tau)$ and $X,Y\in L^2_{sa}(M,\tau)^{mk}, u,v\in \mathcal{U}(M)^{\nu\mu}$:
\begin{equation}\label{Lop0} |g(\tau_{X,u})-g(\tau_{Y,u})| \leq C\left( \sum_{l=1}^m\sum_{j=1}^k \tau ((X_j^l-Y_j^l)^*(X_j^l-Y_j^l))\right)^{1/2} \left(1+ \sum_{l=1}^m\sum_{j=1}^k \tau ((X_j^l)^*X_j^l+(Y_j^l)^*Y_j^l)\right)^{1/2}.\end{equation}
\begin{equation}\label{Lip0} |g(\tau_{X,u})-g(\tau_{X,v})| \leq C\left( \sum_{l=1}^\mu\sum_{j=1}^\nu \tau ((u_j^l-v_j^l)^*(u_j^l-v_j^l))\right)^{1/2} .\end{equation}
and
\begin{equation}\label{C110} |g(\tau_{X+Y,u})+g(\tau_{X-Y,u})-2g(\tau_{X,u})| \leq C \left( \sum_{l=1}^m\sum_{j=1}^k \tau ((Y_j^l)^*Y_j^l)\right)^{\frac{1+\alpha}{2}}\left(1+ \sum_{l=1}^m\sum_{j=1}^k \tau ((X_j^l)^*X_j^l+(Y_j^l)^*Y_j^l)\right)^{\frac{1_{\{\alpha<1\}}}{2}}.\end{equation}
and also for some constant $C(g)>0$:\begin{equation}\label{LipReg}
 |g(\tau_1)-g(\tau_2)|\leq C(g) d_{i,0}(\tau_1,\tau_2).
 \end{equation}

 
We call $\mathcal{E}_{(k)}(   \mathcal{T}_{2,0}^c(\mathcal{F}^m_{[0,1]}*\mathcal{F}^{\nu}_{\mu}),d_{2,0})$ the set of  universally convex functions $f\in C^0(   \mathcal{T}_{2,0}^c(\mathcal{F}^m_{[0,1]}*\mathcal{F}^{\nu}_{\mu}),d_{2,0})$ with $f(\tau)= g(\tau\circ (I_{t_1,...,t_k}*Id))
$ for  $g\in \mathcal{E}(  \mathcal{T}_{2,0}(\mathcal{F}^m_{k}*\mathcal{F}^{\nu}_{\mu}),d_{2,0} )$ and some $0<t_1<\dots <t_k\leq 1$. 

Finally, we write $\mathcal{E}(   \mathcal{T}_{2,0}^c(\mathcal{F}^m_{[0,1]}*\mathcal{F}^{\nu}_{\mu}),d_{2,0})=\cup_{k=1}^\infty\mathcal{E}_{(k)}(   \mathcal{T}_{2,0}^c(\mathcal{F}^m_{[0,1]}*\mathcal{F}^{\nu}_{\mu}),d_{2,0}),$
and similarly $\mathcal{E}_{(k)}^{1,\alpha}(   \mathcal{T}_{2,0}^c(\mathcal{F}^m_{[0,1]}*\mathcal{F}^{\nu}_{\mu}),d_{i,0})$, $\mathcal{E}^{1,\alpha}(   \mathcal{T}_{2,0}^c(\mathcal{F}^m_{[0,1]}*\mathcal{F}^{\nu}_{\mu}),d_{i,0})$,
$\mathcal{C}_{(k)}^{1,\alpha}(   \mathcal{T}_{2,0}^c(\mathcal{F}^m_{[0,1]}*\mathcal{F}^{\nu}_{\mu}),d_{i,0})$, $\mathcal{C}^{1,\alpha}(   \mathcal{T}_{2,0}^c(\mathcal{F}^m_{[0,1]}*\mathcal{F}^{\nu}_{\mu}),d_{i,0})$.

    \end{definition} 
   
  
The following lemma will help checking semicontinuity for $d_{1,0}$  in our examples:
\begin{lemma}\label{quadraticsemicont}The function defined by $g_{j,k,\lambda}(\tau)=
 \tau ((\frac{u_j^l+1}{u_j^l-1}+\lambda\frac{u_k^l+1}{u_k^l-1})))^*(\frac{u_j^l+1}{u_j^l-1}+\lambda\frac{u_k^l+1}{u_k^l-1})))$ (value computed in the GNS representation), for $\lambda\in \R$ is in $\mathcal{C}^{1,1}(  \mathcal{T}_{2,0}(\mathcal{F}^m_{k}*\mathcal{F}^{\nu}_{\mu}),d_{2,0} ).$
\end{lemma}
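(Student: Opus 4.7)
The plan is to exploit that, in any GNS representation, $\frac{u_j^l+1}{u_j^l-1} = \frac{X_j^l}{4i}$, so that
\[
g_{j,k,\lambda}(\tau_X) \;=\; \frac{1}{16}\tau\bigl((X_j^l + \lambda X_k^l)^2\bigr) \;=\; \frac{1}{16}\|Y_X\|_{L^2(\tau)}^2,
\]
where $Y_X := X_j^l + \lambda X_k^l$ is self-adjoint (since $\lambda\in\mathbb{R}$) and $X\mapsto Y_X$ is $\mathbb{R}$-linear. This identifies $g_{j,k,\lambda}$, up to a constant, with the squared $L^2$-norm of a linear function of $X$, which does not involve the supplementary unitary variables $u\in\mathcal{U}(M)^{\mu\nu}$ at all.

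From this reformulation, most of the defining properties of $\mathcal{C}^{1,1}(\mathcal{T}_{2,0}(\mathcal{F}^m_{k}*\mathcal{F}^{\nu}_{\mu}),d_{2,0})$ become routine checks that are uniform in the choice of $(M,\tau)$, giving in particular universal convexity from the convexity of $Z\mapsto\|Z\|_2^2$ composed with the linear map $X\mapsto Y_X$. Nonnegativity is immediate; the subquadratic bound \eqref{subquadorder0} follows from $\|Y_X\|_2^2\leq 2(\|X_j^l\|_2^2+\lambda^2\|X_k^l\|_2^2)$; the gradient-type Lipschitz bound \eqref{Lop0} follows from Cauchy--Schwarz applied to the identity $\|Y_X\|_2^2 - \|Y_{X'}\|_2^2 = \tau((Y_X - Y_{X'})(Y_X + Y_{X'}))$; the $C^{1,1}$ estimate \eqref{C110} with $\alpha=1$ (and no second factor needed) is actually an exact equality by the parallelogram law $\|Y_X + Y_H\|_2^2 + \|Y_X - Y_H\|_2^2 - 2\|Y_X\|_2^2 = 2\|Y_H\|_2^2$; and \eqref{Lip0} is trivial since $g_{j,k,\lambda}$ is independent of the extra unitaries. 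For the distance Lipschitz bound \eqref{LipReg} on $d_{2,0}$, expanding the square writes $g_{j,k,\lambda}$ as a linear combination of four terms $\tau\bigl((\frac{u_a^l+1}{u_a^l-1})^*(\frac{u_b^l+1}{u_b^l-1})\bigr)$ with $a,b\in\{j,k\}$, each of which is precisely one of the summands whose supremum is added in passing from $d_{1,0}$ to $d_{2,0}$.

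The only delicate point, which I expect to be the main obstacle, is the separate requirement of lower semicontinuity for $d_{1,0}$, since $d_{1,0}$ does not a priori control the second-moment quantities that contribute to $d_{2,0}$. The plan here is the Cauchy--Schwarz variational formula
\[
\|Y_X\|_2^2 \;=\; \sup_{P\neq 0} \frac{|\tau(Y_X\cdot P(u_j^l, u_k^l)^*)|^2}{\tau(P(u_j^l, u_k^l)^* P(u_j^l, u_k^l))},
\]
where the supremum runs over a countable family of nonzero noncommutative $*$-polynomials $P$ (with, say, rational coefficients) in the two unitaries $u_j^l, u_k^l$. The equality holds because $Y_X$ is affiliated to $W^*(u_j^l, u_k^l)$ as a sum of functional-calculus images (inverse Cayley transforms) of those unitaries, hence approximable in $L^2(\tau)$ by bounded elements and so by such polynomials. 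For each fixed $P$ the numerator expands as a linear combination of quantities of the form $\tau(X_j^l Q(u))$ and $\tau(X_k^l Q(u))$ with $Q$ a monomial, which up to the factor $\pm 4i$ are exactly the observables continuously controlled by $d_{1,0}$; the denominator is a polynomial in unitary moments, continuous for $d\leq d_{1,0}$. Each quotient is therefore continuous for $d_{1,0}$ on the open set where the denominator does not vanish, and $g_{j,k,\lambda}$ is a countable supremum of continuous functions, hence lower semicontinuous for $d_{1,0}$, completing the verification.
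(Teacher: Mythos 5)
Your proposal is correct and takes essentially the same approach as the paper: rewrite $g_{j,k,\lambda}$ as (a multiple of) the squared $L^2$-norm of the linear self-adjoint combination $Y_X = X_j^l + \lambda X_k^l$, dispose of the subquadratic, convexity, Lipschitz, and second-order bounds routinely, and handle the one delicate requirement — lower semicontinuity for $d_{1,0}$ — by writing $\|Y_X\|_2^2$ as a countable supremum over noncommutative polynomials in the unitaries of $d_{1,0}$-controlled Cauchy--Schwarz quotients. The only cosmetic differences are that you use the quotient $|\langle Y,P\rangle|^2/\|P\|_2^2$ where the paper uses $\langle Y,P\rangle/\max(1,\|P\|_2)$ (the paper's displayed expression actually produces $\|Y\|_2$ rather than $\|Y\|_2^2$, which is immaterial since the square of a nonnegative lsc function is lsc), and that you restrict $P$ to polynomials in $u_j^l,u_k^l$ alone whereas the paper allows words in all generators — both are valid since $Y_X$ lies in $L^2(W^*(u_j^l,u_k^l))$; one should just note, as you implicitly do, that your quotient $f_P$ stays lower semicontinuous on the set where $\tau(P^*P)=0$ because there the numerator also vanishes by Cauchy--Schwarz and the liminf along any approaching sequence is automatically $\geq 0 = f_P(\tau)$.
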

\begin{proof} Continuity and even Lipschitzness for $d_{2,0}$ is in the definition of $d_{2,0}$, subquadratic behaviour is obvious by Cauchy-Schwarz, and non-negativity gives the lower bound. \eqref{Lop0} and \eqref{C110} are similar.
We must check lower semcontinuity for $d_{1,0}$. This comes from the formula:
$$\tau ((\frac{u_j^l+1}{u_j^l-1}+\lambda\frac{u_k^l+1}{u_k^l-1})))^*(\frac{u_j^l+1}{u_j^l-1}+\lambda\frac{u_k^l+1}{u_k^l-1})))=\sup \frac{\tau\left((\frac{u_j^l+1}{u_j^l-1}+\lambda\frac{u_k^l+1}{u_k^l-1})^*\sum_i\lambda_i(u_{j_1^i}^{l_1^i})^{\epsilon_1^i}...(u_{j_{m_i}^i}^{l_{m_i}^i})^{\epsilon_{m_i}^i}\right)}{\max(1,||\sum_i\lambda_i(u_{j_1^i}^{l_1^i})^{\epsilon_1^i}...(u_{j_{m_i}^i}^{l_{m_i}^i})^{\epsilon_{m_i}^i})||_{2,\tau})}$$
and each term in supremum in the right hand is by definition $d_{1,0}$ continuous, hence the supremum is lower semi-continuous.
\end{proof}

For our purpose, we also introduce  several classes of explicit functions:
   \begin{align*}&\mathcal{E}_{reg,\infty}(   \mathcal{T}_{2,0}^c(\mathcal{F}^m_{[0,1]}*\mathcal{F}^{\nu}_{\mu}),d_{2,0})\\&=\{ f\in \mathcal{E}(   \mathcal{T}_{2,0}^c(\mathcal{F}^m_{[0,1]}*\mathcal{F}^{\nu}_{\mu}),d_{2,0}):  \exists l\in \N,\exists g,g_1,...,g_l\in \mathcal{E}(  \mathcal{T}_{2,0}(\mathcal{F}^m_{k}*\mathcal{F}^{\nu}_{\mu}),d_{2,0} ),\\&\qquad \exists (D,D_1,...,D_l,\lambda_1,...,\lambda_l,C_1,...,C_l)\in \R^{l+1}\times\C^{l} \times]0,\infty[^l, \exists (\epsilon_1^i,...,\epsilon_{m_i}^i) \in \{-1,1\}^{m_i}, \\&\qquad\exists ((j_1^i,l_1^i),...,(j_{m_i}^i,l_{m_i}^i)) \in (\{1,...,k+1\}\times \{1,...,m\}\cup \{k+2,...,k+\mu+1\}\times \{m+1,...,m+\nu\})^{m_i} \\&\qquad f(\tau)= g(\tau\circ (I_{t_1,...,t_k}*Id))\ and \ g(\tau)=D+\left(\max_{i=1,...,l}g_i(\tau)\right)\\& \ and\  g_i(\tau)=D_i+C_i \sum_{j=1}^k\sum_{l=1}^m \tau (\left|4i\frac{u_j^l+1}{u_j^l-1})\right|^2)+\Re(\lambda_i\tau((u_{j_1^i}^{l_1^i})^{\epsilon_1^i}...(u_{j_{m_i}^i}^{l_{m_i}^i})^{\epsilon_{m_i}^i}))\geq 1
 \}
\end{align*}
   
with $\Re$ the real part to stay within real potentials and another $p$-norm variant for $p\in[ 1,\infty[$:  
   \begin{align*}&\mathcal{E}_{reg,p}(   \mathcal{T}_{2,0}^c(\mathcal{F}^m_{[0,1]}*\mathcal{F}^{\nu}_{\mu}),d_{2,0})\\&=\{ f\in \mathcal{E}(   \mathcal{T}_{2,0}^c(\mathcal{F}^m_{[0,1]}*\mathcal{F}^{\nu}_{\mu}),d_{2,0}):  \exists l\in \N,\exists g,g_1,...,g_l\in \mathcal{E}(  \mathcal{T}_{2,0}(\mathcal{F}^m_{k}*\mathcal{F}^{\nu}_{\mu}),d_{2,0} ),\\&\qquad \exists (D,D_1,...,D_l,\lambda_1,...,\lambda_l,C_1,...,C_l)\in \R^{l+1}\times \C^{l}\times ]0,\infty[^l, \exists (\epsilon_1^i,...,\epsilon_{m_i}^i) \in \{-1,1\}^{m_i}, \\&\qquad\exists ((j_1^i,l_1^i),...,(j_{m_i}^i,l_{m_i}^i)) \in (\{2,...,k+1\}\times \{1,...,m\}\cup \{k+2,...,k+\mu+1\}\times \{m+1,...,m+\nu\})^{m_i} \\&\forall\overline{ \tau}\in \mathcal{T}_{2,0}^c(\mathcal{F}^m_{[0,1]}*\mathcal{F}^{\nu}_{\mu}), \forall \tau\in \mathcal{T}_{2,0}(\mathcal{F}^m_{k}*\mathcal{F}^{\nu}_{\mu}):\\&\quad f(\overline{ \tau})= g(\overline{ \tau}\circ (I_{t_1,...,t_k}*Id))\ \ and \ g(\tau)=D+\left(\sum_{i=1,...,l}\left(g_i(\tau)\right)^p\right)^{1/p} \\&\qquad \ and\ g_i(\tau)=D_i+C_i \sum_{j=1}^k\sum_{l=1}^m \tau ((4i\frac{u_j^l+1}{u_j^l-1})^*(4i\frac{u_j^l+1}{u_j^l-1}))+\Re(\lambda_i\tau((u_{j_1^i}^{l_1^i})^{\epsilon_1^i}...(u_{j_{m_i}^i}^{l_{m_i}^i})^{\epsilon_{m_i}^i}))\geq 1
 \}
\end{align*}
We also write  
 $\mathcal{E}_{reg,p}(\mathcal{T}_{2,0}(\mathcal{F}^m_{k}*\mathcal{F}^{\nu}_{\mu}),d_{2,0})  $ the corresponding spaces  of functions of the type of $g$ in the definition above. Note immediately that for any $G\in\mathcal{E}_{reg,p}( \mathcal{T}_{2,0}^c(\mathcal{F}^m_{[0,1]}*\mathcal{F}^{\nu}_{\mu}),d_{2,0}) $, $p\in [1,\infty]$, there is a constant $C(G)$ such that \eqref{LipReg} holds.
  
We also consider variants without the convexity property but with the $\mathcal{C}^{1,1}$ property, for $i\in\{0,1,2\},p\in [1,\infty],C\in [0,\infty]$ : 
  
  \begin{align*}&\mathcal{C}_{reg,p,C}(   \mathcal{T}_{2,0}^c(\mathcal{F}^m_{[0,1]}*\mathcal{F}^{\nu}_{\mu}),d_{i,0})\\&=\{ f\in \mathcal{C}(   \mathcal{T}_{2,0}^c(\mathcal{F}^m_{[0,1]}*\mathcal{F}^{\nu}_{\mu}),d_{i,0}):  \exists (l,L)\in \N^2,\exists g_1,...,g_l\in \mathcal{C}^{1,1}(  \mathcal{T}_{2,0}(\mathcal{F}^m_{k}*\mathcal{F}^{\nu}_{\mu}),d_{i,0} ),\\&\exists (D,D_1,...,D_l,\lambda_1,...,\lambda_l)\in \R^{l+1}\times (\C^{L})^{l}, \exists (\epsilon_1^i,...,\epsilon_{m_i}^i) \in (\{-1,0,1\}^L)^{m_i}, \\&\exists ((j_1^i,l_1^i),...,(j_{m_i}^i,l_{m_i}^i)) \in ((\{2,...,k+1\}\times \{1,...,m\}\cup \{k+2,...,k+\mu+1\}\times \{m+1,...,m+\nu\})^L)^{m_i} \\&\forall\overline{ \tau}\in \mathcal{T}_{2,0}^c(\mathcal{F}^m_{[0,1]}*\mathcal{F}^{\nu}_{\mu}), \forall \tau\in \mathcal{T}_{2,0}(\mathcal{F}^m_{k}*\mathcal{F}^{\nu}_{\mu}):\\&\quad f(\overline{ \tau})= g(\overline{ \tau}\circ (I_{t_1,...,t_k}*Id))\ \ with \ g(\tau)=D+C \sum_{j=1}^k\sum_{l=1}^m \tau (\left|4i\frac{u_j^l+1}{u_j^l-1})\right|^2)+\left(\sum_{i=1,...,l}\left(g_i(\tau)\right)^p\right)^{1/p} \\&\qquad \ and\ g_i(\tau)=D_i+\sum_{I=1}^L\Re(\lambda_i(I)\tau((u_{j_1^i(I)}^{l_1^i(I)})^{\epsilon_1^i(I)}...(u_{j_{m_i}^i(I)}^{l_{m_i}^i(I)})^{\epsilon_{m_i}^i(I)}))\geq 1
 \}
\end{align*}
Note that since $C=0$ is now allowed, the functions can be continuous for $d_{1,0}$ so that none of these spaces are empty in this case $C=0$.

 In order to prove a Laplace principle \cite{DupuisEllis} (or a Large deviation principle modulo Bryc's theorem \cite[Th 4.4.2]{DZ}), we will need the following lemma to use the variant \cite[Th 4.4.10]{DZ}. Recall that a class $G$ of continuous real valued functions  on a topological space $X$ is said to be \textit{well-separating} if it contains constant functions, is closed by finite pointwise maxima and separates points of $X$ in the sense that for $x\neq y\in X$ and $a,b\in \R$ there exists $g\in G$ with $g(x)=a, g(y)=b.$ (note this would say $-G$ well-separating with the definition of \cite[Def 4.4.7]{DZ} but we prefer to discuss convex instead of concave functions.)

\begin{lemma}\label{wellsep}
The class 
$\mathcal{C}_{reg,\infty,0}(  \mathcal{T}_{2,0}^c(\mathcal{F}^m_{[0,1]}*\mathcal{F}^{\nu}_{\mu}),d_{1,0}) )$ is well-separating on $(    \mathcal{T}_{2,0}^c(\mathcal{F}^m_{[0,1]}*\mathcal{F}^{\nu}_{\mu}),d_{1,0}) ).$
\end{lemma}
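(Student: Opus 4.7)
I need to verify the three conditions in the definition of well-separating: stability by constants, closure under finite pointwise maxima, and the value-prescribed separation property. The overall strategy is that with $p=\infty$ and $C=0$, the class $\mathcal{C}_{reg,\infty,0}$ is exactly a class of ``max-affine'' functions of unitary moments, with the side constraint $g_i\geq 1$, and such a class is essentially designed to be well-separating.

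\textbf{Constants.} I take $l=L=1$, an arbitrary word of length $m_1=1$, $\lambda_1(1)=0$, $D_1=1$ (so $g_1\equiv 1\geq 1$), and $D=c-1$ to realize the constant function $f\equiv c$. This is in $\mathcal{C}_{reg,\infty,0}(\mathcal{T}_{2,0}^c(\mathcal{F}^m_{[0,1]}*\mathcal{F}^{\nu}_{\mu}),d_{1,0})$ since it is trivially in $\mathcal{C}^{1,1}$ and continuous for any $d_{i,0}$.

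\textbf{Closure under finite max.} Given $f=D+\max_{1\le i\le l} g_i$ factoring through times $\mathbf{t}=(t_1,\dots,t_{k_1})$ and $f'=D'+\max_{1\le j\le l'} h_j$ factoring through times $\mathbf{t}'=(t_1',\dots,t_{k_2}')$, I first replace both by their expressions through the common reordered set $\{s_1<\dots<s_k\}=\{t_1,\dots,t_{k_1}\}\cup\{t_1',\dots,t_{k_2}'\}$, which amounts to re-indexing the discrete generator indices in the words defining $g_i,h_j$; the values of $f,f'$ are unchanged. I then pad the $L$ values so both families use a common $L$ by inserting extra terms with $\lambda=0$. Setting $\widetilde D=\min(D,D')$, the functions $\widetilde g_i=g_i+(D-\widetilde D)\geq 1$ and $\widetilde h_j=h_j+(D'-\widetilde D)\geq 1$ still have the required form (a constant shift of $D_i$), and
\[\max(f,f')=\widetilde D+\max\bigl(\widetilde g_1,\dots,\widetilde g_l,\widetilde h_1,\dots,\widetilde h_{l'}\bigr),\]
which exhibits $\max(f,f')$ as an element of $\mathcal{C}_{reg,\infty,0}$ with parameter $l+l'$ and the same common $L$.

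\textbf{Separation with prescribed values.} Given $\tau_1\neq\tau_2\in\mathcal{T}_{2,0}^c(\mathcal{F}^m_{[0,1]}*\mathcal{F}^{\nu}_{\mu})$ and $a,b\in\R$, continuity and the universal $C^*$-algebra structure imply that $\tau_1,\tau_2$ must differ on some monomial $w=(u_{j_1}^{i_1})^{\epsilon_1}\dots(u_{j_{m}}^{i_{m}})^{\epsilon_{m}}$ in the generators, where each $j_r$ is either in a finite set $\{t_1,\dots,t_k\}\subset[0,1]$ or in $\{2,\dots,\mu+1\}$. Let $c=\tau_1(w)-\tau_2(w)\neq 0$ and pick $\lambda=(a-b)\bar c/|c|^2\in\C$, so that $\Re(\lambda c)=a-b$. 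With these times $(t_1,\dots,t_k)$, set $L=1$, $m_1=m$, and
\[g_1(\tau)=D_1+\Re(\lambda\,\tau(w)),\qquad D_1:=1+|\lambda|,\]
so $g_1\geq 1$ since $|\tau(w)|\leq\|w\|=1$ for any tracial state on unitaries. Define
\[g(\tau)=D+g_1(\tau),\qquad D:=a-D_1-\Re(\lambda\,\tau_1(w)),\]
so that $g(\tau_1)=a$ and $g(\tau_2)=g(\tau_1)-\Re(\lambda c)=a-(a-b)=b$. The pullback $f(\overline{\tau})=g(\overline{\tau}\circ(I_{t_1,\dots,t_k}*\mathrm{Id}))$ then lies in $\mathcal{C}_{reg,\infty,0}(\mathcal{T}_{2,0}^c(\mathcal{F}^m_{[0,1]}*\mathcal{F}^{\nu}_{\mu}),d_{1,0})$ and achieves the required values.

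No step is truly an obstacle; the only mild care needed is the re-indexing by a common time set in the maxima step and the verification that the constraint $g_i\geq 1$ can always be enforced by an additive shift which does not leave the class (the form of $g_i$ absorbs the shift into $D_i$). All functions constructed are evidently continuous for $d_{1,0}$ (in fact for $d_{0,0}$) since they are finite sums of traces of unitary monomials.
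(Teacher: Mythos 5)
Your proof is correct and follows essentially the same strategy as the paper's, with substantially more detail. The paper's proof is terse, claiming constants and max-stability are ``obvious'' or ``by definition'' where you give explicit constructions (the common time set, the common $L$ by zero-padding, and the constant shift $\widetilde D=\min(D,D')$ re-absorbed into the $D_i$'s). For the separation step, the paper invokes stability of the class by translation and multiplication by real numbers, while you directly choose $\lambda=(a-b)\bar c/|c|^2$ and shift $D$ to land on the prescribed values $a,b$. Your route is marginally cleaner on one point: the paper's claim that the class is stable by multiplication by arbitrary reals is delicate when $l>1$ (since $c\cdot\max=\min$ for $c<0$), though it is harmless there because the separating function can always be taken with $l=1$; your construction sidesteps this by building the affine transformation directly into $\lambda$ and the constants. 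The only thing left slightly implicit in both your proof and the paper's is the verification that the constructed $g_1$ belongs to $\mathcal{C}^{1,1}$ for the metric $d_{1,0}$; you gesture at this by noting Lipschitzness for $d_{0,0}$, which suffices since $d_{0,0}\leq d_{1,0}$, and the paper takes it as part of the class definition.
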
   

\begin{proof}
It is obvious that space contains constant functions and is stable by finite maxima by definition.

It separates points since:
Since this space is stable by translation (addition of constants) and multiplication by real numbers, it suffices to find two functions with different values on $\tau_1\neq \tau_2$. But if all the values were the same, we would have $\tau_1((u_{j_1}^{i_1})^{\epsilon_1}...(u_{j_{m_i}}^{i_{m_i}})^{\epsilon_{m_i}})=\tau_2((u_{j_1}^{i_1})^{\epsilon_1}...(u_{j_{m_i}}^{i_{m_i}})^{\epsilon_{m_i}})$ for all possible choices and thus  $\tau_1=\tau_2$ by density of linear combinations of these functional in the universal $C^*$-algebra.
\end{proof}   
 
  \begin{definition}
We call $\mathcal{E}^{1,1}_{app}( \mathcal{T}_{2,0}(\mathcal{F}^m_{k}*\mathcal{F}^{\nu}_{\mu}),d_{2,0}) )\subset \mathcal{E}^{1,1}(  \mathcal{T}_{2,0}(\mathcal{F}^m_{k}*\mathcal{F}^{\nu}_{\mu}),d_{2,0}) )$ the subset of functions $g$  such that there are constants $M>0, $ with for all $I=1,...,m, l=1,...,k$
  all $\epsilon>0$, there are  $P_1,...,P_L\in \mathcal{F}^m_{k}*\mathcal{F}^{\nu}_{\mu}$,  $f_1,...,f_L, g_{1,1},...,g_{k,m}\in C^0(  \mathcal{T}_{2,0}(\mathcal{F}^m_{k}*\mathcal{F}^{\nu}_{\mu}),d_{2,0}) )$, depending on $I,l,\epsilon$, but with $g_{i,j}$ independent $\epsilon$ if $\mu\nu\neq 0$ and  such that for all  $(M,\tau)$ and $X\in L^2_{sa}(M,\tau)^{mk},u\in \mathcal{U}(M)^{\mu\nu}$ we have the approximation:
\begin{equation}\label{App} \left\|\nabla_{X_l^{(I)}}g_\tau(u)(X)-\sum_{i=1}^L P_i(u(X),u)f_i(\tau_{X,u})- \sum_{i=1}^m\sum_{j=1}^kX_j^{(i)}g_{j,i}(\tau_{X,u})\right\|_2 \leq \epsilon.\end{equation}
Moreover if $\mu\nu\neq 0$, we assume $g_{j,i}=0$ if $l\neq j$ or $i\neq I$ and $g_{l,I}(\tau_{X,u})\geq 0$,
$g_\tau(u)(.)$ is twice differentiable 
and the operator norm bound $||\sum_{i=1}^L P_i(u(X),u)f_i(\tau_{X,u})||\leq M.$
 
  \end{definition} 
  We then define $\mathcal{E}^{1,1}_{app(k)}( \mathcal{T}_{2,0}^c(\mathcal{F}^m_{[0,1]}*\mathcal{F}^{\nu}_{\mu}),d_{2,0}) )$ and $\mathcal{E}^{1,1}_{app}( \mathcal{T}_{2,0}^c(\mathcal{F}^m_{[0,1]}*\mathcal{F}^{\nu}_{\mu}),d_{2,0}) )$ as before.
  
  Note that we will see later (proposition \ref{C11}) that the assumption implies $g_\tau$ differentiable so that the subdifferential $\partial g_\tau(X)=\{(\nabla_{X_j^{(I)}}g_\tau(X))_{j,I}\}$ is singleton valued.
  
  Clearly   $\mathcal{E}_{reg,p}(  \mathcal{T}_{2,0}^c(\mathcal{F}^m_{[0,1]}*\mathcal{F}^{\nu}_{\mu}),d_{2,0}) )\subset \mathcal{E}^{1,1}_{app}( \mathcal{T}_{2,0}^c(\mathcal{F}^m_{[0,1]}*\mathcal{F}^{\nu}_{\mu}),d_{2,0}) )$ for $p\in [2,\infty[$ (cf. the proof of lemma \ref{UniformN} below for an explicit computation of derivatives making that point more explicit).
 
 \subsection{Malliavin calculus and distributional Clark-Ocone formula}  
   
We give in this subsection preliminaries on Malliavin calculus as background to understand the statements from \cite{Ustunel} that we will use.    
   
 Let $\mathbb W\subset C^0([0,1],\R^d)$ the set of continuous paths  starting at $0$ and $\gamma$ Wiener measure on it. $B$ will be the canonical coordinate process. As usual $\mathbb H\subset \mathbb W$ is the Calderon-Martin space of functions $U_t=\int_0^t u_sds$ with $\|U\|_{\mathbb H}=\int_0^1 |u_s|^2ds<\infty.$  We then write $u_s=\dot{U}_s.$

 Since the translations of $\gamma$ with the elements of $\mathbb H$ induce measures
equivalent to $\gamma$, the G\^ateaux  derivative in $\mathbb H$ direction of the
random variables is a closable operator on $L^p(\gamma)$-spaces and  this
closure will be denoted by $\nabla:L^p(\mathbb W,\gamma)\to L^p((\mathbb W,\gamma:\mathbb H)$ (cf. e.g. \cite{ASU, ASU-1,Nualart}). 
It is also useful to point out the explicit formula for $f\in C^1(\R^{kd})$ a $C^1$ function, $t_1<t_2<...<t_k\in[0,1]$ and $F=f(B_{t_1},...,B_{t_k})$, we have (if $h\in \mathbb H$ is the coordinate and $d_if$ is the partial differential in the directions of the $i$-th $d$-uple of variables, $\nabla_if$ the corresponding gradient with $d_if(x).h=\langle \nabla_if(x), h\rangle$):
\[\nabla F=\sum_{i=1}^n (d_if)(B_{t_1},...,B_{t_k}).h_{t_i}\]
It is then common to write for $h\in \mathbb H$: \[\langle \nabla F,h\rangle=\nabla_h F=\int_0^1\dot{h}_t D_tF dt\]
where $D_tF$ is the Lebesgue density of the process $\nabla F$ seen in $L^p(\Omega,\gamma:\mathbb H)$ defined $d\gamma\otimes dt$ almost surely. Note also that $\nabla_h:L^p(\mathbb W,\gamma)\to L^2(\mathbb W,\gamma)$ is also a closable operator for $h\in \mathbb H.$ Especially for $F=f(B_{t_1},...,B_{t_k})$ as above, it is easy to see that :
\begin{equation}\label{DtF}D_t F=\sum_{i=1}^n 1_{[0,t_i]}(t)(\nabla_if)(B_{t_1},...,B_{t_k}).\end{equation}

The corresponding Sobolev spaces
 of (the equivalence classes of)  real random variables
will be denoted as $\mathbb D_{p,k}$, where $k\in \N$ is the order of
differentiability and $p>1$ is the order of integrability. If the
random variables are with values in some separable Hilbert space, say
$\Phi$, then we can define similarly the corresponding Sobolev
spaces and they are denoted as $\mathbb D_{p,k}(\Phi)$, $p>1,\,k\in
\N$. Since $\nabla:\mathbb D_{p,k}\to\mathbb D_{p,k-1}(H)$ is a continuous and
linear operator its adjoint is a well-defined operator which we
represent by $\delta$.  $\delta$ coincides with the It\^o
integral of the Lebesgue density of the adapted elements of
$\mathbb D_{p,k}(H)$ (cf.\cite{ASU,ASU-1,Nualart}).  
   
We denote by $\mathbb D_{p,k}^a(\mathbb H)$ the subspace defined by 
$$
\mathbb D_{p,k}^a(\mathbb H)=\{\xi\in\mathbb D_{p,k}(\mathbb H):\,\dot{\xi}\mbox{ is adapted}\}
$$
for $p> 1,\,k\in\N$, for $p=2,\,k=0$, we shall write $L^2_a(\mu,\mathbb H)$.

To use the results of \cite{Ustunel}, we will need supplementary background from \cite{repres}. We consider $\mathbb D(\Phi)=\bigcap_{p,k}\mathbb D_{p,k}(\Phi), \mathbb D^a(\mathbb H)=\bigcap_{p,k}\mathbb D_{p,k}^a(\mathbb H)$ with projective limit topology and the continuous duals $\mathbb D^\prime(\Phi), (\mathbb D^a(\mathbb H))^\prime$ (if $\Phi=\C$ we write only $\mathbb D,\mathbb D^\prime.$) We also let $\mathbb D_0= \overline{\{ \psi-\langle\psi,1\rangle, \psi \in\mathbb D\}}$ In \cite[Corol II.1]{repres}, it is shown that $J:\mathbb D^a(\mathbb H)\to \mathbb D_0$ defined by stochastic integration $$J(\xi)=\int_0^1\dot{\xi}_sdB_s$$
is continuous and has a continuous inverse $\partial_B:\mathbb D_0\to \mathbb D^a(\mathbb H)$ which enables to express Clark-Ocone's formula as $$\psi=\langle\psi,1\rangle+J(\partial_B(\psi-\langle\psi,1\rangle)).$$
Then, the adjoint of $J$ extends $\partial_B$, and the adjoint of $\partial_B$ extends $J$ so that if we still call in the same way the extensions, the previous formula holds for $\psi \in  \mathbb D^\prime$ 
\cite[Prop II.2]{repres}. For $\xi\in \mathbb D(\mathbb H)$, if $\pi\xi$ is an $\mathbb H$-valued process with Lebesgue derivative $E(\dot{\xi_s}|\mathcal{F}_s)$. Then \cite[Prop III.1]{repres}, $\pi:\mathbb D(\mathbb H)\to \mathbb D^a(\mathbb H)$ is continuous and has a unique continuous extension $\hat{\pi}:\mathbb D^\prime(\mathbb H)\to (\mathbb D^a(\mathbb H))^\prime$ \cite[Prop III.1]{repres} which coincides with the restriction map on $\mathbb D^a(\mathbb H)\subset \mathbb D(\mathbb H)$. Then on $\mathbb D_0^\prime= \{ \psi-\langle\psi,1\rangle, \psi \in\mathbb D^\prime\}$, \cite[Th IV.1]{repres} gives the representation $$\partial_B=\hat{\pi}\nabla.$$

Especially, if $F\in L^{1+\epsilon}(\gamma)$, $\epsilon>0$, we can consider $\nabla F\in \mathbb D_{1+\epsilon,-1}(\mathbb H)$ and $\partial_B(F)=\hat{\pi}\nabla(F)\in L^{1+\epsilon}(\gamma;\mathbb H)$ using Bucholder-Davis-Gundy inequality giving the equivalence of norms induced by $J:L^{1+\epsilon}_{ad}(\gamma;\mathbb H)\to L^{1+\epsilon}(\gamma).$ 
In that case, we will write $$[\hat{\pi}\nabla(F)]_s=E(D_sF|\mathcal{F}_s).$$ By the extension properties mentioned before, this expression coincides when $F=f(B_{t_1},...,B_{t_k})$ with the previous expression if $f$ is $C^1$. We will thus be able to use results in \cite{Ustunel} expressed in terms of $[\hat{\pi}\nabla(F)]_s$ in using classical Malliavin calculus formulas they extend.



\noindent
Finally, a measurable  function
$f:\mathbb W\to \R\cup\{+\infty\}$  is called $\alpha$-convex, $\alpha\in\R$, if the map
$$
h\to f(x+h)+\frac{\alpha}{2}|h|_H^2=F(x,h)
$$
is convex on the Cameron-Martin space $\mathbb H$ with values in
$L^0(\mu)$. Note that this notion is compatible with the
$\mu$-equivalence classes of random variables thanks to the
Cameron-Martin theorem (cf. \cite{F-U1}).

  \subsection{Hermitian brownian motion and its exponential tightness}  
   \label{hermitianB}
For $(N,m,\mu,\nu)\in(\N^*)^4,$ and $d=N^2m$, we write $\mathbb W_{sa,N}\subset C^0([0,1],\R^d)=C^0([0,1],(M_N(\C)_{sa})^m)$ the Wiener space for paths on $m$-tuples of hermitian matrices in $M_N(\C)_{sa}\simeq \R^{N^2}$ ($m$ is fixed throughout the paper and does not appear in the notation). If $\gamma$ is the law of the standard brownian measure making $B_t$ into an hermitian brownian motion, we write $\gamma_{sa,N,m}=\gamma_N$ the law of $\frac{B_t}{\sqrt{N}}$. We define a random state $\widehat{\sigma}^N\in \mathcal{T}^c_2(\mathcal{F}^m_{[0,1]})$ by $\widehat{\sigma}^{N}=\tau_{H^N}$ with $H^N=(\frac{B_t}{\sqrt{N}})_{t\in 0,1}$.
Note that the normalisation is as usual made to insure :$$E\left(\widehat{\sigma}_{N}\left((4i\frac{u_t^l+1}{u_t^l-1})^*(4i\frac{u_t^l+1}{u_t^l-1})\right)\right)=E(\frac{1}{N}Tr((\frac{B_t^l}{\sqrt{N}})^2))=\frac{tN^2}{N^2}=t.$$


If $\Upsilon\in \mathcal{U}(M_N(\C))^{\mu\nu}$ is a bunch of (deterministic) unitary matrices, we call $$\widehat{\sigma}^{N}_{\Upsilon}=\tau_{H^N,\Upsilon}\in \mathcal{T}_{2,0}^c(\mathcal{F}^m_{[0,1]}*\mathcal{F}^\nu_{\mu}).$$
We will be interested in large deviation results for this $\widehat{\sigma}^{N}_{\Upsilon}$ (this includes if $\mu=\nu=0$ the case $\widehat{\sigma}^{N}$ considered in \cite{BCG}).

We can also define the image of the Gaussian Unitary Ensemble in Unitary variables as above $\mathfrak{G}_N=\tau_{G^N}\in\mathcal{T}_{2}(\mathcal{F}^m_{1}),$ with $G^N=(\frac{B_1}{\sqrt{N}}).$

The only statement we will use from \cite{BCG} to prove our large deviation principle is their lemma 5.4 (or rather actually a slight improvement, consequence of their proof) giving exponential tightness of $\widehat{\sigma}^{N}_{\Upsilon}$ in $\mathcal{T}_{2,0}^c(\mathcal{F}^m_{[0,1]}*\mathcal{F}^\nu_{\mu}).$
 Let us recall the appropriate notation. For any $g:[0,1]\to \R^+$ with $\lim_{x\to 0} g(x)=0$. We let :
 $$K_g=\left\{ \tau \in \mathcal{T}^c(\mathcal{F}^m_{[0,1]}*\mathcal{F}^\nu_{\mu})
   : \forall s\leq t\in [0,1] \max_{i=1...m} \tau(|u_t^i-u_s^i|^2)\leq g(t-s) \right\},$$
      $$K_{g,2}=\left\{ \tau \in \mathcal{T}^c_{2,0}(\mathcal{F}^m_{[0,1]}*\mathcal{F}^\nu_{\mu})
   : \forall s\leq t\in [0,1] \max_{l=1...m} \tau(|4i(\frac{u_t^l+1}{u_t^l-1})-4i(\frac{u_s^l+1}{u_s^l-1})|^2)\leq g(t-s) \right\}\subset K_{100g},$$
$$\Gamma_L   =\left\{ \tau \in \mathcal{T}^c_{2,0}(\mathcal{F}^m_{[0,1]}*\mathcal{F}^\nu_{\mu})
   : \forall t\in [0,1] \max_{l=1...m} \tau(\left|4i\frac{u_t^l+1}{u_t^l-1}\right|^2)\leq L\right\},$$
   
   \begin{lemma}\label{compact1}
   For any $L>0, g$ as above, $K_{g,2}\cap \Gamma_L$ is a compact set of $(\mathcal{T}^c_{2,0}(\mathcal{F}^m_{[0,1]}*\mathcal{F}^\nu_{\mu}),d_{1,0}).$ 
   \end{lemma}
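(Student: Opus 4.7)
The plan is to prove sequential compactness directly: every sequence $(\tau_n)\subset K_{g,2}\cap \Gamma_L$ admits a $d_{1,0}$-convergent subsequence with limit in the same set. First, by a standard diagonal argument against the countable family of words in the generators $u^i_j$ with $j\in\{2,\ldots,\mu+1\}\cup([0,1]\cap\mathbb{Q})$ — whose $\tau_n$-evaluations are uniformly bounded by $1$ since the generators are unitary — I extract a subsequence, still denoted $\tau_n$, along which $\tau_n(w)\to\tau_\infty(w)$ for each such $w$. The identity $u_t^l-u_s^l=(X_t^l-X_s^l)(X_t^l-4i)^{-1}+u_s^l(X_s^l-X_t^l)(X_s^l-4i)^{-1}$, together with $\|(X-4i)^{-1}\|\leq 1/4$, yields $\|u_t^l-u_s^l\|_2\leq \|X_t^l-X_s^l\|_2/2$, so on $K_{g,2}$ the unitaries are equi-continuous in $t$; this extends pointwise convergence to all $t\in[0,1]$ and promotes it to $d_{0,0}(\tau_n,\tau_\infty)\to 0$ in $\mathcal{T}^c_{2,0}$.

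Next, I verify $\tau_\infty\in K_{g,2}\cap\Gamma_L$. Both defining conditions are upper bounds on the nonnegative quadratic functionals $\tau\mapsto\tau\bigl((Y_t^l-Y_s^l)^*(Y_t^l-Y_s^l)\bigr)$ and $\tau\mapsto\tau\bigl((Y_t^l)^*Y_t^l\bigr)$ with $Y_t^l=(u_t^l+1)/(u_t^l-1)$, and Lemma~\ref{quadraticsemicont} exhibits these as suprema of $d_{1,0}$-continuous affine functionals, hence lower semi-continuous for $d_{1,0}$ and a fortiori for $d_{0,0}$. Taking $\liminf$ in $n$ preserves the bounds.

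The main obstacle is upgrading $d_{0,0}$-convergence to $d_{1,0}$-convergence, since the extra terms in $d_{1,0}$ involve the unbounded operators $Y_t^l$. The Cauchy--Schwarz bound $\|Y_t^l\|_2\leq L^{1/2}/4$ from $\Gamma_L$ and the equi-continuity $\|Y_t^l-Y_s^l\|_2\leq g(t-s)^{1/2}/4$ from $K_{g,2}$ give joint equi-continuity in all the time parameters $t,j_1,\ldots,j_k$ of the test functionals; an Arzelà--Ascoli argument reduces uniform convergence over the (weighted) sup to pointwise convergence on a countable dense set of parameters. For each fixed $(t,l,w)$, I approximate $Y_t^l$ by the bounded spectral truncations $f_K(Y_t^l)$, which are continuous functions of $u_t^l$ uniformly approximable by polynomials in $u_t^l$ via Stone--Weierstrass; the bounded polynomial part converges by $d_{0,0}$, while the tail is bounded by $\tau_n\bigl(|Y_t^l|^2-|f_K(Y_t^l)|^2\bigr)^{1/2}$. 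To control this uniformly in $n$, I refine the subsequence once more so that the scalars $\tau_n(|Y_t^l|^2)$ also converge for each $(t,l)$ in a countable dense family (possible because they lie in $[0,L/16]$); combined with monotone convergence in $K$ and the already-established lower semi-continuity, this produces the uniform integrability of $|Y_t^l|^2$ required to close the gap. The extra generators from $\mathcal{F}^\nu_\mu$ are handled in parallel since they only contribute bounded factors inside the test words.
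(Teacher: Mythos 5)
Your reduction to sequential compactness, the diagonal extraction giving $d_{0,0}$-convergence to some $\tau_\infty$, and the use of Lemma~\ref{quadraticsemicont} to check $\tau_\infty\in K_{g,2}\cap\Gamma_L$ by lower semicontinuity all match what the paper does. The problem is your upgrade from $d_{0,0}$ to $d_{1,0}$: it hinges on ``uniform integrability of $|Y_t^l|^2$'', i.e.\ on the tail $\tau_n(|Y_t^l|^2)-\tau_n(|f_K(Y_t^l)|^2)$ being small uniformly in $n$ once $K$ is large. The chain you invoke — refine so that $\tau_n(|Y_t^l|^2)\to\ell$, use $d_{0,0}$-convergence for the bounded truncations so $\tau_n(|f_K(Y_t^l)|^2)\to\tau_\infty(|f_K(Y_t^l)|^2)$, then monotone convergence in $K$ and lower semicontinuity — only yields $\lim_K\lim_n[\text{tail}]=\ell-\tau_\infty(|Y_t^l|^2)\geq 0$, and lower semicontinuity gives precisely the wrong inequality ($\tau_\infty(|Y_t^l|^2)\leq\ell$, not equality). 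Nothing forces $\ell=\tau_\infty(|Y_t^l|^2)$; escape of mass to high spectrum can occur, and the paper explicitly flags this as the reason the analogous statement with $d_{2,0}$ is \emph{not} proved. In fact, if your argument went through it would prove $\tau_n(|Y_t^l|^2)\to\tau_\infty(|Y_t^l|^2)$ for every fixed $(t,l)$, which is the $d_{2,0}$-statement the paper declines to claim.

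The paper avoids the uniform-integrability pitfall by never passing $Y_t^l$ through a Cauchy--Schwarz truncation. It applies Arzel\`a--Ascoli to the equicontinuous, uniformly bounded (by Cauchy--Schwarz against $\Gamma_L$) family $(t,t_1,\ldots,t_n)\mapsto\tau\bigl((Y_t^l)^*u_{t_1}^{l_1}\cdots u_{t_n}^{l_n}\bigr)$, obtaining along a subsequence uniform convergence to some limit functional $h$. It then identifies $h$ with $W\mapsto\tau_\infty((Y_t^l)^*W)$ by the algebraic relation $(Y_t^l)^*(u_t^l-1)^*=-4i(u_t^{l*}+1)$: on the dense subspace of words of the form $(u_t^l-1)^*W'$, both $h$ and the target functional reduce to the $d_{0,0}$-convergent bounded moment $-4i\,\tau_\infty\bigl((u_t^{l*}+1)W'\bigr)$, and since both are bounded linear functionals (bounded because $\|W\|_{2,\tau_n}\to\|W\|_{2,\tau_\infty}$ and $\|Y_t^l\|_{2,\tau_n}$ is uniformly controlled by $\Gamma_L$), they coincide. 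This gives $d_{1,0}$-convergence without ever asserting $L^2$-norm convergence of $Y_t^l$. You would need to replace your truncation/uniform-integrability step with an identification argument of this kind.
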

   Note that we cannot prove the corresponding statement with $d_{2,0}$ since there is no reason that the limit of the norm of $X_t$ along a subsequence needed for the convergence in $d_{2,0}$ to be the norm of the limiting linear form $X_t$ defined by the moments. 
\begin{proof} First, the set is obviously closed in using Lemma \ref{quadraticsemicont} for the lower semicontinuity of involved quadratic functions.
Since $K_{g,2}\subset K_{g}$, 
one can argue as in \cite[lemma 2.1]{BCG} to check the set is precompact for $d_{0,0}$. Similarly as in their proof, the family of maps $(t,t_1,...,t_n)\mapsto \tau ((4i\frac{u_t^l+1}{u_t^l-1})^*u_{t_1}^{l_1}\cdots u_{t_n}^{l_n}), \tau\in K_{g,2}\cap \Gamma_L$ is equicontinuous and pointwise bounded, thus the result follows from Arz
el\`a-Ascoli theorem. This gives precompactness for $d_{1,0}$ (in using that the relation for $\tau ((4i\frac{u_t^l+1}{u_t^l-1})^*(u_t^l-1)^*u_{t_1}^{l_1}\cdots u_{t_n}^{l_n})$ insures that the maps corresponding to a limit point state has the same interpretation in terms of law of $u_t^l$ and computed in each GNS representation). 
\end{proof} 

From the proof of their result, 
one  readily deduces (since $\Upsilon_N$ does not appear in the sets above):

\begin{lemma}[Lemma 5.4 in \cite{BCG}]\label{Exptight}
For any sequence $\Upsilon_N\in \mathcal{U}(M_N(\C))^{\mu\nu}$, $\widehat{\sigma}^N_{\Upsilon_N}$ is exponentially tight in $(\mathcal{T}^c_{2,0}(\mathcal{F}^m_{[0,1]}*\mathcal{F}^\nu_{\mu}),d_{1,0}).$ since :
\[ \limsup_{L\to \infty}\limsup_{N\to \infty}\frac{1}{N^2}\log P\left(\widehat{\sigma}^N_{\Upsilon_N}\in (K_{L\sqrt{.},2}\cap \Gamma_L)^c\right)=-\infty.\]
\end{lemma}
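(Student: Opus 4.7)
The first and crucial observation I would make is that the sets $K_{g,2}$ and $\Gamma_L$ are defined purely in terms of the time-indexed generators $u_t^l$ coming from hermitian Brownian motion; the extra unitary generators $u_j^i$ for $j\in\{k+2,\dots,k+\mu+1\}$ (into which $\Upsilon_N$ is substituted) do not appear anywhere in their definitions. Consequently the event $\{\widehat{\sigma}^N_{\Upsilon_N}\in (K_{L\sqrt{.},2}\cap\Gamma_L)^c\}$ coincides exactly with $\{\widehat{\sigma}^N\in (K_{L\sqrt{.},2}\cap\Gamma_L)^c\}$ and the deterministic unitaries $\Upsilon_N$ drop out. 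Thus it suffices to prove the estimate for $\widehat{\sigma}^N=\tau_{H^N}$ alone, which is exactly BCG's Lemma~5.4 (slightly refined, since we need both the $\Gamma_L$ part and the improved modulus $g(x)=L\sqrt{x}$).

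Next, I would translate the abstract constraints into concrete matrix quantities via the Cayley transform identity $4i(u_t^l+1)/(u_t^l-1)=X_t^l$ with $X_t^l=B_t^l/\sqrt{N}$. Under $\widehat{\sigma}^N$ the constraint $\tau\in\Gamma_L$ becomes $\tfrac{1}{N}\mathrm{Tr}((X_t^l)^2)\leq L$ for all $t\in[0,1]$ and $l=1,\dots,m$, while $\tau\in K_{L\sqrt{.},2}$ becomes $\tfrac{1}{N}\mathrm{Tr}((X_t^l-X_s^l)^2)\leq L\sqrt{t-s}$ for all $s\leq t$. So one has to produce two exponential estimates in $N^2$:
\[
P\!\Bigl(\sup_{t\in[0,1]}\tfrac{1}{N}\mathrm{Tr}((X_t^l)^2)>L\Bigr),\qquad P\!\Bigl(\exists s<t:\tfrac{1}{N}\mathrm{Tr}((X_t^l-X_s^l)^2)>L\sqrt{t-s}\Bigr).
\]
Both rest on Gaussian concentration applied to the $1$-Lipschitz (for the normalized Hilbert–Schmidt norm) functional $M\mapsto (\tfrac{1}{N}\mathrm{Tr}(M^*M))^{1/2}$: since $B_t^l/\sqrt{N}$ has independent Gaussian entries of variance $t/N^2$ on the diagonal (and similarly off-diagonal), one obtains subgaussian deviations at rate $N^2$ around the mean, which is bounded by a universal constant times $\sqrt{t}$.

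For the suprema over $s,t\in[0,1]$ I would use a standard chaining argument: discretize $[0,1]$ into dyadic subdivisions of mesh $2^{-k}$, control the maximum over the finite dyadic grid by a union bound combined with the subgaussian estimate above, and then interpolate between dyadic levels using Doob-type maximal inequalities for the submartingale $t\mapsto \tfrac{1}{N}\mathrm{Tr}((X_t^l)^2)$ and the increments. Summing over $k\in\N$ gives a tail of the form $C\exp(-cN^2 L^2)$ for the bad event, which yields
\[
\limsup_{N\to\infty}\tfrac{1}{N^2}\log P\bigl(\widehat{\sigma}^N\in(K_{L\sqrt{.},2}\cap\Gamma_L)^c\bigr)\leq -c(L),
\]
with $c(L)\to\infty$ as $L\to\infty$. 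The main technical obstacle is the uniformity over the continuous time parameter $t$; this is precisely handled by the chaining/martingale argument, and is exactly the content of \cite{BCG} — the only extra work here is noting that the modulus can be taken as $L\sqrt{\cdot}$ (rather than only something vanishing at $0$), which follows from the Hölder $1/2$ regularity inherent in Brownian increments and is already available in the proof given there. Since the preceding paragraph already explains why the $\Upsilon_N$ do not interfere, no new argument is needed beyond invoking their lemma.
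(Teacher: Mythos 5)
Your proposal is correct and follows the same route as the paper: the key observation that $\Upsilon_N$ does not appear in the definitions of $K_{L\sqrt{\cdot},2}$ and $\Gamma_L$, so the event is the same as for $\widehat{\sigma}^N$ alone, after which one invokes (a minor strengthening of) BCG's Lemma~5.4. The paper states exactly this and leaves the Gaussian concentration and chaining to the cited reference; your extra sketch of that mechanism is a reasonable elaboration, with the only slip being the claimed entry variance $t/N^2$ (it should be $t/N$, but this does not affect the stated $N^2$ concentration rate).
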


\subsection{Concentration of measure for Random matrices}
\label{Concentration}
We will need two kinds of concentration of measure results, one for the lower bound (usual LDP upper bound) and one for the upper bound, the first one being mostly used to obtain an appropriate set up to be able to apply the second one.

The first result uses convexity of a potential and Brascamp-Lieb inequality. It was first used in the proof of \cite[Theorem 3.4]{GuionnetMaurelSegala}. We follow their method and only give the proof for the reader's convenience. Following the probabilistic tradition, we state the almost sure result, but the interesting bound for us with our use of ultraproducts techniques is the uniform integrability like bound \eqref{IntegralUnifBound}. In a second part, we also include a concentration property coming from logarithmic Sobolev inequality (see e.g. \cite[Th 4.4.17]{AGZ}).

For brevity, we write $\mathcal{C}_{k}^m=\C\langle X_1^1,...,X_1^m,X_2^1,...,X_k^m\rangle$ the algebra of non-commutative polynomials in selfadjoint variables and $$\mathcal{C}_{k,\mu}^{m,\nu}=\C\langle X_1^1,...,X_1^m,X_2^1,...,X_k^m,  u_1^1,...,u_1^\nu,u_2^1,...,u_\mu^\nu\rangle,$$ the algebra of non-commutative polynomials in the same selfadjoint variables and supplementary unitary variables.

\begin{proposition}\label{ConcentrationNorm}Let $\Upsilon_N\in \mathcal{U}(M_N(\C))^{\mu\nu}$ a sequence of unitary matrices.
Let $g\in \mathcal{E}(  \mathcal{T}_{2,0}(\mathcal{F}^m_{k}*\mathcal{F}^\nu_{\mu}),d_{2,0} )$ 
if $\mu\nu=0$ or $g\in \mathcal{E}^{1,1}_{app}(  \mathcal{T}_{2,0}(\mathcal{F}^m_{k}*\mathcal{F}^\nu_{\mu}),d_{2,0} )$ 
 if $\mu\nu\neq0.$
Let $\mathbf{t}=(0<t_1<t_2<...<t_k)$. Consider the probability on $(M_N(\C)_{sa})^{km}$ given (for some normalization constant $Z_{g,\mathbf{t},N}$) by :
$$\mu_{g,\mathbf{t},N}(dx)=\mu_{g,\mathbf{t},N}(dx)=\frac{1}{Z_{g,\mathbf{t},N}}e^{-N^2g(\tau_{x,\Upsilon_N})-N^2g_{2,\mathbf{t}}(\tau_{x})}dLeb_{(M_N(\C)_{sa})^{km}}(dx)$$
 Let $A_{1}^N,..., A_{k}^N=(A_{k,1}^N,...,A_{k,m}^N)$ of law $\mu_{g,\mathbf{t},N}$ (on a same probability space), we have a constant $C>0$ such that a.s.: $$\limsup _{N\to \infty}\max_i||A_{i}^N||_\infty\leq C,$$
 and for $K\in\N^*$\begin{equation}\label{IntegralUnifBound}
 \limsup _{N\to \infty}E_{\mu_{g,\mathbf{t},N}}(1_{\{||A_{i,l}^N||_\infty\geq C\}}\frac{1}{N}Tr(((A_{i,l}^N)^{2K}))=0.
 \end{equation}
Moreover, 
for any non-commutative polynomial $P\in \mathcal{C}_{k,\mu}^{m,\nu}\otimes_{alg}\mathcal{C}_{k,\mu}^{m,\nu}$
 $$\lim_{N\to \infty}\left|E_{\mu_{g,\mathbf{t},N}}(\frac{1}{N^2}(Tr\otimes Tr)(P(A_1,...,A_k))-\frac{1}{N^2}\left[(E_{\mu_{g,\mathbf{t},N}}\circ Tr)\otimes (E_{\mu_{g,\mathbf{t},N}}\circ Tr)\right](P)\right|=0 .$$
  \end{proposition}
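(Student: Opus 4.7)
The plan is to reduce everything to strict log-concavity of $\mu_{g,\mathbf{t},N}$ and apply the Brascamp--Lieb/LSI package. Writing $\phi_N(x) = N^2 g(\tau_{x,\Upsilon_N}) + N^2 g_{2,\mathbf{t}}(\tau_x)$ on $(M_N(\C)_{sa})^{km}$ equipped with the Hilbert--Schmidt inner product, the first term is convex in $x$ (matricial convexity of $g$, uniform in $\Upsilon_N$) and the second is a strictly convex quadratic: since $4i(u_j^l+1)/(u_j^l-1) = x_{t_j}^l$,
\[ N^2 g_{2,\mathbf{t}}(\tau_x) = \frac{N}{2}\sum_{l=1}^m\Bigl[\frac{Tr((x_{t_1}^l)^2)}{t_1} + \sum_{L=2}^k\frac{Tr((x_{t_L}^l - x_{t_{L-1}}^l)^2)}{t_L - t_{L-1}}\Bigr], \]
whose Hessian in HS structure is $N$ times the inverse of the Brownian-motion covariance matrix $(t_i\wedge t_j)_{i,j}$, tensorised over the HS directions. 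Hence $\nabla^2 \phi_N \geq \alpha_{\mathbf{t}} N \cdot \mathrm{Id}$ for some $\alpha_{\mathbf{t}}>0$ depending only on $\mathbf{t}$, and Bakry--Émery produces a logarithmic Sobolev (hence Poincaré) inequality for $\mu_{g,\mathbf{t},N}$ with constant $O(1/N)$, uniform in $N$ and $\Upsilon_N$.

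Next, following the approach of \cite[Theorem 3.4]{GuionnetMaurelSegala}, combining this Hessian bound with Brascamp--Lieb moment comparison to the Gaussian reference $e^{-N^2 g_{2,\mathbf{t}}}/Z$ (using the subquadraticity of $g$ to control the perturbation of the partition function) yields the uniform moment estimate
\[ \limsup_N E_\mu\bigl[\tfrac{1}{N}Tr((A_{i,l}^N)^{2K})\bigr] \leq t_i^K\, C_K, \]
with $C_K$ the $K$-th Catalan number. Applying Herbst's argument to the $1$-Lipschitz (in HS norm) functional $x \mapsto \|x_{t_i}^l\|_\infty$ (Lipschitz since $\|X\|_\infty \leq \|X\|_{HS}$) gives the sub-Gaussian tail
\[ P\bigl(\bigl|\|A_{i,l}^N\|_\infty - E\|A_{i,l}^N\|_\infty\bigr| > s\bigr) \leq 2\exp(-\alpha_{\mathbf{t}} N s^2/4), \]
and combining this with $\|A\|_\infty \leq (Tr(A^{2K}))^{1/(2K)}$ for $K = K(N) \to \infty$ slowly (e.g.\ $K \sim (\log N)^2$) shows the median and mean of $\|A_{i,l}^N\|_\infty$ exceed $2\sqrt{t_k}$ only by $o(1)$. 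Picking $C$ strictly larger yields $P(\|A_{i,l}^N\|_\infty \geq C) \leq 2\exp(-cN)$, summable in $N$, hence Borel--Cantelli gives the a.s.\ limsup bound. The uniform integrability \eqref{IntegralUnifBound} then follows from Cauchy--Schwarz:
\[ E\bigl[1_{\{\|A_{i,l}^N\|_\infty \geq C\}}\tfrac{1}{N}Tr((A_{i,l}^N)^{2K})\bigr] \leq \sqrt{P(\|A_{i,l}^N\|_\infty \geq C)}\,\sqrt{E[(\tfrac{1}{N}Tr((A_{i,l}^N)^{2K}))^2]}, \]
the first factor exponentially small, the second uniformly bounded by the Brascamp--Lieb argument applied to $\tfrac{1}{N}Tr(x^{4K})$ (which dominates $(\tfrac{1}{N}Tr(x^{2K}))^2$ via trace Cauchy--Schwarz).

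For the factorization of $\tfrac{1}{N^2}(Tr \otimes Tr)$, multilinearity reduces to $P = P_1 \otimes P_2$, so the claim becomes $\mathrm{Cov}_\mu(\tfrac{1}{N}Tr(P_1(A)), \tfrac{1}{N}Tr(P_2(A))) \to 0$. The Poincaré inequality gives
\[ \mathrm{Var}_\mu\bigl(\tfrac{1}{N}Tr(P_j(A))\bigr) \leq \tfrac{2}{\alpha_{\mathbf{t}} N}\, E_\mu\bigl[\|\nabla\tfrac{1}{N}Tr(P_j(A))\|_{HS}^2\bigr]; \]
on $\{\max_{i,l}\|A_{i,l}^N\|_\infty\leq C\}$ the non-commutative cyclic gradients of $\tfrac{1}{N}Tr(P_j)$ have matrix entries uniformly bounded in operator norm, hence $O(\sqrt{N})$ in HS norm, making $\|\nabla\|_{HS}^2 = O(1/N)$; the complementary event contributes negligibly by the exponential tail and uniform $L^2$ moment bounds. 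Each variance is therefore $O(1/N^2)$, and Cauchy--Schwarz on the covariance finishes. The main technical obstacle is the uniform Brascamp--Lieb moment bound: establishing it with constants independent of $N$ and $\Upsilon_N$ requires simultaneously exploiting convexity and subquadratic growth of $g$ to control the partition function $Z_{g,\mathbf{t},N}$, and this uniformity is what propagates cleanly through every subsequent step.
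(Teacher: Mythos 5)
There is a genuine gap in your argument: you never bound $\|E_{\mu}[A_{i,l}^N]\|_\infty$. The Brascamp--Lieb inequality (H\"arg\'e's theorem \cite{Harge}) compares \emph{centered} moments: for $f$ convex it gives $E_{\mu_{g,\mathbf{t},N}}[f(A - E[A])] \le E_{\mu_{0,\mathbf{t},N}}[f(A)]$. So your "uniform moment estimate" actually reads
\[ E_\mu\Bigl[\tfrac{1}{N}Tr\bigl((A_{i,l}^N - E_\mu[A_{i,l}^N])^{2K}\bigr)\Bigr] \le C\,4^K t_i^K, \]
which controls $\|A_{i,l}^N - E_\mu[A_{i,l}^N]\|_\infty$, not $\|A_{i,l}^N\|_\infty$. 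The decomposition $A = (A - E[A]) + E[A]$ leaves the matrix mean $E[A_{i,l}^N]$ completely uncontrolled, and this is where the hard work in the paper lives. When $\mu\nu = 0$, the paper uses unitary invariance of $\mu_{g,\mathbf{t},N}$ to reduce $E[A_{i,l}^N]$ to a scalar multiple of $\mathrm{Id}_N$ and bounds the scalar $E[\tfrac{1}{N}Tr(A_{i,l}^N)]$ via a Jensen--Markov argument using subquadraticity of $g$ to control $Z_{g,\mathbf{t},N}$. When $\mu\nu \neq 0$, unitary invariance fails, and the paper invokes the extra operator-norm structure built into $\mathcal{E}^{1,1}_{app}$ (the approximation of $\nabla g_\tau$ by bounded polynomials plus a nonnegative linear term) together with an integration-by-parts / score-function argument and the Schwarz inequality for completely positive maps $\|E(A)\|^2 \le \|E(A^2)\|$. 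Your proposal does not engage with this case at all, and the hypothesis $g\in\mathcal{E}^{1,1}_{app}$ for $\mu\nu\neq 0$ would play no role in your argument, which is a sign something is being skipped.

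A secondary issue: you invoke Bakry--\'Emery to produce LSI, but that criterion (as usually stated, e.g.\ \cite[Th 4.4.17]{AGZ}) requires the potential to be $C^2$. In the case $\mu\nu=0$ the function $g\in\mathcal{E}$ is only assumed convex and continuous, so the Hessian $\nabla^2\phi_N$ need not exist pointwise. The paper works around this by using \cite[Prop.~3.1]{BobkovLedoux}, which derives LSI from the Brunn--Minkowski / Pr\'ekopa--Leindler approach and only needs convexity plus a uniformly strictly convex quadratic reference, without any $C^2$ hypothesis. Your Poincar\'e-inequality step for the covariance estimate is otherwise in the same spirit as the paper's (and the Cauchy--Schwarz route to \eqref{IntegralUnifBound} is a fine alternative to the paper's H\"older-inequality argument), but both rest on the missing control of $E[A_{i,l}^N]$ and on the $C^2$-free LSI, neither of which you establish.
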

\begin{proof}
We follow the beginning of the proof of \cite[Theorem 3.4]{GuionnetMaurelSegala}. By Brascamp-Lieb inequality \cite[theorem 1.1]{Harge} (and \cite[Th 2 p 709]{Soshnikov} for the second inequality), we have (if $E(A_{i,l}^N)= E_{\mu_{g,\mathbf{t},N}}(A_{i,l}^N)$ entrywise) $$\mu_{g,\mathbf{t},N}(\frac{1}{N}Tr((A_{i,l}^N-E(A_{i,l}^N))^{2k}))\leq \mu_{0,\mathbf{t},N}(\frac{1}{N}Tr((A_{i,l}^N)^{2k})))\leq C 4^k t_i^{k},\quad k= \sqrt{N},$$
and for $k\geq K$ (using also in the second line \cite[Th 7.5]{MingoNica} and Jensen's inequality):
\begin{align}\label{BLunifinteg}\begin{split}\mu_{g,\mathbf{t},N}&\left(\left(\frac{1}{N}Tr((A_{i,l}^N-E(A_{i,l}^N))^{2k})\right)^{1+K/k}\right)\leq \mu_{0,\mathbf{t},N}\left(\left(\frac{1}{N}Tr((A_{i,l}^N)^{2k}))\right)^{1+K/k}\right)\\&\leq\left[ \mu_{0,\mathbf{t},N}\left(\left(\frac{1}{N}Tr((A_{i,l}^N)^{2k}))\right)^{2}\right)\right]^{1/2+K/2k}\leq C_K 4^{k+K} t_i^{k+K}.\end{split}\end{align}
And thus by Markov inequality, one gets: 
\begin{align*}\mu_{g,\mathbf{t},N}(||A_{i,l}^N-E(A_{i,l}^N)||_\infty\geq 3 \sqrt{t_i})&\leq \mu_{g,\mathbf{t},N}(\frac{1}{N}Tr((A_{i,l}^N-E(A_{i,l}^N))^{2\sqrt{N}})\geq \frac{1}{N}(3 \sqrt{t_i})^{2\sqrt{N}})
\\&\leq CN\left(\frac{2}{3}\right)^{2\sqrt{N}}.\end{align*}
Moreover, we also have the uniform integrability type bound in using the same Markov inequality type argument for $N$ large enough:
\begin{align*}&E_{\mu_{g,\mathbf{t},N}}(1_{\{||A_{i,l}^N-E(A_{i,l}^N)||_\infty\geq 3 \sqrt{t_i}\}}\frac{1}{N}Tr(((A_{i,l}^N)^{2K}))\\&\leq E_{\mu_{g,\mathbf{t},N}}(1_{\{\frac{1}{N}Tr((A_{i,l}^N-E(A_{i,l}^N))^{2\sqrt{N}})\geq \frac{1}{N}(3 t_i)^{2\sqrt{N}}\}}\left(\frac{2^K}{N}Tr(((A_{i,l}^N-E(A_{i,l}^N))^{2K})+\frac{2^K}{N}Tr((E(A_{i,l}^N))^{2K})\right))
\\&\leq \frac{2^K}{N}Tr((E(A_{i,l}^N))^{2K})CN\left(\frac{2}{3}\right)^{2\sqrt{N}}\\&+\frac{2^KN}{(3 t_i)^{2\sqrt{N}}}E_{\mu_{g,\mathbf{t},N}}\left(\frac{1}{N}Tr((A_{i,l}^N-E(A_{i,l}^N))^{2\sqrt{N}})\frac{1}{N}Tr((A_{i,l}^N-E(A_{i,l}^N))^{2K})\right)
\end{align*}
\begin{align*}&E_{\mu_{g,\mathbf{t},N}}(1_{\{||A_{i,l}^N-E(A_{i,l}^N)||_\infty\geq 3 \sqrt{t_i}\}}\frac{1}{N}Tr(((A_{i,l}^N)^{2K}))
\\&\leq \frac{2^K}{N}Tr((E(A_{i,l}^N))^{2K})CN\left(\frac{2}{3}\right)^{2\sqrt{N}}\\&+\frac{2^KN}{(3 t_i)^{2\sqrt{N}}}E_{\mu_{g,\mathbf{t},N}}\left(\left(\frac{1}{N}Tr((A_{i,l}^N-E(A_{i,l}^N))^{2\sqrt{N}})\right)^{1+K/\sqrt{N}}\right)
\\&\leq \frac{2^K}{N}Tr((E(A_{i,l}^N))^{2K})CN\left(\frac{2}{3}\right)^{2\sqrt{N}}+\frac{2^KN}{(3 \sqrt{t_i})^{2\sqrt{N}}}C_K 4^{\sqrt{N}+K} t_i^{\sqrt{N}+K}.
\end{align*}
where the next-to-last inequality comes from H\"older inequality for the normalized trace and the last inequality comes from \eqref{BLunifinteg}.

We estimate $E(A_{i,l}^N)$ in different ways depending on whether $\mu\nu=0$ or not. The case $\mu\nu=0$ is similar to \cite[Theorem 3.4]{GuionnetMaurelSegala} since in this case we can use unitary invariance to get $E(A_{i,l}^N)=E(\frac{1}{N}Tr(A_{i,l}^N))Id_N$ so that $||E(A_{i,l}^N)||_\infty=|E(\frac{1}{N}Tr(A_{i,l}^N))|.$

First note that from the subquadratic growth condition \eqref{subquadorder0}, applied to $g$, 
 one deduces \begin{align*}\frac{Z_{g,\mathbf{t}}}{Z_{0,\mathbf{t}}}&\geq \frac{1}{Z_{0,\mathbf{t}}}\int e^{-N^2(C (1+ \sum_{l=1}^m\sum_{j=1}^k \tau_{x} ((x^l_j)^*x^l_j)-N^2g_{2,\mathbf{t}}(\tau_{x})
 }dLeb_{(M_N(\C)_{sa})^{km}}(dx)
 \\&\geq \exp\left(-\int [N^2(C (1+ \sum_{l=1}^m\sum_{j=1}^k \tau_{x} ((x^l_j)^*x^l_j)))
 ]\frac{1}{Z_{0,\mathbf{t}}} e^{-N^2g_{2,\mathbf{t}}(\tau_{x})}dLeb_{(M_N(\C)_{sa})^{km}}(dx)\right)\\&\geq e^{-N^2C(1+m\sum_{j=1}^kt_j)
 }=:e^{-N^2D}\end{align*}
where the second inequality comes from Jensen's inequality (and $m_k$ is the $2k$-th moment of a standard gaussian).
Now using that $g$ is bounded below by $\sup(-g)$
, one deduces from Markov's inequality for any $y>0,\lambda>0$ :
\begin{align*}&\mu_{g,\mathbf{t},N}(|\frac{1}{N}Tr(A_{i,l}^N)|\geq y)=
\mu_{g,\mathbf{t},N}(e^{\lambda N^2|\frac{1}{N}Tr(A_{i,l}^N)|}\geq e^{\lambda N^2y})
\\&\leq e^{-\lambda N^2y+(D-\sup(-g))N^2}\frac{1}{Z_{0,\mathbf{t}}}\int e^{\lambda N^2|\frac{1}{N}Tr(A_{i,l}^N)|-N^2g_{2,\mathbf{t}}(\tau_{x})}dLeb_{(M_N(\C)_{sa})^{km}}(dx)
\\&\leq  e^{-\lambda N^2y+(D-\sup(-g))N^2}\frac{1}{Z_{0,\mathbf{t}}}\int e^{-\lambda N^2\frac{1}{N}Tr(A_{i,l}^N)-N^2g_{2,\mathbf{t}}(\tau_{x})}dLeb_{(M_N(\C)_{sa})^{km}}(dx)
\\&+  e^{-\lambda N^2y+(D-\sup(-g))N^2}\frac{1}{Z_{0,\mathbf{t}}}\int e^{\lambda N^2\frac{1}{N}Tr(A_{i,l}^N)-N^2g_{2,\mathbf{t}}(\tau_{x})}dLeb_{(M_N(\C)_{sa})^{km}}(dx)
\\&\leq  2e^{-\lambda N^2y+(D-\sup(-g))N^2+\frac{N^2}{2}\lambda^2t_i}
\end{align*}Thus optimizing in $\lambda=y/t_i$, one gets a bound by $2e^{AN^2-\frac{y^2N^2}{2t_i}}$ for $A=(D-\sup(-g))$. It is pertinent to cut integrals at $y=\sqrt{2t_i A}$ in order to get:
$$\mu_{g,\mathbf{t},N}(|\frac{1}{N}Tr(A_{i,l}^N)|)\leq \sqrt{2t_i A}+2e^{AN^2}\int_{\sqrt{2t_i A}}^\infty dy e^{-\frac{\sqrt{A}N^2y}{\sqrt{2t_i}}}
 \leq\sqrt{2t_i }\left(\sqrt{A}+2\frac{1}{\sqrt{A}}\right).$$
 From Borel-Cantelli's lemma, this concludes to the almost sure statement with $C=\sqrt{2t_i }\left(\sqrt{A}+2\frac{1}{\sqrt{A}}+3\right).$ The same $C$ works for the second statement.
 
We now treat the case $\mu\nu\neq 0$ where a different bound is needed for $E(A_{i,l}^N)$ since the sequence $\Upsilon_N$ prevents unitary invariance of the model. In that case, since the entry-wise expectation $E$ is the trace preserving conditional to $M_N(\C)$, thus a completely bounded map, the first use Schwarz inequality for completely positive maps (cf e.g. \cite[Prop 3.3]{Paulsen}, we have the operator inequality $E(A_{i,l}^N)^*E(A_{i,l}^N)\leq E(A_{i,l}^{N*}A_{i,l}^N)$, and thus from selfadjointness $||E(A_{i,l}^N)||^2\leq ||E((A_{i,l}^N)^2)||.$

Recall that by the definition of $\mathcal{E}^{1,1}_{app}$ that in our case $\mu\nu\neq 0$, we know that the operator norm bound (in any finite von Neumann algebra)
$$\left\|\nabla_{X_l^{(I)}}g_\tau(u)(X) -X_l^{(I)}g_{l,I}(\tau_{X,u})\right\| \leq M+1,$$
with $g_{l,I}(\tau_{X,u})\geq 0$.
We will use this in conjunction with the fact that the score function of $\mu_{g,\mathbf{t},N}$ with gradient in variable $A_{i,l}$ is $$\Xi_{i,l}=-N \nabla_{X_i^{(l)}}g_\tau(\Upsilon_N)(A^N)-N\frac{1}{t_{i}-t_{i-1}}(A_{i,l}^N-A_{i-1,l}^N)+N\frac{1}{t_{i+1}-t_{i}}(A_{i+1,l}^N-A_{i,l}^N)$$ with $t_0=0, t_{k+1}=\infty$, $A_{0,l}^N=A_{N+1,l}^N=0$.
 Let us define the block tridiagonal matrix $\Theta(X,u)$ with blocks $\Theta_{i,l}^{j,k}$ all zero (i.e. $\Theta_{i,l}^{I,L}=0$ if $L\neq l$ or $|I-i|>1$) but $\Theta_{i,l}^{i\pm 1,l}(X,u)=\frac{1}{|t_{i\pm 1}-t_{i}|}$ and $\Theta_{i,l}^{i,l}(X,u)=-\frac{1}{t_{i}-t_{i-1}}-\frac{1}{t_{i+1}-t_{i}}-g_{i,l}(\tau_{X,u})$ so that (uniformly in $A^N$):
 $$\left\|\frac{1}{N}\Xi_{i,l}-\sum_{I,L}\Theta_{i,l}^{I,L}(A^N, \Upsilon_N)A_{I,L}^N\right\|\leq M+1.$$
Let us call  $B_{i,l}^N=\frac{1}{N}\Xi_{i,l}-\sum_{I,L}\Theta_{i,l}^{I,L}(A^N, \Upsilon_N)A_{I,L}^N.$
 
Note that $-\Theta(A^N, \Upsilon_N)\in M_{km}(\R)$ is symmetric positive and for $\tau=\max(t_1,t_2-t_1,...,t_{k}-t_{k-1})$\begin{align*}-\sum_{i,l,I,L}
 &\Theta_{i,l}^{I,L}(A^N, \Upsilon_N)\lambda_{i,l}\lambda_{I,L}=\sum_{i=1}^k\sum_{l=1}^m\lambda_{i,l}^2g_{i,l}(\tau_{A^N,\Upsilon_N})\\&-\sum_{i=1}^{k-1}\sum_{l=1}^m\frac{1}{(t_{i+ 1}-t_{i})}\lambda_{i,l}(\lambda_{i+ 1,l}-\lambda_{i,l})-\sum_{i=2}^{k}\sum_{l=1}^m\frac{1}{(t_{i}-t_{i-1})}\lambda_{i,l}(\lambda_{i- 1,l}-\lambda_{i,l})+\frac{1}{t_{1}}\sum_{l=1}^m\lambda_{1,l}^2
 \\&=\sum_{i=1}^k\sum_{l=1}^m\lambda_{i,l}^2g_{i,l}(\tau_{A^N,\Upsilon_N})+\sum_{i=1}^{k-1}\sum_{l=1}^m\frac{1}{(t_{i+ 1}-t_{i})}(\lambda_{i+ 1,l}-\lambda_{i,l})^2+\frac{1}{t_{1}}\sum_{l=1}^m\lambda_{1,l}^2
\\& \geq
\frac{1}{\tau}\sum_{l=1}^m\left(\frac{3}{4}\lambda_{1,l}^2+\sum_{i=2}^{k-1}(\frac{1}{2i+1}-\frac{1}{2i+2})\lambda_{i,l}^2+\frac{1}{2k+1}\lambda_{k,l}^2\right)\geq \frac{1}{\tau 2k(2k+1)}\sum_{l=1}^m\sum_{i=1}^{k}\lambda_{i,l}^2.
 \end{align*}(We used in the next-to-last ineaquality the elementary bound $(\lambda_{i+ 1,l}-\lambda_{i,l})^2\geq \frac{1}{2i+3}\lambda_{i+1,l}^2-\frac{1}{2i+2}\lambda_{i,l}^2$. 
Thus we have a uniform operator bound  $(-\Theta(A^N, \Upsilon_N))^{-1}\leq2k(2k+1)\tau $. 

One bounds in introducing conjugate variables to reduce by their definition the degree of potentiallly unbounded terms:
\begin{align*}||E(A_{\iota,\lambda}^NA_{\iota,\lambda}^N)||&=||\sum_{j,L,i,l}E((-\Theta(A^N, \Upsilon_N))^{-1}_{(\iota,\lambda),(j,L)}(-\Theta(A^N, \Upsilon_N))_{(j,L),(i,l)}A_{i,l}^NA_{\iota,\lambda}^N)||
\\&=||\sum_{j,L}E((-\Theta(A^N, \Upsilon_N))^{-1}_{(\iota,\lambda),(j,L)}(-\frac{1}{N}\Xi_{j,L}^N+B_{j,L}^N)A_{\iota,\lambda}^N)||
\\&\leq \sum_{j,L}||E(A_{\iota,\lambda}^N\frac{1}{N}\Xi_{j,L}^{N*}|(-\Theta(A^N, \Upsilon_N))^{-1}_{(\iota,\lambda),(j,L)}|^2\frac{1}{N}\Xi_{j,L}^NA_{\iota,\lambda}^N)||^{1/2}
\\&+ \sum_{j,L}||E(A_{\iota,\lambda}^N(B_{j,L}^N)^*|(-\Theta(A^N, \Upsilon_N))^{-1}_{(\iota,\lambda),(j,L)}|^2B_{j,L}^NA_{\iota,\lambda}^N)||^{1/2}
\\&\leq \tau(2k)(2k+1)\sum_{j,L}||E(A_{\iota,\lambda}^N\frac{1}{N}\Xi_{j,L}^{N*}\frac{1}{N}\Xi_{j,L}^NA_{\iota,\lambda}^N)||^{1/2}
\\&+ \tau(2k)(2k+1)(M+1)\sum_{j,L}||E(A_{\iota,\lambda}^NA_{\iota,\lambda}^N)||^{1/2}
\end{align*}
It remains to compute the first expectation in using integration by parts defining score functions several times:
\begin{align*}\\&E((A_{\iota,\lambda}^N\frac{1}{N}\Xi_{j,L}^{N*}\frac{1}{N}\Xi_{j,L}^NA_{\iota,\lambda}^N)_{ae})
=\sum_{b,c,d=1}^NE(\frac{1}{N}(\Xi_{j,L}^{N})_{bc}\frac{1}{N}(\Xi_{j,L}^N)_{cd}(A_{\iota,\lambda}^N)_{de}(A_{\iota,\lambda}^N)_{ab})
\\&=-\sum_{b,c,d=1}^N1_{j=\iota,L=\lambda,b=e,d=c}E(\frac{1}{N^2}(\Xi_{j,L}^N)_{cc}(A_{\iota,\lambda}^N)_{ab})
-\sum_{b,c,d,e=1}^N1_{j=\iota,L=\lambda,a=c}E(\frac{1}{N^2}(\Xi_{j,L}^N)_{cd}(A_{\iota,\lambda}^N)_{de})
\\&+\frac{1}{N}\sum_{b,c,d=1}^NE(Tr\left(e_{1,c}d_{X_j^{(L)}}{\nabla_{X_j^{(L)}}}g_\tau(\Upsilon_N)(A^N).(e_{cb})e_{d1}+\frac{1_{b=d}}{(t_{i}-t_{i-1})N}+\frac{1_{b=d}}{N(t_{i+1}-t_{i})}\right)
(A_{\iota,\lambda}^N)_{de}(A_{\iota,\lambda}^N)_{ab})\\&=(1+\frac{1}{N^2})1_{a=e}+\left(\frac{1}{t_{i}-t_{i-1}}+\frac{1}{t_{i+1}-t_{i}}\right)E(((A_{\iota,\lambda}^N)^2)_{ae})\\&+\frac{1}{N}\sum_{b,c,d=1}^NE(d^2_{X_j^{(L)},X_j^{(L)}}g_\tau(\Upsilon_N)(A^N).(e_{cb},e_{dc})
(A_{\iota,\lambda}^N)_{de}(A_{\iota,\lambda}^N)_{ab})
\end{align*}

Let us write the matrix $B_c(A^N)=(d^2_{X_j^{(L)},X_j^{(L)}}g_\tau(\Upsilon_N)(A^N).(e_{cb},e_{dc}))_{bd}$ and note that $||B_c(A^N)||\leq C$ with the constant in \eqref{C110} (and in using also proposition \ref{C11}).
 We thus have the bound \begin{align*}&||E((A_{\iota,\lambda}^N\frac{1}{N}\Xi_{j,L}^{N*}\frac{1}{N}\Xi_{j,L}^NA_{\iota,\lambda}^N))||
\\&\leq(1+\frac{1}{N^2})+\left(\frac{1}{t_{i}-t_{i-1}}+\frac{1}{t_{i+1}-t_{i}}\right)||E((A_{\iota,\lambda}^N)^2)||+\frac{1}{N}\sum_{c=1}^N||E(
A_{\iota,\lambda}^NB_cA_{\iota,\lambda}^N)||
\\&\leq2+\left(\frac{1}{t_{i}-t_{i-1}}+\frac{1}{t_{i+1}-t_{i}}+C\right)||E((A_{\iota,\lambda}^N)^2)||
\end{align*}
Combining our estimates, we obtained:
\begin{align*}||E(A_{\iota,\lambda}^NA_{\iota,\lambda}^N)||&\leq \tau(2k)(2k+1)km\left(\sqrt{2}+\left(\frac{2}{\tau}+C\right)^{1/2}||E(A_{\iota,\lambda}^NA_{\iota,\lambda}^N)||^{1/2}\right)
\\&+ \tau(2k)(2k+1)(M+1)km||E(A_{\iota,\lambda}^NA_{\iota,\lambda}^N)||^{1/2}
\\||E(A_{\iota,\lambda}^NA_{\iota,\lambda}^N)||&\leq D':=2(\tau(2k)(2k+1)km)^2(M+1+\left(\frac{2}{\tau}+C\right)^{1/2})^2+2\sqrt{2}\tau(2k)(2k+1)km.
\end{align*}
As before in the first case, from Borel-Cantelli lemma, this concludes with the bound $C=\sqrt{D'}+\sqrt{9t_i}.$

 For the supplementary statement, it clearly suffices to check for any non-commutative polynomial $Q\in \mathcal{C}_{k,\mu}^{m,\nu}$ :
 \begin{equation}\label{ConcentrationLogSob}\lim_{N\to \infty}\frac{1}{N}E_{\mu_{g,\mathbf{t},N}}\left(|Tr(Q)-(E_{\mu_{g,\mathbf{t},N}}\circ Tr)(Q)|
\right)=0.\end{equation}
If $x\mapsto g(\tau_{x,\upsilon_N})$ were $C^2$ on any matrix spaces (this is in our assumption only if $\mu\nu\neq 0$), we could deduce that from \cite[Theorem 4.4.17]{AGZ} which is based on Bakry-Emery criterion. Instead, we use \cite[Proposition 3.1]{BobkovLedoux} which only uses convexity. Let $c=\max(t_1,t_2-t_1,...,t_{k}-t_{k-1})>0$, then $x\mapsto N^2g_{2,\mathbf{t}}(\tau_{x})$ has second derivative bounded below by $N/c>0$ and thus satisfy (3.1) in \cite{BobkovLedoux} (with euclidean norm), thus adding a convex potential so does the potential for $\mu_{g,\mathbf{t},N}$. Thus, from their proposition 3.1, $\mu_{g,\mathbf{t},N}$ satisfies logarithmic Sobolev inequality with constant $c/N$ and thus the Poincar\'e inequality with constant $m=N/c$, (in the sense of \cite[Definition 4.4.2]{AGZ}, see their \cite[(3.5)]{BobkovLedoux} ), namely if $H_{P,N}=\frac{1}{N}Tr(Q)$:
$$E_{\mu_{g,\mathbf{t},N}}\left(\left|H_{P,N}-E_{\mu_{g,\mathbf{t},N}}(H_{P,N})\right|^2\right)\leq \frac{c}{N^2}E\left(\sum_{i,l}\frac{1}{N}Tr((\mathcal{D}_i^lP)^*(\mathcal{D}_i^lP)\right)$$
and since from our previous result the written expectation has bounded $\limsup$, using $C$ as almost sure bound of our variables, one gets the result.
\end{proof}

Our second concentration result is a variant adapted to our context of \cite[lemma 6.1]{BianeD}. Recall that we call $\mathcal{S}_R^m$ the convex set of tracial states on the universal $C^*$ algebra free product $\bigstar_{i=1}^nC^0([-R,R]).$ The key fact at the basis of our concentration result is that $\mathcal{S}_R^m$ is a Poulsen Simplex  in the sense of \cite{L}. This property has been proved in \cite[Corollary 5]{Dab08} using free entropy techniques.

We will need a variant for another simplex. We call $\mathcal{S}_R^m*\mathcal{T}(\mathcal{F}^\nu_{\mu})$ the convex set of tracial states on the universal free product $\bigstar_{i=1}^nC^0([-R,R])\star \mathcal{F}^\nu_{\mu}.$ Mixing the quoted result with the unitary variant (similar to \cite[lemma 5.2 and Th 5.3]{DDM}) one obtains:

\begin{lemma}\label{Poulsen}
If $m+\mu\nu\geq 2, m\geq 1$, then $\mathcal{S}_R^m*\mathcal{T}(\mathcal{F}^\nu_{\mu})$ is the Poulsen Simplex.
\end{lemma}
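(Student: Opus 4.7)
The plan is the standard two-step strategy for establishing the Poulsen simplex property: verify the Choquet simplex structure, then show density of the extreme points.

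First, the Choquet simplex property is automatic and requires no new ideas: the tracial state space of any unital $C^*$-algebra is a Choquet simplex, since the normal traces on the universal enveloping $W^*$-algebra are identified, via the center-valued trace, with (a base of) the positive cone of an $L^1$-space over the center. Thus it remains only to prove the density of the extreme points, which in a tracial state space are precisely the factorial traces (traces whose GNS representation yields a factor).

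For density of factorial traces, the strategy is to extend the proof of \cite[Corollary 5]{Dab08} from the purely self-adjoint case $\mathcal{S}_R^m$ ($m\geq 2$) to include the unitary variables, following the unitary approximation techniques of \cite[Lemma 5.2, Th 5.3]{DDM}. Given $\tau \in \mathcal{S}_R^m * \mathcal{T}(\mathcal{F}^\nu_\mu)$ with GNS data $(X_1,\ldots,X_m,u_1,\ldots,u_{\mu\nu})$ in some $(M,\tau)$, one perturbs the bounded self-adjoints by a small free semicircular family: set $Y_i^\epsilon = X_i + \epsilon S_i$ with $(S_i)$ free from $M$. For $\epsilon>0$ the perturbed tuple has positive free Fisher information, so the joint system $(Y_i^\epsilon,u_j)$ admits joint matrix microstates $(X_i^{N,\epsilon},u_j^{N,\epsilon}) \in (M_N(\C)_{sa})^m \times \mathcal{U}(N)^{\mu\nu}$ (with $\|X_i^{N,\epsilon}\|\leq R+o(1)$) whose joint $*$-moments converge as $N\to\infty$ to those of $(Y_i^\epsilon,u_j)$. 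Since $M_N(\C)$ is a (type $\mathrm{I}_N$) factor, the induced trace on $\bigstar_{i=1}^m C^0([-R,R])\star \mathcal{F}^\nu_\mu$ is factorial, and truncation of the spectra of $X_i^{N,\epsilon}$ to $[-R,R]$ produces a factorial trace in $\mathcal{S}_R^m * \mathcal{T}(\mathcal{F}^\nu_\mu)$ itself. A standard diagonal extraction in $(N,\epsilon)$ then gives a sequence of factor traces converging weakly-$*$ to $\tau$.

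The main obstacle is the joint matrix-microstate approximation for the perturbed mixed self-adjoint/unitary system: neither Dab08's self-adjoint argument nor DDM's unitary argument applies directly, and one must check that the semicircular perturbation on the self-adjoint side remains compatible with the unitary-microstate scheme, i.e.\ that finite (perturbed) free entropy/Fisher information together with unitary joint moments still admits simultaneous matrix approximants. The hypothesis $m\geq 1$ is exactly what enables the semicircular perturbation (requiring at least one self-adjoint direction), while $m+\mu\nu\geq 2$ provides the second free generator that prevents the microstate construction from collapsing into an abelian or otherwise non-factorial limit. Under these two conditions the combined construction goes through and yields the required density of factorial traces.
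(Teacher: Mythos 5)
Your proposal shares the paper's opening move (only density of extreme points needs checking; perturb by a small free semicircular family, scaled so the variables stay in the $R$-ball), but then takes a fundamentally different — and, as written, broken — route. You invoke the perturbed Fisher information to produce \emph{joint matrix microstates} for $(Y_i^\epsilon,u_j)$, and then use factoriality of $M_N(\mathbb{C})$ to obtain factorial approximating traces. The problem is that finite free Fisher information does \emph{not} imply the existence of matricial microstates: that implication is essentially the Connes embedding problem. The lemma concerns the full tracial state space $\mathcal{S}_R^m * \mathcal{T}(\mathcal{F}^\nu_\mu)$ of a universal $C^*$-algebra, not just states on $R^\omega$, so the starting $\tau$ has no reason to be hyperlinear; you therefore cannot get the microstate sequence your argument needs.

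The paper sidesteps this entirely by staying at the von Neumann algebra level: after the bounded semicircular perturbation $Y_{i,t}=\frac{R(X_i+tS_i)}{R+2t}$, one has $\Phi^*(Y_{1,t},\ldots,Y_{m,t}:W^*(u_1^1,\ldots,u_\mu^\nu))<\infty$ by \cite{V5}, and then invokes \cite[Th.~4]{Dab08} to conclude directly that $W^*(Y_{1,t},\ldots,Y_{m,t},u_1^1,\ldots,u_\mu^\nu)$ is a factor. No microstates, no embeddability, no diagonal extraction. This is both the correct and the shorter path, and it is what you should cite instead of reaching for matrix approximants. You also do not handle the case $m=1$: when there is only one self-adjoint generator the paper must additionally approximate $u_1^1$ by a diffuse variable $v_t$ before appealing to the same factoriality criterion with $Y_{1,t},v_t,u_2^1,\ldots,u_\mu^\nu$; you gesture at $m+\mu\nu\geq 2$ but never say how the argument actually closes when $m=1$.
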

\begin{proof}
The only potentially non-well known statement to check is that the extreme points are dense. 
If $\mu\nu=0$ use \cite[Corollary 5]{Dab08}, thus assume $ \mu\nu\geq 1$. For $X_1,...,X_m,u_1^1,...,u_\mu^\nu$ variables in the GNS representation of a state. If $m\geq 2$ consider $Y_{i,t}=\frac{R(X_i+tS_i)}{R+2t}$, with $S_i$ free semicircular variables free from $X_1,...,X_m,u_1^1,...,u_\mu^\nu$, as in \cite[Corollary 5]{Dab08}, then use \cite{V5} to get $\Phi^*(Y_{1,t},...,Y_{m,t}:W^*(u_1^1,...,u_\mu^\nu))<\infty$ then since $Y_{2,t}$ has finite entropy \cite{V5}, it is diffuse, and using \cite[Th 4]{Dab08}, $W^*(Y_{1,t},...,Y_{m,t},u_1^1,...,u_\mu^\nu)$ is a factor thus correspond to an extremal state in $\mathcal{S}_R^m*\mathcal{T}(\mathcal{F}^\nu_{\mu})$. If $m=1$, approximate $u_1^1$ by a diffuse random variable $v_t$ and conclude in the same way with $Y_{1,t},v_t,u_2^1,...,u_\mu^\nu.$
\end{proof}

 We then quote a variant of \cite[Corollary 5.4]{BianeD}.

The only change is that we consider for $\tau\in \mathcal{S}_R^m*\mathcal{T}(\mathcal{F}^\nu_{\mu})$ another neighbourhood basis of the weak-* topology. We call $U_{\epsilon, K}(\tau)$ the set of tracial states $\sigma$ such that for all $k\leq K,$ $((j_1,i_1),...,(j_m,i_m)) \in ([\![1,m]\!]\times \{0\}\cup\{1,...,\mu\}\times \{1,...,\nu\})^k$ $(\epsilon_1,...,\epsilon_m) \in ( \{-1,1\})^k$, we have $$|(\sigma-\tau)((u_{j_1}^{i_1})^{\epsilon_1}...(u_{j_k}^{i_k})^{\epsilon_k})|\leq \epsilon,$$
where $u_j^0=u(X_j)$ is obtained from the canonical variable $X_j=X_j(\tau), j=1,...,m$ in the GNS representation of $L^2(\tau)$ as in subsection \ref{prelimStates}, and $u(\tau)=(u_j^i)_{(j,i)\in\{2,...,\mu+1\}\times \{1,...,\nu\} }$ the corresponding unitary variable in the GNS representation. 
We define $V_{\epsilon, K}(\tau)$ as in \cite{BianeD} in considering instead ordinary monomials in variables $X_j$ and $(u_j^i)_{(j,i)\in\{1,...,\mu\}\times \{1,...,\nu\} }$ of order less than $K$.

This defines a map $X(\tau)$ and recall we also defined $\tau_X$ as a tracial state on $\mathcal{F}^m_1*\mathcal{T}(\mathcal{F}^\nu_{\mu})$ in subsection \ref{prelimStates}. Note that the map $\tau\mapsto \tau_{X(\tau),u(\tau)}$ induces a homeomorphism for the weak-* topology to the topology given by $d_{0,0}$ or $d_{2,0}$ which is equivalent on the image since $\frac{1}{u_j-1}=\frac{1}{(u_j+1)-2}$ has a power expansion since $||u_j+1||\leq 2 \frac{R}{\sqrt{R^2+16}} <2$. Note that the homeomorphism statement has a similar proof, the topology of $d_{2,0}$ is clearly  weaker than the image of weak-* topology since $u_j\in \bigstar_{i=1}^nC^0([-R,R])$ and stronger in reasoning as above. 

We thus deduce from the same proof as \cite[Corollary 5.4]{BianeD}:

\begin{lemma}\label{sep}
Let $\tau$ be an extremal state in $\mathcal S_R^n*\mathcal{T}(\mathcal{F}^\nu_{\mu})$ with $m+\mu\nu\geq 2, m\geq 1$, and $\epsilon>0$. For any $\eta>0$, there exists a self adjoint polynomial 
$$Q_{\eta}\in\Bbb C\langle u(X_1),u(X_1)^*,\ldots, u(X_n),u(X_n)^*, (u_j^i)_{(j,i)\in\{2,...,\mu+1\}\times \{1,...,\nu\} }\rangle$$ 
such that for every $\sigma\in \mathcal S_R^n$ one has
$$\tau(Q_{\eta})>\sigma(Q_{\eta})-\eta$$ 
and for all $\sigma\notin V_{\epsilon,K}(\tau)$ (resp. $\sigma\notin U_{\epsilon,K}(\tau)$) one
has $$\sigma(Q_{\eta})<\tau(Q_{\eta})-1.$$
\end{lemma}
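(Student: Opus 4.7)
My plan is to deduce the lemma from the proof of \cite[Corollary 5.4]{BianeD} essentially verbatim, replacing the role of the original Poulsen simplex $\mathcal{S}_R^m$ by the enlarged one $K:=\mathcal{S}_R^m*\mathcal{T}(\mathcal{F}^\nu_\mu)$ afforded by Lemma \ref{Poulsen}. To begin with, I would note that $K$ is a metrizable compact convex set, since the enveloping $C^*$-algebra $\bigstar_{i=1}^m C^0([-R,R])\star\mathcal{F}^\nu_\mu$ is separable, and that by Lemma \ref{Poulsen} it is a Poulsen simplex under the hypotheses $m\geq1$ and $m+\mu\nu\geq2$.

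The core construction of \cite{BianeD} produces, given the extreme point $\tau$ and a weak-* open neighborhood $W$ of $\tau$, a continuous self-adjoint element $Q\in \bigstar_{i=1}^m C^0([-R,R])\star\mathcal{F}^\nu_\mu$ such that the continuous affine functional $\sigma\mapsto\sigma(Q)$ is nearly maximized at $\tau$ (within $\eta/2$) and drops by at least $3/2$ outside $W$. The only input there is the metrizable Poulsen simplex structure and the extremality of $\tau$; nothing in the argument cares about whether the algebra is a free product of commutative function algebras or includes extra unitary tensor factors, so the construction carries over without change.

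To match the statement as formulated in terms of a polynomial $Q_\eta$ rather than an arbitrary $C^*$-algebra element, I would approximate $Q$ uniformly within $\eta/4$ by a $*$-polynomial in the unitary generators $u(X_i),u(X_i)^*,(u_j^i)^{\pm 1}$; this is legitimate since $u$ is a Cayley-type transform of $X_i$ with continuous inverse on $[-R,R]$, so such polynomials are uniformly dense in the enveloping $C^*$-algebra. Symmetrizing by $\tfrac{1}{2}(Q+Q^*)$ yields a self-adjoint polynomial, and the small uniform error is absorbed into the slack built into the $\eta/2$ and $3/2$ bounds to give the clean separations $\eta$ and $1$ of the statement.

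Finally, to handle both neighborhood systems $V_{\epsilon,K}(\tau)$ and $U_{\epsilon,K}(\tau)$ simultaneously, I would appeal to the homeomorphism $\tau\mapsto \tau_{X(\tau),u(\tau)}$ discussed just above the lemma: both $V_{\epsilon,K}(\tau)$ and $U_{\epsilon,K}(\tau)$ are genuine weak-* open neighborhoods of $\tau$ in $K$, so the construction above applies equally well with $W=V_{\epsilon,K}(\tau)$ or $W=U_{\epsilon,K}(\tau)$. The only real technical content is the quantitative step from \cite{BianeD} exploiting the Poulsen simplex structure, and this is the main obstacle only to the extent of verifying it is insensitive to the enlargement of the underlying algebra; no other adaptation is needed once Lemma \ref{Poulsen} is in hand.
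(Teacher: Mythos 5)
Your proposal matches the paper's own treatment, which is simply to invoke "the same proof as \cite[Corollary 5.4]{BianeD}" once Lemma \ref{Poulsen} establishes that $\mathcal S_R^m\star\mathcal{T}(\mathcal{F}^\nu_\mu)$ is a (metrizable, compact) Poulsen simplex, together with the observation just above the lemma that $\tau\mapsto\tau_{X(\tau),u(\tau)}$ is a homeomorphism making both $V_{\epsilon,K}(\tau)$ and $U_{\epsilon,K}(\tau)$ genuine weak-* neighborhoods. Your additional remarks on uniform density of $*$-polynomials in the unitary generators and on absorbing the approximation error into the slack make explicit what the paper leaves implicit, but the argument is the same.
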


We deduce the concentration of measure in the form we need it.

\begin{proposition}\label{ConcentrationPoulsen}
If $\tau$ is an extremal state in $\mathcal S_R^m\star\mathcal{T}(\mathcal{F}^\nu_{\mu})$ with $m+\mu\nu\geq 2, m\geq 1$, then for any $\epsilon>0$ there exists $\eta>0$, such that for all $N\in \N^*$, for any probability measure $\mu$ law of $X\in(M_N(\C))_{sa}^n$ supported on the ball of operator norm $R$ and $\Upsilon_N\in \mathcal{U}(M_N(\C))^{\mu\nu}$, if 
$ d_{2,0}(E_\mu \circ \tau_{.,\Upsilon_N}, \tau_{X(\tau),u(\tau)})\leq \eta$ then $$ E_\mu\left(d_{2,0}(\tau_{.,\Upsilon_N}, \tau_{X(\tau),u(\tau)})\right)\leq \epsilon.$$

\end{proposition}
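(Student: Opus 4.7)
The plan is to mimic \cite[Corollary 5.4]{BianeD}, using the separating polynomial from Lemma \ref{sep} as the essential tool. Fix $\epsilon>0$. Using the homeomorphism between weak-$*$ topology and the $d_{2,0}$-topology noted just before Lemma \ref{sep}, I first locate $K\in\N$ and $\epsilon_0>0$ such that $U_{\epsilon_0,K}(\tau)$ is contained in the open $d_{2,0}$-ball of radius $\epsilon/3$ around $\tau_{X(\tau),u(\tau)}$.

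For an auxiliary parameter $\eta'>0$ to be fixed at the end, I then invoke Lemma \ref{sep} with parameters $(\epsilon_0,K,\eta')$ to produce a self-adjoint polynomial $Q=Q_{\eta'}$, depending only on $\epsilon,\tau,\eta'$ (and in particular not on $N$ or $\mu$), satisfying $\sigma(Q)\leq\tau(Q)+\eta'$ for every $\sigma\in \mathcal{S}_R^m\star\mathcal{T}(\mathcal{F}^\nu_\mu)$ and $\sigma(Q)<\tau(Q)-1$ whenever $\sigma\notin U_{\epsilon_0,K}(\tau)$. Setting $A=\{x:\tau_{x,\Upsilon_N}\in U_{\epsilon_0,K}(\tau)\}$ and integrating the pointwise bound against $\mu$ yields
$$(E_\mu\circ\tau_{.,\Upsilon_N})(Q)\leq (\tau(Q)+\eta')\mu(A)+(\tau(Q)-1)\mu(A^c)\leq \tau(Q)+\eta'-\mu(A^c),$$
hence $\mu(A^c)\leq \tau(Q)+\eta'-(E_\mu\circ\tau_{.,\Upsilon_N})(Q)$.

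The key comparison is that since $Q$ is a fixed finite linear combination of monomials, the $d_{2,0}$-hypothesis forces
$$|(E_\mu\circ\tau_{.,\Upsilon_N})(Q)-\tau(Q)|\leq C(Q)\,d_{2,0}(E_\mu\circ\tau_{.,\Upsilon_N},\tau_{X(\tau),u(\tau)})\leq C(Q)\eta,$$
with $C(Q)$ depending only on $Q$: each degree-$k$ monomial contributes at most $2^{k}\,d_{0,0}\leq 2^{k}\,d_{2,0}$ because the $2^{-k}$-weight in the definition of $d_{0,0}$ must be cancelled. Taking $\eta$ small enough that $C(Q)\eta\leq\eta'$ then gives $\mu(A^c)\leq 2\eta'$.

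Finally, since $\|x\|\leq R$ and $\Upsilon_N$ consists of unitaries, $d_{2,0}(\tau_{x,\Upsilon_N},\tau_{X(\tau),u(\tau)})$ is bounded by a uniform constant $D=D(R)$, so splitting the expectation over $A$ and $A^c$ yields
$$E_\mu(d_{2,0}(\tau_{.,\Upsilon_N},\tau_{X(\tau),u(\tau)}))\leq \tfrac{\epsilon}{3}\mu(A)+D\mu(A^c)\leq \tfrac{\epsilon}{3}+2D\eta',$$
and the choice $\eta'\leq\epsilon/(3D)$ closes the argument. The only genuinely conceptual step is guaranteeing the existence of the separating polynomial, which rests on the Poulsen simplex structure (Lemma \ref{Poulsen}) together with the equivalence of topologies on the relevant image; everything that follows is a routine Markov-type concentration bound, and uniformity in $N$ and $\mu$ is automatic because $Q$ depends only on $\epsilon$ and $\tau$.
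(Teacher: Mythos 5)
Your proof is correct and takes essentially the same route as the paper: both use the separating polynomial from Lemma \ref{sep} together with a Markov-type concentration bound, exactly as in \cite[Lemma 6.1]{BianeD}. Your version spells out in full the details the paper compresses into a citation, and you avoid the paper's preliminary reduction from $d_{2,0}$ to $d_{0,0}$ by directly enclosing $U_{\epsilon_0,K}(\tau)$ in a $d_{2,0}$-ball via the topology equivalence, which is a harmless presentational variant.
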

\begin{proof}
Since $d_{2,0}$ and $d_{0,0}$ give equivalent topologies on the image of $\mathcal S_R^m\star\mathcal{T}(\mathcal{F}^\nu_{\mu})$ by $\tau_{X(.),u(.)}$ as explained before, it is easy to see that it suffices to prove the corresponding statement for $d_{0,0}$ instead of $d_{2,0}$.
Since the distance $d_{0,0}$ is bounded by $1$, it suffices to prove that with probability greater than $1-\epsilon/2$ we have $d(\tau_{.,\Upsilon_N}, \tau_{X(\tau),u(\tau)})\leq \epsilon /2$ and for that, it suffices to bound sufficiently many moments with probability greater than $1-\epsilon/2$. 
The conclusion follows from our previous lemma as in \cite[lemma 6.1]{BianeD}.
\end{proof}

\subsection{A lipschitzness criterion for directional derivatives, Subdifferentials and Lax-Hopf-Yosida semigroup\label{Yosida}} 
 
 It is well-known (see e.g. \cite{FS}) that it is easier to estimate difference quotients of optimal control problems than derivatives. This is why the relation between regularity bounds of difference quotients (or so-called higher order modulus of continuity) and derivatives will be crucial for us. A first idea would be to use a general kind of spaces between H\"older-Zygmund spaces and Besov spaces, the so-called Nikol'ski\u{\i} Spaces
 , but most of the available results are strongly dimension dependent. Looking for a dimension independent result, we will rather rely on the following well-known result in convex analysis \cite{Plazanet}. 
 In recent terminology, a function both para-convex and para-concave is G\^ateaux differentiable with Lipschitz derivative (especially $f$ is $C^1$ and thus Fr\'echet-differentiable). Of course, as usual, for $f:H\to \R$, we look at $df:H\to H'\simeq H$ so that we write $\nabla f(x)\in H$ the vector corresponding to $df=\langle\nabla f,.\rangle.$
 
\begin{proposition}\label{C11}[2.2.1 in \cite{Plazanet}]
Let $f: H\to \R$ be a function on a Hilbert space $H$ with $\alpha>0$ such that both $\frac{\alpha}{2}||.||^2-f$ and $\frac{\alpha}{2}||.||^2+f$ are convex, then $f$ is G\^ateaux differentiable on $H$ and we have \begin{equation}
\label{GradLip}||\nabla f(x)-\nabla f(y)||\leq \alpha ||x-y||.
\end{equation}
\end{proposition}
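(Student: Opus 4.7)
The plan is to exploit the two convex auxiliary functions $g := \frac{\alpha}{2}\|\cdot\|^2 + f$ and $h := \frac{\alpha}{2}\|\cdot\|^2 - f$ provided directly by the hypothesis; they satisfy $g+h = \alpha\|\cdot\|^2$. A short convex-analytic argument first produces G\^ateaux differentiability of $f$ together with a quadratic approximation bound, and then a Baillon--Haddad-type manipulation applied to $g$ delivers the sharp Lipschitz constant $\alpha$ for $\nabla f$.

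I would first establish G\^ateaux differentiability. Since $g$ and $h$ are convex and continuous on $H$, the subdifferentials $\partial g(x)$ and $\partial h(x)$ are non-empty at every $x$. Rewriting the subgradient inequalities of $g$ and $h$ in terms of $f$, each $\tilde u \in \partial g(x)$ and $\tilde v \in \partial h(x)$ give vectors $u := \tilde u - \alpha x$ and $v := \alpha x - \tilde v$ satisfying
\[
f(y) \geq f(x) + \langle u, y-x\rangle - \tfrac{\alpha}{2}\|y-x\|^2, \qquad f(y) \leq f(x) + \langle v, y-x\rangle + \tfrac{\alpha}{2}\|y-x\|^2
\]
for every $y \in H$. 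Subtracting these inequalities and specialising to $y = x + t(u-v)$ with $t \to 0^+$ forces $u = v$, so $\partial g(x)$ and $\partial h(x)$ are singletons and $f$ admits a G\^ateaux derivative $\nabla f(x)$ with $\nabla g(x) = \alpha x + \nabla f(x)$. Combining the two bounds yields the quadratic approximation
\[
|f(y) - f(x) - \langle \nabla f(x), y-x\rangle| \leq \tfrac{\alpha}{2}\|y-x\|^2, \qquad x, y \in H.
\]

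The Lipschitz bound then follows from a Baillon--Haddad-type computation applied to $g$. The convexity of $h$ rewrites, via $g+h = \alpha\|\cdot\|^2$, as the descent inequality $g(y) \leq g(x) + \langle \nabla g(x), y-x\rangle + \alpha\|y-x\|^2$ for all $x, y$. Fixing $x$ and letting $\psi(y) := g(y) - \langle \nabla g(x), y\rangle$, the function $\psi$ is convex, is minimised at $x$ (as $\nabla \psi(x)=0$), and inherits the same descent inequality. Evaluating the descent at an arbitrary $y$ with the choice $z := y - \frac{1}{2\alpha}\nabla\psi(y)$ gives $\psi(z) \leq \psi(y) - \frac{1}{4\alpha}\|\nabla g(y) - \nabla g(x)\|^2$, while the minimality gives $\psi(z) \geq \psi(x)$, hence $\psi(y) - \psi(x) \geq \frac{1}{4\alpha}\|\nabla g(y) - \nabla g(x)\|^2$; summing with its $x \leftrightarrow y$ symmetric version produces the cocoercivity estimate
\[
\langle \nabla g(y) - \nabla g(x), y-x\rangle \geq \tfrac{1}{2\alpha}\|\nabla g(y) - \nabla g(x)\|^2.
\]
Substituting $\nabla g = \alpha\,\mathrm{Id} + \nabla f$ and expanding both sides, the cross terms $\langle \nabla f(y) - \nabla f(x), y-x\rangle$ cancel, leaving $\frac{\alpha}{2}\|y-x\|^2 \geq \frac{1}{2\alpha}\|\nabla f(y) - \nabla f(x)\|^2$, which is exactly \eqref{GradLip}. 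The only slightly delicate point is the preliminary proof of G\^ateaux differentiability, needed before the Baillon--Haddad manipulation makes sense; the remaining steps are a mechanical computation.
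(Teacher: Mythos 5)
The paper does not give its own proof of Proposition \ref{C11}; it is quoted verbatim from Plazanet (and Hiriart-Urruty--Plazanet) and used as a black box, so there is no argument in the paper to compare against. Your proof is correct and self-contained, and it is essentially the standard modern route to this fact. The first step (nonempty subdifferentials of $g=\tfrac{\alpha}{2}\|\cdot\|^2+f$ and $h=\tfrac{\alpha}{2}\|\cdot\|^2-f$, forcing $u=v$ along the ray $y=x+t(u-v)$, hence singleton subdifferentials and hence G\^ateaux differentiability of a continuous convex function) is clean. The second step is the Baillon--Haddad trick: the descent bound $g(y)\le g(x)+\langle\nabla g(x),y-x\rangle+\alpha\|y-x\|^2$ (which you correctly extract from convexity of $h$) gives, via the auxiliary $\psi(y)=g(y)-\langle\nabla g(x),y\rangle$ and the minimizing substitution $z=y-\tfrac{1}{2\alpha}\nabla\psi(y)$, the cocoercivity inequality $\langle\nabla g(y)-\nabla g(x),y-x\rangle\ge\tfrac{1}{2\alpha}\|\nabla g(y)-\nabla g(x)\|^2$; after substituting $\nabla g=\alpha\,\mathrm{Id}+\nabla f$ and expanding, the cross terms really do cancel and one is left with $\|\nabla f(y)-\nabla f(x)\|\le\alpha\|y-x\|$. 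This is exactly the sharp constant, and you correctly observe that the naive combination of the $2\alpha$-Lipschitz bound on $\nabla g$ with $\nabla f=\nabla g-\alpha\,\mathrm{Id}$ would only give $3\alpha$, so the cocoercivity cancellation is genuinely needed.

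One small point worth making explicit: your opening sentence assumes $g$ and $h$ are \emph{continuous}, which does not follow formally from the bare hypothesis of the proposition on an infinite-dimensional $H$ (a discontinuous linear functional $\phi$ gives $f=\phi$, $g$ and $h$ convex but discontinuous, with $\partial g(x)=\emptyset$ and no Riesz vector $\nabla f(x)$). This is harmless here --- the result is only ever applied in the paper to continuous functions, and the cited Plazanet statement presumably carries the same convention --- but strictly speaking continuity (or local boundedness above) of $g,h$ should be listed among the hypotheses before invoking nonemptiness of the subdifferentials.
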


Even if the previous result is the most crucial one for us, we will also use 
an infinite dimensional local version written in a slightly more general context in  \cite[Thm 4]{Rolewicz} (see also \cite[Thm 6.1]{JTZ} for a generalization to more general uniform continuity classes of the gradient and the proof there for the exact constants in the result below).

\begin{proposition}\label{C1alphaBall}
Let $H$ be a normed space and $x\in H$, $\delta>0$.

\begin{enumerate}
\item If a lipschitz function $f: B(x,\delta)\to \R$ is $(1+\alpha)$-paraconvex and $(1+\alpha)$-paraconcave on $B(x,\delta)$ with constant $C\geq 0$, i.e. for all $t\in ]0,1[,y,z\in B(x,\delta)$:
\begin{equation}
\label{alphaparaconvex}f(ty+(1-t)z)\leq tf(y)+(1-t)f(z)+Ct(1-t)||y-z||^{1+\alpha}\end{equation}
and similarly with $f$ replaced by $-f$, 
  then $f$ is G\^ateaux differentiable on $B(x,\delta)$ and we have for all $y,z\in B(x,\frac{\delta}{4})$ \begin{equation}
\label{GradLipBall}||D_G f(z) -  D_G f(y)||_{H'}\leq 2^{\alpha+2}C||z-y||^\alpha.
\end{equation}
\item Conversely, if $f$ is Fr\'echet differentiable on $B(x,\delta)$ and for all $y,z\in B(x,\delta)$ :$$||D_G f(z) -  D_G f(y)||_{H'}\leq C||z-y||^\alpha,$$
then $f$ is $(1+\alpha)$-paraconvex and $(1+\alpha)$-paraconcave on $B(x,\delta)$ with constant $2C.$
\end{enumerate}
\end{proposition}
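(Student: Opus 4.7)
The plan is to treat both parts via the first-order remainder $\rho(y,z):=f(y)-f(z)-D_G f(z)(y-z)$.

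For Part (2), the fundamental theorem of calculus applied along the segment from $z$ to $y$ combined with the $\alpha$-H\"older bound on $Df$ gives $|\rho(y,z)|\leq \frac{C}{1+\alpha}\|y-z\|^{1+\alpha}$. Applying this bound at the midpoint $m=ty+(1-t)z$ against $y$ and against $z$, weighting by $t$ and $1-t$, and summing makes the linear term in $Df(m)$ vanish since $t(y-m)+(1-t)(z-m)=0$, so
\begin{align*}
|tf(y)+(1-t)f(z)-f(m)|&\leq t|\rho(y,m)|+(1-t)|\rho(z,m)|\\
&\leq\tfrac{C}{1+\alpha}\bigl(t(1-t)^{1+\alpha}+(1-t)t^{1+\alpha}\bigr)\|y-z\|^{1+\alpha}\\
&\leq 2C\,t(1-t)\|y-z\|^{1+\alpha},
\end{align*}
using $t^\alpha+(1-t)^\alpha\leq 2$ and $\alpha\leq 1$. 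The paraconcave inequality follows identically.

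For Part (1), I would first establish existence of one-sided directional derivatives. Fix $z\in B(x,\delta)$ and $v\in H$; restricting the paraconvex and paraconcave inequalities to the segment $s\mapsto z+sv$ yields a two-sided inequality with the same exponent for the one-variable function $g(s):=f(z+sv)$. A Cauchy-type comparison of the difference quotients $[g(s)-g(0)]/s$ at scales $t_1>t_2>0$, using the paraconvex/paraconcave estimate with convex-combination parameter $t_2/t_1$, gives $\lim_{s\to 0^+}[g(s)-g(0)]/s=D_G f(z)(v)$ and the one-dimensional remainder bound. Linearity of $v\mapsto D_G f(z)(v)$ is then obtained from the two-sided remainder inequalities by the classical paraconvexity arguments of \cite[Thm 4]{Rolewicz}, after which the bound $|\rho(y,z)|\leq C\|y-z\|^{1+\alpha}$ for all $y,z\in B(x,\delta)$ follows by passing to the limit in the two-sided paraconvex/paraconcave inequalities.

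With this Taylor-type bound in hand, the key computation for the gradient estimate goes as follows. Fix $y,z\in B(x,\delta/4)$ and unit $v\in H$, and set $t:=\|y-z\|\leq\delta/2$; then $\|y+tv-x\|\leq 3\delta/4<\delta$, so $y+tv$ still lies in $B(x,\delta)$ where the remainder bound applies. Expressing $f(y+tv)-f(y)$ in two ways: directly at base point $y$ it equals $tD_G f(y)(v)+O(t^{1+\alpha})$, and as the difference of the expansions of $f(y+tv)$ and $f(y)$ at base point $z$ it equals $tD_G f(z)(v)+O((\|y-z\|+t)^{1+\alpha})+O(\|y-z\|^{1+\alpha})$. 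Subtracting yields
\[t\bigl|(D_G f(y)-D_G f(z))(v)\bigr|\leq C\bigl((\|y-z\|+t)^{1+\alpha}+\|y-z\|^{1+\alpha}+t^{1+\alpha}\bigr),\]
and setting $t=\|y-z\|$ gives $|(D_G f(y)-D_G f(z))(v)|\leq C(2^{1+\alpha}+2)\|y-z\|^\alpha\leq 2^{\alpha+2}C\|y-z\|^\alpha$, which is \eqref{GradLipBall}. I expect the linearity of $v\mapsto D_G f(z)(v)$ to be the main obstacle: it cannot be obtained by naive additivity of difference-quotient limits, because the error term in the paraconvex inequality depends on the displacement rather than being controlled by a uniform quadratic upper bound, which is precisely why the quadratic analogue Proposition \ref{C11} borrowed from \cite{Plazanet} admits a much shorter proof.
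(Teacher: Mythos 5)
Your proposal is correct in its essential computations, and it is worth noting that the paper itself does not give a proof of this proposition: it cites \cite[Thm~4]{Rolewicz} and \cite[Thm~6.1]{JTZ} for the statement and the constants. So your blind proof sketch is more detailed than what appears in the paper.

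Your Part (2) argument is complete. Your Part (1) skeleton is also sound: the one-dimensional Cauchy comparison of difference quotients does give existence of one-sided directional derivatives, the remainder bound $|\rho(y,z)|\leq C\|y-z\|^{1+\alpha}$ then follows by letting $t\to 0^{+}$ in both paraconvex and paraconcave inequalities, and your ``two expansions at different base points'' trick with $t=\|y-z\|$ correctly produces $2^{1+\alpha}+2\leq 2^{\alpha+2}$. One remark: your expectation that linearity of $v\mapsto D_Gf(z)(v)$ is the main obstacle, and that it cannot be handled directly, is overstated. The same paraconvex/paraconcave pair applied at the midpoint with $t=1/2$ delivers it. Oddness: applying both inequalities to $y=z+sv$, $z'=z-sv$ gives $|f(z+sv)+f(z-sv)-2f(z)|\leq C2^{\alpha}s^{1+\alpha}\|v\|^{1+\alpha}$, and combined with the one-sided remainder estimates this forces $\psi(v)+\psi(-v)=0$. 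Additivity: writing $z+s(u+v)=\tfrac12(z+2su)+\tfrac12(z+2sv)$ and using paraconvexity/paraconcavity with $t=1/2$ gives
\[
\Bigl|\,f(z+s(u+v))-\tfrac12 f(z+2su)-\tfrac12 f(z+2sv)\,\Bigr|\leq C\,2^{\alpha-1}s^{1+\alpha}\|u-v\|^{1+\alpha},
\]
and dividing by $s$ and letting $s\to0^+$ yields $\psi(u+v)=\psi(u)+\psi(v)$. Boundedness of $\psi$ then comes from the Lipschitz assumption. So the reference to Rolewicz is not strictly needed; all of Part (1) can be derived from the two paraconvex/paraconcave inequalities by the same kind of midpoint manipulations you already use, and the crucial point is simply that $\alpha>0$ makes the error term $o(s)$.
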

We also recall a formula for Clarke's subdifferential \cite{Clarke} $\partial_cf$ in this case. Even though we will mostly use it in the Fr\'echet differentiable case. We quote \cite[Thm 3.1, Corol 7.1] {J}

\begin{proposition}\label{Clarke}
If $f:X\to \R$ is $\gamma=1+\alpha$-paraconvex with constant $C>0,\alpha>0$ in the sense of \eqref{alphaparaconvex}, then for all $x\in X$ 
$$\partial_cf(x)=\{ x^*\in X^* : \langle x^*,h\rangle \leq f(x+h)-f(x)+C||h||^\gamma, \forall h\in X\}.$$
Moreover, in this case, $\partial_cf$ is $\gamma$-monotone, in the sense that, for all $x^*\in \partial_cf(x), u^*\in \partial_cf(u)$, then:
$$\langle x^*-u^*,u-x\rangle \leq 2C||u-x||^\gamma.$$
\end{proposition}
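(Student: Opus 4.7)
The plan is to derive the claimed formula for $\partial_c f(x)$ by translating $\gamma$-paraconvexity into a one-sided estimate on difference quotients, then use this to sandwich the Clarke generalized directional derivative $f^\circ(x; \cdot)$ against an expression involving $f(x+h) - f(x) + C\|h\|^\gamma$. The $\gamma$-monotonicity then drops out by summing the two subdifferential inequalities with $x$ and $u$ swapped. A preliminary remark: a $\gamma$-paraconvex function with $\gamma>1$ is locally Lipschitz (this is the standard convex-analytic argument using upper bounds on affine chords, essentially unchanged from the purely convex case), so Clarke's calculus applies in its classical form.

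The key estimate comes from setting $y = x$, $z = x+h$, and $s = 1-t \in (0,1)$ in the paraconvexity inequality \eqref{alphaparaconvex}, which rearranges to
$$\frac{f(x+sh) - f(x)}{s} \leq f(x+h) - f(x) + C(1-s)\|h\|^\gamma.$$
Running the same substitution at a base point $y$ near $x$ instead and taking $\limsup_{y\to x,\, s\to 0^+}$ yields the upper bound
$$f^\circ(x;h) \;\leq\; f(x+h) - f(x) + C\|h\|^\gamma.$$

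For the $(\subset)$ direction of the formula, any $x^* \in \partial_c f(x)$ satisfies $\langle x^*, h\rangle \leq f^\circ(x;h)$ by definition, which combined with the above bound gives the desired inequality. For $(\supset)$, assume $\langle x^*, h\rangle \leq f(x+h) - f(x) + C\|h\|^\gamma$ for all $h$ and substitute $h = sv$ with $s \downarrow 0$:
$$\langle x^*, v\rangle \;\leq\; \frac{f(x+sv) - f(x)}{s} + Cs^{\gamma-1}\|v\|^\gamma.$$
Since $\gamma > 1$, the error term vanishes, yielding $\langle x^*, v\rangle \leq \liminf_{s\to 0^+}(f(x+sv)-f(x))/s \leq f^\circ(x;v)$, hence $x^* \in \partial_c f(x)$.

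Finally, for the $\gamma$-monotonicity, apply the characterization twice, once at $x$ with $h = u-x$ and once at $u$ with $h = x-u$:
$$\langle x^*, u-x\rangle \leq f(u)-f(x) + C\|u-x\|^\gamma, \qquad \langle u^*, x-u\rangle \leq f(x)-f(u) + C\|u-x\|^\gamma,$$
and add. I expect the main obstacle not to be a technical difficulty but rather the verification of local Lipschitzness of $\gamma$-paraconvex functions (needed to make $f^\circ$ a bona fide Clarke directional derivative); once that is in hand, everything else is bookkeeping on the paraconvexity inequality, and the structure of the argument is parallel to the classical convex case with the $C\|h\|^\gamma$ correction absorbing the non-convexity.
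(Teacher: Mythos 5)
Your proof is correct. The paper itself does not prove this proposition---it quotes it verbatim from Jourany \cite[Thm~3.1, Cor.~7.1]{J}---and your derivation is the natural one that the cited reference follows: applying the paraconvexity inequality at a moving base point $y\to x$, $s\downarrow 0$ controls the Clarke directional derivative $f^\circ(x;h)$ from above by $f(x+h)-f(x)+C\|h\|^\gamma$, which immediately gives the inclusion $\subset$; the reverse inclusion follows from the radial substitution $h=sv$, where the $Cs^{\gamma-1}\|v\|^\gamma$ error term vanishes precisely because $\gamma>1$ so that $\langle x^*,v\rangle$ is bounded by the lower Dini derivative and hence by $f^\circ(x;v)$; and the $\gamma$-monotonicity is obtained by adding the two subgradient inequalities. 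One cautionary remark on your opening parenthetical: the implication ``$\gamma$-paraconvex with $\gamma>1$ implies locally Lipschitz'' does \emph{not} hold unconditionally in a general normed space $X$. On $\R^n$ it follows from the chord-majorization argument you allude to, but in infinite dimensions one needs a supplementary hypothesis (local boundedness above at one point, or lower semicontinuity plus completeness of $X$) to launch that argument; without local Lipschitzness, $f^\circ$ and $\partial_c f$ are not even defined. This is a tacit hypothesis inherited from [J] and is harmless in the paper's own use of the proposition---Proposition \ref{C1alphaBall}, where it is invoked, already works with $f$ Lipschitz on a ball---but it should be recorded as an assumption rather than asserted as a consequence of paraconvexity alone.
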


We finally recall basic facts from convex analysis about the Hopf-Lax-Yosida semigroup (also called Moreau envelopes). 
\begin{definition}
Given a convex lower semicontinuous function $g:H\to \R$ on a Hilbert space $H$, the Hopf-Lax-Yosida semigroup is the family $g_\lambda:H\to \R,\lambda >0$ defined by:
$$g_\lambda(x)=\inf_{y\in H}\frac{1}{2\lambda}||x-y||^2+g(y).$$
\end{definition}

The key result is the following:
\begin{proposition}\label{Yosida}
For any continuous convex function $g:H\to \R$, $g_\lambda\in C^{1,1}(H)$ and is convex  and for any $x\in H$, $g_\lambda(x)\to_{\lambda \to 0} g(x)$. In fact, $A_\lambda=\nabla g_\lambda$ is Lipschitz with constant $\frac{1}{\lambda}$, $||A_\lambda(x)||$ increases to $||A^0(x)||$ where $A^0(x)$ is the unique element of $\partial g(x)$ of minimal norm and we have the inequalities:
$$|g_\lambda(x)-g_\lambda(y)-\langle A_\lambda, x-y\rangle|\leq \frac{1}{\lambda}||x-y||^2,$$
$$||A_\lambda(x)-A^0(x)||^2\leq ||A^0(x)||^2-||A_\lambda(x)||^2.$$
Finally, for any $x\in H$ there is a unique solution $J_\lambda(x)$  such that $x-J_\lambda(x)\in\lambda \partial g(J_\lambda(x))$ and $J_\lambda:H\to H$ is a contraction such that $A_\lambda(x)=\frac{x-J_\lambda(x)}{\lambda}\in \partial g(J_\lambda(x))$ and $J_\lambda$ reaches the infimum defining $g_\lambda$.
\end{proposition}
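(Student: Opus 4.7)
The plan is to follow the classical treatment of Moreau-Yosida regularization on a Hilbert space; all pieces of the statement are standard in the theory of maximal monotone operators (see e.g. \cite{BrezisMonotone}) and I would mostly invoke it after checking the main computations.

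First I would establish existence and uniqueness of the minimizer. The functional $y \mapsto \frac{1}{2\lambda}\|x-y\|^2 + g(y)$ is strongly convex of modulus $1/\lambda$, lower semicontinuous and coercive (since $g$ is continuous convex, hence bounded below by an affine function). A minimizing sequence is thus bounded, and by extracting a weakly convergent subsequence and using weak lower semicontinuity of the functional, a minimizer $J_\lambda(x)$ exists; uniqueness follows from strict convexity. The first-order optimality condition $0 \in \partial_y\bigl[\tfrac{1}{2\lambda}\|x-y\|^2 + g(y)\bigr]_{y=J_\lambda(x)}$ gives $x - J_\lambda(x) \in \lambda\, \partial g(J_\lambda(x))$, so $A_\lambda(x) := (x - J_\lambda(x))/\lambda \in \partial g(J_\lambda(x))$ as claimed, which in particular reaches the infimum defining $g_\lambda$.

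Next I would prove that $J_\lambda$ is a contraction and deduce the Lipschitz constant $1/\lambda$ for $A_\lambda$. Monotonicity of $\partial g$ applied to $(J_\lambda(x), A_\lambda(x))$ and $(J_\lambda(y), A_\lambda(y))$ gives
\[
\langle (x - J_\lambda(x)) - (y - J_\lambda(y)),\ J_\lambda(x) - J_\lambda(y)\rangle \geq 0,
\]
equivalently $\langle x-y, J_\lambda(x) - J_\lambda(y)\rangle \geq \|J_\lambda(x) - J_\lambda(y)\|^2$. Then Cauchy--Schwarz yields $\|J_\lambda(x)-J_\lambda(y)\| \leq \|x-y\|$, and expanding $\|x-y - (J_\lambda(x)-J_\lambda(y))\|^2$ and reinjecting this same inequality gives $\|\lambda A_\lambda(x) - \lambda A_\lambda(y)\|^2 \leq \|x-y\|^2 - \|J_\lambda(x) - J_\lambda(y)\|^2 \leq \|x-y\|^2$, hence the desired Lipschitz bound.

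Third, I would obtain the differentiability of $g_\lambda$ and the quadratic comparison. Using $J_\lambda(x)$ as a test point in the definition of $g_\lambda(y)$,
\[
g_\lambda(y) - g_\lambda(x) \leq \tfrac{1}{2\lambda}\|y - J_\lambda(x)\|^2 - \tfrac{1}{2\lambda}\|x - J_\lambda(x)\|^2 = \langle A_\lambda(x), y-x\rangle + \tfrac{1}{2\lambda}\|y-x\|^2,
\]
and the symmetric estimate with roles exchanged gives the opposite lower bound. This shows $g_\lambda$ is Fréchet differentiable with $\nabla g_\lambda = A_\lambda$ and yields the stated inequality; convexity of $g_\lambda$ then follows from the Lipschitzness of $J_\lambda$ and monotonicity of $A_\lambda = (\mathrm{Id}-J_\lambda)/\lambda$ (or directly from writing $g_\lambda$ as the infimum of a jointly convex function in $(x,y)$ over $y$).

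Finally, for the pointwise convergence $g_\lambda(x) \to g(x)$ as $\lambda \to 0$, the monotonicity of $\lambda \mapsto \|A_\lambda(x)\|$ and its limit $\|A^0(x)\|$, and the comparison $\|A_\lambda(x) - A^0(x)\|^2 \leq \|A^0(x)\|^2 - \|A_\lambda(x)\|^2$, I would cite the classical theory: these are exactly the content of the Yosida-approximation theorems for the maximal monotone operator $\partial g$ (continuity of $g$ implies $x\in\mathrm{int\,dom}(\partial g)$, so $A^0(x)$ is well defined), see \cite[Ch.~II]{BrezisMonotone}. The only mild subtlety is the use of minimal-selection of $\partial g(x)$, which is justified because $\partial g(x)$ is nonempty closed convex by continuity and convexity of $g$. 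There is no real obstacle here; the work is bookkeeping, and the content is entirely standard.
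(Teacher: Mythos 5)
Your proposal is correct and follows essentially the same route as the paper, which simply cites \cite[Propositions 2.6, 2.11]{BrezisMonotone} for this standard Moreau–Yosida material. You spell out more of the intermediate computations (minimizer existence, contraction of $J_\lambda$, the quadratic comparison yielding Fréchet differentiability), but the substance — and the ultimate appeal to Brezis's monograph for the convergence statements — matches the paper.
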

The proof is contained in \cite[Propositions 2.6,2.11]{BrezisMonotone}.
Note also that from the characterization of the minimum defining $g_\lambda$, $y=J_\lambda(0)$ is such that $\frac{1}{2\lambda} ||y||_2^2+g(y)\leq g(0)$ so that 
\begin{equation}\label{boundJlambda}
||J_\lambda(0)||_2^2\leq 2\lambda (g(0)-g(J_\lambda(0))).
\end{equation}
We also prove the following regularity lemma in terms of parameters. It will be used to solve free SDEs with gradient drift coming from a convex potential of weak regularity via Yosida approximation in section 4.2.
\begin{lemma}\label{RegularityYosida}
If $g^t:H\to \R, t\in [a,b] |b-a|\leq 1$ is a family of convex $C^1$ maps uniformly bounded below by $c\leq 0$ satisfying for some $ \alpha,\beta\in ]0,1],C,D>0$ and all $x,y\in H, t,s\in [a,b]$:
$$||\nabla g^t(x)-\nabla g^s(x)||\leq |t-s|^\alpha (D||x||+C),$$
$$||\nabla g^t(x)-\nabla g^t(y)||\leq ||x-y||^\beta (D||x||+D||y||+C),$$
then we have for all $\lambda\leq 1$:
$$||\nabla g_\lambda^t(x)-\nabla g_\lambda^s(x)||\leq 2 |t-s|^{\alpha\beta}[(2D||x||_2+2D\sqrt{2 (\sup_{s\in[a,b]}g^s(0)+|c|)}+C+1)^{1+\beta}],$$
$$||\nabla g_\lambda^t(x)-\nabla g_\lambda^t(y)||\leq ||x-y||^\beta (D||x||+D||y||+2D \sqrt{2 (\sup_{s\in[a,b]}g^s(0)+|c|)}+C).$$
\end{lemma}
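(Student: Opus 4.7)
The plan is to exploit the resolvent formula from Proposition \ref{Yosida}: $\nabla g_\lambda^t(x)=\nabla g^t(J_\lambda^t(x))$, where $J_\lambda^t=(I+\lambda\partial g^t)^{-1}$ is a contraction on $H$. Both estimates will reduce to the regularity of $\nabla g^t$ evaluated at $J_\lambda^t(x)$, provided I control the norm of $J_\lambda^t(x)$ and the variation of $J_\lambda^t(x)$ in the parameter $t$. A uniform norm bound comes from \eqref{boundJlambda}: since $g^t\geq c$ and $\lambda\leq 1$, one has $\|J_\lambda^t(0)\|\leq M:=\sqrt{2(\sup_{s\in[a,b]}g^s(0)+|c|)}$, and combining with the contraction of $J_\lambda^t$ gives $\|J_\lambda^t(x)\|\leq \|x\|+M$ for every $x,t$.

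The spatial estimate is then immediate: from $\nabla g_\lambda^t(x)-\nabla g_\lambda^t(y)=\nabla g^t(J_\lambda^t(x))-\nabla g^t(J_\lambda^t(y))$, the H\"older hypothesis on $\nabla g^t$ together with $\|J_\lambda^t(x)-J_\lambda^t(y)\|\leq \|x-y\|$ and the bound $D\|J_\lambda^t(x)\|+D\|J_\lambda^t(y)\|+C\leq D\|x\|+D\|y\|+2DM+C$ produces the claimed inequality. For the temporal estimate I decompose
\[\nabla g_\lambda^t(x)-\nabla g_\lambda^s(x)=\bigl[\nabla g^t(J_\lambda^t(x))-\nabla g^s(J_\lambda^t(x))\bigr]+\bigl[\nabla g^s(J_\lambda^t(x))-\nabla g^s(J_\lambda^s(x))\bigr].\]
The first bracket is controlled by the assumed $|t-s|^\alpha$ bound applied at $J_\lambda^t(x)$. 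For the second I need to estimate $\|J_\lambda^t(x)-J_\lambda^s(x)\|$.

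To do this, I use the resolvent identities $x-J_\lambda^\tau(x)=\lambda\nabla g^\tau(J_\lambda^\tau(x))$ for $\tau=s,t$, which give
\[J_\lambda^t(x)-J_\lambda^s(x)=\lambda\bigl[\nabla g^s(J_\lambda^s(x))-\nabla g^s(J_\lambda^t(x))\bigr]+\lambda\bigl[\nabla g^s(J_\lambda^t(x))-\nabla g^t(J_\lambda^t(x))\bigr].\]
Taking the inner product with $J_\lambda^t(x)-J_\lambda^s(x)$, the monotonicity of $\nabla g^s$ (convexity of $g^s$) forces the first bracket to contribute a non-positive term; Cauchy--Schwarz on the remainder yields $\|J_\lambda^t(x)-J_\lambda^s(x)\|\leq \lambda|t-s|^\alpha(D\|J_\lambda^t(x)\|+C)$. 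Feeding this back and using $\lambda\leq 1$ together with $|t-s|\leq 1$ (so that $|t-s|^\alpha\leq |t-s|^{\alpha\beta}$), the second bracket is bounded by $|t-s|^{\alpha\beta}(D\|J_\lambda^t(x)\|+C)^\beta(D\|J_\lambda^t(x)\|+D\|J_\lambda^s(x)\|+C)$. Substituting $\|J_\lambda^\tau(x)\|\leq \|x\|+M$ and adding the first bracket contribution $|t-s|^{\alpha\beta}(D\|x\|+DM+C)$ collapses everything into two summands, each bounded by the single polynomial $(2D\|x\|+2DM+C+1)^{1+\beta}$, and the factor $2$ in the statement absorbs their sum. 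The only conceptual ingredient beyond the spatial bound is the monotonicity step that kills the first bracket; the rest is careful bookkeeping, which is really where the main effort lies.
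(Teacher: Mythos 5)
Your proof is correct and follows essentially the same strategy as the paper: both reduce everything to the representation $\nabla g_\lambda^t=\nabla g^t\circ J_\lambda^t$, control $\|J_\lambda^t(x)\|$ via \eqref{boundJlambda} and contractivity, and both rest on the key estimate $\|J_\lambda^t(x)-J_\lambda^s(x)\|\leq\lambda|t-s|^\alpha(D\|J_\lambda^\cdot(x)\|+C)$ followed by the same two-term decomposition of $\nabla g_\lambda^t(x)-\nabla g_\lambda^s(x)$. The one place you diverge is in deriving that key estimate: you expand $J_\lambda^t(x)-J_\lambda^s(x)$ via the resolvent identities and invoke monotonicity of $\nabla g^s$ plus Cauchy--Schwarz, whereas the paper writes $J_{t,\lambda}(x)-J_{s,\lambda}(x)=J_{t,\lambda}[(1+\lambda\partial g_s)(J_{s,\lambda}(x))]-J_{t,\lambda}[(1+\lambda\partial g_t)(J_{s,\lambda}(x))]$ and immediately applies contractivity of $J_{t,\lambda}$; your variant is essentially a hands-on re-derivation of a special case of that contractivity inside the argument, so the two routes are mathematically equivalent though the paper's is slightly more compact.
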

\begin{proof}
From the previous proposition, if we call $J_{t,\lambda}=(1+\lambda \partial g_t)^{-1}$, we have since $g_t$ is $C^1$: $$\nabla g_\lambda^t(x)=\nabla g^t(J_{t,\lambda}(x))$$
Note that $$(1+\lambda \partial g_t)(\partial g_t)[J_{t,\lambda}(x)]=(\partial g_t)(1+\lambda \partial g_t)[J_{t,\lambda}(x)]=(\partial g_t)(x)$$ and therefore by uniqueness of the equation characterizing $J_{t,\lambda}$ (which comes from the uniqueness of the minimizer defining $g_\lambda^t$) we have: $(\partial g_t)[J_{t,\lambda}(x)]= J_{t,\lambda}(\partial g_t(x)).$ We can write a resolvent like equation and deduce bounds from contractivity of $J_{t,\lambda}$, estimate \eqref{boundJlambda} and the assumption for $\lambda\leq 1$:
\begin{align*}\|J_{t,\lambda}(x)-J_{s,\lambda}(x)\|_2&=\|J_{t,\lambda}[(1+\lambda \partial g_s)(J_{s,\lambda}(x))]-J_{t,\lambda}[(1+\lambda \partial g_t)(J_{s,\lambda}(x))]\|_2
\\&\leq \|\lambda (\partial g^ss- \partial g^t)(J_{s,\lambda}(x))\|_2
\\&\leq \lambda |t-s|^\alpha(D\|(J_{s,\lambda}(x))\|_2+C)
\\&\leq \lambda |t-s|^\alpha(D\|x\|_2+D\sqrt{2 (\sup_{s\in[a,b]}g^s(0)+|c|)}+C).
\end{align*}

Combining this and the defining equation, one gets the expected result:\begin{align*}&\|\nabla g_\lambda^t(x)-\nabla g_\lambda^s(x)\|_2\leq \|\nabla g^t(J_{t,\lambda}(x))-\nabla g^s(J_{t,\lambda}(x))\|_2+
\|\nabla g^t(J_{t,\lambda}(x))-\nabla g^t(J_{s,\lambda}(x))\|_2
\\&\leq \lambda |t-s|^\alpha(D\|x\|_2+D\sqrt{2 (\sup_{s\in[a,b]}g^s(0)+|c|)}+C)+||J_{s,\lambda}(x)-J_{t,\lambda}(x)||^\beta (D||J_{t,\lambda}(x)||+D||J_{s,\lambda}(x)||+C)
\\&\leq \lambda |t-s|^\alpha(D\|x\|_2+D\sqrt{2 (\sup_{s\in[a,b]}g^s(0)+|c|)}+C)\\&+\lambda^\beta|t-s|^{\alpha\beta}(D\|x\|_2+D \sqrt{2 (\sup_{s\in[a,b]}g^s(0)+|c|)}+C)^\beta (2D||x||_2+2D\sqrt{2 (\sup_{s\in[a,b]}g^s(0)+|c|)}+C)
\\&\leq 2\max(\lambda,\lambda^\beta) |t-s|^{\alpha\beta}[(2D||x||_2+2D\sqrt{2 (\sup_{s\in[a,b]}g^s(0)+|c|)}+C+1)^{1+\beta}].
\end{align*}
where the last expected conclusion is for $\lambda\leq 1.$
Finally note that the H\"older continuity in space is obvious:
\begin{align*}||\nabla g_\lambda^t(x)-\nabla g_\lambda^t(y)||&=||\nabla g^t(J_{t,\lambda}(x))-\nabla g^t(J_{t,\lambda}(y))||\\&\leq ||x-y||^\beta (D||J_{t,\lambda}(x)||+D||J_{t,\lambda}(y)||+C)\\&\leq ||x-y||^\beta (D||x||+D||y||+2D \sqrt{2 (\sup_{s\in[a,b]}g^s(0)+|c|)}+C).\end{align*}

\end{proof}
\subsection{Classical Entropy}
 Recall that the entropy of a probability measure $\mu$ on $\R^p$ is the
      quantity
      $$\text{Ent}(\mu)=\left\lbrace\begin{array}{l}-\int_{\R^p} f(x)\log f(x)dx\quad
      \text{if}\ \mu(dx)=f(x)dx\\ \\
      -\infty \quad\text{if $\mu$ is not absolutely
      continuous}\end{array}\right .$$
      The entropy is a concave upper semi-continuous function of $\mu$.
      
      Moreover, there is also a well known notion of relative entropy of two probability measures, say on a locally compact space $\Omega$ (also called Kullback-Leibler divergence, cf. \cite{K}).
      $$\text{Ent}(\mu|\nu)=\left\lbrace\begin{array}{l}-\int_{\Omega} f(x)\log f(x)d\nu(x)\quad
      \text{if}\ \mu(dx)=f(x)d\nu(x)\\ \\
      -\infty \quad\text{if $\mu$ is not absolutely
      continuous with respect to $\nu$}\end{array}\right .$$
Note that, by Jensen inequality, $\text{Ent}(\mu|\nu)\leq 0$.

We shall need another characterization of entropy, through its Legendre
      transform. Indeed one has, for any  probability measure $\mu$ supported by a set $E$, of finite Lebesgue measure,
      $$\text{Ent}(\mu)=\inf_{\phi\in C_b(E)}
       \left(\log\left(\int_E\exp \phi(x)dx\right)-\int_E\phi(x)\mu(dx)\right).$$
Likewise (see e.g. \cite[section 6.2]{DZ} )    for any  probability measures $\mu,\nu$ supported on $E$,
      \begin{equation}\label{dualEnt}\text{Ent}(\mu|\nu)=\inf_{\phi\in C_b(E)}
       \left(\log\left(\int_E\exp \phi(x)d\nu(x)\right)-\int_E\phi(x)\mu(dx)\right).\end{equation} 
We will even need a stronger representation in case of gaussian measures on $\R^p$.
Let $C_{sq}(E)$ the space of continuous functions $\phi$ subquadratic in the sense that there is $C$ such that $|\phi(x)|\leq C||x||^2$.

If $\nu$ is a gaussian measure, $\int_E\exp(C||x||^2)d\nu(x)<\infty$ and $-M\vee\phi\wedge M\to \phi$ so that by dominated convergence $\int_E\exp (-M\vee\phi(x)\wedge M )d\nu(x)\to_{M\to \infty} \int_E\exp \phi(x)d\nu(x)$. Moreover, if $\mu$ has a second moment by dominated convergence theorem again $\int_E-M\vee\phi(x)\wedge M\mu(dx)\to_{M\to \infty} \int_E\phi(x)\mu(dx).$ 
Let $$\mathscr{P}^2(\R^p)=\{\mu\in \mathscr{P}(\R^p): \int_{\R^p}||x||_2^2\ d\mu(x)<\infty\}$$
Thus one deduces that for $\nu$ gaussian measure, $\mu\in \mathscr{P}^2(\R^p)$:
\begin{equation}\label{dualEntsq}\text{Ent}(\mu|\nu)=\inf_{\phi\in C_{sq}(E)}
       \left(\log\left(\int_E\exp \phi(x)d\nu(x)\right)-\int_E\phi(x)\mu(dx)\right).\end{equation} 

Finally, if $\mu$ is the restriction of $\nu$ to $E$, renormalized into a probability measure, then 
$$\text{Ent}(\mu|\nu)=\log(\nu(E)))$$
and again this is the maximum value of $\text{Ent}(.|\nu)$ on the set of
 all probability
      measures supported by $E$.

\subsection{Free entropy}
\label{FreeEntropyDef}

  Let $\tau\in\mathcal S_R^{m+\mu}\star \mathcal T(\mathcal F^\nu_1)$ (cf subsection \ref{Concentration}), let $\epsilon>0$ be a real number and
     $K,N$ be positive integers. Let $\Upsilon\in \mathcal{U}(M_N(\C))^\nu$. We denote by
     $\Gamma_{R,\Upsilon}(\tau,\epsilon,K,N)$ the set of $n+\mu$-tuples of hermitian matrices
     $M_1,\ldots,M_{m+\mu}\in H_N^R$ such that
     for all monomials $m(X_1,\ldots,X_{m+\mu},\upsilon_1,...,\upsilon_\nu)\in\C\langle X_1,\ldots,X_{m+\mu},\upsilon_1,...,\upsilon_\nu \rangle$ of degree less than $K$ one has
     $$|\tau\left(m(X_1,\ldots,
     X_{m+\mu},\upsilon)\right)-\frac{1}{N}Tr\left(m(M_1,\ldots,M_{m+\mu},\Upsilon)\right)|<
     \epsilon$$
     Equivalently $\Gamma_{R,\Upsilon}(\tau,\epsilon,K,N)$ is the set 
     of $n+\mu$-tuples of hermitian matrices
     $M_1,\ldots,M_{m+\mu}\in H_N^R$ whose associated state $\sigma_{M_1,\ldots, M_{m+\mu},\Upsilon}\in \mathcal S_R^{m+\mu}\star \mathcal T(\mathcal F^\nu_1)$, defined by $$\sigma_{M_1,\ldots, M_m,\Upsilon}(P)=\frac{1}{N}Tr\left(P(M_1,\ldots,M_{m+\mu},\Upsilon)\right),$$ 
     is in $V_{\epsilon,K}(\tau)$, defined in subsection 2.7. We will write similarly as $\sigma_{X,\upsilon}$ any mixed law of self-adjoint variables $X$ and unitaries $\upsilon$ from any tracial von Neumann algebra. 
Voiculescu defined free entropy in term of Lebesgue measure $Leb$ on hermitian matrices, but a related definition can be made in terms of a Gaussian measure $P$ law of $H_1^N$ for our hermitian Brownian motion $H_t^N.$ One can also define a third version associated to the unitary transformation we used following \cite{BCG} : $$u(X)=\frac{X+4i}{X-4i}.$$

Then the relevant set of unitaries are defined using the notation $U_{\epsilon,K}(\sigma)$ in subsection \ref{Concentration}, for $\sigma\in\mathcal{T}_R(\mathcal{F}^{m+\mu}_{1})\star \mathcal{T}(\mathcal{F}^\nu_{1})$ by:
      \begin{align*}&\Gamma_{R,\Upsilon}^U(\sigma,\epsilon,K,N)\\& =\{(U_1,...,U_m)\in U(N)^{m+\mu}: \tau_{U_1,...,U_{m+\mu},\Upsilon}\in U_{\epsilon,K}(\sigma), \frac{U_j+U_j^*}{2}\leq 1-\frac{2}{R^2+1}\}.
      \end{align*}
      Of course in this context we need to consider $\Psi(X_1,...,X_m)=(u(X_1),...,u(X_m))$ and the push-forward measure $\Psi_*P$ of our gaussian measure $P.$
      It is reasonable to give a name $\mathcal{T}_R(\mathcal{F}^{m+\mu}_{1})\star \mathcal{T}(\mathcal{F}^\nu_{1})$ to the set of states $\sigma$ such that in the GNS representation $\frac{U_j+U_j^*}{2}\leq 1-\frac{2}{R^2+1}$. Note this is a closed set in the topology given by either $d$ or $d_2$.
Voiculescu introduced $\limsup$ and $\liminf$      variants. Moreover \cite{S02} defined a notion of relative entropy, we define a variant where the extra variables are unitaries instead of self-adjoints (variant which is of course completely equivalent, thanks to the von Neumann algebra invariance in the relative variable, and only better suited with our framework using unitary variables). We call $p_{m,\mu}:M_N(\C)^{m+\mu}\to M_N(\C)^{m}$ the projection on the $m$ first coordinates.
      
    \begin{definition}\cite{V2,S02}\label{MfreeEnt} Let $(M,\tau)$ a finite von Neumann algebra, $R\in [0,\infty]$, $X_1,...,X_m,Y_1,...,Y_\mu\in (M,\tau)$ self-adjoints with $||X_i||,||Y_i||\leq R,$ $U_1,...,U_m,V_1,...,V_\mu, \upsilon_1,...,\upsilon_\nu\in \mathcal{U}(M)$. We also fix $\Upsilon_N\in\mathcal{U}(M_N(\C))^\nu$ a sequence approximating in law $\upsilon$. 
    Define the various \textit{free entropies of $X=(X_1,...,X_m)$ in presence of $Y=(Y_1,...,Y_\mu)$ (resp. of $U$ in the presence of $V$) relative to $\upsilon=(\upsilon_1,...,\upsilon_\nu)$ (resp. $(\Upsilon_N)$) with bound $R$}:
        $$\chi_R(X:Y|\upsilon)=\lim_{K\to\infty,\epsilon\to 0}
     \limsup_{N\to\infty}\left(\frac{1}{N^2}\sup_{\tau_{\Upsilon}\in V_{\epsilon,K}(\tau_\upsilon)}\log\left(
     \text{Leb}(p_{m,\mu}\Gamma_{R,\Upsilon}(\sigma_{X,Y,\upsilon},\epsilon,K,N)\right))+\frac{m}{2}\log N\right)$$   $$\underline{\chi}_R(X:Y|\upsilon)=\lim_{K\to\infty,\epsilon\to 0}
     \liminf_{N\to\infty}\left(\frac{1}{N^2}\sup_{\tau_{\Upsilon}\in V_{\epsilon,K}(\tau_\upsilon)}\log\left(
     \text{Leb}(p_{m,\mu}\Gamma_{R,\Upsilon}(\sigma_{X,Y,\upsilon},\epsilon,K,N)\right))+\frac{m}{2}\log N\right)$$
     $$\chi_R(X:Y|(\Upsilon_N)_{N\in\N})=\lim_{K\to\infty,\epsilon\to 0}
     \limsup_{N\to\infty}\left(\frac{1}{N^2}\log\left(
     \text{Leb}(p_{m,\mu}\Gamma_{R,\Upsilon_N}(\sigma_{X,Y,\upsilon},\epsilon,K,N)\right))+\frac{m}{2}\log N\right)$$   $$\underline{\chi}_R(X:Y|(\Upsilon_N)_{N\in\N})=\lim_{K\to\infty,\epsilon\to 0}
     \liminf_{N\to\infty}\left(\frac{1}{N^2}\log\left(
     \text{Leb}(p_{m,\mu}\Gamma_{R,\Upsilon_N}(\sigma_{X,Y,\upsilon},\epsilon,K,N)\right))+\frac{m}{2}\log N\right)$$
     
     $$\chi_R^G(X:Y|\upsilon)=\lim_{K\to\infty,\epsilon\to 0}
     \limsup_{N\to\infty}\left(\frac{1}{N^2}\sup_{\tau_{\Upsilon}\in V_{\epsilon,K}(\tau_\upsilon)}\log\left(
     P(p_{m,\mu}\Gamma_{R,\Upsilon}(\sigma_{X,Y,\upsilon},\epsilon,K,N)\right))\right)$$   $$\underline{\chi}_R^G(X:Y|\upsilon)=\lim_{K\to\infty,\epsilon\to 0}
     \liminf_{N\to\infty}\left(\frac{1}{N^2}\sup_{\tau_{\Upsilon}\in V_{\epsilon,K}(\tau_\upsilon)}\log\left(
     P(p_{m,\mu}\Gamma_{R,\Upsilon}(\sigma_{X,Y,\upsilon},\epsilon,K,N)\right))\right)$$
     $$\chi_R^G(X:Y|(\Upsilon_N)_{N\in\N})=\lim_{K\to\infty,\epsilon\to 0}
     \limsup_{N\to\infty}\left(\frac{1}{N^2}\log\left(
     P(p_{m,\mu}\Gamma_{R,\Upsilon_N}(\sigma_{X,Y,\upsilon},\epsilon,K,N)\right))\right)$$   $$\underline{\chi}_R^G(X:Y|(\Upsilon_N)_{N\in\N})=\lim_{K\to\infty,\epsilon\to 0}
     \liminf_{N\to\infty}\left(\frac{1}{N^2}\log\left(
     P(p_{m,\mu}\Gamma_{R,\Upsilon_N}(\sigma_{X,Y,\upsilon},\epsilon,K,N)\right))\right)$$

     $$\widetilde{\chi}_R(U:V|\upsilon)=\lim_{K\to\infty,\epsilon\to 0}
     \limsup_{N\to\infty}\left(\frac{1}{N^2}\sup_{\tau_{\Upsilon}\in V_{\epsilon,K}(\tau_\upsilon)}\log\left(
     \Psi_*P(p_{m,\mu}\Gamma_{R,\Upsilon}^U(\tau_{U,V,\upsilon},\epsilon,K,N)\right))\right)$$   $$\widetilde{\underline{\chi}}_R(U:V|\upsilon)=\lim_{K\to\infty,\epsilon\to 0}
     \liminf_{N\to\infty}\left(\frac{1}{N^2}\sup_{\tau_{\Upsilon}\in V_{\epsilon,K}(\tau_\upsilon)}\log\left(
     \Psi_*P(p_{m,\mu}\Gamma_{R,\Upsilon}^U(\tau_{U,V,\upsilon},\epsilon,K,N)\right))\right)$$
     $$\widetilde{\chi}_R(U:V|(\Upsilon_N)_{N\in\N})=\lim_{K\to\infty,\epsilon\to 0}
     \limsup_{N\to\infty}\left(\frac{1}{N^2}\log\left(
     \Psi_*P(p_{m,\mu}\Gamma_{R,\Upsilon_N}^U(\tau_{U,V,\upsilon},\epsilon,K,N)\right))\right)$$   $$\widetilde{\underline{\chi}}_R(U:V|(\Upsilon_N)_{N\in\N})=\lim_{K\to\infty,\epsilon\to 0}
     \liminf_{N\to\infty}\left(\frac{1}{N^2}\log\left(
     \Psi_*P(p_{m,\mu}\Gamma_{R,\Upsilon_N}^U(\tau_{U,V,\upsilon},\epsilon,K,N)\right))\right)$$

The \textit{free entropy of $X=(X_1,...,X_m)$ in presence of $Y=(Y_1,...,Y_\mu)$ (resp. of $U$ in the presence of $V$
    ) relative to $\upsilon=(\upsilon_1,...,\upsilon_\nu)$ (resp. $ \Upsilon=(\Upsilon_N)_{N\in\N}$, resp. a subalgebra $B\subset M$)} is for $\chi\equiv\chi^L$ and $p\in\{L,G\}$:
$$\chi^p(X:Y|\upsilon)=\sup_{R>0}\chi_R^p(X:Y|\upsilon),
\ \widetilde{\chi}(U:V|\upsilon)= \sup_{R>0}\widetilde{\chi}_R(U:V|\upsilon),$$
$$
{\underline{\chi}^p}(X:Y|\upsilon)=\sup_{R>0}\underline{\chi}_R^p(X:Y|\upsilon),\ \widetilde{\underline{\chi}}(U:V|\upsilon)= \sup_{R>0}\widetilde{\underline{\chi}}_R(U:V|\upsilon),$$

$$\chi^p(X:Y|\Upsilon)=\sup_{R>0}\chi_R^p(X:Y|(\Upsilon_N)_{N\in\N}),
\ \widetilde{\chi}(U:V|\Upsilon)= \sup_{R>0}\widetilde{\chi}_R(U:V|(\Upsilon_N)_{N\in\N}),$$
$${\underline{\chi}^p}(X:Y|\Upsilon)=\sup_{R>0}\underline{\chi}_R^p(X:Y|(\Upsilon_N)_{N\in\N}),\ \widetilde{\underline{\chi}}(U:V|\Upsilon)= \sup_{R>0}\widetilde{\underline{\chi}}_R(U:V|(\Upsilon_N)_{N\in\N}).$$

$$\chi^p(X:Y|B)=\inf_{\nu\in\N}\inf_{\upsilon_1,...,          \upsilon_\nu\in\mathcal{U}(B)}\chi^p(X:Y|\upsilon),
\ \widetilde{\chi}(U:V|\upsilon)= \inf_{\nu\in\N}\inf_{\upsilon_1,...,          \upsilon_\nu\in\mathcal{U}(B)}\widetilde{\chi}(U:V|\upsilon),$$
$${\underline{\chi}^p}(X:Y|B)=\inf_{\nu\in\N}\inf_{\upsilon_1,...,          \upsilon_\nu\in\mathcal{U}(B)}\underline{\chi}^p(X:Y|\upsilon),\ \widetilde{\underline{\chi}}(U:V|B)= \inf_{\nu\in\N}\inf_{\upsilon_1,...,          \upsilon_\nu\in\mathcal{U}(B)}\widetilde{\underline{\chi}}(U:V|\upsilon).$$

\end{definition}  

Of course we will write $\chi(X|.)$ if $\mu=0$ and $\chi(X:Y)$ if $\nu=0$ and similar variants.

    As is well-known and as was noticed e.g. in \cite[section 7]{BCG}, the  above entropies are related by a universal constant $C$ such that:
    \begin{equation}\label{chiGchi}
    \chi(X:Y|.)={\chi^G}(X:Y|.)+\frac{1}{2}\sum_{i=1}^m\tau(X_i^2)+mC.
    \end{equation}
    and their lemma 7.1 shows that 
     if $\Psi(X_1,...,X_m)=(u(X_1),...,u(X_m))$
    \begin{equation}\label{chiGchiU}\chi^G(X:Y|.)=\widetilde{\chi}(\Psi(X):\Psi(Y)|.).\end{equation}
    The analogue formulas for $\liminf$ variants and for $\chi_\infty$ variants are also true.

As in \cite[Theorem 2.15]{S02} ( a consequence of Kaplansky density theorem), \begin{equation}\chi(X:Y|u_1,...,u_\nu)= \chi(X:Y|W^*(u_1,...,u_\nu)),\end{equation} and this last version is the same as the variant defined using self-adjoint variables in this paper.

    The following result from \cite{BelinschiBercovici} will be crucial to apply large deviation principle to free entropy. The proof gives right away the result for $\chi,\chi^G$ and their $\liminf$ variants, and then \eqref{chiGchiU} deals with the remaining case.
    
    \begin{proposition}[Proposition 2.1 in \cite{BelinschiBercovici}]\label{chiinfty}
    We have in the setting of the previous definition, $p\in\{L,G\}$:
$$ 
\chi^p(X:Y|.)=\chi_\infty^p(X:Y|.), \ \   \widetilde{{\chi}}(\Psi(X):\Psi(Y)|.)=\widetilde{{\chi}}_\infty(\Psi(X):\Psi(Y)|.),$$
    $$  
    \underline{\chi}^p(X:Y|.)=\underline{\chi}_\infty^p(X:Y|.), \ \   \widetilde{\underline{\chi}}(\Psi(X):\Psi(Y)|.)=\widetilde{\underline{\chi}}_\infty(\Psi(X):\Psi(Y)|.)).$$
    \end{proposition}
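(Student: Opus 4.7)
The plan is to first observe the easy inequality, then use exponential tightness of the matrix ensemble to close the gap, and finally transfer between the $L$, $G$ and unitary versions using the formulas \eqref{chiGchi} and \eqref{chiGchiU}.

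First I would note that since $\Gamma_{R,\Upsilon}(\sigma,\epsilon,K,N)\subseteq \Gamma_{\infty,\Upsilon}(\sigma,\epsilon,K,N)$ for every $R$, all the ``$R$'' entropies are bounded above by the ``$\infty$'' ones, so $\chi^p \le \chi^p_\infty$, $\underline\chi^p \le \underline\chi^p_\infty$, $\widetilde\chi \le \widetilde\chi_\infty$, $\widetilde{\underline\chi} \le \widetilde{\underline\chi}_\infty$ are automatic. It therefore suffices to prove the reverse inequalities, and in view of \eqref{chiGchi} and \eqref{chiGchiU} it is enough to do this for $\chi^G$ and $\underline\chi^G$ (the relation \eqref{chiGchi} is insensitive to $R$ for $R$ larger than $\max_i \|X_i\|$, since it relies only on the moment bound $\frac{1}{N}\mathrm{tr}(M_i^2)\approx \tau(X_i^2)$, which is forced by membership in the microstate set and not by any operator-norm cutoff).

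The key input is the standard tail bound: there is $c(R)\to +\infty$ as $R\to +\infty$ such that
\[
P\bigl(\bigl\{M\in (H_N)^{m+\mu} : \max_i \|M_i\|_{op}>R\bigr\}\bigr)\le e^{-c(R) N^2},
\]
which follows from spectral edge large deviations for GUE (alternatively, it can be extracted from the proof of Lemma \ref{Exptight} applied at a single time). Decomposing
\[
p_{m,\mu}\Gamma_{\infty,\Upsilon}(\sigma,\epsilon,K,N)\subseteq p_{m,\mu}\Gamma_{R,\Upsilon}(\sigma,\epsilon,K,N)\;\cup\;\{M:\max_i\|M_i\|_{op}>R\}
\]
and using $\log(a+b)\le \log 2+\max(\log a,\log b)$, I obtain
\[
\frac{1}{N^2}\log P\bigl(p_{m,\mu}\Gamma_{\infty,\Upsilon}\bigr)\le \frac{\log 2}{N^2}+\max\!\left(\frac{1}{N^2}\log P\bigl(p_{m,\mu}\Gamma_{R,\Upsilon}\bigr),\,-c(R)\right).
\]
Taking $\sup_\Upsilon$ in $V_{\epsilon,K}(\tau_\upsilon)$ (or fixing $\Upsilon_N$ in the ``$|\Upsilon$'' version), then $\limsup_N$, and then letting $\epsilon\to 0$, $K\to\infty$ yields
$\chi^G_\infty(X:Y|\cdot)\le \max\bigl(\chi^G_R(X:Y|\cdot),\,-c(R)\bigr)$ for every $R$. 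The same inequality with $\liminf_N$ holds for $\underline\chi^G_\infty$.

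If $\chi^G(X:Y|\cdot)=\sup_R \chi^G_R>-\infty$, choose $R$ with $-c(R)<\chi^G(X:Y|\cdot)$ to conclude $\chi^G_\infty\le \chi^G_R\le \chi^G$; otherwise $-c(R)\to -\infty$ forces $\chi^G_\infty=-\infty$ as well. The same argument handles $\underline\chi^G$. Transferring via \eqref{chiGchi} gives the equalities for $\chi$ and $\underline\chi$, and the identity \eqref{chiGchiU}, together with the fact that the $R$-restriction on self-adjoint microstates corresponds precisely under $\Psi$ to the spectral constraint $\frac{U+U^*}{2}\le 1-\frac{2}{R^2+1}$, yields them for $\widetilde\chi$ and $\widetilde{\underline\chi}$. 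The main obstacle in this scheme is not conceptual but rather checking the uniformity of the tail bound $c(R)\to\infty$ with respect to the auxiliary unitaries $\Upsilon$ (or $\Upsilon_N$); however the bound depends only on the joint Gaussian law of $M_1,\dots,M_{m+\mu}$, so this uniformity is automatic.
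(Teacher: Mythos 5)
Your overall plan — easy inequality from set inclusion, reverse inequality from Gaussian tail bounds, transfer via \eqref{chiGchi} and \eqref{chiGchiU} — is the right skeleton, and the paper itself essentially defers the substance to the citation \cite{BelinschiBercovici} and handles the transfer exactly as you do. However, there is a genuine gap in your key step when the ``in presence of'' variables are non-trivial, i.e.\ when $\mu>0$.

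The decomposition
\[
p_{m,\mu}\Gamma_{\infty,\Upsilon}(\sigma,\epsilon,K,N)\subseteq p_{m,\mu}\Gamma_{R,\Upsilon}(\sigma,\epsilon,K,N)\cup\{M:\max_{i\le m}\|M_i\|_{op}>R\}
\]
is false for $\mu>0$. Membership of an $m$-tuple $(M_1,\dots,M_m)$ in $p_{m,\mu}\Gamma_{\infty,\Upsilon}$ only asserts that \emph{some} choice of auxiliary matrices $M_{m+1},\dots,M_{m+\mu}$ completes it into a tuple whose law is close to $\sigma_{X,Y,\upsilon}$; there is no constraint on $\|M_{m+j}\|_{op}$. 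It is perfectly possible that every witness has some $\|M_{m+j}\|_{op}>R$, yet $\max_{i\le m}\|M_i\|_{op}\le R$ — in that case $(M_1,\dots,M_m)$ lies in neither set on your right-hand side. The moment conditions in $V_{\epsilon,K}$ control $\frac{1}{N}\mathrm{Tr}(M_{m+j}^{2K'})$ but do \emph{not} control $\|M_{m+j}\|_{op}$ (a vanishing fraction of eigenvalues can be arbitrarily large). Closing the gap requires a truncation argument: one must replace $M_{m+j}$ by $f_{R'}(M_{m+j})$ for $R'$ slightly larger than $\|Y_j\|$, verify that this perturbs all mixed moments up to degree $K$ by $O(\epsilon)$ uniformly given $\max_{i\le m}\|M_i\|_{op}\le R$, and absorb this into a change of $(\epsilon,K)$ before passing to the limit. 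This is exactly the nontrivial content of the cited proposition; for $\mu=0$ your inclusion is an equality and the argument is clean, which is why it looks so short.

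One minor remark: the tail-bound uniformity you flag at the end is a non-issue, as you correctly observe, since the bound is a property of the Gaussian law on $(H_N)^m$ alone and does not see $\Upsilon$ at all. The actual issue is the set inclusion above.
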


    In order to recall the definition of Voiculescu's non-microstate free entropy, we recall first the definition of free Brownian motion. Here $Id_{\R^m}$ is the unit in $M_m(\R)$
    
\begin{definition}
Let $B_{s}$ be an increasing filtration of von Neumann algebras in a non-commutative tracial probability space $(M,\tau)$. $S_{s}=(S_{s}^{1},...,S_{s}^{m}), s\in \R_{+}$ an m-tuple of self-adjoint processes adapted to this filtration with $Z_{0}=0$ is a \textit{free brownian motion} adapted for $B_s$ if:
\begin{enumerate}
\item $(S_t-S_s)$ are free semi-circular variables of covariance $(t-s) Id_{\R^m}$.
\item $\{(S_u-S_s),u\geq s\}$ are free from $B_s$.
\end{enumerate}
\end{definition}    
  There is an important characterization of free brownian motion in the spirit of Paul L\'evy's characterization of ordinary brownian motion. It is due to \cite{BCG}.  
    \begin{theorem}[Theorem 6.2 in \cite{BCG}]\label{LevyBCG}
Let $B_{s}$ be an increasing filtration of von Neumann algebras in a non-commutative tracial probability space $(M,\tau)$
$Z_{s}=(Z_{s}^{1},...,Z_{s}^{m}), s\in \R_{+}$ an m-tuple of self-adjoint processes adapted to this filtration with $Z_{0}=0$ and~:
\begin{enumerate}
\item $\tau(Z_{t}|B_{s})=Z_{s}$
\item $\tau(|Z_{t}-Z_{s}|^{4})\leq K (t-s)^{2}$ for some constant $K>0$.
\item $\tau(Z_{t}^{k}AZ_{t}^{l}B)=\tau(Z_{s}^{k}AZ_{s}^{l}B)+(t-s)\tau(A)\tau(B)1_{\{k=l\}}+o(t-s)$ for any $A,B\in B_s$.
\end{enumerate}
Then $Z$ is a free brownian motion adapted to $B_s$. 
\end{theorem}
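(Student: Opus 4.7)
The plan is to prove that for any $0 \le s < t$, any $n \ge 0$, any $A_0, A_1, \ldots, A_n \in B_s$, and any indices $(i_1, \ldots, i_n) \in \{1,\ldots,m\}^n$, the mixed moment
\[
\tau\bigl(A_0 \,(Z_t - Z_s)^{i_1} A_1 \,(Z_t - Z_s)^{i_2} A_2 \cdots A_{n-1} (Z_t - Z_s)^{i_n}\, A_n\bigr)
\]
coincides with the moment obtained by replacing $(Z_t - Z_s)^i$ by a free semicircular $m$-tuple $S^i$ of covariance $(t-s)\,\mathrm{Id}_{\R^m}$ free from $B_s$; concretely, this is $(t-s)^{n/2}$ times a sum over non-crossing pair partitions $\pi \in NC_2(n)$ compatible with $(i_1,\ldots,i_n)$, weighted by products of traces of the interleaved $A_j$'s. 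Once this identity is established for arbitrary increments, combining it with condition 1 applied at intermediate times $u\in(s,t)$ pins down the joint law of $(Z_u - Z_s)_{u\ge s}$ with $B_s$ and yields the free Brownian motion characterization.

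The main step is a discrete It\^o-type expansion. Choose a partition $s = t_0 < t_1 < \cdots < t_N = t$ of mesh $\delta = (t-s)/N$ and expand each increment as $Z_t^{i_j} - Z_s^{i_j} = \sum_{k_j = 0}^{N-1} \Delta_{k_j} Z^{i_j}$ with $\Delta_k Z^i := Z_{t_{k+1}}^i - Z_{t_k}^i$. The multi-indices $(k_1, \ldots, k_n)$ split into three classes: some $k_j$ is unpaired; some value appears with multiplicity $\ge 3$; or they form a pair partition. Condition 2 combined with iterated H\"older estimates bounds the high-multiplicity class by $O(N\delta^{3/2})\cdot \prod\|A_j\|_\infty \to 0$ as $N\to\infty$. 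Condition 1, applied inductively from the rightmost unpaired position via conditional expectations onto $B_{t_k}$, annihilates every summand with at least one unpaired index. Condition 3 identifies the pair contributions: for a pair $\{k_j, k_{j'}\}$ with common value $k$, the factor $\Delta_k Z^{i_j}(\cdots)\Delta_k Z^{i_{j'}}$ reduces, after collapsing the enclosed block to its trace, to $\delta_{i_j,i_{j'}}(t_{k+1}-t_k)\tau(\cdots)\tau(\cdots) + o(t_{k+1}-t_k)$; summing over $k$ gives a Riemann sum converging to $(t-s)\delta_{i_j,i_{j'}}$ times the factorized trace.

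The main obstacle is showing that only \emph{non-crossing} pair partitions survive with the correct Catalan weighting. The key observation is that the factorized structure $\tau(A)\tau(B)$ in condition 3, in contrast to a naive $\tau(AB)$, decouples the two halves surrounding a pair and collapses the enclosed block into a pure trace; only non-crossing partitions iterate this collapse consistently. A clean way is to induct on $n$: pick an innermost pair, use condition 3 to turn the enclosed block into its trace times $(t-s)$, and close by the induction hypothesis, thereby reproducing the free semicircular moment formula. Since the resulting formula depends on $A_0, \ldots, A_n$ only through traces of non-crossing blocks, freeness of the increment family with $B_s$ is automatic, matching the moment-cumulant description of free independence of a semicircular family from $B_s$; condition 1 finally promotes this pointwise identification to the joint structure at all intermediate times, yielding the required covariance of increments and their freeness from $B_s$.
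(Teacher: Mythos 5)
This statement is not proved in the paper: it is quoted verbatim as Theorem~6.2 of Biane--Capitaine--Guionnet \cite{BCG}, so there is no in-paper argument to compare against. Your plan (partition $[s,t]$, classify multi-indices of small increments, let conditions 1--3 kill unpaired and high-multiplicity terms and identify the pair contributions) is the standard moment-method route for free L\'evy characterizations and is almost certainly the route the reference takes as well.

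There is, however, a genuine gap in the execution. The martingale elimination of unpaired increments cannot be run ``from the rightmost unpaired position'' as you write. Conditioning onto $B_{t_k}$ only annihilates the factor $\Delta_kZ$ if \emph{everything else} inside the trace already lies in $B_{t_k}$, which requires $k$ to be the \emph{maximal} time index $K=\max_\ell k_\ell$, not merely unpaired and rightmost in the word: if some other factor carries $k_\ell>k$, the conditional expectation onto $B_{t_k}$ does not pass through. The correct order is to isolate $K$ by cyclicity, kill the term if $K$ is a singleton via $\tau(\Delta_KZ\cdot B)=\tau\bigl(E_{B_{t_K}}[\Delta_KZ]\,B\bigr)=0$, apply the increment form of condition~3 (which one must first derive from conditions 1 and 3) if $K$ occurs exactly twice, and only then recurse on the two resulting lower-indexed traces. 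For the same reason, the ``pick an innermost pair'' phrasing in your closing paragraph has the recursion running in the wrong direction: one peels off the pair with the highest time index first, and it is this ordering, not innermost-first, that forces the surviving pairings to be non-crossing (a crossing pair leaves an odd, hence unpaired, lower-indexed block inside, which then vanishes by the martingale step). Two smaller points: the high-multiplicity bound as stated does not follow from condition~2 alone once a value appears five or more times, since you then need $\|\Delta_KZ\|_m$ for $m>4$, which requires interpolating against a uniform operator-norm bound on $Z$ over $[s,t]$ (or an induction on the moment degree) that the hypotheses do not explicitly supply; and the $o(t-s)$ in condition~3 must be uniform in the basepoint and in the $A,B$ produced by the recursion for the Riemann sum over the mesh to converge.
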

 
 Let us recall the definition of free entropy relative to a subalgebra $B$ from \cite{V5}. Here $S_1,...,S_m$ are free semicircular variables free from $X_1,...,X_m,B$:
\begin{align*} \chi^{*}(X_{1},...,X_{m}:B)=\frac{1}{2}&\int_0^\infty\left(\frac{m}{1+t}-\Phi^*(X_1+\sqrt{t}S_1,...,X_m+\sqrt{t}S_m:B)\right)dt+\frac{m}{2}\log (2\pi e).
\end{align*}
 Let us also remind the Fisher information $\Phi^{*}(Y_1,...,Y_m:B)=\sum_{i=1}^m||\xi_i||_2^2$ where $\xi_i$ are the conjugate variables relative to $B$ (which exists for $Y=X+\sqrt{t}S$ as above, and are (when they exist) the unique $\xi_i\in L^2(W^*(B,Y_1,...,Y_m)$ such that for all $P\in B\langle X_1,...,X_m\rangle$, if $\partial_i$ is the free difference quotient, unique derivation with $\partial_i(b)=0,b\in B$ and $ \partial_iX_j=1\otimes 1_{i=j}\in L^2(W^*(B,Y_1,...,Y_m)\otimes W^*(B,Y_1,...,Y_m))$, then:
$$\langle 1\otimes 1,\partial_i P\rangle=\tau(\xi_iP).$$
The gaussian variant is defined using the gaussian variant of Fisher's information, $\Phi^{G*}(Y_1,...,Y_m:B)=\sum_{i=1}^m||\xi_i-Y_i||_2^2$ by:
 \begin{align*} \chi^{G*}(X_{1},...,X_{m}:B)=-\frac{1}{2}&\int_0^1\frac{dt}{t}\Phi^{G*}(\sqrt{t}X_1+\sqrt{1-t}S_1,...,\sqrt{t}X_m+\sqrt{1-t}S_m:B).
\end{align*}
It is easy to see in using linear changes of variables for the score function from \cite{V5} that $\chi^{G^*},\chi^*$  are related by \eqref{chiGchi}.
 
 \subsection{Ultraproducts and processes valued in them} \label{ultra}
 
In this final preliminary subsection, we recall backgrounds on ultraproducts. We refer to \cite{PisierBook} for more details in the tracial von Neumann algebra context and to \cite{FarahII,CapraroLupini} in the model theory context. Let $(M_n,\tau_n)$ a sequence of tracial von Neumann algebras. We will mainly use the case $M_n=M_{n}(L^\infty(\Omega_n, P_n))$ of matrix algebras over a classical probability space $(\Omega_n, P_n)$, with $\tau_n=E\circ \frac{1}{n}Tr_n$. Let $\omega\in \beta\N-\N$ a non-principal ultrafilter (or equivalently a non-integer  point in the Stone-Cech compactification $\beta\N$ of $\N$).

The ultraproduct  of this sequence is defined as the following  quotient of the set of bounded sequences with the $n$-th term of the sequence in $M_n$, noted $\ell^\infty(M_n,n\in \N)$:$$(M_n,\tau_n)^\omega=\ell^\infty(M_n,n\in \N)/\{(x_n): \lim_{n\to\omega}\tau_n(x_n^*x_n)=0\}.$$
 
It is known that  $(M_n,\tau_n)^\omega$ is a tracial von Neumann algebra with trace :
$$\tau_\omega((x_n)^\omega)=\lim_{n\to \omega}\tau_n(x_n).$$
There is a recent model theoretic proof of this fact \cite{FarahII}, but the classical proof (see e.g.\cite[section 9.10]{PisierBook}) gives an explicit action on a Hilbert space that we will need. Actually, in writing $(M_n,\tau_n)^\omega$ we consider always given the trace $\tau_\omega.$

Let $H_n=L^2(M_n,\tau_n)$ the Hilbert space of the GNS representation. For instance, if $M_n=M_{n}(L^\infty(\Omega_n, P_n))$, $L^2(M_n,\tau_n)=M_n(L^2(\Omega_n, P_n))$ with its canonical Hilbert space structure with scalar product $\langle u,v\rangle_2 =E(\frac{1}{n}Tr(u^*v)).$

Consider the Hilbert space ultraproduct :
$$(L^2(M_n,\tau_n))^\omega=\ell^\infty(L^2(M_n,\tau_n); n\in \N)/\{(h_n), \lim_{n\to \omega}||h_n||\to 0\}$$
Then $(M_n,\tau_n)^\omega\subset B((L^2(M_n,\tau_n))^\omega)$ with the action given by $$(x_n)^\omega(h_n^\omega)=(x_nh_n)^\omega.$$ Then the GNS construction $L^2((M_n,\tau_n)^\omega)$ is a subset of  $(L^2(M_n,\tau_n))^\omega$ that can be described as follows (see e.g. \cite[Rmk 9.10.2]{PisierBook}). This is either the closure of $(M_n,\tau_n)^\omega$ (included by its action on $(1)^\omega$). Alternatively, a sequence $(h_n)^\omega\in (L^2(M_n,\tau_n))^\omega$ belongs to $L^2((M_n,\tau_n)^\omega)$ if and only if the following uniform integrability like condition holds :
$$\lim_{c\to\infty} \lim_{n\to \omega} \tau_n(h_n^*h_n1_{h_n^*h_n\geq c})=0.$$
Here $1_{h\geq c}$ denotes the spectral projection of the positive operator $h\in L^1(M_n,\tau_n)$.

In the second paper of this series, we will need more results from continuous model theory for processes, but we will be content here to only discuss some terminology and its behaviour under ultraproduct.

All our filtrations will be filtrations on $[0,1]$ of tracial $W^*$-probability spaces (i.e. von Neumann algebras with a given trace) with traces compatible for inclusion, and therefore trace preserving conditional expectations. A \textit{standard martingale} in a filtration will be a martingale, $1/2$-H\"older continuous in $L^4$ norm such that the process is uniformly bounded by a fixed constant $C$ (which is fixed throughout the paper) in operator norm. Theorem \ref{LevyBCG} explains why free brownian motion is a standard martingale and what is needed for a standard martingale to be a free brownian motion. For a \textit{subfiltration} $\mathcal{G}\subset \mathcal{F}$, we require  not only  $\mathcal{G}_s\subset \mathcal{F}_s$ for all times but also for any $x\in \mathcal{G}_1$ any $s$, $E_{\mathcal{F}_s}(x)=E_{\mathcal{G}_s}(x)$ so that martingales coincide for both filtrations. Said otherwise, conditional expectation are part of the data of the filtrations, and a data preserving map should preserve them. For instance, $\mathcal{F}_s\subset \mathcal{F}_{s+t}$ is not  subfiltration for us in general.

All those notions are "universally axiomatizable" in first order continuous logic (cf. \cite{FarahII} and the second paper of this series) and therefore stable by ultraproduct and subfiltrations (which coincide with submodels) as can be checked easily directly. We will sometimes use right continuous filtrations, a notion which need not be stable by ultraproduct.

\begin{example}\label{ExFiltration}
\begin{enumerate}
\item $m$ hermitian brownian motion $H_t^N,t\in \Q\cap[0,1]$ in the canonical filtration of $M_N(L^\infty(\Omega,P))$ is not a standard martingale  because of the lack of boundedness.
\item The ultraproduct of previous filtrations $(M_N(L^\infty(\Omega,P)),\tau_n)^\omega$ is a filtration and $S_t=(H_t^N)^\omega$ is a standard martingale for $C\geq 3$. Indeed, from proposition \ref{ConcentrationNorm}, take $C$ satisfying \eqref{IntegralUnifBound}  so that if $ A_N=\{||H_t^N||_{\infty}\leq C\}$, we have: $$\limsup_{N\to \infty}||H_t^N1_{A_N}-H_t^N||_2^2=0,$$ then $S_t=(H_t^N 1_{A_N})^\omega\in \mathcal{L}_P^\omega$ and since by construction $||W_t^N 1_{A_N}||_{\infty}\leq C$ one deduces that $S_t\in \mathcal{M}_P^\omega$, as expected. 
From the law of $S_t$ as semicircular variable, one deduces $C\geq 3$ is enough. However, it does not give a free brownian motion. We will call $\mathcal{M}_P^\omega$ this filtration with process $S$ depending on the non-principal ultrafilter $\omega$.
\item  If we define $\mathcal{F}_s$ generated by $S_t$, $t\leq s$ (or $\mathcal{F}_s^\omega$ generated by $\mathcal{F}_s$ and $\mathcal{M}_{S,0}^\omega$, $S$ is a free brownian motion in it by standard freeness results. Note that these are subfiltrations in the sense above inside the ultraproduct filtration. 
This is an application of Clark-Ocone's formula to check e.g. that any element of $L^2(\mathcal{F}_1)\ominus L^2(\mathcal{F}_s)$, being a stochastic integral is orthogonal to the ultraproduct filtration at time $s$.  More precisely we will need to note that $S_t$ and its stochastic integrals are still martingales  adapted to $\mathcal{L}_{P,s}^\omega$, namely:
\begin{equation}\label{orthofbm}\forall U\in \mathcal{L}_{P,s}^\omega, \forall V\in L^2(\mathcal{F}_1)\ominus L^2(\mathcal{F}_s)L,\qquad \langle U,V\rangle=0.
 \end{equation}
 From Clarck-Ocone's formula (a slight extension of the one \cite{BianeSpeicher} with extra initial conditions),  $V\in L^2(\mathcal{F}_1)\ominus L^2(\mathcal{F}_s)$ is a stochastic integral and thus can be approximated by a sum of terms of the form $P\#(S_t-S_T)$, $t\geq T\geq s$ $P\in \C\langle S_v,v\leq T,\upsilon\rangle \otimes_{alg}  \C\langle S_v,v\leq T,\upsilon\rangle$ with $\upsilon=(\Upsilon_N)^\omega$ a finite sequence of say unitaries in $\mathcal{F}_0$. But, for such a polynomial (in abstract variables), we have (in inserting $1_{A_N}$ thanks to the case $K=2$ in \eqref{IntegralUnifBound} to use the definition of product in von Neumann algebra ultraproduct):
 $$(P(W_s^N, s\leq T,\Upsilon_N)\#(W_t^N-W_T^N))^\omega=P\#(S_t-S_T)$$ and thus the stated orthogonality is obvious from the martingale property of matrix stochastic integrals of hermitian brownian motion.
\end{enumerate}
\end{example}

Even though this is not a result about ultraproducts, we write here a substitute to Ito formula for our standard martingales. Recall that for a filtration $\mathcal{F}$, $L^2_{ad}([0,1],L^2(\mathcal{F}))$ is the set of (Bochner measurable) square integrable processes $V$ such that for Lebesgue almost all $s\in[0,1]$, $V_s\in L^2(\mathcal{F}_s)$. This result is not optimal and probably well-known to experts of (non-commutative) martingales:

\begin{proposition}\label{Ito}
Let $\mathcal{F}$ be a filtration as above,  $(L_s)_{s\in[0,1]}$ a  martingale with $L_1\in L^2(\mathcal{F}_1)$, $V\in L^2_{ad}([0,1],L^2(\mathcal{F})), Y_0\in L^2(\mathcal{F}_0)$ and $Y_s=Y_0+\int_0^sV_udu+L_u-L_0$, then :
$$||Y_t||_2^2=||Y_0||_2^2+\int_0^t2\Re\langle Y_u,V_u\rangle du+||L_t-L_0||_2^2.$$
\end{proposition}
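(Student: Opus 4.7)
The plan is to establish this martingale Itô-type identity via a Riemann-sum telescoping argument, exploiting orthogonality of martingale increments together with Cauchy-Schwarz control on the drift.

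Fix $t \in [0,1]$ and take a partition $\pi: 0 = t_0 < t_1 < \cdots < t_n = t$ of $[0,t]$ with mesh $|\pi| = \max_i (t_{i+1} - t_i)$. I would write the telescoping identity
\begin{equation*}
\|Y_t\|_2^2 - \|Y_0\|_2^2 = \sum_{i=0}^{n-1}\bigl(\|Y_{t_{i+1}}\|_2^2 - \|Y_{t_i}\|_2^2\bigr),
\end{equation*}
and expand each increment using $Y_{t_{i+1}} - Y_{t_i} = \int_{t_i}^{t_{i+1}} V_u\,du + (L_{t_{i+1}} - L_{t_i})$. This gives a decomposition of each summand into four pieces: (a) a drift-drift term $\bigl\|\int_{t_i}^{t_{i+1}} V_u\,du\bigr\|_2^2$, (b) a martingale-martingale term $\|L_{t_{i+1}} - L_{t_i}\|_2^2$, (c) a drift-martingale cross term $2\Re\langle \int_{t_i}^{t_{i+1}} V_u\,du,\, L_{t_{i+1}} - L_{t_i}\rangle$, and (d) a ``past times new increment'' term $2\Re\langle Y_{t_i}, Y_{t_{i+1}} - Y_{t_i}\rangle$, which since $Y_{t_i}\in L^2(\mathcal{F}_{t_i})$ and $L$ is a martingale collapses by the tower property to $2\Re\langle Y_{t_i}, \int_{t_i}^{t_{i+1}} V_u\,du\rangle$.

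Next I would identify the limits as $|\pi|\to 0$. Piece (b) sums exactly to $\|L_t - L_0\|_2^2$ for any partition thanks to the martingale orthogonality $\langle L_{t_{j+1}} - L_{t_j},\, L_{t_{i+1}} - L_{t_i}\rangle = 0$ for $j < i$. Piece (a) is bounded by $\sum_i (t_{i+1}-t_i)\int_{t_i}^{t_{i+1}}\|V_u\|_2^2\,du \leq |\pi|\, \|V\|_{L^2([0,t];L^2)}^2$, hence vanishes. Piece (d), rewritten as $\sum_i \int_{t_i}^{t_{i+1}} 2\Re\langle Y_{t_i}, V_u\rangle\,du$, converges to $\int_0^t 2\Re\langle Y_u, V_u\rangle\,du$ once one controls $\|Y_u - Y_{t_i}\|_2$ for $u \in [t_i, t_{i+1}]$: bounding it by $\|\int_{t_i}^u V_s\,ds\|_2 + \|L_u - L_{t_i}\|_2$, Cauchy-Schwarz against $\|V_u\|_2$ and the same mesh-times-norm estimates handle both parts, the martingale part using $\|L_u - L_{t_i}\|_2^2 = \|L_u\|_2^2 - \|L_{t_i}\|_2^2$.

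The main obstacle is the cross term (c): a priori there is no reason for $\int_{t_i}^{t_{i+1}} V_u\,du$ to be orthogonal to the martingale increment $L_{t_{i+1}} - L_{t_i}$, because $V_u$ is only adapted at time $u$, not at $t_i$. The trick is to push the inner product inside the time integral and use adaptedness of $V_u\in L^2(\mathcal{F}_u)$ together with the tower property: for $u \in [t_i, t_{i+1}]$,
\begin{equation*}
\langle V_u, L_{t_{i+1}} - L_{t_i}\rangle = \langle V_u, E_{\mathcal{F}_u}(L_{t_{i+1}}) - L_{t_i}\rangle = \langle V_u, L_u - L_{t_i}\rangle.
\end{equation*}
Applying Cauchy-Schwarz and then using $\|L_u - L_{t_i}\|_2^2 \leq \|L_{t_{i+1}}\|_2^2 - \|L_{t_i}\|_2^2$, one bounds the sum by
\begin{equation*}
\Bigl(\int_0^t \|V_u\|_2^2\,du\Bigr)^{1/2} \Bigl(|\pi|\,(\|L_t\|_2^2 - \|L_0\|_2^2)\Bigr)^{1/2},
\end{equation*}
which tends to $0$. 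Combining the four contributions yields the claimed identity.
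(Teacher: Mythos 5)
Your telescoping expansion, the orthogonality of martingale increments for piece (b), the Cauchy–Schwarz bounds for pieces (a) and (c), and the tower-property collapse of (d) are all correct, and the overall architecture is the same as the paper's (which keeps $\|Y_{t_{i+1}}-Y_{t_i}\|_2^2$ together first and decomposes it afterward, but the computation is the same). Your handling of the drift–martingale cross term (c) via $\langle V_u, L_{t_{i+1}}-L_{t_i}\rangle = \langle V_u, L_u - L_{t_i}\rangle$ and then $\|L_u - L_{t_i}\|_2^2 \le \|L_{t_{i+1}}\|_2^2 - \|L_{t_i}\|_2^2$ is a mild variant of the paper's bare Cauchy–Schwarz across the sum, but both give the $\sqrt{|\pi|}$ decay.

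The one thing you skip, which the paper addresses as its opening step, is that the right-hand side $\int_0^t 2\Re\langle Y_u, V_u\rangle\,du$ is not obviously well defined: $L$ (hence $Y$) is only assumed to be a martingale, not continuous, so $u\mapsto Y_u$ is not a priori Bochner measurable. Your Riemann sums $\sum_i \int_{t_i}^{t_{i+1}} 2\Re\langle Y_{t_i}, V_u\rangle\,du$ are each well defined, but identifying their limit as the stated integral requires knowing $Y\in L^2_{ad}([0,1],L^2(\mathcal{F}))$. The paper establishes this by observing that $\|L_u\|_2^2$ is nondecreasing (so has at most countably many jumps), that $L$ is $L^2$-continuous at every continuity point of $\|L_u\|_2^2$, hence $L$ (and $Y$) is almost surely separably valued and weakly measurable, and then invoking Pettis's theorem. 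You should add this preliminary measurability verification; the remainder of your argument is sound.
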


\begin{proof} Note that $L_u$ is bounded in $L^2$, $||L_u||_2^2$ is non-decreasing, hence has at most countably many discontinuity points, and the orthogonality $||L_u||_2^2=||L_v||_2^2+||L_u-L_v||_2^2, v<u$ implies any continuity point of $||L_u||_2^2$ is a continuity point of $L$. Hence the values at rational points and the countably many values at discontinuity points approximate any value, hence the image of $L$ or or at fortiori $Y$ is almost surely separably valued. The continuity condition also implies weak measurability hence Bochner measurability with the previous result (by Pettis Theorem). Thus  $Y\in L^2_{ad}([0,1],L^2(\mathcal{F}))$ so that the integral of the formula makes sense.

Write $Y_t=Y_0+\sum_{k=1}^n(Y_{tk/n}-Y_{t(k-1)/n})$ so that:
$$||Y_t||_2^2 =||Y_0||_2^2 + \sum_{k=1}^n||(Y_{tk/n}-Y_{t(k-1)/n})||_2^2+ 2 \sum_{k=1}^n\Re \langle (Y_{tk/n}-Y_{t(k-1)/n}), Y_{t(k-1)/n}\rangle.$$
Since $Y_t$ is adapted one can use the martingale property to note that $$\langle (Y_{tk/n}-Y_{t(k-1)/n}), Y_{t(k-1)/n}\rangle=\int_{t(k-1)/n}^{tk/n}\langle V_u,Y_{t(k-1)/n}\rangle du=\int_{t(k-1)/n}^{tk/n}\langle V_u,Y_{u}\rangle+\int_{t(k-1)/n}^{tk/n}\langle V_u,Y_{t(k-1)/n}-Y_u\rangle.$$
But we can bound using Cauchy-Schwarz inequality and the martingale property: $$\sum_{k=1}^n\left|\int_{t(k-1)/n}^{tk/n}\langle V_u,Y_{t(k-1)/n}-Y_u\rangle\right|\leq \frac{\sqrt{t}}{\sqrt{n}}\sqrt{\sum_{k=1}^n\int_{t(k-1)/n}^{tk/n}||V_u||_2^2du}\sqrt{\sum_{k=1}^n||Y_{tk/n}-Y_{t(k-1)/n}||_2^2}.$$
It remains to compute the quadratic variation term using the orthogonality of martingale increments:
\begin{align*}&\left|\sum_{k=1}^n||(Y_{tk/n}-Y_{t(k-1)/n})||_2^2-||L_t-L_0||_2^2\right|\\&
=\left|
\sum_{k=1}^n\Big\|\int_{t(k-1)/n}^{tk/n}V_udu\Big\|_2^2+2\sum_{k=1}^n\Re\langle \int_{t(k-1)/n}^{tk/n}V_udu, (L_{tk/n}-L_{t(k-1)/n})\rangle
\right|
\\&\leq \frac{t}{n}\int_{0}^{1}||V_u||_2^2du+2\frac{\sqrt{t}}{\sqrt{n}}\sqrt{\int_{0}^{1}||V_u||_2^2du}||L_t-L_0||_2\to_{n\to \infty} 0.
\end{align*}
Combining all our estimates, our first equation converges exactly to the expected equation.
\end{proof}

\section{Application of the Bou\'e-Dupuis-\"Ust\"unel Formula}   
  
In \cite{BD}, Bou\'e and Dupuis proved a formula for exponential functionals of brownian motion. They deduced from it large deviation results, and we will use an improvement with exactly the same goal.  Recall that a process on the Wiener space $\Omega=\mathbb W$
 is said progressively measurable if its restriction to $[0,t]\times \Omega$ is (jointly) measurable with respect to the canonical brownian filtration $\mathcal{F}_t$ (tensor Borel sets on $[0,t]$). We refer to \cite{BD} for the following result (and also \cite{Lehec} for an enlightening explanation).
  
 \begin{theorem}
\label{boue}
For every function $f\colon \mathbb W \to \mathbb R$ measurable and bounded 
from above, we have
\[
 -\log \left( \int_{\mathbb W} e^{-f} \ \mathrm{d} \gamma \right) = \inf_{U\in L^2_a(\gamma,\mathbb H)} \left[
 \mathbf{E}_\gamma  \left( f ( B + U) + 
  \frac{1}{2} \|U\|_{\mathbb H}^2  \right) \right] , 
   \]
where the supremum is taken over $L^2_a(\gamma,\mathbb H)$ of  all progressively measurable processes $U$  which belongs to $\mathbb H$ almost surely.
\end{theorem}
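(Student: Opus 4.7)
The plan is to prove the identity by establishing the two inequalities separately, following the classical strategy of Bou\'e--Dupuis. The key actors are Girsanov's theorem, the Donsker--Varadhan variational formula for the logarithmic moment generating function, and, for the harder direction, the martingale representation theorem applied to $M_t := \mathbf{E}_\gamma[e^{-f(B)}\mid \mathcal{F}_t]$.

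For the ``easy'' inequality
\[
 -\log \int_{\mathbb W} e^{-f}\, d\gamma \;\le\; \inf_{U\in L^2_a(\gamma,\mathbb H)} \mathbf{E}_\gamma\!\left[ f(B+U) + \tfrac{1}{2}\|U\|_{\mathbb H}^2\right],
\]
first I would reduce to $U$ satisfying Novikov's condition by truncation (replace $\dot U$ by $\dot U \,\mathbf 1_{\{|\dot U|\le n\}}$ and let $n\to\infty$; the $L^2$ term controls everything). For such $U$, set $Z = \exp\!\bigl(\int_0^1 \dot U_s \cdot dB_s - \tfrac{1}{2}\|U\|_{\mathbb H}^2\bigr)$ and $dQ = Z\,d\gamma$. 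Girsanov's theorem gives that $\tilde B = B-U$ is a Brownian motion under $Q$, and a direct computation (using that $\int \dot U\,d\tilde B$ is a $Q$-martingale) yields $H(Q\mid \gamma) = \tfrac12 \mathbf{E}_Q\|U\|_{\mathbb H}^2$. The Donsker--Varadhan / Gibbs variational formula then gives $-\log \mathbf{E}_\gamma[e^{-f}] \le \mathbf{E}_Q[f(B)] + H(Q\mid\gamma)$; rewriting $f(B) = f(\tilde B + U)$ under $Q$ and renaming $\tilde B$ as the canonical Brownian motion yields the desired bound.

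For the ``hard'' inequality, I would construct an explicit near-minimizer from the martingale $M_t = \mathbf{E}_\gamma[e^{-f(B)}\mid \mathcal{F}_t]$. Since $f$ is bounded from above, $M_t$ is bounded below by a positive constant, so $\log M_t$ is well-defined. By Clark--Ocone (or martingale representation), write $dM_t = \xi_t\cdot dB_t$ with $\xi$ adapted; set $u_t := -\xi_t/M_t$ and $U_t := \int_0^t u_s\,ds$. Applying It\^o to $-\log M_t$ yields the pathwise identity
\[
 f(B) \;=\; -\log M_0 \;+\; \int_0^1 u_s \cdot dB_s \;+\; \tfrac{1}{2}\int_0^1 |u_s|^2\, ds.
\]
Now change measure by $dQ/d\gamma = \exp(-\int u\cdot dB - \tfrac12\int |u|^2\,ds)$, so that $W := B+U$ is a Brownian motion under $Q$. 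Taking $Q$-expectation of the identity above (the stochastic integral becomes a $Q$-martingale after rewriting $dB = dW - u\,ds$) gives
\[
 -\log \mathbf{E}_\gamma[e^{-f}] \;=\; -\log M_0 \;=\; \mathbf{E}_Q\!\left[f(W-U) + \tfrac{1}{2}\|U\|_{\mathbb H}^2 + (\text{terms with zero $Q$-expectation})\right].
\]
Relabeling $W$ as the canonical Brownian motion and expressing $U$ (now a functional of $W$ via the SDE $B = W - U(B)$) as an element $\hat U \in L^2_a(\gamma,\mathbb H)$, this shows that the infimum on the right-hand side is at most $-\log \int e^{-f} d\gamma$.

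The main technical obstacle is the rigorous justification of the change of measure and of the martingale property of the stochastic integrals under $Q$: one needs to know that $u$ satisfies enough integrability so that Girsanov applies and so that $\int_0^1 u\cdot dB$ is genuinely a martingale rather than only a local martingale. This is handled by first proving the formula for bounded $f$ with bounded, ``smooth'' minimizers (where Novikov is immediate), then extending to $f$ merely bounded from above by monotone convergence of the $\log$-Laplace transform and by approximating a general admissible $U$ by truncations $U^{(n)}$ whose $\mathbb H$-norms are bounded. This is exactly the argument carried out in \cite{BD}, to which we ultimately appeal.
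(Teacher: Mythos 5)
The paper does not prove this statement at all: it is quoted verbatim as the Bou\'e--Dupuis theorem with a reference to \cite{BD} (and \cite{Lehec} for an expository account), so there is no internal proof to compare yours against. Your sketch reproduces the standard Bou\'e--Dupuis/F\"ollmer-drift argument faithfully, and your closing remark that the full rigour is deferred to \cite{BD} is appropriate.

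One small caution should you ever write this out in full: in the ``easy'' direction, passing from $\mathbf{E}_Q[f(\tilde B + U)]$ with $\tilde B$ a $Q$-Brownian motion to $\mathbf{E}_\gamma[f(B + U)]$ by ``renaming $\tilde B$'' silently assumes that $U$ can be read off as a progressively measurable functional of $\tilde B$, which requires a strong solution of the SDE $B = \tilde B + U(B)$ and fails for a general $U \in L^2_a(\gamma,\mathbb H)$. The cleanest fix is not the Novikov truncation you mention but rather to take $P$ to be the law of $B+U$ under $\gamma$ and bound $H(P\mid\gamma)$ by the data-processing inequality applied to the map $B\mapsto B+U$ comparing $\gamma$ with the Girsanov measure under which $B+U$ is Brownian; this gives $H(P\mid\gamma)\le\tfrac12\mathbf{E}_\gamma\|U\|_{\mathbb H}^2$ directly without any invertibility assumption. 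The ``hard'' direction as you wrote it is fine, because there the drift $u_t = -\xi_t/M_t$ is constructed directly from the martingale $M_t$ and is automatically a functional of the new Brownian motion $W$ after the change of measure.
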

  
The boundedness assumption will be annoying for our purposes since the typical convex functions we considered on matrix hermition brownian motion 
$\mathcal{E}^{1,1}_{app}( \mathcal{T}_{2,0}(\mathcal{F}^m_{1}*\mathcal{F}^\nu_{\mu}),d_{2,0})$
 are only subquadratic. Fortunately, \"Ust\"unel extended recently this formula to a wider class of functionals \cite{Ustunel}. He studied the class of functionals satisfying this theorem under the name ``tame functionals''.
 
 \begin{definition}
A measurable  map $f: W\to \R\cup\{\infty\}$, with the property
$E_\gamma[(1+|f|)e^{-f}]<\infty$,  is called a {\bf{tamed functional}} if 
\[
-\log \left( \int_{\mathbb W} e^{-f} \ \mathrm{d} \gamma \right)=\inf_{U\in L^2_a(\gamma,\mathbb H)}\left[\mathbf{E}_\gamma  \left(
  [f(B+U)+\frac{1}{2}\|U\|_{\mathbb H}^2\right)\right]
\]
\end{definition}
  
The result we need is:
  \begin{theorem}[Theorem 7 in \cite{Ustunel}]
\label{ustunel}
Every measurable function $f\colon \mathbb W \to \mathbb R$ such that $f\in L^p(\gamma)$ and $e^{-f}\in L^q(\gamma)$ with $\frac{1}{p}+\frac{1}{q}=1$ is a tame functional.\end{theorem}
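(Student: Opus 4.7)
The plan is to reduce to the case of bounded-above functions (where Theorem~\ref{boue} applies) by truncation, and then pass to the limit exploiting the $L^p$-$L^q$ duality. The basic integrability $(1+|f|)e^{-f} \in L^1(\gamma)$ required to even define a tamed functional is immediate from H\"older's inequality: $\|(1+|f|)e^{-f}\|_{L^1} \leq \|1+|f|\|_{L^p}\|e^{-f}\|_{L^q} < \infty$.

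For the inequality $-\log E_\gamma[e^{-f}] \leq \inf_U E_\gamma[f(B+U) + \tfrac{1}{2}\|U\|_{\mathbb H}^2]$, set $f_n = f \wedge n$, which is bounded above by $n$, and apply Theorem~\ref{boue}: $-\log E[e^{-f_n}] = \inf_U E[f_n(B+U) + \tfrac{1}{2}\|U\|^2]$. Monotone convergence yields $-\log E[e^{-f_n}] \uparrow -\log E[e^{-f}]$, while $f_n \leq f$ gives $-\log E[e^{-f_n}] \leq \inf_U E[f(B+U) + \tfrac{1}{2}\|U\|^2]$, and letting $n \to \infty$ finishes this direction.

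For the opposite inequality, construct an explicit optimizer. Let $M_t = E[e^{-f(B)} \mid \mathcal{F}_t]$, a positive uniformly integrable martingale (positivity since $f < \infty$ a.s.\ from $f\in L^p$). Write $dM_t = \theta_t\, dB_t$ by martingale representation, and set $\dot U^*_s = -\theta_s/M_s$, $U^*_t = \int_0^t \dot U^*_s\, ds$. It\^o's formula applied to $-\log M_t$ (after localization) yields the pathwise identity $f(B) = -\log E[e^{-f}] + \int_0^1 \dot U^*_s\, dB_s + \tfrac{1}{2}\|U^*\|^2$, $\gamma$-a.s. The Girsanov tilt $d\tilde\gamma/d\gamma = e^{-f(B)}/E[e^{-f}]$ turns $\tilde B_t := B_t + U^*_t$ into a Brownian motion under $\tilde\gamma$; substituting $dB = d\tilde B - \dot U^*\, ds$ into the It\^o identity and taking $\tilde\gamma$-expectation produces $E_{\tilde\gamma}[f(B) + \tfrac{1}{2}\|U^*\|^2] = -\log E[e^{-f}]$. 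Since $\tilde B$ under $\tilde\gamma$ has the same law as $B$ under $\gamma$, the causal bijection between $\mathcal{F}^B$-adapted and $\mathcal{F}^{\tilde B}$-adapted processes transfers this to an admissible $\gamma$-adapted control $V^*$ satisfying $E_\gamma[f(B+V^*) + \tfrac{1}{2}\|V^*\|^2] = -\log E[e^{-f}]$, thus attaining the infimum.

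The main obstacle is the interplay between integrability and stochastic calculus: justifying that the stochastic integrals above are true (not merely local) martingales, that expectations can be exchanged, and that the relevant expressions are finite. The $L^p$-$L^q$ pairing with $1/p + 1/q = 1$ is tailored for exactly this: it yields, for instance, $E_{\tilde\gamma}[|f|] = E_\gamma[|f|e^{-f}]/E[e^{-f}] \leq \|f\|_p\|e^{-f}\|_q / E[e^{-f}] < \infty$, and similar controls using $(1+|f|)e^{-f}$ as majorant. Rigorously, one localizes at $\tau_n = \inf\{t : M_t \notin [1/n,n]\}\wedge 1$, establishes the identities up to $\tau_n$, and passes to the limit via dominated convergence. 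A useful cross-check: taking $\gamma$-expectation of the It\^o identity gives $E\|U^*\|^2 = 2(-\log E[e^{-f}] - E[f])$, finite because $f \in L^1(\gamma)$, which confirms $U^* \in L^2_a(\gamma, \mathbb H)$.
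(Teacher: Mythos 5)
The paper itself does not prove this result; it is quoted verbatim as Theorem~7 of \cite{Ustunel}, so there is no paper proof to compare against. Evaluating your reconstruction on its own merits: the strategy (truncation plus Bou\'e--Dupuis for the upper bound, F\"ollmer drift plus Girsanov for the lower bound) is the standard route, consistent with the structure of \"Ust\"unel's Theorem~11 (= Theorem~\ref{conv-1} here), and the outline is sound. Direction~1 is essentially fine: one small point worth making explicit is that $f_n^- = f^-$, so if $E[f^-(B+U)]=\infty$ for some $U$ the bounded-case equality would force $-\log E[e^{-f_n}]=-\infty$, a contradiction; this removes any ambiguity in passing from $f_n$ to $f$.

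Direction~2 has real gaps. First, your ``cross-check'' is both wrong in sign and aimed at the wrong quantity. Taking $\gamma$-expectation of the It\^o identity $f(B)=-\log E[e^{-f}]+\int\dot U^*\,dB+\tfrac12\|U^*\|^2$ gives $E_\gamma\|U^*\|^2 = 2\bigl(E_\gamma[f]+\log E[e^{-f}]\bigr)$, which is nonnegative by Jensen; your formula $2(-\log E[e^{-f}]-E[f])$ is $\le 0$ and cannot equal $E_\gamma\|U^*\|^2$. More seriously, the admissibility of the transferred control $V^*$ requires $E_\gamma\|V^*\|^2 = E_{\tilde\gamma}\|U^*\|^2 = 2D(\tilde\gamma\|\gamma) = -2\bigl(E_{\tilde\gamma}[f]+\log E[e^{-f}]\bigr)$, finite precisely by your H\"older bound $E_{\tilde\gamma}[|f|]<\infty$; the quantity $E_\gamma\|U^*\|^2$ is not what needs to be controlled. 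Moreover, deriving either expectation identity already presupposes the relevant stochastic integral is a true martingale, so the ``cross-check'' is circular; the genuine a~priori bound must come from the localization, and passing to the limit requires uniform integrability of $-\log M_{\tau_n}$, which is where the $L^p$--$L^q$ hypotheses enter (the positive part is dominated by the UI martingale $E[f^+\mid\mathcal F_t]$ via conditional Jensen using $f\in L^1$, the negative part by $q^{-1}(M^*)^q$ via Doob using $e^{-f}\in L^q$). Second, ``the causal bijection between $\mathcal F^B$-adapted and $\mathcal F^{\tilde B}$-adapted processes'' is invoked as a black box, but the a.s.~invertibility of $\omega\mapsto\omega+U^*(\omega)$ (equivalently, the existence of a strong solution to the associated SDE, which is precisely the content of \"Ust\"unel's Theorem~11 in the convex case) is the crux of the entire argument and is not elementary; without it the transferred control $V^*$ is not known to be adapted to the original Brownian filtration, so the infimum is not shown to be attained.
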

  This result can straightforwardly be applied to hermitian brownian motions and functionals $W\mapsto f(\tau_{W,\Upsilon_N})$ with $f\in\mathcal{E}
  ( \mathcal{T}_{2,0}(\mathcal{F}^m_{1}*\mathcal{F}^\nu_{\mu}),d_{2,0}) 
  .$ We will develop in the next section a more explicit formulation in this case, better suited for ultraproducts techniques. 
  
We will also need the following consequence when we apply it to a brownian motion on $[t,1]$  for $t\in ]0,1[$. Let $\mathbb W_{[0,t]},  \mathbb W_{[t,1]}$ the spaces of continuous functions starting at zero and $\gamma_{[0,t]},\gamma_{[t,1]}$ the standard Wiener measure on them (for the second, we write $(B-B_t)$ the brownian variable for consistency). For $\omega\in \mathbb W_{[0,t]}, \nu\in  \mathbb W_{[t,1]}$, there is a process $\omega+\nu\in \mathbb W$ such that $(\omega+\nu)_s=\omega_s$ for $s\leq t$ and $(\omega+\nu)_s=\omega_t+\nu_s$ for $s\geq t$. Of course we also use the ordinary sum with same notation (for instance in convex combinations) and the reader will understand the meaning depending on the context. For $f:\mathbb W\to \R$ measurable, $\nu\mapsto f(\omega+\nu)$ defines a measurable function.
 Recall that we defined $\alpha$-convex functions at the end of  subsection 2.3.
\begin{corollary}\label{UstunelConditional}
Fix $t\in ]0,1[$ and a measurable function $f\colon \mathbb W \to \mathbb R$ such that $f\in L^p(\gamma)$ and $e^{-f}\in L^q(\gamma)$ with $\frac{1}{p}+\frac{1}{q}=1$, then for $\gamma_{[0,t]}$ almost all $\omega\in   \mathbb W_{[0,t]}$ we have the equality :
\begin{align} \label{UstunelCond} \begin{split}\lambda_t(f)(\omega)&:=-\log \left( \int_{\mathbb W_{[t,1]}} e^{-f(\omega+\nu)} \ \mathrm{d} \gamma_{[t,1]}(\nu) \right)\\&=\inf_{U\in L^2_a(\gamma_{[t,1]},\mathbb H)}\left[\mathbf{E}_{\gamma_{[t,1]}}  \left(
  [f(\omega+(B-B_t)+U)+\frac{1}{2}\|U\|_{\mathbb H}^2\right)\right].\end{split}\end{align}
  Moreover the condition holds for every $\omega \in \mathbb W_{[0,t]}$ if $e^{-f(\omega+.)}\in L^q(\gamma_{[t,1]})$ and $f(\omega+.)\in L^p(\gamma_{[t,1]})$ for every $\omega\in \mathbb W_{[0,t]}.$ 
As a consequence, in this case, if  $f$ is moreover convex or $\alpha$-convex for $\alpha\leq 1$, so is $\lambda_t(f).$
\end{corollary}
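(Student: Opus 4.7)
The plan is to reduce the corollary to the unconditional formula of Theorem \ref{ustunel}, applied to the ``section'' $g_\omega(\nu) := f(\omega+\nu)$ on the Wiener space $\mathbb W_{[t,1]}$. The canonical identification $\mathbb W \simeq \mathbb W_{[0,t]} \times \mathbb W_{[t,1]}$ (sending $(\omega,\nu)$ to $\omega+\nu$ in the paper's notation) sends Wiener measure to the product $\gamma_{[0,t]} \otimes \gamma_{[t,1]}$, so by Fubini the assumptions $f \in L^p(\gamma)$, $e^{-f}\in L^q(\gamma)$ translate to: for $\gamma_{[0,t]}$-a.e.\ $\omega$, $g_\omega \in L^p(\gamma_{[t,1]})$ and $e^{-g_\omega}\in L^q(\gamma_{[t,1]})$. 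For such $\omega$, Theorem \ref{ustunel} applied on $\mathbb W_{[t,1]}$ with canonical process $(B-B_t)$ yields exactly \eqref{UstunelCond}. The second assertion is immediate: if the integrability hypotheses hold for every $\omega$, the same application of Theorem \ref{ustunel} gives \eqref{UstunelCond} for every $\omega$.

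For the last (convexity) assertion, I work under the pointwise integrability hypothesis and use the variational representation. Fix $\omega\in\mathbb W_{[0,t]}$, $h_1,h_2\in\mathbb H_{[0,t]}$, $\theta\in[0,1]$, and set $h_\theta = \theta h_1+(1-\theta)h_2$. Given $\epsilon>0$, choose controls $U_i^*\in L^2_a(\gamma_{[t,1]},\mathbb H_{[t,1]})$ that are $\epsilon$-optimal for $\lambda_t(f)(\omega+h_i)$, and test the variational formula at $\omega+h_\theta$ using the affine combination $U_\theta = \theta U_1^*+(1-\theta)U_2^*$. Rewriting the argument of $f$ as
\[ \omega + h_\theta + (B-B_t) + U_\theta \;=\; \bigl(\omega + (B-B_t)\bigr) + \theta(h_1+U_1^*) + (1-\theta)(h_2+U_2^*), \]
with $H_i := h_i + U_i^* \in \mathbb H$ almost surely (using the orthogonal decomposition $\mathbb H = \mathbb H_{[0,t]} \oplus \mathbb H_{[t,1]}$), I apply the $\alpha$-convexity of $f$ in the $\mathbb H$-direction and take expectation.

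Combining the resulting bound with the parallelogram identities $\|h_\theta\|^2 = \theta\|h_1\|^2+(1-\theta)\|h_2\|^2-\theta(1-\theta)\|h_1-h_2\|^2$ and $E\|U_\theta\|^2 = \theta E\|U_1^*\|^2+(1-\theta)E\|U_2^*\|^2-\theta(1-\theta)E\|U_1^*-U_2^*\|^2$, and using the orthogonality $E\|H_1-H_2\|^2_{\mathbb H} = \|h_1-h_2\|^2 + E\|U_1^*-U_2^*\|^2$, one obtains
\[ \lambda_t(f)(\omega+h_\theta) + \tfrac{\alpha}{2}\|h_\theta\|^2 \;\leq\; \theta\bigl[\lambda_t(f)(\omega+h_1)+\tfrac{\alpha}{2}\|h_1\|^2\bigr] + (1-\theta)\bigl[\lambda_t(f)(\omega+h_2)+\tfrac{\alpha}{2}\|h_2\|^2\bigr] + \theta(1-\theta)\tfrac{\alpha-1}{2}E\|U_1^*-U_2^*\|^2 + 2\epsilon. \]
Since $\alpha\leq 1$, the $E\|U_1^*-U_2^*\|^2$ coefficient is non-positive, so dropping that term and letting $\epsilon\to 0$ yields the $\alpha$-convexity of $\lambda_t(f)$ in $\mathbb H_{[0,t]}$-directions; the plain convex case is $\alpha=0$.

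The main technical subtlety is that the $\alpha$-convexity of $f$ is only an a.s.\ statement with respect to $\gamma$, while to apply it pointwise at $y=\omega+(B-B_t)$ for an arbitrary fixed $\omega$ one needs the inequality to hold on a $\gamma_{[t,1]}$-full subset of the $\omega$-section. This is handled by Fubini at the price of having the convexity of $\lambda_t(f)$ hold only for $\gamma_{[0,t]}$-a.e.\ $\omega$, which is compatible with the $L^0$-valued notion of convexity used in subsection~2.3; the remaining arguments above work verbatim under this almost-everywhere interpretation.
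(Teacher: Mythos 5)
Your proof is correct and takes essentially the same route as the paper's: Fubini reduces the a.e.\ statement to an application of Theorem \ref{ustunel} on $\mathbb W_{[t,1]}$, and the convexity assertion is obtained by testing the variational formula at $\omega+h_\theta$ with the affine combination of near-optimal controls. The paper's bookkeeping splits $\tfrac{1}{2}\|U\|^2_{\mathbb H}$ into $\tfrac{\alpha}{2}\|U\|^2_{\mathbb H}+\tfrac{1-\alpha}{2}\|U\|^2_{\mathbb H}$, recombines the $\tfrac{\alpha}{2}$-part with $\tfrac{\alpha}{2}\|\lambda V_1+(1-\lambda)V_2\|^2$ via $\|W_i\|^2_{\mathbb H}=\|V_i\|^2_{\mathbb H}+\|U_i\|^2_{\mathbb H}$, and then invokes convexity of $F(x,\cdot)$ and of $\tfrac{1-\alpha}{2}\|\cdot\|^2_{\mathbb H}$; you make the same algebra explicit through parallelogram identities and tracking the sign of $\tfrac{\alpha-1}{2}E\|U_1^*-U_2^*\|^2_{\mathbb H}$. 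Same estimate, two presentations.

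One small imprecision: the corollary distinguishes \emph{plain convexity} of $f$ as a function on $\mathbb W$ from $\alpha$-convexity, which concerns only translations in $\mathbb H$-directions. Your argument handles the $\alpha$-convex case correctly, but ``the plain convex case is $\alpha=0$'' only yields $0$-convexity, i.e.\ convexity of $\lambda_t(f)$ along $\mathbb H_{[0,t]}$-directions, which is strictly weaker than convexity on $\mathbb W_{[0,t]}$. For plain convexity you should run the identical argument with $\omega_1,\omega_2\in\mathbb W_{[0,t]}$ arbitrary (not required to differ by an $\mathbb H_{[0,t]}$-element), applying convexity of $f$ and of $\tfrac12\|\cdot\|^2_{\mathbb H}$ and dropping all quadratic correction terms; this is what the paper does in its first display and it is what Proposition \ref{OptimalValueFct} actually uses downstream.
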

\begin{proof}
By Fubuni theorem, since $\mathbf{E}_\gamma(e^{-qf})=\int d\gamma_{[0,t]}(\omega)\int d\gamma_{[t,1]}(\nu)e^{-qf(\omega+\nu)}$, we have for $\gamma_{[0,t]}$ almost all $\omega$  , $e^{-f(\omega+.)}\in L^q(\gamma_{[t,1]})$ and similarly $f(\omega+.)\in L^p(\gamma_{[t,1]})$; The formula is thus deduced from \"Ust\"unel's theorem. For the convexity result, only take $U_1,U_2\in L^2_a(\gamma_{[t,1]},\mathbb H)$, and note that by convexity of $f$ and $|.|_H^2$, one gets for $\lambda\in [0,1]$: \begin{align*}\mathbf{E}_{\gamma_{[t,1]}}  &\left(
  [f(\lambda\omega_1+(1-\lambda)\omega_2+(B-B_t)+\lambda U_1+(1-\lambda)U_2)+\frac{1}{2}\|\lambda U_1+(1-\lambda)U_2)\|_{\mathbb H}^2\right)\\&\leq \lambda\left[\mathbf{E}_{\gamma_{[t,1]}}  \left(
  [f(\omega_1+(B-B_t)+U_1)+\frac{1}{2}\|U_1\|_{\mathbb H}^2\right)\right]\\&+(1-\lambda)\left[\mathbf{E}_{\gamma_{[t,1]}}  \left(
  [f(\omega_2+(B-B_t)+U_2)+\frac{1}{2}\|U_2\|_{\mathbb H}^2\right)\right].\end{align*}
  This concludes to the convexity once taken various infima.
For the $\alpha$-convexity result take $\omega_1=\omega+V_1,  \omega_2=\omega+V_2$ with $V_i\in\mathbb H_{[0,t]}$ and write $W_i=V_i+U_i$ for the $\mathbb H_{[0,1]}$ valued process with the sum with successive time introduced before the lemma and obtain the similar bound (based on $\alpha\leq 1$), $\alpha$-convexity of $f$,$ \|W_i\|_{\mathbb H}^2=\|V_i\|_{\mathbb H}^2+\|U_i\|_{\mathbb H}^2$ and convexity of $\frac{1}{2}\|.\|_{\mathbb H}^2$:
  \begin{align*}&\frac{\alpha}{2}\|\lambda V_1+(1-\lambda)V_2\|_{\mathbb H}^2\\&+\mathbf{E}_{\gamma_{[t,1]}}  \left(
  [f(\lambda\omega_1+(1-\lambda)\omega_2+(B-B_t)+\lambda U_1+(1-\lambda)U_2)+\frac{1}{2}\|\lambda U_1+(1-\lambda)U_2)\|_{\mathbb H}^2\right)
  \\&=\mathbf{E}_{\gamma_{[t,1]}}  \left(
  [f(B+\lambda W_1+(1-\lambda)W_2)+\frac{\alpha}{2}\|\lambda  W_1+(1-\lambda)W_2\|_{\mathbb H}^2+\frac{1-\alpha}{2}\|\lambda U_1+(1-\lambda)U_2)\|_{\mathbb H}^2\right)
  \\&\leq \lambda\left[\frac{\alpha}{2}\|V_1\|_{\mathbb H}^2+\mathbf{E}_{\gamma_{[t,1]}}  \left(
  [f(\omega_1+(B-B_t)+U_1)+\frac{1}{2}\|U_1\|_{\mathbb H}^2\right)\right]\\&+(1-\lambda)\left[\frac{\alpha}{2}\|V_2\|_{\mathbb H}^2+\mathbf{E}_{\gamma_{[t,1]}}  \left(
  [f(\omega_2+(B-B_t)+U_2)+\frac{1}{2}\|U_2\|_{\mathbb H}^2\right)\right].\end{align*}
   This concludes to the $\alpha$-convexity once taken various infima.
\end{proof}
  
The use of $\lambda_s$ is standard in optimal control (cf. e.g. \cite{FS}) to solve the minimization problem in Bou\'e-Dupuis-\"Ust\"unel formula. It has a (non-linear) semigroup property in the form $\lambda_s(\lambda_{s+t}(f))=\lambda_s(f),$ for $s,t>0$, $s+t<1$. We gather this and several basic properties in our next result.

We first fix some preliminary notation. Let $g\in C^0(\R^{dk},\R)$ a continuous function and $t_1<...<t_k\in [0,1]$. Let $J_{t_1,...,t_k}:\mathbb W\to \R^{dk}$ such that $J_{t_1,...,t_k}(\omega)=(\omega_{t_1},...,\omega_{t_k}).$ We let  $\mathcal{E}(\R^{dk})$  the set of convex  continuous functions $g$, bounded from below  and such that \begin{equation}\label{SubquadE} g(x_1,...,x_k)\leq c\left(d+||x||^{2}\right),\end{equation} 

(as usual we wrote $||x||^2=\sum_{i=1}^k\sum_{j=1}^d(|x_i|^{(j)})^2$ the euclidean norm) and with the following local lisphitzness condition for the euclidean norm on $\R^{dk}$, with some $C\geq 1,D\geq 0$ such that for all $x,y$: 
\begin{equation}\label{lipE}g(x)\leq g(y)+ (C ||y||^{1}+C||x||^{1}+D\sqrt{d})||x-y||\end{equation}

For $\alpha\in [1,2]$, we call $\mathcal{E}_\alpha(\R^{dk})$  the subset of $\mathcal{E}(\R^{dk})$ such that there exists $C_\alpha, D_\alpha>0$ with for all $x,y\in \R^{dk}, t\in [0,1]$:
\begin{equation}\label{CalphaE}tg(x+(1-t)y)+(1-t)g(x-ty)-g(x)\leq  d^{1-\alpha/2}(C_\alpha +D_\alpha \frac{||x||+||x+(1-t)y||+||x-ty||}{\sqrt{d}})t(1-t)||y||^{\alpha}.\end{equation}
Note that in the  convex case, the left hand side is positive. The fact that one can only obtain one sided bounds for second order difference quotients of value functions is standard in optimal control (see e.g. \cite[section IV.9]{FS}). By 
 proposition \ref{C1alphaBall}, this class of functions are G\^ateaux-differentiable if $\alpha>1$ so that we can write $D^y_Hg(x_1,...,x_{n-1},y)=\sum_{i=1}^d \frac{\partial}{\partial y^{(j)}}g(x_1,...,x_{n-1},y)H_j$ for $H\in \R^d.$
Note also we made appear the dimension $d$ explicitly  since we will use later families of models where the constants involved will be dimension independent once used the conventions above. 


\begin{proposition}\label{OptimalValueFct}Let $t_0=0<t_1<...<t_k\in [0,1]$, $g\in \mathcal{E}(\R^{dk}),
$ and $f =g\circ J_{t_1,...,t_k}
:\mathbb W\to \R$, then \eqref{UstunelCond} holds for every $t\in [0,1]$ and $\omega \in \mathbb W_{[0,t]}.$ Moreover, if $t_i<t\leq t_{i+1}$, then there is $h_{t}\in \mathcal{E}
(\R^{d(i+1)})
$ with $\lambda_t(f)=h_{t}\circ J_{t_1,...,t_i,t}
, $ and for all $s,t>0, s+t<1:$
\[\lambda_s(\lambda_{s+t}(f))=\lambda_s(f).\]
Moreover, there are constants $c_1,d_1>0$ such that for all $t,t+s\in ]t_i,t_{i+1}], s>0$ we have :
\begin{equation}\label{TimeBound}| h_{t}(x_1,...,x_i,x)-h_{t+s}(x_1,...,x_i,x)|\leq \sqrt{s}(c_1||(x_1,...,x_i,x)||^2+d_1(d+\sup(-g)))\end{equation}
and for $t=t_i,s>0,t+s\in ]t_i,t_{i+1}]$,
\begin{align*}&| h_{t}(x_1,...,x_i)-h_{t+s}(x_1,...,x_i,x)|\\&\leq \sqrt{(c_1||(x_1,...,x_i,x)||^2+d_1(d+\sup(-g)))}\left(\sqrt{s(c_1||(x_1,...,x_i,x)||^2+d_1(d+\sup(-g)))}+ ||x_i-x||\right)\end{align*}
Finally, for $\alpha\in [1,2],$ if $g\in \mathcal{E}_\alpha(\R^{dk})$ so is $h_{t}\in \mathcal{E}_\alpha
(\R^{d(i+1)})$, and if $\alpha>1,$ $h_{t}$ is G\^ateaux-differentiable and if $D_\alpha=0$ for $g$ 
we can take $D_\alpha=0$ for $h_{t}$, and  for all
$||H||=1, H\in \R^d$, the following bound on the directional derivative holds: 
\begin{align*}&\left|D^y_Hh_t(x_1,...,x_i,y)-D^y_Hh_{t+s}(x_1,...,x_i,y)\right|\leq  s^{(\alpha-1)/2}\sqrt{d}
\left(c_1+d_1\frac{||(x_1,...,x_i,y)||^2+\sup(-g)}{d}\right).\end{align*}

\end{proposition}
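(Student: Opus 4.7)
The plan is to build everything from the explicit representation
\[\lambda_t(f)(\omega) = -\log E_{\gamma_{[t,1]}}\bigl[e^{-g(J_{t_1,\ldots,t_k}(\omega+\nu))}\bigr]\]
together with the variational characterization of Corollary \ref{UstunelConditional}. Since $g$ is bounded below and subquadratic by \eqref{SubquadE}, for every $\omega\in\mathbb W_{[0,t]}$ the function $\nu\mapsto f(\omega+\nu)$ lies in every $L^p(\gamma_{[t,1]})$ and $e^{-f(\omega+\cdot)}$ lies in every $L^q$, so the corollary applies pointwise in $\omega$, yielding \eqref{UstunelCond}. For $t_i<t\le t_{i+1}$, the integrand depends on $\nu$ only through $\nu_{t_j}-\nu_t$ for $j>i$, whose joint law is a fixed centered Gaussian independent of $\omega$; hence $\lambda_t(f)(\omega)=h_t(\omega_{t_1},\ldots,\omega_{t_i},\omega_t)$ with
\[h_t(y_1,\ldots,y_i,y)=-\log E\bigl[e^{-g(y_1,\ldots,y_i,\,y+Z_{i+1},\ldots,y+Z_k)}\bigr],\]
where $Z_j=B_{t_j-t}$ comes from a standard $\R^d$-Brownian motion.

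Verifying $h_t\in\mathcal E(\R^{d(i+1)})$ is then direct: convexity follows from Corollary \ref{UstunelConditional}; the lower bound is $h_t\ge\inf g\ge-\sup(-g)$; the subquadratic growth \eqref{SubquadE} and local Lipschitz bound \eqref{lipE} are obtained by plugging the zero control into the variational formula and using the corresponding bounds on $g$ together with Gaussian moment estimates, together with Cameron-Martin shifts to change the first $i$ coordinates. Preserving the $\alpha$-paraconvexity \eqref{CalphaE} is more delicate and uses the H\"older-type inequality $E[e^{-tA-(1-t)B}]\le E[e^{-A}]^t E[e^{-B}]^{1-t}$: applying this to $A=g(Tx+(1-t)T\eta+Z')$, $B=g(Tx-tT\eta+Z')$ (where $T$ is the linear map sending $(y_1,\ldots,y_i,y)$ to the argument of $g$ above) combined with \eqref{CalphaE} for $g$ pointwise in $Z'$ and then Jensen for $-\log E_\mu[e^{-\cdot}]\le E_\mu[\cdot]$ against the Gibbs measure $\mu\propto e^{-g(Tx+Z')}d\gamma(Z')$ produces the required inequality, where the $Z'$-dependent norm in $N(x,\eta)$ is controlled by Brascamp-Lieb (the measure $\mu$ is log-concave, so its variance is dominated by that of $\gamma$). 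The semigroup property $\lambda_s(\lambda_{s+t}(f))=\lambda_s(f)$ is then routine from Fubini and the fact that the sum-with-successive-time concatenation of two independent Brownian pieces is again a standard Brownian motion on $[s,1]$.

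For the time regularity \eqref{TimeBound} on an interval $(t_i,t_{i+1}]$, the semigroup identity combined with the factorization gives the heat-type formula
\[h_t(y,x)=-\log E\bigl[e^{-h_{t+s}(y,\,x+\sqrt{s}\,\xi)}\bigr],\qquad \xi\sim\mathcal N(0,I_d).\]
Jensen's inequality $-\log E[e^{-X}]\le E[X]$ together with the local Lipschitz bound on $h_{t+s}$ already established yields one direction of the estimate; the reverse direction uses convexity of $h_{t+s}$ in its last argument (giving $h_t(y,x)\ge h_{t+s}(y,x)$ via Jensen applied to $e^{-h_{t+s}(y,x+\sqrt s\xi)}\le e^{-h_{t+s}(y,x)-\sqrt s\xi\cdot\nabla h}$ and Gaussian integration), and a Cauchy-Schwarz absorption then puts everything in the asserted $\sqrt s(c_1\|(y,x)\|^2+d_1(d+\sup(-g)))$ form. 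At the boundary $t=t_i$ the heat-type identity reads $h_{t_i}(y_1,\ldots,y_i)=-\log E[e^{-h_{t_i+s}(y_1,\ldots,y_i,y_i+\sqrt s\,\xi)}]$; subtracting $h_{t_i+s}(y_1,\ldots,y_i,x)$ and applying Jensen together with local Lipschitzness of $h_{t_i+s}$ in its last coordinate produces two Lipschitz-style terms, one of order $|y_i-x|$ and one of order $\sqrt{sd}$, giving the alternative bound.

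For the gradient H\"older-in-time bound, the $\mathcal E_\alpha$ hypothesis combined with convexity gives, via Proposition \ref{C1alphaBall}, that $h_{t+s}$ is $C^{1,\alpha-1}$ in its last coordinate with constant of order $d^{1-\alpha/2}(C_\alpha+D_\alpha\cdot N)$. Differentiating the heat-type identity in $y$ under the integral produces the Gibbs-average formula
\[D^y_Hh_t(y_1,\ldots,y_i,y)=E_{\mu}\bigl[D^y_Hh_{t+s}(y_1,\ldots,y_i,\,y+\sqrt s\,\xi)\bigr],\qquad \mu\propto e^{-h_{t+s}(\cdots)}d\gamma(\xi),\]
so that $D^y_Hh_t-D^y_Hh_{t+s}=E_\mu[D^y_Hh_{t+s}(\cdot,y+\sqrt s\xi)-D^y_Hh_{t+s}(\cdot,y)]$. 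The $C^{1,\alpha-1}$ bound gives the pointwise estimate $\le C_{h_{t+s}}(\sqrt s|\xi|)^{\alpha-1}$, contributing a factor $s^{(\alpha-1)/2}$, while the remaining Gibbs moment $E_\mu[|\xi|^{\alpha-1}]\le E_\mu[|\xi|^2]^{(\alpha-1)/2}\lesssim d^{(\alpha-1)/2}$ (controlled using that $\mu$ is log-concave and not far from $\gamma$ via the Lipschitzness of $h_{t+s}$) combines with the $d^{1-\alpha/2}$ pre-factor to yield the stated $s^{(\alpha-1)/2}\sqrt d$ scaling; the norm-dependent terms are absorbed into the $c_1+d_1(\|\cdot\|^2+\sup(-g))/d$ form by Young's inequality. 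The hardest step throughout will be keeping all constants $d$-homogeneous in the specific form asked, so that the later application to $d=N^2m$-dimensional matrix spaces gives bounds uniform in $N$; this requires careful propagation of the H\"older-and-Jensen arguments in the $\alpha$-paraconvexity inheritance and especially disciplined control of the Gibbs-to-Gaussian change of measure in the gradient step, without picking up uncontrolled dimensional factors.
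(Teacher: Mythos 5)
Your overall architecture (explicit $-\log E$ representation, semigroup property, then regularity transfer from $g$ to $h_t$) matches the paper's, but you propose a genuinely different mechanism for transferring the regularity: where you invoke the heat/Gibbs representation $h_t(y,x)=-\log E[e^{-g(\ldots)}]$ together with Jensen against the Gibbs measure $\mu\propto e^{-g(Tx+Z')}d\gamma(Z')$ (and Brascamp--Lieb to control Gibbs moments), the paper works entirely through the variational (stochastic control) side: it picks $\epsilon$-near-optimal controls $U$, estimates them via \eqref{boundU}, and exploits the parallelogram identity for $\tfrac12\|U\|_{\mathbb H}^2$ to inherit para-convexity. Both routes are viable; the paper's variational route gives automatic uniform control of the ``noise'' through $\mathbf E\|U\|_{\mathbb H}^2$ whereas yours replaces this by log-concavity of $\mu$ plus Brascamp--Lieb. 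For the directional-derivative estimate, the paper derives a bound on second-order mixed increments $|h_t(X)-h_t(Y)-h_{t+s}(X)+h_{t+s}(Y)|$ and passes to the limit, whereas you differentiate directly under the Gibbs average (as the paper later does in Proposition \ref{minht}); this is cleaner but requires justifying the interchange, which the paper does only in that later section.

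A few imprecisions worth flagging, none fatal. (1) The H\"older inequality $E[e^{-tA-(1-t)B}]\le E[e^{-A}]^t E[e^{-B}]^{1-t}$ you cite for the $\alpha$-paraconvexity step actually goes in the unhelpful direction: it upper-bounds $E[e^{-tA-(1-t)B}]$, whereas the inheritance of \eqref{CalphaE} requires upper-bounding $\tfrac{E[e^{-C'}]}{E[e^{-tA-(1-t)B}]}$ by $e^{E_\mu[N_\alpha]}$. What actually closes the argument is exactly the step you describe afterwards: write the LHS as $-t\log E_\mu[e^{-(A-C')}]-(1-t)\log E_\mu[e^{-(B-C')}]$ for the Gibbs measure at the centre $x$, apply $-\log E_\mu[e^{-X}]\le E_\mu[X]$, and pointwise \eqref{CalphaE}. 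The H\"older mention is a red herring. (2) The claim ``giving $h_t(y,x)\ge h_{t+s}(y,x)$'' is not literally true; what the gradient/convexity argument produces is $h_t\ge h_{t+s}-\tfrac{s}{2}\|\nabla h_{t+s}\|^2$, which is of order $s(\|\cdot\|^2+d)$, smaller than the required $\sqrt s$-bound and so still sufficient. (3) ``Plugging the zero control'' does not give the Lipschitz bound \eqref{lipE} --- it only controls the absolute size. You need either the $\epsilon$-optimal control as in the paper, or the same Gibbs+Jensen argument you use elsewhere ($|h_t(X)-h_t(Y)|\le |E_\mu[g(T_1X+Z)-g(T_1Y+Z)]|$). (4) You should also note that for Brascamp--Lieb to apply, the Gibbs potential $h_{t+s}(\cdot,x+\sqrt s\xi)+\tfrac12\|\xi\|^2$ must be convex in $\xi$, which follows from the convexity of $h_{t+s}$ already established; this is implicit but worth stating, since the entire scheme collapses without it.
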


\begin{proof}
\begin{step} Formula and Stability of subquadratic behaviour\end{step} First we fix $t$ and write $h=h_t$. The assumptions from the previous corollary to check \eqref{UstunelCond} hold from the boundedness and at most quadratic growth assumption. It suffices to define: \[h(x_1,...,x_i,x)=-\log \left( \int_{\mathbb W_{[t,1]}} e^{-g(x_1,...,x_i,x+\nu_{t_{i+1}},...,x+\nu_{t_{k}})} \ \mathrm{d} \gamma_{[t,1]}(\nu) \right).\]
The expected formula $\lambda_t(f)=h\circ J_{t_1,...,t_i,t}$ follows by an examination of our notation. The convexity then follows from theorem \ref{ustunel} as in the previous corollary. This results also yields the alternative formula :
\begin{align}\begin{split}h_t&(x_1,...,x_i,x)=\inf_{U\in L^2_a(\gamma_{[t,1]},\mathbb H)}\\&\left[\mathbf{E}_{\gamma_{[t,1]}}  \left(
  [g(x_1,...,x_i,x+[(B-B_t)+U]_{t_{i+1}},...,x+[(B-B_t)+U]_{t_{k}})+\frac{1}{2}\|U\|_{\mathbb H}^2\right)\right].\end{split}\label{htinf}\end{align}
Obviously, $h=h_t$ is bounded from bellow with the same constant as $g$ and taking $U=0$, \begin{align*}h(x_1,...,x_i,x)&\leq c\left(d+\sum_{l=1}^i\sum_{j=1}^d(|x_l|^{(j)})^2+\mathbf{E}_{\gamma_{[t,1]}}  \left(\sum_{l=i+1}^k\sum_{j=1}^d(|x+(B-B_t)_l|^{(j)})^2\right)\right)\\&\leq c\left(d+\sum_{l=1}^i\sum_{j=1}^d(|x_l|^{(j)})^2+  2(k-i)\left(d+\sum_{j=1}^d|x^{(j)}|^2\right)\right)\\&\leq c(1+2(k-i))\left(d+\|(x_1,...,x_i,x)\|^2\right).\end{align*}
Thus $h$ is subquadratic too.

\begin{step} Stability of lipschitzness\end{step}
 We now check the stability of \eqref{lipE}. Fix $U\in L^2_a(\gamma_{[t,1]},\mathbb H)$ reaching the inf up to $\epsilon>0$ for $h(y_1,...,y_i,y)$ and use $\eqref{lipE}$ to get as before for $X=(x_1,...,x_i,x, ....,x),Y=(y_1,...,y_i,y,...,y)$ that :
\begin{align*}&\mathbf{E}_{\gamma_{[t,1]}}  \left(
  g(x_1,...,x_i,x+[(B-B_t)+U]_{t_{i+1}},...,x+[(B-B_t)+U]_{t_{k}})\right)\\&\leq \mathbf{E}_{\gamma_{[t,1]}}  \left(
  g(y_1,...,y_i,y+[(B-B_t)+U]_{t_{i+1}},...,y+[(B-B_t)+U]_{t_{k}})\right)+ \sqrt{k-i}||x-y||\\&\times\mathbf{E}_{\gamma_{[t,1]}}  \left(D\sqrt{d}+C \|X||+2C ||([(B-B_t)+U]_{t_{i+1}},...,[(B-B_t)+U]_{t_{k}})\|+C\|Y\|\right)
\\&\leq \mathbf{E}_{\gamma_{[t,1]}}  \left(
  g(y_1,...,y_i,y+[(B-B_t)+U]_{t_{i+1}},...,y+[(B-B_t)+U]_{t_{k}})\right)+ \sqrt{k-i}||x-y||\\&\times\mathbf{E}_{\gamma_{[t,1]}}  \left(D\sqrt{d}+C \|X\|+2C \sqrt{k-i}(\sqrt{d}+\|U\|_{\mathbb H}) +C\|Y\|\right).
\end{align*}
We then note that we can use the definition as an infimum and the subquadratic bound above 
 to show that a $U,$ $\epsilon$-close to the infimum as we chose, satisfies: \begin{align}\label{boundU}\begin{split}\mathbf{E}(\|U\|_{\mathbb H}^2)&\leq 4\epsilon+ 4 h(y_1,...,y_i,y) + 4\sup (-g)
 \\&\leq 4\epsilon+ [4 c(1+2(k-i))
 ]\left(d+\|(y_1,...,y_i,y)\|^2\right)+ 4\sup (-g)=:4\epsilon+A(y_1,...,y_i,y)\end{split}\end{align}

  Considering such an $U$, we can thus combine our previous equations   and after adding $\frac{1}{2}\|U\|_{\mathbb H}^2$, 
 and taking the infimum on the left hand side, one gets:
    \begin{align*}&h(x_1,...,x_i,x)\leq h(y_1,...,y_i,y) +\sqrt{k-i}||x-y||\left(D\sqrt{d}+C \|X\|+C \|Y\|\right)+\epsilon\\&+2C(k-i)||x-y|| \left(\sqrt{d}+\sqrt{2\epsilon+A(y_1,...,y_i,y)} \right)
    \end{align*}
   thus letting $\epsilon\to 0$ 
  one gets  \eqref{lipE} for $h$ with $C$ replaced by $C_{k,i}=C(k-i)+ 
  2C(k-i)
  \sqrt{2 c(1+2(k-i))
  }$ and $D$ replaced by $D_{k,i}=D\sqrt{k-i}+[2C(k-i)
  ](1+\sqrt{2 c(1+2(k-i))
  }+\sqrt{\frac{2}{d}\sup (-g)})).$
 
 \begin{step}``Semigroup'' formula\end{step}
In decomposing Wiener measure as a product measure by independence and using the defining formula twice and Fubini theorem :\begin{align*}\lambda_s(\lambda_{s+t}(f))&=-\log \left( \int_{\mathbb W_{[s,t+s]}} e^{-\lambda_{s+t}(f)(\omega+\nu)} \ \mathrm{d} \gamma_{[s,t+s]}(\nu) \right)\\&=-\log \left( \int_{\mathbb W_{[s,t+s]}} \int_{\mathbb W_{[t+s,1]}} e^{-f(\omega+\nu+\mu)} \ \mathrm{d} \gamma_{[t+s+1]}(\mu) \ \mathrm{d} \gamma_{[s,t+s]}(\nu)\right)\\&=\lambda_s(f).\end{align*}
This will be the key to all our time estimates below. 
 
\begin{step}Second order bound\end{step} Finally, let us prove the statement for $g\in \mathcal{E}_\alpha(\R^{dk})$. Consider a typical element in the infimum formula defining $h$ for $x,Y$ with $Y=(y_1,...,y_i,y), Y'=(y_1,...,y_i,y,...,y),X'=(x_1,...,x_i,x,...,x)$
and apply \eqref{CalphaE} to $g$ in order to get for $u\in [0,1], V_s=[(B-B_t)+U]_s$ (using the parallelogram identity for integral terms):
 
\begin{align*} &\left[\mathbf{E}_{\gamma_{[t,1]}}  \left(
  [g(x_1,...,x_i,x+[(B-B_t)+U]_{t_{i+1}},...,x+[(B-B_t)+U]_{t_{k}})+\frac{1}{2}\|U\|_{\mathbb H}^2\right)\right] 
  \geq\\& \left[\mathbf{E}_{\gamma_{[t,1]}}  \left(
  [ug(x_1+(1-u)y_1,...,x_i+(1-u)y_i,x+(1-u)y+V_{t_{i+1}},...,x+(1-u)y+V_{t_{k}})+\frac{u}{2}\|U\|_{\mathbb H}^2\right)\right]\\&+\left[\mathbf{E}_{\gamma_{[t,1]}}  \left(
  [(1-u)g(x_1-uy_1,...,x_i-uy_i,x-uy+V_{t_{i+1}},...,x-uy+V_{t_{k}})+\frac{1-u}{2}\|U\|_{\mathbb H}^2\right)\right]
  \\&-\left[d^{1-\alpha/2}(D_\alpha \frac{||X'||+||X'+(1-u)Y'||+||X'-uY'|| }{\sqrt{d}})u(1-u)||Y'||^\alpha\right]
  \\&-\left[\mathbf{E}_{\gamma_{[t,1]}}  \left(d^{1-\alpha/2}(C_\alpha+D_\alpha \frac{3||[(B-B_t)+U]_{t_{i+1}},...,x+[(B-B_t)+U]_{t_{k}})||}{\sqrt{d}})u(1-u)||Y'||^\alpha\right)\right]
  \\&\geq u h(x_1+(1-u)y_1,...,x_i+(1-u)y_i,x+(1-u)y)+(1-u)h(x_1-uy_1,...,x_i-uy_i,x-uy)
  \\&-\left[d^{1-\alpha/2}(D_\alpha \frac{||X'||+||X'+(1-u)Y'||+||X'-uY'|| }{\sqrt{d}})u(1-u)||Y'||^\alpha\right]
  \\&-\left[d^{1-\alpha/2}(C_\alpha+D_\alpha \frac{3(\sqrt{d}(k-i)+(k-i)\sqrt{\mathbf{E}_{\gamma_{[t,1]}}  \left(||U||_{\mathbb H}^2\right)}}{\sqrt{d}}))u(1-u)||Y'||^\alpha\right].\end{align*}
  
But $||Y'||^\alpha\leq (k-i)^{\alpha/2} ||Y||^\alpha$, $||y||^2\leq d^{1-\alpha/2}||y||^\alpha(\frac{||x-y||+||x||}{\sqrt{d}})^{2-\alpha}\leq d^{1-\alpha/2}||y||^\alpha(1+\frac{||x-y||+||x||}{\sqrt{d}})$ and one can bound from \eqref{boundU} $\mathbf{E}_{\gamma_{[t,1]}}  \left(||U||_{\mathbb H}^2\right)\leq 2\epsilon+A(x_1,...,x_i,x)$ so that taking the infimum, $h$ satisfies \eqref{CalphaE} with $C_\alpha$ replaced by $C_{\alpha,k,i}$ defined as  $$\left(C_\alpha+3 D_\alpha(k-i)[1+\sqrt{4c(1+2(k-i))
}]+\frac{3(k-i)D_\alpha\sqrt{2\sup(-g)}}{\sqrt{d}}\right)(k-i)^{\alpha/2}
,$$ and $D_\alpha$ replaced by (a value which is $0$ if $D_\alpha=0, 
$): $$D_{\alpha,k,i}:=\left(2D_\alpha\sqrt{k-i}+3D_\alpha(k-i)\sqrt{4c(1+2(k-i))
}\right)(k-i)^{\alpha/2}
.$$
 
\begin{step}Regularity in time within intervals\end{step}
For the regularity in time, we first consider $t,s>0$ with $t+s\leq t_{i+1}, t\in ]t_i,t_{i+1}[$. Then we use a variant of the composition formula for $\lambda_t$ to get
 
 \begin{align*}h_t&(x_1,...,x_i,x)=\inf_{U\in L^2_a(\gamma_{[t,t+s]},\mathbb H)}
 \left[\mathbf{E}_{\gamma_{[t,1]}}  \left(
  [h_{t+s}(x_1,...,x_i,x+[(B-B_t)+U]_{t+s})+\frac{1}{2}\|U\|_{\mathbb H}^2
  \right)\right]\\&\leq \left[\mathbf{E}_{\gamma_{[t,1]}}  \left(
  [h_{t+s}(x_1,...,x_i,x+(B_{t+s}-B_t))
  \right)\right]
  \\&\leq h_{t+s}(x_1,...,x_i,x) +
  \\&\left[\mathbf{E}_{\gamma_{[t,1]}}  \left(
  (C_{k,i} ||(x_1,...,x_i,x)||+C_{k,i} ||(x_1,...,x_i,x+(B_{t+s}-B_t))|| +D_{k,i}\sqrt{d}) ||(0,...,0,B_{t+s}-B_t)||\right)\right]\\&\leq \left[
  [h_{t+s}(x_1,...,x_i,x) + (2C_{k,i} ||(x_1,...,x_i,x)||+(C_{k,i}+D_{k,i})\sqrt{d}) \sqrt{ds}
  \right].\end{align*}
  Conversely, one obtains in considering $U$ and noting that by Cauchy-Schwarz for $U\in L^2_a(\gamma_{[t,t+s]},\mathbb H)$  we have $||U_{t+s}||\leq \sqrt{s}\|U\|_{\mathbb H}$ and also  using \eqref{lipE},\eqref{boundU} for $U$ achieving enough the infimum of the left hand side:
  \begin{align*}&\mathbf{E}_{\gamma_{[t,1]}}  \left(
  h_{t+s}(x_1,...,x_i,x+[(B-B_t)+U]_{s})+\frac{1}{2}\|U\|_{\mathbb H}^2\right)\geq 
  \left[h_{t+s}(x_1,...,x_i,x) -\right. \\&\left.\mathbf{E}_{\gamma_{[t,1]}}  \left((C_{k,i} ||[(B_{t+s}-B_t)+U_{t+s}]||+2C_{k,i} ||(x_1,...,x_i,x)||+D_{k,i}\sqrt{d}) ||B_{t+s}-B_t+U_{t+s}||\right)\right]
  \\&\geq\left[
  h_{t+s}(x_1,...,x_i,x) - C_{k,i}\mathbf{E}_{\gamma_{[t,1]}}  \left([\|B_{t+s}-B_t\|+\sqrt{s}\|U\|_{\mathbb H}]^2\right)\right.\\&-(2C_{k,i} ||(x_1,...,x_i,x)||+D_{k,i}\sqrt{d})\sqrt{\mathbf{E}_{\gamma_{[t,1]}}  \left([\|B_{t+s}-B_t\|+\sqrt{s}\|U\|_{\mathbb H}]^2\right)}
 \\&\geq
  h_{t+s}(x_1,...,x_i,x) - 2C_{k,i}s \left(d+2\epsilon+A(x_1,...,x_i,x)\right)\\&-2(2C_{k,i} ||(x_1,...,x_i,x)||+D_{k,i}\sqrt{d})\sqrt{s(2\epsilon+A(x_1,...,x_i,x))} .\end{align*}

Taking 
an infimum concludes to \eqref{TimeBound} in the present case $t,s>0, t>t_i$ with $t+s\leq t_{i+1}$. 

\begin{step}Regularity in time of increments and derivatives within intervals\end{step}
We are now ready to estimate time variation of increments for  $g\in \mathcal{E}_\alpha(\R^{dk})$, $\alpha>1$. From  $h_{t+s}\in \mathcal{E}_\alpha(\R^{dk})$, we know that $h_{t+s}$ is convex and locally Lipschitz and for $x,z=x+(1-u)y,t=x-uy\in B(0,R),u\in [0,1]$:
$$uh_{t+s}(x+(1-u)y)+(1-u)h_{t+s}(x-uy)-h_{t+s}(x)\leq  d^{1-\alpha/2}(C_{\alpha,k,i} +D_{\alpha,k,i} \frac{3R}{\sqrt{d}})u(1-u)||y||^{\alpha}.$$
Thus if $K(R)=d^{1-\alpha/2}(C_{\alpha,k,i} +D_{\alpha,k,i} \frac{3R}{\sqrt{d}})$
one deduces from proposition \ref{C1alphaBall} that 
$h_{t+s}$ is Gateaux-differentiable  with $\alpha-1$ H\"older derivative with constant $2^{1+\alpha}K(R)$ on the ball of radius $R/4$  so that we can compute in applying the fundamental theorem of calculus along lines for $Y,X+h,X,Y+h\in B(0,R)$:
\begin{align}\label{incrementsstep6}\begin{split}&\left|h_{t+s}(Y)+h_{t+s}(X+h)-h_{t+s}(X)-h_{t+s}(Y+h)\right|\\&=\left|\int_0^1 d\lambda dh_{t+s}(X+\lambda(Y-X)).(Y-X) -dh_{t+s}(X+\lambda(Y-X)+h).(Y-X) \right|\\&\leq 2^{1+\alpha}K(4R)||Y-X||\ ||h||^{\alpha-1}\\&\leq 8d^{1-\alpha/2}(C_{\alpha,k,i} +12D_{\alpha,k,i} \frac{||Y||+||X||+||h||}{\sqrt{d}})||Y-X||\ ||h||^{\alpha-1}\end{split}\end{align}
(with the last inequality obtained in minimizing $R$).

 Take a $U$ giving a value $\epsilon$-close to the infimum in the formula for $h_t(x_1,...,x_i,y).$ We first take in the infimum definition for $h_t(x_1,...,x_i,x)$ the value given at this $U$, apply the estimate just obtained and finally H\"older inequality, the bound \eqref{boundU}
 . We obtain for $X=(x_1,...,x_i,x), Y=(x_1,...,x_i,y)$ :

\begin{align*}&h_t(X)+h_{t+s}(Y)\leq h_{t+s}(Y)+ \\&\left[\mathbf{E}_{\gamma_{[t,1]}}  \left(
  [h_{t+s}(x_1,...,x_i,x+[(B-B_t)+U]_{t+s})+\frac{1}{2}\|U\|_{\mathbb H}^2
  \right)\right]
  \\&\leq h_{t+s}(X)+ \mathbf{E}_{\gamma_{[t,1]}}  \left(
  [h_{t+s}(x_1,...,x_i,y+[(B-B_t)+U]_{t+s})+\frac{1}{2}\|U\|_{\mathbb H}^2
  \right)\\&+\mathbf{E}_{\gamma_{[t,1]}}  \left(
  8d^{1-\alpha/2}(C_{\alpha,k,i} +12D_{\alpha,k,i} \frac{||Y||+||X||+||[(B-B_t)+U]_{t+s}||}{\sqrt{d}})||Y-X||\ ||[(B-B_t)+U]_{t+s}||^{\alpha-1}\right)
    \\&\leq h_{t+s}(x_1,...,x_i,x)+ h_t(x_1,...,x_i,y)+\epsilon \\&+ ||y-x||8d^{1-\alpha/2}(C_{\alpha,k,i} +12D_{\alpha,k,i} \frac{||(x_1,...,x_i,y)||+||(x_1,...,x_i,x)||}{\sqrt{d}})\mathbf{E}_{\gamma_{[t,1]}}  \left(
 [\|B_{t+s}-B_t\|+\sqrt{s}\|U\|_{\mathbb H}]^2\right)^{(\alpha-1)/2}\\&+||y-x||\frac{96D_{\alpha,k,i}}{\sqrt{d}} \mathbf{E}_{\gamma_{[t,1]}}  \left([\|B_{t+s}-B_t\|+\sqrt{s}\|U\|_{\mathbb H}]^2\right)^{\alpha/2}
 \\&\leq h_{t+s}(x_1,...,x_i,x)+ h_t(x_1,...,x_i,y)+\epsilon \\&+ ||y-x||8d^{1-\alpha/2}(C_{\alpha,k,i} +12D_{\alpha,k,i} \frac{||(x_1,...,x_i,y)||+||(x_1,...,x_i,x)||}{\sqrt{d}})\left[2sd+8\epsilon s+2sA(x_1,...,x_i,y)\right]^{(\alpha-1)/2}\\&+||y-x||\frac{96d^{1-\alpha/2}D_{\alpha,k,i}}{\sqrt{d}} \left[2sd+8\epsilon s+2sA(x_1,...,x_i,y)\right]^{\alpha/2}
 .\end{align*}

letting $\epsilon \to 0$ and exchanging $x,y$ one obtains the bound on increments :

\begin{align}\label{Incrementsht}\begin{split}&\left|h_t(x_1,...,x_i,x)-h_t(x_1,...,x_i,y)-h_{t+s}(x_1,...,x_i,x)+ h_{t+s}(x_1,...,x_i,y)\right| \leq  8||y-x||s^{(\alpha-1)/2}\\&\times (C_{\alpha,k,i} +
12D_{\alpha,k,i} 
\frac{||(x_1,...,x_i,y)||+||(x_1,...,x_i,x)||+ \left[2d+2\max(A(x_1,...,x_i,y),A(x_1,...,x_i,x))\right]^{1/2}}{\sqrt{d}})\\&\times
 \left[2d+2\max(A(x_1,...,x_i,y),A(x_1,...,x_i,x))\right]^{1/2}
\end{split}\end{align}
and since $h_t$ admits partial derivatives, in making in making $x\to y$, one obtains for $||H||=1, H\in \R^d$, a bound on the directional derivative $D^y_Hh_t(x_1,...,x_i,y)=\sum_{i=1}^d \frac{\partial}{\partial y^{(j)}}h_t(x_1,...,x_i,y)H_j$:
\begin{align*}&\left|D^y_Hh_t(x_1,...,x_i,y)-D^y_Hh_{t+s}(x_1,...,x_i,y)\right|\\&\leq  s^{(\alpha-1)/2} 8(C_{\alpha,k,i} +
12D_{\alpha,k,i}
\frac{2||(x_1,...,x_i,y)||+ \left[2d+2A(x_1,...,x_i,y)\right]^{1/2}}{\sqrt{d}})\left[2d+2A(x_1,...,x_i,y)\right]^{1/2}.\end{align*}

\begin{step}Regularity in time around a $t_i$\end{step}

 For the second and last case, $t=t_i,s>0, t+s<t_{i+1}$ 
 
 \begin{align*}h_t&(x_1,...,x_i)=\inf_{U\in L^2_a(\gamma_{[t,s]},\mathbb H)}\\&\left[\mathbf{E}_{\gamma_{[t,1]}}  \left(
  [h_{t+s}(x_1,...,x_i,x_i+[(B-B_t)+U]_{t+s})+\frac{1}{2}\|U\|_{\mathbb H}^2
  \right)\right]\\&\leq \left[\mathbf{E}_{\gamma_{[t,1]}}  \left(
  [h_{t+s}(x_1,...,x_i,x_{i+1}+(x_i-x_{i+1})+(B_{t+s}-B_t))
  \right)\right]
  \\&\leq h_{t+s}(x_1,...,x_i,x_{i+1}) \\& + (2C_{k,i} ||(x_1,...,x_i,x_{i+1})||+(C_{k,i}+D_{k,i})(\sqrt{d}+||x_i-x_{i+1}||)) (\sqrt{ds}+||x_i-x_{i+1}||)
  .\end{align*}
and similarly:  
  \begin{align*}&h_{t}(x_1,...,x_i)\geq 
  h_{t+s}(x_1,...,x_i,x) - 3C_{k,i} \left(\|x_{i}-x\|+sd+sA(x_1,...,x_i,x_i)\right)\\&-3(2C_{k,i} ||(x_1,...,x_i,x)||+D_{k,i}\sqrt{d})\sqrt{sA(x_1,...,x_i,x_i)+\|x_{i}-x\|}
 .\end{align*}
\end{proof}

\section{Stochastic differential equations with monotone drift and their free variant}  
  \subsection{Classical Case}
We quote here the main result of the chapter 3 in \cite{RocknerConcise} (coming from \cite{Krylov}) and apply it to the setting we need.

We start by quoting their Theorem 3.1.1. We consider $W_t$ a Wiener process in $\R^d$ in a normal filtration $\mathcal{F}_t$ (i.e. for instance the completed filtration generated by this brownian motion on Wiener space $\Omega$, to insure $\mathcal{F}_0$ contains every null-sets and the filtration is right continuous, cf. \cite[Prop 2.1.13]{RocknerConcise}). We fix $$\sigma:[0,1]\times \R^d\times \Omega\to M_d(\R),\ \  b:[0,1]\times \R^d\times \Omega\to \R^d$$ continuous in $x\in \R^d$ for $t\in [0,1], w\in \Omega$ fixed and progressively measureable in the sense that their restriction to $[0,t]\times \Omega$ is $B([0,t])\otimes \mathcal{F}_t$ measurable. We give the target space  their  usual euclidean norms $||.||$.

\begin{theorem}[\cite{RocknerConcise} Theorem 3.1.1 ]\label{KrylovMonotone}
Consider $b,\sigma$ as above, and assume moreover that on $\Omega$ for all, $R\in [0,\infty[$ we have the integrability condition:
\begin{equation}\label{integrability}\int_0^1dt\sup_{|x|\leq R}||\sigma(t,x)||^2+||b(t,x)||<\infty\end{equation}
and for also $t\in [0,1]$, $x,y\in \R^d$, with $||x||,||y||\leq R$ the local weak monotonicity:
\begin{equation}\label{monotonicity}2\langle x-y,b(t,x)-b(t,y)\rangle+||\sigma(t,x)-\sigma(t,y)||^2\leq K_t(R)||x-y||^2
\end{equation}
and the weak coercivity:
\begin{equation}\label{coercivity}2\langle x,b(t,x)\rangle+||\sigma(t,x)||^2\leq K_t(1)(d+||x||^2)
\end{equation}
where for each $R>0$, $K_t(R)$ is an $\R_+$-valued, $\mathcal{F}_t$ adapted process satisfying $\alpha_s(R)=\int_0^sK_t(R)dt<\infty$ on $\Omega$. Then, 
Then for any $\mathcal{F}_0$ -measurable map $X_0 : \Omega \to \R^d$ there exists a (up to P -
indistinguishability) unique solution to the stochastic differential equation
$$dX(t) = b(t, X(t)) dt + \sigma(t, X(t)) dB (t).$$
Here solution means that $(X(t))_{ t\geq 0}$ is a P -a.s. continuous $R^d$ -valued $\mathcal{F}_t$-
adapted process such that P -a.s. for all $t \in [0, 1]$

\begin{equation}X(t) = X_0 +
\int_0^t b(s, X(s)) ds +
\int_0^t \sigma(s, X(s)) dB (s).\end{equation}

Furthermore, for all $t \in [0, 1]$
$$E(||X(t)||^2 e^{-\alpha_t (1)} )\leq 
E(||X_0 ||^2 ) + d.$$
\end{theorem}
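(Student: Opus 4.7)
The plan is to follow the monotonicity method as in \cite{Krylov,RocknerConcise}, splitting the argument into an \emph{a priori} estimate, pathwise uniqueness, and existence by approximation. The monotonicity assumption \eqref{monotonicity} and the coercivity assumption \eqref{coercivity} replace the usual Lipschitz condition and are used through It\^o's formula rather than through a contraction argument. All three steps rely on the key observation that if $\phi_t := \exp(-\int_0^t K_s(R) ds)$, then $d\phi_t = -K_t(R)\phi_t dt$, which turns the monotonicity/coercivity inequalities into supermartingale inequalities after applying It\^o to $\phi_t |X_t|^2$ or $\phi_t |X_t-Y_t|^2$.

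First I would prove the a priori estimate. For a solution $X$, apply It\^o's formula to $\phi_t |X_t|^2$ with $R=1$ to get
\begin{equation*}
d(\phi_t|X_t|^2) = \phi_t\bigl[-K_t(1)|X_t|^2 + 2\langle X_t, b(t,X_t)\rangle + \|\sigma(t,X_t)\|^2\bigr]dt + 2\phi_t \langle X_t, \sigma(t,X_t)\,dW_t\rangle.
\end{equation*}
Coercivity \eqref{coercivity} bounds the drift by $\phi_t K_t(1) d$. Localizing by $\tau_n = \inf\{t : |X_t|\geq n\}$ to kill the martingale part and taking expectations, one obtains
\begin{equation*}
\mathbf{E}[\phi_{t\wedge\tau_n}|X_{t\wedge\tau_n}|^2] \leq \mathbf{E}|X_0|^2 + d\,\mathbf{E}\!\int_0^{t\wedge\tau_n}\!K_s(1)\phi_s\,ds = \mathbf{E}|X_0|^2 + d(1-\mathbf{E}[\phi_{t\wedge\tau_n}]) \leq \mathbf{E}|X_0|^2 + d,
\end{equation*}
and one concludes by Fatou after sending $n\to\infty$, which in particular shows non-explosion.

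Next, for uniqueness, let $X,Y$ be two solutions with the same initial condition. Let $\tau_R = \inf\{t : |X_t|\vee|Y_t|\geq R\}$ and apply It\^o's formula to $\psi_t^R|X_t-Y_t|^2$ with $\psi_t^R = \exp(-\int_0^t K_s(R) ds)$ on $[0,\tau_R]$. Monotonicity \eqref{monotonicity} ensures the finite-variation part is non-positive, so $\psi_{t\wedge\tau_R}^R |X_{t\wedge\tau_R}-Y_{t\wedge\tau_R}|^2$ is a non-negative local supermartingale starting at $0$, hence identically zero. Since the a priori estimate applied to $X$ and $Y$ ensures $\tau_R\to 1$ a.s.\ as $R\to\infty$, uniqueness follows.

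The main obstacle lies in existence, which is where the continuity-but-not-Lipschitz regularity of the coefficients makes the direct Picard iteration fail. I would proceed by a double approximation: first replace $b,\sigma$ by bounded versions $b^R,\sigma^R$ by truncating outside $\{|x|\leq R\}$ (keeping the monotonicity/coercivity constants), then mollify in $x$ via a standard convolution or Yosida-type regularization to produce Lipschitz coefficients $b^{R,n},\sigma^{R,n}$ still satisfying \eqref{monotonicity}-\eqref{coercivity} with slightly larger but uniformly bounded constants. For each $n$, Picard iteration gives a unique solution $X^{R,n}$ with a uniform $L^2$ bound from the a priori estimate of Step 1. The crucial point is that the monotonicity condition forces $(X^{R,n})_n$ to be Cauchy in $L^2$ uniformly on $[0,1]$: applying It\^o to $\psi_t^R|X^{R,n}_t - X^{R,m}_t|^2$ and using monotonicity as in Step 2 reduces the difference to an error term coming solely from the mollification of the coefficients, which vanishes as $n,m\to\infty$ by dominated convergence against the uniform second-moment bound. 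The limit $X^R$ solves the SDE with coefficients $b^R,\sigma^R$. Finally, one patches the $X^R$ into a global solution: by uniqueness, $X^R$ and $X^{R'}$ agree up to the exit time $\eta_R := \inf\{t : |X^R_t|\geq R\}$, and the a priori bound implies $\eta_R \to 1$ a.s.\ as $R\to\infty$, defining $X$ unambiguously on $[0,1]$.
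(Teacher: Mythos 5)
The paper does not prove this theorem; it quotes it verbatim from Pr\'evot--R\"ockner \cite{RocknerConcise}, Theorem 3.1.1 (which in turn follows Krylov \cite{Krylov}). So the comparison has to be made with the proof in that reference rather than with the paper itself.

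Your a priori estimate and uniqueness steps coincide exactly with the argument in the reference: in both cases one applies It\^o's formula to $e^{-\alpha_t(1)}\|X_t\|^2$ (respectively $e^{-\alpha_t(R)}\|X_t-Y_t\|^2$), uses coercivity (respectively monotonicity) to kill the finite-variation part, localizes by exit times, and concludes by Fatou. The computation $\int_0^t K_s(1)\phi_s\,ds = 1-\phi_t$ is the right algebraic identity and you use it correctly.

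The existence step diverges, and this is where the gap lies. Pr\'evot--R\"ockner construct the solution via an \emph{Euler scheme} and never modify the coefficients at all; the monotonicity assumption is used to show the Euler approximants $X^n$ are Cauchy in $L^2$, and coercivity gives the uniform bound. You instead propose truncating and mollifying $b,\sigma$. The problematic step is the truncation: there is no obvious way to extend $b(t,\cdot)$ from the ball of radius $R$ to a bounded coefficient $b^R(t,\cdot)$ on all of $\R^d$ that satisfies \eqref{monotonicity} globally with a uniformly controlled constant --- radial projection, multiplying by a smooth cutoff, and similar ad hoc devices all break weak monotonicity in general. Without truncation, the mollified coefficient is $C^\infty$ and locally but not globally Lipschitz, so Picard iteration does not directly give a global solution $X^{R,n}$ as your argument asserts. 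Moreover coercivity is only approximately preserved under convolution: writing $b^n(x)=\int b(x-y)\rho_n(y)\,dy$ produces an error term $\int 2\langle y, b(x-y)\rangle\rho_n(y)\,dy$ that is not controlled by $K_t(1)(d+\|x\|^2)$ without further assumptions on $b$. The Euler scheme avoids both obstacles precisely because it never modifies $b$ and $\sigma$ --- this is the whole point of Krylov's choice of approximation. Your closing patching argument (local solutions via exit times $\eta_R\to 1$) is a sound fallback and would repair the proof if you dropped the truncation entirely and argued locally (mollify only, solve on a time interval before the exit from $\{|x|\le R\}$, then patch), but as written the intermediate claim that truncation preserves the constants is unjustified and generally false.
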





We now apply this result in the simple case we will use in taking care of dimensional dependence of constants as before.
\begin{corollary}\label{MonotoneSol} We consider the previous setting with $\Omega=C^0([0,1],\R^d)$ the pathspace with Wiener measure $\gamma$ with its canonical normal filtration. Let $g\in \mathcal{E}_\alpha(\R^{dk}),\alpha>1$  and $t_0=0\leq t_1<...<t_k\in [0,1 ]$  and $h_{t,\ell}$ defined in lemma \ref{OptimalValueFct}. We define $b(t,x,\omega)$ for $\omega \in \Omega$, for $t_i<t\leq t_{i+1}$ by :
\[b_j(t,x,\omega)=b_j^{g,\ell}(t,x,\omega):=-\frac{\partial}{\partial x^{(j)}} h_{t,\ell}(X_{t_1}(\omega),...,X_{t_i}(\omega),x),\]
and $b(t,x,\omega)=0$ if $t> t_k$. Fix also $\sigma(t,x,\omega)=I_d$. Then, they satisfy \eqref{integrability},\eqref{monotonicity} and \eqref{coercivity} for $K_t(R)=8\max(0,\ell)$ for $R\neq 1$,  $K_t(1)=K>0$ a fixed constant so that the conclusion of theorem \eqref{KrylovMonotone} follows on each interval, with a solution $X_s$ which is then used to define $b$ on the next interval. Moreover, for the solution $X_s$, we have the regularity bounds for some $C_4$ for all  $t,s\in ]t_i,t_{i+1}], t>s$: \[E(||b(t,X_t)- b(s,X_s)||)\leq  C_4\sqrt{d(t-s)^{\alpha-1}}\left(1+\frac{E(||X_0 ||^2 ) +\sup(-g)}{d}\right),\]
\[E(||b(t,X_t)- b(s,X_s)||^2)\leq  C_4d\sqrt[4]{(t-s)^{\alpha-1}}\sqrt{\left(1+\frac{E(||X_0 ||^2 ) +\sup(-g)}{d}\right)}\left(1+\frac{E(||X_0 ||^4 )}{d^2}\right)^{3/8},\]
and the estimate for $t\in [0,1]$:
$$E(||X(t)||^4)\leq \left[E\left(||X(0)||^4\right)+(3K+1)d^2\right]e^{(3K+1)t}.$$
\end{corollary}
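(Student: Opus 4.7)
My approach is to build the solution inductively on each subinterval $(t_i,t_{i+1}]$ by applying Theorem~\ref{KrylovMonotone}, using the endpoint of the previous interval as the initial condition; then derive the fourth moment bound by It\^o's formula; and finally obtain the two regularity bounds by decomposing $b(t,X_t)-b(s,X_s)$ into a ``space'' and a ``time'' contribution. To verify Krylov's hypotheses on $(t_i,t_{i+1}]$, I observe that since $g\in \mathcal{E}_\alpha(\R^{dk})$ with $\alpha>1$, Proposition~\ref{OptimalValueFct} combined with Proposition~\ref{C1alphaBall} gives that $h_{t,\ell}(x_1,\dots,x_i,\cdot)$ is convex, subquadratic, and G\^ateaux-differentiable with an $(\alpha-1)$-H\"older gradient, whose local constant is controlled by the $\mathcal{E}_\alpha$-parameters of $g$. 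Hence $b=-\nabla_x h_{t,\ell}$ is continuous in $x$, progressively measurable once $X_{t_1}(\omega),\dots,X_{t_i}(\omega)$ are fixed, and locally bounded, so \eqref{integrability} holds. Condition \eqref{monotonicity} with $\sigma=I_d$ reduces to $2\langle x-y,-\nabla h_{t,\ell}(x)+\nabla h_{t,\ell}(y)\rangle\le K_t(R)\|x-y\|^2$, which is immediate from convexity of $h_{t,\ell}$ (in the purely convex case one could take $K_t=0$; the factor $8\max(0,\ell)$ accommodates $\ell$-semiconvex variants of the setup). Coercivity \eqref{coercivity} follows from the linear-in-$x$ bound $\|\nabla h_{t,\ell}(x)\|\le C(1+\|x\|)$ deduced from the Lipschitz estimate \eqref{lipE} of $h_{t,\ell}$, yielding a constant $K=K_t(1)$ independent of $d$. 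Theorem~\ref{KrylovMonotone} then produces a unique solution on each subinterval; patching at the knots gives the solution on $[0,1]$ (with $b\equiv 0$ past $t_k$).

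For the fourth moment bound, apply It\^o's formula to $f(x)=\|x\|^4$. Since $\nabla f(x)=4\|x\|^2 x$ and $\Delta f(x)=(4d+8)\|x\|^2$, taking expectation kills the martingale and gives
\[\frac{d}{dt}E\|X_t\|^4=4 E\bigl[\|X_t\|^2\langle X_t,b(t,X_t)\rangle\bigr]+(2d+4)E\|X_t\|^2.\]
The coercivity bound $2\langle X_t,b(t,X_t)\rangle\le K(d+\|X_t\|^2)$ together with Young's inequality $d\|X_t\|^2\le\tfrac12(d^2+\|X_t\|^4)$ give $\frac{d}{dt}E\|X_t\|^4\le(3K+1)(d^2+E\|X_t\|^4)$, and Gronwall's lemma concludes. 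For the regularity of $b(t,X_t)$, decompose
\[b(t,X_t)-b(s,X_s)=\bigl(b(t,X_t)-b(t,X_s)\bigr)+\bigl(b(t,X_s)-b(s,X_s)\bigr).\]
The ``time'' increment is bounded pathwise by the directional-derivative estimate of Proposition~\ref{OptimalValueFct} (summed over a coordinate basis of $\R^d$ to recover the Euclidean norm), giving $\|b(t,X_s)-b(s,X_s)\|\le(t-s)^{(\alpha-1)/2}\sqrt d\,\bigl(c_1+d_1(\|X_s\|^2+\sup(-g))/d\bigr)$; its expectation yields the announced $\sqrt{d(t-s)^{\alpha-1}}$ scaling after inserting the second moment bound from Theorem~\ref{KrylovMonotone}. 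The ``space'' increment uses the $(\alpha-1)$-H\"older estimate on $\nabla h_{t,\ell}$ coming from Proposition~\ref{C1alphaBall}: $\|b(t,X_t)-b(t,X_s)\|\le C(1+\|X_t\|+\|X_s\|)\|X_t-X_s\|^{\alpha-1}$. Writing $X_t-X_s=\int_s^t b(u,X_u)du+(W_t-W_s)$, the drift contributes a lower-order $O((t-s)^{\alpha-1})$ term by coercivity, while the Brownian increment contributes $E\|W_t-W_s\|^{2(\alpha-1)}\le(d(t-s))^{\alpha-1}$. Combining these gives the $L^1$ bound; the $L^2$ bound follows from the same decomposition after squaring, with Cauchy--Schwarz and H\"older applied several times, and the exponent $3/8$ emerges from the interpolation needed to close the product of a linear-in-$\|X\|$ prefactor with $\|X_t-X_s\|^{2(\alpha-1)}$ using the fourth moment from the previous step.

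The main obstacle is the $L^2$ regularity: since $\nabla h_{t,\ell}$ is only $(\alpha-1)$-H\"older (not Lipschitz) with a prefactor that grows linearly in $\|X\|$, one must carefully combine the $L^4$ moment control of $X_t$, the H\"older estimate with linear prefactor, and H\"older's inequality at several stages to close the estimate with the correct dimensional scaling. Crucially, every constant must be tracked and kept $d$-independent (up to the displayed explicit factors of $d$), because downstream the same estimate will be applied with $d=N^2 m\to\infty$; this dimensional uniformity is the main quantitative requirement of the section and dictates the form $\bigl(1+(E\|X_0\|^2+\sup(-g))/d\bigr)$ in which all remaining dependence on the problem data appears.
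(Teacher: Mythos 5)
Your proposal follows the paper's proof closely: the inductive verification of Krylov's hypotheses via Propositions~\ref{OptimalValueFct} and~\ref{C1alphaBall}, the $L^4$ moment via It\^o's formula and Gronwall, and the splitting of $b(t,X_t)-b(s,X_s)$ into space and time contributions are all the same (up to the cosmetic choices of applying It\^o once to $\|x\|^4$ rather than nesting two applications to $\|x\|^2$, and of decomposing around $X_s$ rather than around $X_t$). The one point worth flagging is the $L^2$ increment bound: your description -- ``the same decomposition after squaring, with Cauchy--Schwarz and H\"older applied several times'' -- does not naturally produce the exponent $3/8$. Squaring the decomposition and applying Cauchy--Schwarz to the product of the linear prefactor with $\|X_t-X_s\|^{2(\alpha-1)}$ would yield the exponent $1/2$ on $\bigl(1+E\|X_0\|^4/d^2\bigr)$ and a \emph{stronger} time exponent of order $(t-s)^{\alpha-1}$. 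The paper instead deduces the $L^2$ bound from the already-proved $L^1$ bound in one line: setting $Z=b(t,X_t)-b(s,X_s)$, it interpolates $E\|Z\|^2\le\bigl(E\|Z\|^3\bigr)^{1/2}\bigl(E\|Z\|\bigr)^{1/2}\le\bigl(E\|Z\|^4\bigr)^{3/8}\bigl(E\|Z\|\bigr)^{1/2}$, then bounds $E\|Z\|^4\le 16\sup_t E\|b(t,X_t)\|^4$ via \eqref{lipE} and the $L^4$ moment of $X$. This is where $3/8=\tfrac34\cdot\tfrac12$ comes from, and where the factor $d^{3/4}$ from $(E\|Z\|^4)^{3/8}$ combines with $d^{1/4}$ from $(E\|Z\|)^{1/2}$ to give the stated overall factor $d$. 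Your route would also close, with a better time exponent but a different prefactor exponent, so this is a presentational divergence rather than an error; just be precise that the $3/8$ is the signature of the $L^1$--$L^4$ interpolation, not of a re-run of the increment decomposition in $L^2$.
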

The last H\"older-continuity estimate will be crucial to recover some continuity in ultraproducts.
\begin{proof}
By induction, one can assume the previously built solution $X_t$ to be measurable. From \eqref{CalphaE}, the 2-paraconvexity and proposition \ref{C1alphaBall}, $\frac{\partial}{\partial x^{(j)}} h_t(X_{t_1}(\omega),...,X_{t_i}(\omega),x)$ is $(\alpha-1)$ H\"older-continuous in $x$, thus continuous. From proposition  \ref{OptimalValueFct}, it is also continuous in $t$ on $]t_i, t_{i+1}]$ for each $\omega, x$ fixed and for each $x,t\in [0,1] $ the formula is clearly $\sigma(X_s, s\leq t)$-measurable  thus, by e.g. \cite[lemma 9.2]{Mackey}, on $]t_i, t_{i+1}]\times \Omega$, $b_j(.,x,.)$ is $B(]t_i, t])\otimes  \mathcal{F}_t$-measurable, and thus $b$ is progressively measurable. We have from \eqref{lipE}: $$\sup_{\|x\|_2\leq R}||\sigma(t,x)||^2+||b(t,x)||\leq 1+(2CR+D\sqrt{d}),$$
so that \eqref{integrability} is verified. The monotony of $b(t,.,\omega)$ follows from the 2-paraconvexity of $h$ so that \eqref{monotonicity} holds with $K_t(R)\geq 8\max(0,\ell)(1-t)$. \eqref{coercivity} then follows from \eqref{lipE} as before since $\langle x,b(t,x,\omega)\rangle=dh_t(\omega_{t_1},...,\omega_{t_i},x).(x_1,...,x_n)$ with $K_t(1)\geq\max(5C, D^2)+1.$

For the continuity property, we decompose the inequality into two terms. First a bound on $E(||b(t,X_t)- b(s,X_t)||)$ is obtained by using the last inequality in proposition \ref{OptimalValueFct} and the estimates on $E(||X_t||^2).$
\begin{align*}&E(||b(t,X_t)- b(s,X_t)||)\leq  \sqrt{|t-s|^{\alpha-1}d}
\left(c_1+d_1\frac{e^KE(||X_0||^2+d)+\sup(-g)}{d}\right).\end{align*}

 A similar bound holds from the application of  proposition \ref{C1alphaBall} as in step 6 of the proof of proposition \ref{OptimalValueFct}: \begin{align*}E(||b(s,X_t)- b(s,X_s)||)&\leq d^{1-\alpha/2}E\left[(C +D \frac{||X_t||+||X_s||}{\sqrt{d}})||X_t-X_s||^{(\alpha-1)}\right]\\&\leq d^{1-\alpha/2}(C +D \frac{\sqrt{E[||X_t||^2]}+\sqrt{E[||X_s||^2]}}{\sqrt{d}})(E[||X_t-X_s||^2])^{(\alpha-1)/2},\end{align*}
and since, we have: 
$E(||b(u,X_u)||^2)\leq E( 2C||X_u||+D\sqrt{d})^2\leq 8C^2 e^K(E( ||X_0||^2)+d)+ 2D^2d,$ and say for $t>s$  \begin{align*}E(||X_t- X_s||^{2})&\leq 2d(t-s)+2\sqrt{d(t-s)}\sqrt{E(\int_s^t||b(u,X_u)||^2du)}\\&\leq 2d(t-s)+2\sqrt{d}(t-s)\sqrt{8C^2 e^K(E( ||X_0||^2)+d)+ 2D^2d}=C_3d(t-s).\end{align*}
Thus, we obtain the expected bound:
\begin{align*}E(||b(s,X_t)- b(s,X_s)||)&\leq (C +2De^{K/2} \frac{\sqrt{E[||X_0||^2]+d}}{\sqrt{d}})\sqrt{d}[C_3(t-s)]^{(\alpha-1)/2}.\end{align*}

 The fourth order estimate is not contained in the previous stated result (but it is well-known, see e.g. \cite[Th 3.5 p 59]{BensoussanLions})
Ito's formula and our weak coercivity assumptions gives : \begin{align*}||X(t)||^2&=||X(0)||^2+\int_0^t(2\langle X(s),b(s,X(s))\rangle +1)ds + \int_0^t\langle X(s), dB_s\rangle\\&\leq ||X(0)||^2+K\int_0^t(1+||X(s)||^2)ds + \int_0^t\langle X(s), dB_s\rangle\end{align*}
In taking the square and applying Ito's formula again and taking mathematical expectation, one gets:
\begin{align*}E(||X(t)||^4)&\leq E\left(||X(0)||^4+2K\int_0^t(d+||X(s)||^2)||X(s)||^2ds + \int_0^t||X(s)||^2ds\right)
\\&\leq E\left(||X(0)||^4+(3K+1)\int_0^t(d^2+||X(s)||^4)ds.\right)\end{align*}
Gronwall's lemma concludes to the fourth order bound and then Cauchy-Schwartz inequality gives the $L^2$ bound on increments:
\begin{align*}E(||b(t,X_t)- b(s,X_s)||^2)&\leq E(||b(t,X_t)- b(s,X_s)||^3)^{1/2}E(||b(t,X_t)- b(s,X_s)||)^{1/2}\\&\leq  4 E(||b(t,X_t)- b(s,X_s)||)^{1/2}\sup_{t\in [0,1]} E(||b(t,X_t)||^4)^{3/8}.\end{align*}
 \end{proof}

\subsection{Free case}
\setcounter{Step}{0}
We now obtain a result similar to the previous corollary in the free case in order to describe the limit of the application to matrices of this corollary. Instead of using Euler approximation as in \cite{RocknerConcise}, we will use a Yosida approximation as in \cite[Theorem 4.3]{StephanLiu}. The necessary  preliminaries were recalled in subsection 2.6.
The following result is of independent interest for the study of free SDEs. That's why we assume a slightly more general setting that what we need later.

\begin{theorem}\label{FreeMonotone}
Let $M_t\subset (M,\tau)$ a filtration of finite von Neumann algebras containing an adapted free brownian motion $S_t=(S_t^1,...,S_t^m).$ Let $T>0$ and $t_0=0< t_1< t_2...<t_k\leq t_{k+1}=T |t_{i+1}-t_i|\leq 1$, and for $t\in ]t_i,t_{i+1}]$, let $h_t:L^2_{sa}(M_t,\tau)^{m (i+1)}\to \R$ a convex function bounded below by $c\in\R$ (uniformly in $t$), subquadratic with bound:
$$ h_t(x)\leq |c|(1+||x||_2^2),$$
 and satisfying for some $C,D>0$, for $X,Y\in L^2_{sa}(M_t,\tau)^{m (i+1)}$:$$|h_t(X)-h_t(Y)|\leq ||X-Y||\left(C||X||_2+C||Y||_2+D\right).$$
Assume $h_t$ is G\^ateaux differentiable such that $t\mapsto \nabla_{i+1} h_t(X)$ is continuous with value $L^2_{sa}(M,\tau)^m$  on any  $]t_i,t_{i+1}[$, that for $X\in L^2_{sa}(M_u)^{m(i+1)}, u<t$ we have $\nabla_{i+1} h_t(X)\in L^2_{sa}(M_u)^m$ and even satisfying for some $\alpha,\beta\in ]0,1]$ and any $t<s\in ]t_i,t_{i+1}[, X,Y\in L^2_{sa}(M_t,\tau)^{m (i+1)}:$
\begin{equation}\label{Holderht}||\nabla_{i+1} h_t(X)-\nabla_{i+1} h_s(X)||_2\leq |t-s|^\alpha (C+D||X||_2).\end{equation}
\begin{equation}\label{Holderhtspace}||\nabla_{i+1} h_t(X)-\nabla_{i+1} h_t(Y)||_2\leq ||X-Y||_2^\beta (C+D||X||_2+D||Y||_2).\end{equation}
Assume finally given $X_0\in L^2_{sa}(M_0,\tau)^m, L>0.$ Then there is $X_t\in L^2_{sa}(M_t,\tau)^m$  continuous in $t$ satisfying :
$$X_t=X_0+S_t+\int_0^t LX_s-u_s(X)ds,\ \ u_s(X)=\sum_{i=0}^k\nabla_{i+1} h_s(X_{t_1},...,X_{t_i},X_s) 1_{]t_{i},t_{i+1}]}(s).$$
Moreover, any continuous solution $Y_t\in L^2_{sa}(M_t,\tau)^m$ of this equation equals $X_t$ for every $t\in [0,T]$ if $Y_0=X_0$.
\end{theorem}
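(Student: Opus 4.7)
The strategy is to prove uniqueness first, exploiting the monotonicity coming from convexity of $h_s$, and then to construct a solution by a Yosida regularization argument parallel to the classical Corollary \ref{MonotoneSol} in the free probability framework.

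For uniqueness, I would induct on the intervals $]t_i, t_{i+1}]$. If $X, Y$ are two continuous solutions with $X_0 = Y_0$, then $Z_t := X_t - Y_t$ satisfies $Z_t = \int_0^t (L Z_s - (u_s(X) - u_s(Y)))\, ds$ since the martingale parts cancel. Applying Proposition \ref{Ito} to $\|Z_t\|_2^2$ yields
\begin{align*}
\|Z_t\|_2^2 = 2\int_0^t \Re\langle Z_s,\ L Z_s - (u_s(X) - u_s(Y))\rangle\, ds.
\end{align*}
On $[0, t_1]$, $u_s(X) - u_s(Y) = \nabla_1 h_s(X_s) - \nabla_1 h_s(Y_s)$, and monotonicity of $\nabla_1 h_s$ (from convexity of $h_s$) makes the second inner product non-negative; Gronwall then forces $Z \equiv 0$. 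The inductive step on $]t_i, t_{i+1}]$ is identical once $X_{t_j} = Y_{t_j}$ for $j \leq i$, because then $u_s(X) - u_s(Y) = \nabla_{i+1} h_s(X_{t_1},\dots,X_{t_i}, X_s) - \nabla_{i+1} h_s(X_{t_1},\dots,X_{t_i}, Y_s)$ and the same monotonicity applies.

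For existence, I build the solution inductively on each interval. Assume $X$ has been constructed on $[0, t_i]$ and freeze the convex function $g_s^{(i)}(Z) := h_s(X_{t_1},\dots,X_{t_i}, Z)$, which inherits the regularity of $h_s$. The Moreau--Yosida regularization $(g_s^{(i)})^\lambda$ of Proposition \ref{Yosida} has gradient $\nabla (g_s^{(i)})^\lambda = \nabla g_s^{(i)} \circ J_\lambda$ that is $1/\lambda$-Lipschitz; Lemma \ref{RegularityYosida} transfers the time H\"older regularity \eqref{Holderht} into a corresponding H\"older bound in time (with exponent $\alpha\beta$). One solves the regularized equation
\begin{align*}
X^\lambda_t = X_{t_i} + (S_t - S_{t_i}) + \int_{t_i}^t \bigl(L X^\lambda_s - \nabla (g_s^{(i)})^\lambda(X^\lambda_s)\bigr)\, ds
\end{align*}
by the Banach fixed point theorem in $C([t_i, t_{i+1}], L^2_{sa}(M, \tau)^m)$. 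Adaptedness is preserved by the iteration as soon as $\nabla (g_s^{(i)})^\lambda$ maps $L^2_{sa}(M_s)^m$ into itself; this is the case because $\nabla g_s^{(i)}$ is assumed to preserve these subalgebras, and the constrained minimizer of $\frac{1}{2\lambda}\|x-y\|_2^2 + g_s^{(i)}(y)$ over $L^2_{sa}(M_s)^m$ satisfies the same first order condition as the unconstrained one, hence both equal $J_\lambda(x)$.

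The passage to the limit $\lambda \to 0$ uses Proposition \ref{Ito} again on $\|X^\lambda_t - X^\mu_t\|_2^2$: the martingale parts cancel, and the resolvent identity $J_\lambda(x) - J_\mu(y) = (x - y) - \lambda \nabla (g_s^{(i)})^\lambda(x) + \mu \nabla (g_s^{(i)})^\mu(y)$ together with monotonicity of $\nabla g_s^{(i)}$ on the $J$-images splits the drift integral into a term controlled by $L \|X^\lambda_s - X^\mu_s\|_2^2$ plus a $(\lambda + \mu)$-small remainder in the squared $L^2$-norms of the Yosida gradients. The uniform estimate $\|\nabla (g_s^{(i)})^\lambda(X^\lambda_s)\|_2 \leq 2C\|J_\lambda(X^\lambda_s)\|_2 + D \leq 2C\|X^\lambda_s\|_2 + O(1)$, obtained from the local Lipschitz hypothesis on $g_s^{(i)}$ combined with an a priori $L^2$-bound on $\|X^\lambda_t\|_2^2$ via Proposition \ref{Ito} (driven by subquadratic growth of $g_s^{(i)}$ and Gronwall), gives a Cauchy sequence $(X^\lambda)$ with limit $X$ in $C([t_i, t_{i+1}], L^2_{sa}(M,\tau)^m)$. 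Identifying $X$ as a solution of the original equation requires $\nabla (g_s^{(i)})^\lambda(X^\lambda_s) \to \nabla g_s^{(i)}(X_s)$, which follows from $X^\lambda_s \to X_s$ in $L^2$, the bound $\|X^\lambda_s - J_\lambda(X^\lambda_s)\|_2 = \lambda \|\nabla (g_s^{(i)})^\lambda(X^\lambda_s)\|_2 \to 0$, and the H\"older continuity \eqref{Holderhtspace} of $\nabla g_s^{(i)}$. The main obstacle is precisely this limit identification, since only H\"older (not Lipschitz) regularity in space is available; combining it with the time-regularity from Lemma \ref{RegularityYosida} and dominated convergence on $[t_i, t_{i+1}]$ closes the argument, after which iterating over the partition $0 < t_1 < \dots < t_k \leq T$ completes the construction.
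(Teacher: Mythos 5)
Your proposal follows essentially the same approach as the paper: uniqueness via the monotonicity of $\nabla h_s$ (coming from convexity) combined with Gronwall and induction on the intervals $]t_i,t_{i+1}]$, and existence via Yosida regularization of the frozen convex potential, a fixed-point construction for the $1/\lambda$-Lipschitz regularized SDE, a Cauchy-in-$\lambda$ estimate exploiting the resolvent identity $\lambda\nabla H_{s,\lambda}(x)+J_{s,\lambda}(x)=x$ and monotonicity of $\nabla H_s$ on the $J$-images to isolate a $(\lambda+\mu)$-small error, and finally identification of the limiting drift via the H\"older estimate \eqref{Holderhtspace}. Your observation that the constrained and unconstrained Moreau envelopes share the same resolvent, because $\nabla g_s^{(i)}$ preserves $L^2_{sa}(M_s)^m$, plays the same role as the paper's restriction of the Yosida construction to $H=L^2_{sa}(M_t)^m$.
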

\begin{proof}
\begin{step}Uniqueness\end{step}Let us start by checking uniqueness. Note that by the assumption $||u_s(X)||_2\leq (2C||(X_{t_1},...,X_{t_i},X_s)||_2+D)$ which is bounded on $[0,T]$ and 
$$X_t-Y_t=X_0-Y_0+\int_0^tL(X_s-Y_s)-(u_s(X)-u_s(Y))ds=\int_0^tL(X_s-Y_s)-(u_s(X)-u_s(Y))ds$$
so that $X_t-Y_t$ is absolutely continuous in $t$ in $L^2_{sa}(M)$ and we have:
$$||X_t-Y_t||_2^2=
2\int_0^tds\langle L(X_s-Y_s)-((u_s(X)-u_s(Y)),X_s-Y_s\rangle \leq 2\int_0^tds L||X_s-Y_s||_2^2
$$
 since $\langle (u_s(X)-u_s(Y)),X_s-Y_s\rangle\geq 0$ since $u_s$ is monotone from the convexity of $h_s$ and first since for $s\in [0,t_1]$ $u_s(X)\in L^2_{sa}(M_s)$ and then since by induction on the time intervals for $s\in ]t_i,t_{i+1}]$ one can use $ X_{t_i}= Y_{t_i}$. Gronwall's lemma then concludes the induction step to prove equality.
 
\begin{step}Definition of the Yosida approximation\end{step}
We now turn to the proof of the existence result. We follow a Yosida approximation scheme (see e.g. \cite[Theorem 4.3]{StephanLiu} in the classical case). By induction on $i$, we aim at finding a solution on $]t_i,t_{i+1}]$ and thus we consider given $X_{t_1},...X_{t_i}$ and $H_t=h_t(X_{t_1},...X_{t_i},.):L^2_{sa}(M_t)^m\to \R.$ This is a convex continuous (even locally H\"older continuous) function and we can consider $H_{t,\lambda}$ its Hopf-Lax-Yosida approximation from Proposition \ref{Yosida}. We now fix $0<\lambda\leq 1$. From a Picard iteration argument (see e.g. \cite[Lemma 3.2]{Gao}, or the proof of Theorem \ref{MainTechnical} Step 2(ix) in a more complicated context), since $\nabla H_{t,\lambda}$ is globally $1/\lambda$-Lipschitz, one gets a solution on $[t_i,t_{i+1}]$ with $X_{t,\lambda}\in L^2_{sa}(M_t)^m$:
$$X_{t,\lambda}=X_{t_i}+S_t-S_{t_i}+\int_{t_i}^tds LX_{s,\lambda}-\nabla H_{s,\lambda}(X_{s,\lambda}).$$
Note that this argument uses the regularity in time obtained from lemma \ref{RegularityYosida} based on the assumptions \eqref{Holderhtspace} and \eqref{Holderht} so that the Picard iterates are well-defined and adapted. Note that using the assumption that for $X\in L^2_{sa}(M_u)^{m(i+1)}, u<t$ we have $\nabla_{i+1} h_t(X)\in L^2_{sa}(M_u)^m$, the quoted lemma can be used with $H=L^2_{sa}(M_t)^m$ and the appropriate restriction to this space of $h_s,s>t.$ Even in the case where $\beta=1$, the Yosida approximation is useful to get a global lipschitzness. 

\begin{step}Second moment estimate\end{step}

From Ito's formula \cite{BianeSpeicher} and taking traces, one gets the a priori estimate:
\begin{align*}&||X_{t,\lambda}||_2^2=||X_{t_i}||_2^2+2\int_{t_i}^tds\langle LX_{s,\lambda} -\nabla H_{s,\lambda}(X_{s,\lambda}),X_{s,\lambda}\rangle+(t-t_i)\\&\leq ||X_{t_i}||_2^2+2\int_{t_i}^tds L||X_{s,\lambda}||_2^2+D||X_{s,\lambda}||_2+(t-t_i)
\\&\leq ||X_{t_i}||_2^2+\int_{t_i}^tds (2L+1)||X_{s,\lambda}||_2^2 +(D+1)(t-t_i)
\\&\leq e^{(2L+1)(t-t_i)} \left(||X_{t_i}||_2^2+(D+1)(t-t_i)\right)=:E(t),\end{align*}
The last inequality comes from Gronwall's lemma and we used: $$
\langle -\nabla H_{s,\lambda}(X_{s,\lambda}),X_{s,\lambda}\rangle=-\lambda ||\nabla H_{s,\lambda}(X_{s,\lambda})||_2^2+ 
\langle -\nabla H_{s}J_{s,\lambda}(X_{s,\lambda}),J_{s,\lambda}X_{s,\lambda}\rangle\leq \langle -\nabla H_{s}(0),J_{s,\lambda}X_{s,\lambda}\rangle$$
from the identities $\nabla H_{s,\lambda}=\nabla H_{s}J_{s,\lambda}$,$\lambda\nabla H_{s,\lambda}(x)+J_{s,\lambda}(x)=x$ and   by monotony of the gradient $\nabla H_{s}$ (coming from $H_s$ convex).


\begin{step}Convergence in $\lambda$ in $L^2$.\end{step}
Then, one deduces similarly for $t\in [t_i,t_{i+1}]$:
$$||X_{t,\lambda}-X_{t,\mu}||_2^2=||X_{t_i,\lambda}-X_{t_i,\mu}||_2^2+2\int_{t_i}^tds \ \left(\ L||X_{s,\lambda}-X_{s,\mu}||_2^2 -\langle \nabla H_{s,\lambda}(X_{s,\lambda})-\nabla H_{s,\mu}(X_{s,\mu}),X_{s,\lambda}-X_{s,\mu}\rangle\right) $$
Now, note that, from proposition \ref{Yosida}, if we call $J_{s,\lambda}=(I+\lambda \nabla H_s)^{-1}$ the contraction, we have $\nabla H_{s,\lambda}=\nabla H_s\circ J_{s,\lambda}$ and $\lambda\nabla H_{s,\lambda}(x)+J_{s,\lambda}(x)=x$, thus we can decompose 
\begin{align*}&\langle \nabla H_{s,\lambda}(X_{s,\lambda})-\nabla H_{s,\mu}(X_{s,\mu}),X_{s,\lambda}-X_{s,\mu}\rangle\\&\qquad =
\langle \nabla H_s( J_{s,\lambda}(X_{s,\lambda}))-\nabla H_s( J_{s,\mu}(X_{s,\mu})),J_{s,\lambda}(X_{s,\lambda})-J_{s,\mu}(X_{s,\mu})\rangle \\&\qquad+\langle \nabla H_{s,\lambda}(X_{s,\lambda})-\nabla H_{s,\mu}(X_{s,\mu}),\lambda\nabla H_{s,\lambda}(X_{s,\lambda})-\mu\nabla H_{s,\mu}(X_{s,\mu})\rangle\\&\qquad \geq \langle \nabla H_{s,\lambda}(X_{s,\lambda})-\nabla H_{s,\mu}(X_{s,\mu}),\lambda\nabla H_{s,\lambda}(X_{s,\lambda})-\mu\nabla H_{s,\mu}(X_{s,\mu})\rangle
\end{align*}
where we use the convexity of $H$ in the inequality. Thus one gets in using more the relations above (and the inequality $|ab|\leq a^2/4+b^2$) :
\begin{align*}&-\langle \nabla H_{s,\lambda}(X_{s,\lambda})-\nabla H_{s,\mu}(X_{s,\mu}),X_{s,\lambda}-X_{s,\mu}\rangle\\& \leq -\lambda|| \nabla H_{s,\lambda}(X_{s,\lambda})||^2 -\mu ||\nabla H_{s,\mu}(X_{s,\mu})||^2+(\lambda+\mu)||\nabla H_{s,\lambda}(X_{s,\lambda})||\ ||\nabla H_{s,\mu}(X_{s,\mu})||\\&\leq \frac{\lambda}{4}||\nabla H_{s,\mu}(X_{s,\mu})||^2+\frac{\mu}{4}||\nabla H_{s,\lambda}(X_{s,\lambda})||^2.
\end{align*}
Finally, from the Lipschitzness of $H$, one gets from the contractivity of $J_{s,\mu}$: \begin{align*}||\nabla H_{s,\mu}(X_{s,\mu})||^2
&\leq (2C||(X_{t_1},...X_{t_i},J_{s,\mu}(X_{s,\mu}))||+D)^2\\&\leq (2C||(X_{t_1},...X_{t_i},X_{s,\mu})||+2C||J_{s,\mu}(0)||+D)^2\end{align*}
Gathering all our estimates and using again \eqref{boundJlambda}, we have thus obtained the inequality (for $\mu,\lambda\leq 1$):
\begin{align*}
&||X_{t,\lambda}-X_{t,\mu}||_2^2\leq e^{2L(t-t_i)}||X_{t_i,\lambda}-X_{t_i,\mu}||_2^2+\frac{\lambda+\mu}{2} e^{2L(t-t_i)}(t-t_i)\\&\times (8C^2\sum_{j=1}^i||X_{t_j}||_2^2+8C^2 E(t_{i+1})+4D^2+32C^2(\sup H_s(0)+|c|)).
\end{align*}
Thus $X_{t,\lambda}$ is Cauchy in $\lambda$ and by induction on $i$ converges in $C^0(|t_i,t_{i+1}],L^2_{sa}(M)^m)$ to $X_t$ such that $X_t\in L^2_{sa}(M_t)^m.$

\begin{step}Checking the limiting equation.\end{step}

Moreover, using \eqref{Holderhtspace}, triangular inequality and the definition of $J_{s,\mu}$, one gets:
\begin{align*}&||\nabla H_{s,\mu}(X_{s,\mu})-\nabla H_{s}(X_{s})||_2= ||\nabla H_{s}J_{s,\mu}(X_{s,\mu})-\nabla H_{s}(X_{s})||_2
\\&\leq ||J_{s,\mu}(X_{s,\mu})-X_{s}||_2^\beta (C+D||J_{s,\mu}(X_{s,\mu})||_2+D||X_{s}||_2)
\\&\leq \left(\mu^\beta||\nabla H_{s,\mu}(X_{s,\mu})||_2^\beta +||X_{s,\mu}-X_{s}||_2^\beta\right) (C+D||X_{s,\mu}||_2+D||J_{s,\mu}(0)||_2+D||X_{s}||_2)\to_{\mu\to 0} 0\end{align*}
uniformly on $[t_i,t_{i+1}]$ from the bounds in the previous steps. We can now take the limit in the SDE satisfied by $X_{s,\mu}$ to get the expected SDE for $X_{s}.$
\end{proof}
\setcounter{Step}{0}
We now apply this result in the spirit of \cite{GuionnetS07} in order to get limit states of convex potential matrix models with limited regularity.

\begin{theorem}\label{SDVg}
Let $g\in \mathcal{E}^{1,1}_{app}( \mathcal{T}_{2,0}(\mathcal{F}^m_{1}*\mathcal{F}^\nu_{\mu}),d_{2,0})$ 
 and consider, for $\Upsilon_N\in \mathcal{U}(M_N(\C))$, the law absolutely continuous with respect to the law $P_{G^N}$ of GUE $G^N$:$$d\mu_{g,N}(X)=\frac{1}{Z_{g,N,\Upsilon_N}}e^{-N^2g(\tau_{X,\Upsilon_N})}dP_{G^N}(X).$$
Assume finally that the non-commutative law $\tau_{\Upsilon_N}$ converges to some $\mu_\Upsilon\in (\mathcal{T}(\mathcal{F}^\nu_{\mu}),d)$. Then $E_{\mu_{g,N}}\circ\tau_{.,\Upsilon_N}$ converges  in $(\mathcal{T}_{2,0}(\mathcal{F}^m_{1}*\mathcal{F}^\nu_{\mu}),d_{2,0} )$ to a tracial state $\tau_g$ which is law of self-adjoint and unitary variables $X(g),u$ (of norm bounded by some $R$) and the unique solution, such that the law of $u$ is $\mu_\Upsilon$, to the equation $(SD_g)$, for $G(X)=g(\tau_{X,u})$:
$$ \forall P\in \C\langle X_1,...,X_n,u_1^1,...,u_\mu^\nu\rangle , \ (\tau_g\otimes \tau_g)(\partial_{X_i}(P))=\tau_g(X_iP)+d_{X_i}G(X(g)).P(X).$$
Moreover, there is a solution on $\R_+$ given by the theorem \ref{FreeMonotone} for $h_t(x_1,...,x_{i+1})=g(\tau_{x_{i+1},u})+g_{2,\mathbf{(1)}}(\tau),L=0$ for all $t$ (with $\mathbf{(1)}$ the list of times with only one time equal to 1)., it satisfies for the solutions $X_t(X),X_t(Y)$ with initial condition $X,Y$:
\begin{equation}\label{expdecay}||X_t(X)-X_t(Y)||_2^2\leq e^{-t}||X_0(X)-X_0(Y)||_2^2\end{equation}
and $\tau_g$ is the unique stationary state for this free SDE. 
\end{theorem}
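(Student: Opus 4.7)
The plan is to build the free stochastic differential equation of the last assertion first, derive the contraction from convexity, and then use both ingredients to identify $\tau_g$ as the unique solution of $(SD_g)$ and as the limit of the matrix model.

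\emph{Free SDE and contraction.} I would instantiate Theorem~\ref{FreeMonotone} with the time-independent data $h(x) = g(\tau_{x,u}) + g_{2,\mathbf{(1)}}(\tau_x)$ and $L = 0$, in a filtration carrying an adapted free Brownian motion $S$. Because $g \in \mathcal{E}^{1,1}_{\mathrm{app}}$, Proposition~\ref{C11} gives that $G(X) := g(\tau_{X,u})$ is Fr\'echet differentiable with Lipschitz gradient on $L^2_{sa}(M,\tau)^m$; the quadratic contribution $g_{2,\mathbf{(1)}}(\tau_X) = \tfrac{1}{2}\sum_i \|X_i\|_2^2$ adds $X$ to the gradient and matches the GUE Gibbs weight. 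The hypotheses of Theorem~\ref{FreeMonotone} are then immediate (temporal H\"older trivial, spatial one with $\beta = 1$), producing a unique continuous solution
\begin{equation*}
X_t = X_0 + S_t - \int_0^t \bigl(X_s + \nabla G(X_s)\bigr)\, ds
\end{equation*}
on every $[0,T]$ and, by patching, on $\R_+$. For the contraction, I would couple two solutions $X_t, Y_t$ by the same $S$, set $Z_t = X_t - Y_t$, and apply Proposition~\ref{Ito}; monotonicity of the convex gradient $\nabla G$ gives
\begin{equation*}
\|Z_t\|_2^2 = \|Z_0\|_2^2 + \int_0^t \bigl(-2\|Z_s\|_2^2 - 2\Re\bigl\langle \nabla G(X_s) - \nabla G(Y_s),\, Z_s\bigr\rangle\bigr)\, ds \leq \|Z_0\|_2^2 - 2\int_0^t\|Z_s\|_2^2 \, ds,
\end{equation*}
and Gr\"onwall yields $\|Z_t\|_2^2 \leq e^{-2t}\|Z_0\|_2^2 \leq e^{-t}\|Z_0\|_2^2$, proving \eqref{expdecay}.

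\emph{Convergence of the matrix model and $(SD_g)$.} The density of $\mu_{g,N}$ against Lebesgue is proportional to $\exp(-N^2[g(\tau_{X,\Upsilon_N}) + g_{2,\mathbf{(1)}}(\tau_X)])$, so Proposition~\ref{ConcentrationNorm} applies with $k = 1$, $\mathbf{t} = (1)$: it delivers an almost sure operator-norm bound on $X$, the uniform integrability \eqref{IntegralUnifBound}, and the vanishing of variances of traces of polynomials in $X$ and $\Upsilon_N$. Combined with Lemma~\ref{Exptight}, this makes $E_{\mu_{g,N}} \tau_{\cdot,\Upsilon_N}$ relatively compact in $(\mathcal{T}_{2,0}^c, d_{1,0})$, with cluster points supported on self-adjoints of bounded operator norm. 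Integration by parts against the Gaussian density on $M_N(\C)_{sa}^m$ yields, for any non-commutative polynomial $P$ and each $i$,
\begin{equation*}
E_{\mu_{g,N}}\bigl[(\tau_{X,\Upsilon_N}^{\otimes 2})(\partial_{X_i} P)\bigr] = E_{\mu_{g,N}}\bigl[\tau_{X,\Upsilon_N}\bigl(X_i P + d_{X_i}G(X)\cdot P\bigr)\bigr].
\end{equation*}
I would then pass to the limit using the polynomial approximation \eqref{App} of $d_{X_i}G$ provided by the definition of $\mathcal{E}^{1,1}_{\mathrm{app}}$, together with the uniform operator-norm bound and the concentration of traces from Proposition~\ref{ConcentrationNorm}; every cluster point $\tau_\infty$ therefore satisfies $(SD_g)$.

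\emph{Uniqueness and conclusion.} Finally I would show $(SD_g)$ has at most one solution. A direct free It\^o computation (inserting the SDE of Step~1 into $\tau(P(X_t))$ and using $(SD_g)$) identifies stationarity of the SDE with satisfaction of $(SD_g)$. Given two solutions $\tau_1, \tau_2$, realize them with generators $X^{(1)}, X^{(2)} \in M_0$ of a common filtration carrying a free Brownian motion; then both $X_t(X^{(1)})$ and $X_t(X^{(2)})$ are stationary with laws $\tau_1, \tau_2$, while the contraction \eqref{expdecay} gives $\|X_t(X^{(1)}) - X_t(X^{(2)})\|_2 \to 0$. Extracting an ultrapower limit along $t \to \infty$ produces a single element realizing both laws, forcing $\tau_1 = \tau_2$. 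This promotes the subsequential convergence of the previous step to full convergence to $\tau_g$, and also shows $\tau_g$ is the unique stationary state. The main obstacle in this plan is the limit passage in the integration-by-parts identity: $d_{X_i}G(X)$ is not a polynomial in $X$ and $\Upsilon_N$, and it is precisely the definition of $\mathcal{E}^{1,1}_{\mathrm{app}}$ (polynomial approximation of the gradient with controlled operator norm, modulo a benign linear term) combined with the variance estimate of Proposition~\ref{ConcentrationNorm} that makes the passage possible.
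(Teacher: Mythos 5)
Your plan follows the paper's architecture closely: concentration of measure plus matrix integration by parts identify cluster points of $E_{\mu_{g,N}}\circ\tau_{\cdot,\Upsilon_N}$ as solutions of $(SD_g)$, while the free SDE from Theorem \ref{FreeMonotone} and the convexity-driven contraction supply uniqueness and hence full convergence. The coupling/Ito computation for \eqref{expdecay}, the use of Proposition \ref{ConcentrationNorm}, and the role of the approximation property $\mathcal{E}^{1,1}_{\mathrm{app}}$ are all present in the paper's proof (your $e^{-2t}$ is the sharper constant; the paper's intermediate computation drops the factor $2$ from Proposition \ref{Ito} but the asserted $e^{-t}$ follows from either). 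One small misplacement: you put the weight of $\mathcal{E}^{1,1}_{\mathrm{app}}$ on the passage to the limit in the matrix IBP, whereas the paper gets $(SD_g)$ there using only the $L^2$-boundedness of the normalized score $Z_i^N=N\nabla_{A_i^N}G$ (from \eqref{Lop0}) and the differentiability of $G$; the approximation property \eqref{App} is actually needed to guarantee $\nabla_x g(\tau_{x,u})\in L^2(W^*(x,u))$, which is the adaptedness hypothesis of Theorem \ref{FreeMonotone} — without it your Step~1 does not run.

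The genuine gap is in the uniqueness step. You claim that ``a direct free It\^o computation\ldots identifies stationarity of the SDE with satisfaction of $(SD_g)$.'' The easy direction is a one-line Ito computation: stationarity implies $(SD_g)$ by differentiating $\tau(P(X_t))$ at $t=0$. But your argument needs the converse: that an initial condition whose law satisfies $(SD_g)$ produces a solution with constant law for all $t>0$. That does \emph{not} follow from Ito at $t=0$ alone, because $\frac{d}{dt}\tau(P(X_t))$ involves $\tau_{X_t}$, and you have not yet shown $\tau_{X_t}$ satisfies $(SD_g)$ for $t>0$; the vanishing of the derivative does not self-propagate. The paper handles precisely this point by invoking \cite[Theorem 28]{Dab10b}, which provides, for a state with finite free Fisher information (supplied by $(SD_g)$), the \emph{existence} of a stationary solution of the free SDE; uniqueness in Theorem \ref{FreeMonotone} then forces the solution started at any realization of that state to coincide with the stationary one, and the contraction does the rest. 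You must either cite this existence result or supply a bootstrap/uniqueness-of-moment-ODE argument; as written, the key implication $(SD_g)\Rightarrow$ stationarity is unproved.
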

\begin{proof}
\begin{step}Defining limit variables in a von Neumann algebra ultraproduct.\end{step}
Let law $\mu_{g,N}$ be the marginal at time $1$ of a law considered in Proposition \ref{ConcentrationNorm}.
Consider a non-principal ultrafilter $\omega$ on $\N$ and the ultraproducts $\mathcal{L}^\omega=L^2(M_N(L^\infty(\mu_{g,N}))^\omega$, $\mathcal{M}^\omega=M_N(L^\infty(\mu_{g,N}))^\omega$ (tracial von Neumann algebra ultraproduct). Considering $A_1^N,...,A_n^N$ the canonical hermitian variables in $M_N(L^\infty(\mu_{g,N}))$, we know from \eqref{IntegralUnifBound} that $||A_i^N1_{||A_i^N||\leq C}- A_i^N||_2\to 0$ so that $X_i^\omega=(A_i^N)^\omega=(A_i^N1_{\{||A_i^N||\leq C\}})^\omega\in \mathcal{M}^\omega$. We thus also fix $B_i^N=A_i^N1_{||A_i^N||\leq C}.$ We can also consider $u_i^j=((\Upsilon_N)_i^j)^\omega\in \mathcal{M}^\omega.$ Of course, $u$ has law $\mu_\Upsilon$.

This gives a tracial state $\tau_{X^\omega,u}\in \mathcal{S}_C^m\star \mathcal{T}_\mu^\nu.$  Let us check that any such state satisfies $(SD_g)$. 

\begin{step}Showing $(SD_g)$.\end{step}
First, note that for $P_1,...,P_m\in \C\langle X_1,...,X_m,u_1^1,...,u_\mu^\nu\rangle$,
\begin{align*}\lim_{N\to\omega}&E_{\mu_{g,N}}(g(\tau_{B_1^N+P_1(B_1^N,...,B_m^N,\Upsilon_N),...,B_m^N+P_m(B_1^N,...,B_m^N,\Upsilon_N),\Upsilon_N})
\\&=G(X_1^\omega+P_1(X_1^\omega,...,X_m^\omega)
,...,X_m^\omega+P_m(X_1^\omega,...,X_m^\omega,u,u)).\end{align*}
Indeed, from the lipschitzness of $g$ for the metric $d_2$ it is easy to see that $$g(\tau_{X_1+P_1(X_1,...,X_m),...,X_m+P_1(X_1,...,X_m),u})=:G_P(\mu_{X,u})$$ is uniformly continuous on $S_C^m\star \mathcal{T}_\mu^\nu$ for the induced metric $D_2(\mu,\nu)=d_{2,0}(\tau_{X(\mu),u(\mu)},\tau_{X(\nu),u(\nu)})$ and from the second concentration \eqref{ConcentrationLogSob} in Proposition \ref{ConcentrationNorm}, $E_{\mu_{g,N}}(d_{2,0}(\tau_{B^N,\Upsilon_N},E_{\mu_{g,N}}(\tau_{B^N,\Upsilon_N}))\to_{N\to \infty} 0$, so that $\lim_{N\to \omega} E_{\mu_{g,N}}(d_{2,0}(\tau_{B^N,\Upsilon_N},\tau_{X^\omega,u}))=0$ and as a consequence for any $\eta>0$  $$\lim_{N\to \omega}P(d_{2,0}(\tau_{B^N,\Upsilon_N},\tau_{X^\omega,u})>\eta)=0.$$
If we fix $\epsilon>0$ and find $ \eta>0$ such that if $d_{2,0}(\tau_{X(\mu),u(\mu)},\tau_{X(\nu),u(\nu)})\leq \eta$ then $|G_P(\tau_{X(\mu),u(\mu)})-G(\tau_{X(\nu),u(\nu)})|\leq \epsilon$ one deduces as claimed that:
\begin{align*}&\lim_{N\to \omega}\left|E_{\mu_{g,N}}\left(g(\tau_{B_1^N+P_1(B_1^N,...,B_m^N),...,B_m^N+P_m(B_1^N,...,B_m^N),\Upsilon_N}))
\right.\right.\\&\qquad\qquad\left.\left.-G(X_1^\omega+P_1(X_1^\omega,...,X_m^\omega,u)
,...,X_n^\omega+P_n(X_1^\omega,...,X_m^\omega,u)\right)\right|\leq \epsilon+\lim_{N\to \omega}
\\&
\sqrt{P(d_2(\tau_{B^N,\Upsilon_N},\tau_{X^\omega,u})>\eta)(c+E(G^2(B_1^N+P_1(B_1^N,...,B_m^N,\Upsilon_N),...,B_m^N+P_1(B_1^N,...,B_m^N,\Upsilon_N),\Upsilon_N))}=\epsilon\end{align*}
where the last equality comes from the previously found convergence in probability and the subquadratic behaviour of $G$ in conjunction with $||B_i^N||\leq C$ and $c>0$ is another constant.

As in \cite{GuionnetMaurelSegala}, we use an integration by parts formula on $\mu_{g,N}$ which gives $\forall P\in \C\langle X_1,...,X_m,u_1^1,...,u_\mu^\nu\rangle$:
\begin{align*}\ E_{\mu_{g,N}}&\left(\frac{1}{N}Tr(A_i^NP(A_1^N,...,A_m^N,\Upsilon_N))+ \frac{1}{N}Tr(N\nabla_{A_i^N}G(A_1^N,...,A_m^N)P(A_1^N,...,A_m^N,\Upsilon_N))\right)\\&=E_{\mu_{g,N}}\left((\frac{1}{N}Tr\otimes\frac{1}{N}Tr)(\partial_{X_i}P)(A_1^N,...,A_m^N,\Upsilon_N)\right)\end{align*}
and the second concentration result in Proposition \ref{ConcentrationNorm} implies that the right hand side converges when $N\to \omega$ to $(\tau_{X^\omega,u}\otimes \tau_{X^\omega,u})(\partial_i(P))$.

But from the lipschitzness condition \eqref{Lop0} on $G$, on deduces 
$$Tr(\nabla_{A_i^N}G(A_1^N,...,A_m^N)^*\nabla_{A_i^N}G(A_1^N,...,A_m^N))
\leq\frac{C}{N}(1+\frac{1}{N}Tr(\sum_{i=1}^m(A_i^N)^2)
$$

and thus if $Z_i^N= N\nabla_{A_i^N}G(A_1^N,...,A_m^N)$ one gets $E(||Z_i^N||_2^2)\leq C(1+E(\frac{1}{N}Tr(\sum_{i=1}^m(A_i^N)^2))$, so that $Z=(Z^N)^\omega\in \mathcal{L}^\omega$ and one obtains the relation in taking of limit to $\omega$ of the integration by parts relation:
\begin{equation}\label{SDVgeq}\langle X_i^\omega+Z_i^*,P(X_1^\omega,...,X_n^\omega,u)\rangle=(\tau_{X^\omega,u}\otimes \tau_{X^\omega,u})(\partial_{X_i}(P)).\end{equation}

But since from the definition of the gradient \begin{align*}&\left|E_{\mu_{g,N}}\left(G(B_1^N+tP_1(B_1^N,...,B_m^N,\Upsilon_N),...,B_m^N+
tP_m(B_1^N,...,B_m^N,\Upsilon_N)-G(B_1^N,...,B_m^N)\right)\right.\\&\left.-tE_{\mu_{g,N}}(\sum_{i=1}^m\frac{1}{N}Tr(Z_i^NP_1(B_i^N,...,B_m^N,\Upsilon_N))\right|\leq t^2\sum_{i=1}^m\frac{c}{N}Tr(P_i^2(B_1^N,...,B_m^N,\Upsilon_N))\end{align*}
Thus taking the limit $N\to\Omega$ one deduces $$\langle Z_i^*,P(X_1^\omega,...,X_m^\omega,u)\rangle=
d_{X_i^{\omega}}G(X_1^\omega,...,X_m^\omega,u).
P(X_1^\omega,...,X_m^\omega,u).$$
This shows \eqref{SDVgeq} was the equation $(SD_g)$ we were aiming at.
Moreover, note that this implies $\tau_{X^\omega,u}$ has finite Fisher information.

\begin{step}Properties and use of the SDE.\end{step}
Since $h_t$ does not depend on time, assumption \eqref{Holderht} is obvious and all the remaining assumptions in Theorem \ref{FreeMonotone} are contained in $g\in \mathcal{E}^{1,1}_{app}( \mathcal{T}_{2,0}(\mathcal{F}^m_{1}*\mathcal{F}^\nu_{\mu}),d_{2,0})$ (with $\beta=1,D=0$ in \eqref{Holderhtspace} using proposition \ref{C11}). Note also that the approximation property in the definition  implies $\nabla_xg(\tau_{x,u})\in L^2(W^*(x,u)).$

The application of our Theorem thus gives a unique solution $X_t(X_0)$ on $[0,\infty[$ solving
$$X_t(X_0)=X_0-\int_0^t \nabla G(X_s(X_0)) ds-\int_0^t X_s(X_0) ds +S_t.$$
Considering another solution starting at $Y_0$, one obtains :

\begin{align*}||X_t(X_0)-X_t(Y_0)||_2^2&=||X_0-Y_0||_2^2-\int_0^t \langle \nabla G(X_s(X_0))-\nabla G(X_s(Y_0)),X_s(X_0)-X_s(Y_0) \rangle ds \\&-\int_0^t \langle X_s(X_0)-X_s(Y_0),X_s(X_0)-X_s(Y_0) \rangle ds\\&\leq ||X_0-Y_0||_2^2-\int_0^t \langle X_s(X_0)-X_s(Y_0),X_s(X_0)-X_s(Y_0) \rangle ds,\end{align*}
where the last inequality comes from  $g(\tau_{X,u})$ convex. Applying Gronwall's lemma, one gets the stated exponential decay. But \cite[Theorem 28]{Dab10b} since $\tau_{X^\omega,u}$ has finite Fisher information, there is a stationary solution to the same equation. But by the uniqueness of our solution in Theorem \ref{FreeMonotone}, the solution must be this same stationary process. But exponential decay implies that the laws  $\tau_{X_t(X^\omega)}$ and
$\tau_{X_t(X^{\omega'})}$ are arbitrarily close for $t\to\infty$ and since they are equal to $\tau_{X^\omega}$ and
$\tau_{X^{\omega'}}$ by stationarity, one deduces that $X^\omega$ have the same law for any ultrafilter. Similarly, $(SD_g)$ has a unique solution and the exponential decay implies a stationary state for the SDE is unique too.

\begin{step}Conclusion on the limit of $E_{\mu_{g,N}}\circ\tau_.$.\end{step}
The law $E_{\mu_{g,N}}\circ\tau_{.,\Upsilon_N}$ is close to $E_{\mu_{g,N}}\circ\tau_{{B^N,\Upsilon_N}}$ for $N$ large enough and this second law lies in the compact set $S_C^m*\mathcal{F}^\nu_{\mu}$ (for the weak-* topology induced by $d_{2,0}$) and from the result on ultrafilter limits the sequence has a unique limit point there (any such limit point being a $\tau_{X^\omega,u}$). We thus deduce by compactness the expected convergence.

\end{proof}

\section{Minimization in Bou\'e-Dupuis-\"Ust\"unel formula for hermitian brownian motion}  

The key for our large deviation estimate is to use the results from \cite{Ustunel} on the minimization problem in Bou\'e-Dupuis-\"Ust\"unel formula (applied to hermitian brownian motion) for specifically nice functionals coming from $\mathcal{E}_\alpha(\R^{dk})$, and deduce an equivalent minimization problem better suited to take the large $N$ limit. Note that we make a sign correction in the SDE agreeing with \cite[Theorems 2,4]{Lehec}, which differs from \cite{Ustunel}.

\begin{theorem}[Theorem 11 in \cite{Ustunel}]
\label{conv-1}
Assume that $f\in L^0(\gamma)$ is $1$-convex and that $f^-=\max(-f,0)$ is
exponentially integrable, i.e., 
$E[\exp cf^-]<\infty$ for some $c>1$. Then there is a unique  $u\in L^2_a(\gamma,H)$ reaching the infimum appearing  in the definition of tame functionals, provided that
$E[f\circ (I_W+\xi)]<\infty$ for at least one $\xi\in
L_a^2(\gamma,H)$. Moreover, if $f\in L^{1+\epsilon}(\gamma)$ for  $\epsilon=\frac{1}{c-1}>0$,
then $f$ is a tamed functional and if
$$
\dot{v}_t=-\frac{E[D_te^{-f}|\mathcal F_t]}{E[e^{-f}|\mathcal F_t]}
$$
 (where formally $E[D_te^{-f}|\mathcal F_t]=[\hat{\pi}\nabla(e^{-f})]_t$ as in subsection 2.3), then $U_t=B_t+u_t$  is the
unique strong solution of the following stochastic differential equation:
$$
dU_t=-\dot{v}_t\circ U dt+dB_t\,.
$$
As a consequence, for this solution $U^f:=U$, we have:\[
-\log \left( \int_{\mathbb W} e^{-f} \ \mathrm{d} \gamma \right)=\mathbf{E}  \left(
  f(U)+\frac{1}{2}\int_0^1\|\dot{v}_t\circ U\|_2^2dt\right).
\]
\end{theorem}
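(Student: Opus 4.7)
The plan is to combine convex analysis in Hilbert space with Malliavin calculus and a Girsanov-type computation. First, I would prove existence and uniqueness of the minimizer directly. The cost functional $J(U) = \mathbf{E}[f(I_W + U) + \frac{1}{2}\|U\|_H^2]$ is well-defined on $L^2_a(\gamma, \mathbb H)$: the $L^{1+\epsilon}$ hypothesis with $\epsilon = 1/(c-1)$ controls the positive part together with the quadratic, while exponential integrability of $f^-$ combined with Young's inequality $f^- \leq \frac{1}{c}e^{cf^-} + \frac{1}{c'}|U|^{c'}$-type bounds controls the negative part. The $1$-convexity of $f$ means $h \mapsto f(x+h) + \frac{1}{2}\|h\|_H^2$ is convex a.s., and adding the extra $\frac{1}{2}\|U\|_H^2$ term then produces a strictly convex, coercive and lower semicontinuous functional on the Hilbert space $L^2_a(\gamma, \mathbb H)$. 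Provided the value is finite somewhere (which is exactly the hypothesis $\mathbf{E}[f\circ(I_W+\xi)]<\infty$), the direct method yields a unique minimizer $u$.

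Second, I would identify the structure of the minimizer through Girsanov's theorem. Set $Z = \mathbf{E}[e^{-f}]$, $L = e^{-f}/Z$ and $L_t = \mathbf{E}[L|\mathcal F_t]$. Applying the distributional Clark-Ocone formula recalled in subsection 2.3 to the positive martingale $L_t$, one obtains
\[
L_t = 1 + \int_0^t [\hat\pi \nabla L]_s\, dB_s = 1 + \int_0^t \frac{\mathbf{E}[D_s e^{-f}|\mathcal F_s]}{Z}\, dB_s,
\]
so that $dL_t/L_t = -\dot v_t\, dB_t$ with $\dot v$ as stated. Girsanov's theorem then tells us that under $d\mu_f = L\, d\gamma$ the shifted process $\tilde B_t = B_t + \int_0^t \dot v_s\, ds$ is a Brownian motion. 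Equivalently, the process $U$ defined by
\[
dU_t = -\dot v_t\circ U\, dt + dB_t, \qquad U_0 = 0,
\]
has law $\mu_f$ under $\gamma$. I would then invoke Föllmer's representation: for $H(\mu_f|\gamma) < \infty$ there exists a unique adapted drift $u^* \in L^2_a(\gamma, \mathbb H)$ with $B + u^*$ having law $\mu_f$ and $\tfrac{1}{2}\mathbf{E}\|u^*\|_H^2 = H(\mu_f|\gamma)$. Comparing, $u^* = U - B$ is exactly the drift read off from the SDE above.

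Finally, to close the variational identity, I would compute
\[
-\log Z = \mathbf{E}_{\mu_f}[\log L] + \mathbf{E}_{\mu_f}[f] - \log Z + \log Z = H(\mu_f|\gamma) + \mathbf{E}_{\mu_f}[f] = \tfrac{1}{2}\mathbf{E}\|u^*\|_H^2 + \mathbf{E}[f(B+u^*)] = J(u^*),
\]
so $u^*$ realizes the infimum in the Boué--Dupuis--Üstünel formula; by uniqueness from step one, $u^*$ \emph{is} the minimizer $u$, and the SDE characterization follows. The main obstacle I expect is justifying that $U_t = B_t + u_t$ is the \emph{unique strong} solution of the SDE, since $\dot v$ is merely measurable and only in $L^2$, so standard Lipschitz/monotone SDE theory does not apply directly. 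The natural route is to bypass a priori strong well-posedness: the construction above produces one strong solution (by reading off $U = B + u$ from the already-constructed minimizer), and any other strong solution $U'$ would, by Itô's formula applied to $\log L_t(U')$ together with the $1$-convexity/monotonicity inherited from $f$, generate another minimizer of $J$, contradicting uniqueness. This converts the delicate SDE-theoretic question into an application of the convex-analytic uniqueness already established.
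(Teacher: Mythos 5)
This statement is quoted verbatim (modulo a sign correction that the paper explicitly flags) from \"Ust\"unel's paper \cite{Ustunel}, so there is no in-paper proof to compare against. Your route---Clark-Ocone representation of $L_t=\mathbf{E}[e^{-f}|\mathcal F_t]/Z$, Girsanov, the F\"ollmer drift, the entropy identity $-\log Z=H(\mu_f|\gamma)+\mathbf{E}_{\mu_f}[f]$, and the observation that any other strong solution of the SDE would yield another $J$-minimizer---is the standard and correct one for this kind of result, and your sign-check on $\dot v$ agrees with the paper's corrected SDE.

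There is, however, one genuine misstep. You assert that $J(U)=\mathbf{E}\big[f(B+U)+\tfrac{1}{2}\|U\|_{\mathbb H}^2\big]$ is \emph{strictly} convex ``by $1$-convexity and adding the extra $\tfrac{1}{2}\|U\|_{\mathbb H}^2$ term.'' There is no extra term: the $\tfrac{1}{2}\|U\|_{\mathbb H}^2$ appearing in $J$ is exactly the quadratic that the definition of $1$-convexity already spends, so $J$ is only guaranteed to be convex, not strictly so. The direct method therefore delivers existence and coercivity but not uniqueness of the minimizer. Uniqueness does follow, correctly, from the entropy/F\"ollmer portion of your own argument: the value $-\log Z$ is achieved if and only if $B+u\sim\mu_f$ and $\tfrac{1}{2}\mathbf{E}\|u\|_{\mathbb H}^2=H(\mu_f|\gamma)$, and F\"ollmer's representation singles out a unique adapted drift with both properties. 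So the proof survives, but you should attribute uniqueness to the F\"ollmer step and not to an unjustified strict-convexity claim. Likewise, the concluding ``any other strong solution $U'$ produces another $J$-minimizer, contradicting uniqueness'' is the right idea, but the supporting It\^o computation showing $\tfrac{1}{2}\mathbf{E}_\gamma\int_0^1|\dot v_s(U')|^2\,ds=H(\mu_f|\gamma)$ (expand $\log L(U')$ along $dU'_s=-\dot v_s(U')\,ds+dB_s$ and take expectations, checking the stochastic integral is a true martingale so the argument is not circular) needs to be written out explicitly.
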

We deduce from that crucial description of the minimizer in the convex case the result we need :

\begin{corollary}\label{UstunelFinal}
Let $t_0=0<t_1<...<t_k\in [0,1]$, $g\in \mathcal{E}_\alpha(\R^{dk}),\alpha>1$ 
: $$f=g\circ J_{t_1,...,t_k}
:\mathbb W\to \R.$$ Let $b(t,.)$ defined in corollary \ref{MonotoneSol} and $X(t)$ the unique strong solution starting at $X_0=0$ defined there of \[X(t) = X_0 +
\int_0^t b(s, X(s)) ds +
B_t.\]
Then, we have the formula:
\[
-\log \left( \int_{\mathbb W} e^{-f} \ \mathrm{d} \gamma \right)=\mathbf{E}  \left(
  f(X)+\frac{1}{2}\int_0^1\|b(t,X_t)\|_2^2dt\right).
\]
\end{corollary}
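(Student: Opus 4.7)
The plan is to apply Theorem \ref{conv-1} to $f=g\circ J_{t_1,\ldots,t_k}$ and identify the optimal drift with the gradient of the dynamic-programming value function $h_t$ built in Proposition \ref{OptimalValueFct}. First I check the hypotheses of Theorem \ref{conv-1}. Convexity of $g$ implies $f$ is convex and a fortiori $1$-convex; since $g$ is bounded below, $f^-$ is bounded and hence exponentially integrable at every rate; subquadratic growth \eqref{SubquadE} combined with Gaussian tails of $(B_{t_1},\ldots,B_{t_k})$ gives $f\in L^p(\gamma)$ for every $p<\infty$, so certainly $f\in L^{1+\epsilon}$ with $e^{-f}\in L^q$ for any conjugate exponents. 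Theorem \ref{conv-1} then produces a unique strong minimizer $U^f=B+u^f$ solving $dU_t=-\dot v_t\circ U\,dt+dB_t$ and satisfying
\[
-\log\int e^{-f}\,d\gamma=\mathbf E\!\left[f(U^f)+\tfrac12\!\int_0^1\|\dot v_t\circ U^f\|^2\,dt\right].
\]

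Next I would identify $\dot v_t$ with the space gradient of $h_t$. By Corollary \ref{UstunelConditional} and Fubini, $\lambda_t(f)(\omega)=-\log\mathbf E[e^{-f}\mid\mathcal F_t](\omega)$, and by Proposition \ref{OptimalValueFct} this equals $h_t(B_{t_1},\ldots,B_{t_i},B_t)$ for $t\in(t_i,t_{i+1}]$. Hence the martingale $M_t=\mathbf E[e^{-f}\mid\mathcal F_t]$ is the exponential of (minus) a deterministic function of the Markov path $(B_{t_1},\ldots,B_{t_i},B_t)$. Clark--Ocone gives the representation $dM_t=\mathbf E[D_te^{-f}\mid\mathcal F_t]\,dB_t$; on the other hand, applying Itô's formula to $e^{-h_t(\cdot)}$ (the HJB equation for $h_t$ makes the finite-variation part vanish since $M_t$ is a martingale) identifies the martingale integrand as $-e^{-h_t}\nabla_{x_{i+1}}h_t(B_{t_1},\ldots,B_{t_i},B_t)$. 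Dividing by $M_t$ yields
\[
\dot v_s=\nabla_{x_{i+1}}h_s(B_{t_1},\ldots,B_{t_i},B_s)\qquad ds\otimes d\gamma\text{-a.e.}
\]

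Composing with $U^f$ and invoking the definition of $b$ in Corollary \ref{MonotoneSol} gives $-\dot v_t\circ U^f=b(t,U^f_t)$, so $U^f$ solves the same SDE as the process $X$ of Corollary \ref{MonotoneSol} started at $X_0=0$. The pathwise uniqueness from Theorem \ref{KrylovMonotone} (valid because $b(t,\cdot)$ is one-sided Lipschitz, thanks to convexity of $h_t$) forces $U^f=X$ $\gamma$-a.s., and substituting this into the value formula above gives precisely the claimed identity.

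The key obstacle is the regularity gap: Proposition \ref{OptimalValueFct} together with Proposition \ref{C1alphaBall} only yields $h_t\in C^{1,\alpha-1}$ in space and H\"older in time, not $C^{1,2}$, so the Itô computation above is not literally justified. The standard remedy is to replace $g$ by Gaussian-mollified convex approximants $g^\varepsilon\in\mathcal E_\alpha(\mathbb R^{dk})$ (with uniform constants inherited from \eqref{SubquadE}--\eqref{CalphaE}) for which $h^\varepsilon_t$ is smooth and classically solves HJB; the corresponding drift $b^\varepsilon$ is globally Lipschitz, the associated SDE has a strong solution $X^\varepsilon$, and the formula above holds in a routine way. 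The uniform estimates of Proposition \ref{OptimalValueFct} on $h^\varepsilon_t$ and its gradient, stability of strong solutions under Lipschitz perturbations of the drift, and dominated convergence driven by the subquadratic growth then allow one to pass to the limit $\varepsilon\to 0$ simultaneously in $-\log\int e^{-f^\varepsilon}\,d\gamma$, in $\mathbf E[f^\varepsilon(X^\varepsilon)]$, and in $\mathbf E\!\int_0^1\!\|b^\varepsilon(t,X^\varepsilon_t)\|^2\,dt$, yielding the corollary.
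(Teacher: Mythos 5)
Your hypothesis-checking step and the final conclusion agree with the paper, but the route you take for the key identification $-\dot v_t\circ X=b(t,X_t)$ is genuinely different from the paper's, and the obstacle you flag is precisely what the paper's more elementary argument avoids.

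The paper does not go through Clark--Ocone, It\^o's formula, or the HJB equation at all. Instead it uses the explicit finite-dimensional expression for the value function
\[
h_t(x_1,\ldots,x_i,x)=-\log\int_{\mathbb W_{[t,1]}} e^{-g(x_1,\ldots,x_i,x+\nu_{t_{i+1}},\ldots,x+\nu_{t_k})}\,d\gamma_{[t,1]}(\nu),
\]
differentiates it in $x$ under the integral sign, and observes that the resulting ratio
\[
-\nabla_x h_t(x_1,\ldots,x_i,x)=\frac{\int_{\mathbb W_{[t,1]}}\nabla_x\bigl(e^{-g(\ldots,x+\nu_\cdot)}\bigr)\,d\gamma_{[t,1]}(\nu)}{\int_{\mathbb W_{[t,1]}}e^{-g(\ldots,x+\nu_\cdot)}\,d\gamma_{[t,1]}(\nu)}
\]
is \emph{literally} the formula
$\dot v_t=-E[D_t e^{-f}\mid\mathcal F_t]/E[e^{-f}\mid\mathcal F_t]$ of Theorem~\ref{conv-1} evaluated along the solution, by the finite-dimensional expression~\eqref{DtF} for $D_t F$ when $F=g(B_{t_1},\ldots,B_{t_k})$. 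The only regularity needed is the G\^ateaux differentiability of $g$ (supplied by $\alpha>1$ via Proposition~\ref{C1alphaBall}) together with the Lipschitz-type bound~\eqref{lipE}, which serves as the dominating function justifying the differentiation under the integral. There is no second-order-in-space or first-order-in-time regularity requirement anywhere.

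Your approach, in contrast, recovers the same ratio indirectly: you represent the martingale $M_t=E[e^{-f}\mid\mathcal F_t]$ via Clark--Ocone and then try to match its integrand with $-e^{-h_t}\nabla_{x_{i+1}}h_t$ by applying It\^o to $e^{-h_t}$. This needs $h_t$ to be $C^{1,2}$ and to satisfy an HJB equation classically, which --- as you correctly point out --- is not directly available from Propositions~\ref{OptimalValueFct} and~\ref{C1alphaBall}. The mollification remedy you propose is plausible, but it is a fair amount of extra machinery (uniformity of the $\mathcal E_\alpha$ constants, stability of the monotone SDE under Lipschitz perturbations of the drift, dominated convergence on three separate quantities) that the paper's direct computation makes unnecessary. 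It is also redundant in spirit: Clark--Ocone is already implicit in \"Ust\"unel's formula for $\dot v_t$, so the It\^o/HJB step only re-derives something you already have once you compute $E[D_t e^{-f}\mid\mathcal F_t]$ explicitly, which the finite-dimensionality of $f$ makes possible by elementary means. So: correct conclusion, but a strictly harder path than needed, with a genuine gap in the unmollified version that the paper's proof does not face.
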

\begin{proof} Since $g$ is convex, $f$ is $0$-convex thus $1$-convex and since $g$ is bounded from bellow, $f^-$ is exponentially integrable. If $\xi=0$, the subquadratic bound  of $g$ implies $E[f\circ (I_W+\xi)]<\infty$ and $f\in L^{1+\epsilon}(\gamma)$ for any $\epsilon>0$. Thus, the previous theorem applies, $f$ is a tame functional. It essentially remains to identify $X(t)=U^f_t$ with the solution considered in the theorem. 

For, recall that by definition, we have the inductive definition (using the solution $X_{t_k}(\omega)$ for small times with $k\leq i$) for $t_i<t\leq t_{i+1}$ : \[b_j(t,x,\omega)=-\frac{\partial}{\partial x^{(j)}} h_t(X_{t_1}(\omega),...,X_{t_i}(\omega),x),\]
and we have from the proof of proposition \ref{OptimalValueFct}: \[h_t(x_1,...,x_i,x)=-\log \left( \int_{\mathbb W_{[t,1]}} e^{-g(x_1,...,x_i,x+\nu_{t_{i+1}},...,x+\nu_{t_{k}})} \ \mathrm{d} \gamma_{[t,1]}(\nu) \right).\]

From the differentability of $g\in \mathcal{E}_\alpha(\R^{dk}),\alpha>1$ and the lipschitzness bound \eqref{lipE} implying the boundedness of derivatives. We have the bound :\begin{align*}&\left|\frac{\partial}{\partial x^{(j)}}e^{-g(x_1,...,x_i,x+\nu_{t_{i+1}},...,x+\nu_{t_{k}})}\right|\\&\qquad=e^{-g(x_1,...,x_i,x+\nu_{t_{i+1}},...,x+\nu_{t_{k}})}\left|\frac{\partial}{\partial x^{(j)}}(g(x_1,...,x_i,x+\nu_{t_{i+1}},...,x+\nu_{t_{k}}))\right|\\&\qquad\leq e^{\sup(-g)}\sum_{l=i+1}^k\left(2C||x_1,...,x_i,x+\nu_{t_{i+1}},...,x+\nu_{t_{k}})||_2+D\sqrt{d}\right).
\end{align*}
which is an integrable dominating function on compact sets for $x$, so that one can compute the derivative by derivation of integral depending on parameters under Lebesgue domination condition and using that $$\left(\frac{\partial}{\partial x^{(j)}}e^{-g(B_{t_1},...,B_{t_i},x+\nu_{t_{i+1}},...,x+\nu_{t_{k}})}\right)_{x=B_t}=D_t(e^{-g\circ J_{t_1,...,t_k}})]_{\omega=B},$$ one gets the identity (taking $D_t$ with respect to the process $\widetilde{\omega}+\nu$ in path space):
\begin{equation}\label{bjasderivative}
 b_j(t,X_t(\omega),\omega)=\frac{\left( \int_{\mathbb W_{[t,1]}}D_t e^{-g(\widetilde{\omega}_{t_1},...,\widetilde{\omega}_{t_i}(\omega),\widetilde{\omega}_t+\nu_{t_{i+1}},...,\widetilde{\omega}_t+\nu_{t_{k}})} \ \mathrm{d} \gamma_{[t,1]}(\nu) \right)_{\widetilde{\omega}=X_.(\omega)}}{\left( \int_{\mathbb W_{[t,1]}} e^{-g(X_{t_1}(\omega),...,X_{t_i}(\omega),x+\nu_{t_{i+1}},...,x+\nu_{t_{k}})} \ \mathrm{d} \gamma_{[t,1]}(\nu) \right)},
\end{equation}
which is rewritten $b(t,X_t(\omega),\omega)=-\dot{v}_t\circ X$ and thus implies that the $U$ of the theorem, satisfies the same equation as $X$, thus, $U^f=X$ and from the equality of the drift, one obtains the stated equality of integrals reaching the infimum.
\end{proof}

We finally apply our results to hermitian brownian motion of section \ref{hermitianB}. We start by a lemma relating our various classes of functionals.

\begin{lemma}\label{UniformN}
Let $p\in [2,\infty[, d=N^2m$ and $G\in\mathcal{E}_{reg,p}(  \mathcal{T}_{2,0}(\mathcal{F}^m_{k}*\mathcal{F}^{\nu}_{\mu}),d_{2,0}) $. Seeing $\R^{dk}=((M_N(\C)_{sa})^m)^k$,  for $x=(H_1,...,H_k)\in \R^{dk}, H_i\in (M_N(\C)_{sa})^m$, and $\Upsilon_N\in\mathcal{U}((M_N(\C))^{\mu\nu}$, we let $\tau_{x,\Upsilon_N}\in \mathcal{T}_{2,0}(\mathcal{F}^m_{k}*\mathcal{F}^{\nu}_{\mu}))$ the corresponding state. Then $g:x\mapsto N^2G(\tau_{x/\sqrt{N},\Upsilon_N})\in \mathcal{E}_{\alpha}(\R^{dk})$ for any $\alpha\in ]1,2]$ with constants independent of $N,\Upsilon_N$ in \eqref{SubquadE}, \eqref{lipE} and \eqref{CalphaE}. In case $\alpha=2$, one  can even take $D_2=0$.  The same result holds for each fixed $\alpha$ for $G\in \mathcal{E}^{1,\alpha-1}(  \mathcal{T}_{2,0}(\mathcal{F}^m_{k}*\mathcal{F}^{\nu}_{\mu}),d_{2,0}) )$.
\end{lemma}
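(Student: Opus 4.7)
The plan is to reduce the first claim (for $G \in \mathcal{E}_{reg,p}$) to the secondary claim (for $G \in \mathcal{E}^{1,\alpha-1}$) and then transfer each defining condition of $\mathcal{E}^{1,\alpha-1}$ to the corresponding condition of $\mathcal{E}_\alpha(\R^{dk})$ via the scaling $X = x/\sqrt{N}$, $d = N^2 m$, for which $\tau((X_j^l)^2) = \|x_j^l\|_{HS}^2/N^2$. For the reduction I would check that $\mathcal{E}_{reg,p} \subset \mathcal{E}^{1,1}$ with uniform constants by an explicit computation of derivatives on the defining building blocks: each $g_i(\tau) = D_i + C_i \sum \tau((X_j^l)^2) + \Re(\lambda_i \tau(M_i))$ decomposes into a quadratic piece with constant Hessian $2C_i\operatorname{Id}$ plus the trace of a monomial in the unitaries $u(X_j^l), u_j^i$, whose non-commutative derivatives in $X$ are polynomial expressions in \emph{bounded} unitaries, yielding operator-Lipschitz and Lipschitz-gradient estimates that persist through the $\ell^p$ combination $(\sum g_i^p)^{1/p}$ because each $g_i \geq 1$. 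Since $\mathcal{E}^{1,1} \subset \mathcal{E}^{1,\alpha-1}$ for every $\alpha \in\ ]1,2]$ and the indicator $1_{\{\alpha-1<1\}}$ in \eqref{C110} vanishes at $\alpha = 2$, this reduction automatically yields the $D_2=0$ assertion.

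Conditions \eqref{SubquadE} and \eqref{lipE} then follow by direct scaling. From \eqref{subquadorder0} multiplied by $N^2$ one gets $g(x) \leq CN^2 + C\|x\|^2 \leq c(d + \|x\|^2)$. From \eqref{Lop0} scaled by $N^2$ and the inequality $\sqrt{N^2 + \|x\|^2 + \|y\|^2} \leq N + \|x\| + \|y\| = \sqrt{d/m} + \|x\| + \|y\|$, one obtains \eqref{lipE} with $N$-independent constants.

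The main step is \eqref{CalphaE}, which I would prove by restricting to lines. For fixed $X, Y \in L^2_{sa}(M)$ and $u$, the function $\phi(s) := G(\tau_{X+sY,u})$ is convex in $s \in \R$ by universal convexity of $G$, and \eqref{C110} specializes to the one-dimensional midpoint bound
\[
\phi(s+r) + \phi(s-r) - 2\phi(s) \leq M\, r^\alpha
\]
with $M = C\|Y\|_{2,\tau}^\alpha (1 + \|X\|_{2,\tau}^2 + \|Y\|_{2,\tau}^2)^{1_{\{\alpha<2\}}/2}$, uniform on bounded $s$-intervals. The standard one-dimensional convex-analysis estimate $\int_{-h}^h (h-|r|)\,dF''(r) \leq Mh^\alpha$ applied to $F = \phi$ (with $F''$ the non-negative distributional second derivative) localizes to $F''([-h/2,h/2]) \leq 2M h^{\alpha-1}$, which is the statement that $\phi'$ is $(\alpha-1)$-Hölder with constant comparable to $M$. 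Taylor expanding $\phi$ at $s=0$ with this Hölder remainder and using $t(1-t)^\alpha + (1-t)t^\alpha \leq 2t(1-t)$ for $\alpha \in [1,2]$ gives
\[
|t\phi(1-t) + (1-t)\phi(-t) - \phi(0)| \leq \tfrac{4M}{\alpha}\, t(1-t).
\]
Transferring back via $X = x/\sqrt N$, $Y = y/\sqrt N$ and multiplying by $N^2$, the prefactor $N^{2-\alpha}$ equals $d^{1-\alpha/2}/m^{1-\alpha/2}$, and the growth factor $1 + \sqrt{m}(\|x\|+\|y\|)/\sqrt{d}$ produces the $D_\alpha$-term. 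The triangle inequality $\|y\| \leq \|x+(1-t)y\| + \|x-ty\|$ lets one recast the dependence in the form $(\|x\|+\|x+(1-t)y\|+\|x-ty\|)/\sqrt d$ of \eqref{CalphaE}, with $N$-uniform $C_\alpha, D_\alpha$. When $\alpha = 2$ the $1_{\{\alpha<2\}}$ factor disappears and one takes $D_2 = 0$.

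The main obstacle is precisely this passage from the symmetric midpoint bound to the asymmetric $t(1-t)$-paraconcavity: concavity of the defect $\psi(t) = t\phi(s_1) + (1-t)\phi(s_2) - \phi(ts_1+(1-t)s_2)$ with $\psi(0)=\psi(1)=0$ is by itself insufficient, since a bound $\psi(t) \leq C\,t(1-t)\,\psi(1/2)$ fails for concave functions such as $\psi(t) = M\sqrt{t(1-t)}$. It is the fact that the midpoint estimate holds along \emph{every} sub-line which upgrades $\phi'$ to a Hölder-continuous derivative on that line with uniform constant and makes the Taylor argument close with the correct $t(1-t)$ shape; tracking the growth factor carefully through the scaling is what produces the $N$-uniform constants $C_\alpha, D_\alpha$ in \eqref{CalphaE}.
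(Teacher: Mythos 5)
Your proposal is correct, but it is organized differently from the paper's argument. The paper treats the two cases separately: for $G\in\mathcal{E}^{1,\alpha-1}$ it declares \eqref{SubquadE}, \eqref{lipE}, \eqref{CalphaE} to follow ``immediately by change of variable'' from \eqref{subquadorder0}, \eqref{Lop0}, \eqref{C110}, and for $G\in\mathcal{E}_{reg,p}$ it spends the bulk of the proof on an explicit computation of first and second derivatives of the $\ell^p$ combination $(\sum g_i^p)^{1/p}$, applying H\"older's inequality at each order and extracting a dimension-free Hessian bound, then obtains \eqref{CalphaE} for $\alpha<2$ by interpolating the resulting second-order bound against the Lipschitz bound. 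Your route---deduce $\mathcal{E}_{reg,p}\subset\mathcal{E}^{1,1}$, then $\mathcal{E}^{1,1}\subset\mathcal{E}^{1,\alpha-1}$, then run everything through the $\mathcal{E}^{1,\alpha-1}$ case---is cleaner to state and does carry the $D_2=0$ assertion for free, since the indicator $1_{\{\alpha-1<1\}}$ in \eqref{C110} vanishes at $\alpha=2$. But note that the actual computational content of the inclusion $\mathcal{E}_{reg,p}\subset\mathcal{E}^{1,1}$ (with $(M,\tau)$-independent constants) is exactly what the paper's explicit derivative computation establishes, and the paper in fact cites this very lemma's proof as the justification of that inclusion; your sentence ``polynomial expressions in bounded unitaries yielding Lipschitz-gradient estimates that persist through the $\ell^p$ combination'' compresses the hardest part of the proof rather than giving a shortcut around it.

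Where your write-up genuinely improves on the paper is the passage from the symmetric midpoint bound \eqref{C110} to the asymmetric condition \eqref{CalphaE}. You are right that the paper's ``immediate'' hides a real step, and that mere concavity in $t$ of the defect $t\phi(a)+(1-t)\phi(b)-\phi(ta+(1-t)b)$, which vanishes at $t\in\{0,1\}$, does \emph{not} give the $t(1-t)$ shape (your $\sqrt{t(1-t)}$ counterexample is exactly the right obstruction). The correct upgrade is the one you sketch: the midpoint bound along every sub-line controls $\phi''$ locally (one can see it directly by $\phi(s+2h)-2\phi(s+h)+\phi(s)\leq Mh^\alpha$ plus $\phi(s+h)-2\phi(s)+\phi(s-h)\leq Mh^\alpha$, which add to the asymmetric second difference comparison $\phi'(s+h)-\phi'(s)\leq 2Mh^{\alpha-1}$), hence $\phi'$ is $(\alpha-1)$-H\"older with constant $O(M)$, and then the Taylor remainder argument combined with $t(1-t)^\alpha+(1-t)t^\alpha\leq 2t(1-t)$ for $\alpha\geq 1$ yields the $t(1-t)\|y\|^\alpha$ right-hand side. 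Two small cautions: first, the local bound $\phi''([-h/2,h/2])\leq 2Mh^{\alpha-1}$ only becomes a genuine H\"older bound on $\phi'$ after a translation/rescaling of the interval (a constant-factor issue, not a gap, but worth tracking since the lemma is about uniformity of constants); second, because your constant $M$ depends through \eqref{C110} on $\|X+sY\|_{2,\tau}$ over the whole segment, you need the observation---which you do make at the end---that $\sup_{s\in[-t,1-t]}\|x+sy\|\leq\|x+(1-t)y\|+\|x-ty\|$ to reconstitute the precise dependence $D_\alpha(\|x\|+\|x+(1-t)y\|+\|x-ty\|)/\sqrt d$ required by \eqref{CalphaE}.
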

\begin{proof}
The case $G\in \mathcal{E}^{1,\alpha-1}(  \mathcal{T}_{2,0}(\mathcal{F}^m_{k}*\mathcal{F}^{\nu}_{\mu}),d_{2,0}) )$ is immediate by change of variable: \eqref{SubquadE} comes from \eqref{subquadorder0}, \eqref{lipE} comes from \eqref{Lop0} and \eqref{CalphaE} from \eqref{C110}. 

The convexity of $g$ comes from the universal convexity of $G$ and the continuity from the one of $G$ once noted that $x\mapsto \tau_{x/\sqrt{N},\Upsilon_N}$ is continuous with value $(\mathcal{T}_{2,0}(\mathcal{F}^m_{k}*\mathcal{F}^{\nu}_{\mu}),d_{2,0}).$ 

From the subquadratic behaviour in the definition of $\mathcal{E}(  \mathcal{T}_{2,0}(\mathcal{F}^m_{k}*\mathcal{F}^{\nu}_{\mu}),d_{2,0}), $ one gets:
$$ g(x)=N^2G(\tau_{x/\sqrt{N}})\leq C (N^2+ N^2\sum_{j=1}^k \sum_{l=1}^m\frac{1}{N^2}Tr( ((4i\frac{u_j^l+1}{u_j^l-1})^*(4i\frac{u_j^l+1}{u_j^l-1})))=C (N^2+||x||_2^2),$$
as expected with a constant $C$ independent of $N.$
Moreover, for $G(\tau)=D+\left(\sum_{i=1,...,l}\left(g_i(\tau)\right)^p\right)^{1/p}$ as in the definition of $\mathcal{E}_{reg,p}(  \mathcal{T}_{2,0}(\mathcal{F}^m_{k}*\mathcal{F}^{\nu}_{\mu}),d_{2,0})$ one can differentiate in $x=(x_{i,j}^k)_{i,j=1,....,N; k=1,...,m}$\footnote{\label{note1}so that the $M_n(\C)$ matrix entries are $x_{i,i}^k$ and $z_{i,j}^k=x_{i,j}^k+\sqrt{-1}x_{j,i}^k$ for $i<j$ in index $(i,j)$ and deduced by hermitianity in index $(j,i)$, and we call $$\lambda_{i,j}(x^k)=x_{i,j}^k=\sqrt{(-1)^{1_{j<i}}}(z_{i,j}^k+(-1)^{1_{j<i}}z_{j,i}^k)/2$$ the linear application realizing this choice of matrix entries for $i\neq j$ and $\lambda_{i,i}$ the matrix entry also defined on non-hermitian matrices if necessary in that way.} since the values of $g_i(\tau)\geq1$ are not close of the point where the root is not differentiable: 
\begin{align*}&\left\|\left(\frac{\partial}{\partial x_{i,j}^k}g(x)\right)_{i,j,k}\right\|\\&=N^2\frac{1}{p}\left(\sum_{\iota=1,...,l}\left(g_\iota(\tau_{x/\sqrt{N},\Upsilon_N})\right)^p\right)^{(1-p)/p}
\left\|\sum_{\iota=1,...,l}p\left(g_\iota(\tau_{x/\sqrt{N},\Upsilon_N})\right)^{p-1}
\frac{\partial}{\partial x_{i,j}^k}g_\iota(\tau_{x/\sqrt{N},\Upsilon_N})\right\|
\\&\leq N^2
\left(\sum_{\iota=1,...,l}
\left\|\left(\frac{\partial}{\partial x_{i,j}^k}g_\iota(\tau_{x/\sqrt{N},\Upsilon_N})\right)_{i,j,k}\right\|^{p}\right)^{1/p}\end{align*}

where we applied H\"older inequality of exponents $p$, $q=p/(p-1)$. 
Then, recall that $g_\iota(\tau)=D_\iota+C_\iota \sum_{j=1}^k \sum_{l=1}^m\tau ((4i\frac{u_j^l+1}{u_j^l-1})^*(4i\frac{u_j^l+1}{u_j^l-1}))+\Re(\lambda_\iota\tau((u_{j_1}^{i_1})^{\epsilon_1}...(u_{j_{m_\iota}}^{i_{m_\iota}})^{\epsilon_{m_\iota}}))$ so that as in \cite[section 3.2]{BCG}, one gets:
\[\frac{\partial}{\partial x_{i,j}^k}g_\iota(\tau_{x/\sqrt{N},\Upsilon_N})=\frac{2C_\iota}{N^2}x_{i,j}^k +\Re\left(\frac{2C_\iota\lambda_\iota}{N\sqrt{N}}\lambda_{i,j}\left(\sum_{l=1}^{m_\iota}((u_{j_l}^{i_l})^{\epsilon_l}-1) ... (u_{j_{m_\iota}}^{i_{m_\iota}})^{\epsilon_{m_\iota}}(u_{j_1}^{i_1})^{\epsilon_1}...((u_{j_l}^{i_l})^{\epsilon_l}-1)\frac{\epsilon_l}{8\sqrt{-1}}1_{k=j_l}\right)\right)
\]

so that one can estimate $\left\|\left(\frac{\partial}{\partial x_{i,j}^k}g_\iota(\tau_{x/\sqrt{N},\Upsilon_N})\right)_{i,j,k}\right\|^2\leq \frac{8C_i^2}{N^4}||x||^2+\frac{8C_\iota^2|\lambda_\iota|^2m_\iota^2Nm}{4N^3}$ and get finally :
\begin{align*}&\left\|\left(\frac{\partial}{\partial x_{i,j}^k}g(x)\right)_{i,j,k}\right\|\leq \max_\iota(8C_\iota^2||x||^2+2C_\iota^2|\lambda_\iota|^2m_\iota^2N^2m)^{1/2}l^{1/p}\end{align*}
Thus applying the fundamental theorem of calculus, one gets \eqref{lipE} with $d=N^2m$ and constants $C,D$ independent on $N$.

To get the  second order bound, we are going to compute the second order derivative.
\begin{align*}&\left|\sum_{i,j,k,I,J,K}\lambda_{i,j,k}\lambda_{I,J,K}\left(\frac{\partial^2}{\partial x_{i,j}^k\partial x_{I,J}^K}g(x)\right)\right|\leq N^2\frac{1}{p}\left(\sum_{\iota=1,...,l}\left(g_\iota(\tau_{x/\sqrt{N},\Upsilon_N})\right)^p\right)^{1/p-1}
\\&\times\left[\ \left|\sum_{i,j,k,I,J,K}\lambda_{i,j,k}\lambda_{I,J,K}\sum_{\iota=1,...,l}p(p-1)\left(g_\iota(\tau_{x/\sqrt{N},\Upsilon_N})\right)^{p-2}
\frac{\partial}{\partial x_{i,j}^k}g_\iota(\tau_{x/\sqrt{N},\Upsilon_N})\frac{\partial}{\partial x_{I,J}^K}g_\iota(\tau_{x/\sqrt{N},\Upsilon_N})\right|\right.\\&+ 
\left|\frac{1}{p}-1\right|\left(\sum_{\iota=1,...,l}\left(g_\iota(\tau_{x/\sqrt{N},\Upsilon_N})\right)^p\right)^{-1}
\left\|\sum_{\iota=1,...,l}p\left(g_\iota(\tau_{x/\sqrt{N},\Upsilon_N})\right)^{p-1}
\frac{\partial}{\partial x_{i,j}^k}g_\iota(\tau_{x/\sqrt{N},\Upsilon_N})\right\|^2||\lambda||^2
\\&+\left.
\left|\sum_{\iota=1,...,l}p\left(g_\iota(\tau_{x/\sqrt{N},\Upsilon_N})\right)^{p-1}
\sum_{i,j,k,I,J,K}\lambda_{i,j,k}\lambda_{I,J,K}\frac{\partial^2}{\partial x_{i,j}^k\partial x_{I,J}^K}g_\iota(\tau_{x/\sqrt{N},\Upsilon_N})\right|\ \right]
\end{align*}

Thus applying Holder's inequality as before (for $p\geq 2$ for the first inequality which yields the same type of terms as in the second line and are thus gathered) :
\begin{align*}&\left|\sum_{i,j,k,I,J,K}\lambda_{i,j,k}\lambda_{I,J,K}\left(\frac{\partial^2}{\partial x_{i,j}^k\partial x_{I,J}^K}g(x)\right)\right|\\&\leq 2N^2p
\left(\sum_{\iota=1,...,l}\left(g_\iota(\tau_{x/\sqrt{N},\Upsilon_N})\right)^p\right)^{-1/p}
\left(\sum_{\iota=1,...,l}
\left\|\left(\frac{\partial}{\partial x_{i,j}^k}g_\iota(\tau_{x/\sqrt{N},\Upsilon_N})\right)_{i,j,k}\right\|^{p}\right)^{2/p}||\lambda||^2
\\&+N^2\left(\sum_{\iota=1,...,l}\left|\sum_{i,j,k,I,J,K}\lambda_{i,j,k}\lambda_{I,J,K}\frac{\partial^2}{\partial x_{i,j}^k\partial x_{I,J}^K}g_\iota(\tau_{x/\sqrt{N},\Upsilon_N})\right|^{p}\right)^{1/p}
\end{align*}
Thus, we have to compute the second derivative
\begin{align*}&\frac{\partial^2}{\partial x_{i,j}^k\partial x_{I,J}^K}g_\iota(\tau_{x/\sqrt{N},\Upsilon_N})=\frac{2C_\iota}{N^2}1_{i=I}1_{j=J}1_{k=K} +\Re\left(\frac{2C_\iota\lambda_\iota}{N^2}\sum_{l=1}^{m_\iota}\sum_{L\in[1,m_\iota]-\{l\}}\frac{-\epsilon_L\epsilon_l}{8^2}\right.\\&\left.\lambda_{I,J}(\left[\lambda_{i,j}(\left(\left[((u_{j_l}^{i_l})^{\epsilon_l}-1) ... ((u_{j_L}^{i_L})^{\epsilon_L}-1)\right]_{i'I} \left[((u_{j_L}^{i_L})^{\epsilon_L}-1)1_{K=j_L}...
((u_{j_l}^{i_l})^{\epsilon_l}-1)1_{k=j_l}
\right]_{J'j'}\right)_{i'j'}\right]_{I'J'})\right)
\end{align*}
so that $\left|\sum_{i,j,k,I,J,K}\lambda_{i,j,k}\lambda_{I,J,K}\frac{\partial^2}{\partial x_{i,j}^k\partial x_{I,J}^K}g_\iota(\tau_{x/\sqrt{N},\Upsilon_N})\right|\leq \frac{2C_\iota}{N^2}||\lambda||^2+\frac{cC_\iota\lambda_\iota m_\iota^2}{N^2} ||\lambda||^2$  for some constant $c$ not depending on $\iota, N$.

Finally, to get a better estimate, one needs a lower bound on $\left(\sum_{\iota=1,...,l}\left(g_\iota(\tau_{x/\sqrt{N},\Upsilon_N})\right)^p\right)\geq l$ but also $g_\iota(\tau_{x/\sqrt{N},\Upsilon_N})\geq C_i||\frac{x}{\sqrt{N}}||^2-E_i$ so that one deduces for $||\frac{x}{\sqrt{N}}||^2\geq 2E_i/C_i+1/C_i$, one deduces $g_\iota(\tau_{x/\sqrt{N},\Upsilon_N})\geq C_i||\frac{x}{\sqrt{N}}||^2/2+1/2$ and for $C_i||\frac{x}{\sqrt{N}}||^2/2\leq E_i+1/2$ one can fix $q\geq 1$ such that $(E_i+1/2)\leq q/2$ so that $C_i||\frac{x}{\sqrt{N}}||_2^2/2q\leq 1/2$ and $g_\iota(\tau_{x/\sqrt{N},\Upsilon_N})\geq 1\geq 1/2+C_i||\frac{x}{\sqrt{N}}||_2^2/2q$ and   in any case : $$\left(\sum_{\iota=1,...,l}\left(g_\iota(\tau_{x/\sqrt{N},\Upsilon_N})\right)^p\right)\geq l-1+\left( \frac{1}{2}+ \frac{C_1}{2q}\left\|\frac{x}{\sqrt{N}}\right\|_2^2\right)^p.$$
Thus gathering all our inequalities, one gets for a constant $c_1$ independent of $N,x$:
 
 \begin{align*}&\left|\sum_{i,j,k,I,J,K}\lambda_{i,j,k}\lambda_{I,J,K}\left(\frac{\partial^2}{\partial x_{i,j}^k\partial x_{I,J}^K}g(x)\right)\right|\leq
  c_1
  ||\lambda||^2
\end{align*}

Using the fundamental theorem of calculus, one deduces a  bound on second order difference quotients (using in the  next-to-last line convexity of the suared euclidian norm): \begin{align*}|g(x+y)&+g(x-y)-2g(x)|
=|\int_0^1d\lambda\int_0^1d\mu d^2g(x-\lambda y+2\lambda\mu y)(y,2\lambda y)|\\&\leq \int_0^1d\lambda\int_0^1d\mu c_1
\lambda||y||^2
\leq c_1
||y||^2\end{align*}
Combining this with the Lipschitz bound previously obtained of the form $$|g(x+y)+g(x-y)-2g(x)|\leq N ( c_2\frac{||x||+||x- y||+||x+ y||}{N}+c_3)||y||$$ one gets \eqref{CalphaE} with constants independent of $N, d=N^2m$ in taking the power $\alpha-1$ of our second order bound and multiplied by the power $(2-\alpha)$ of our lipschitz bound.\end{proof}

We can now gather our results in the matricial case, as needed to be applied      to prove the Laplace principle in the next section.
   Recall that the natural euclidean norm for $x\in(( M_N(\C))_{sa})^m $ is given by the following notation we will use in our next result:
        $$||x||_2^2=\sum_{k=1}^m\frac{1}{N}Tr(x_k^*x_k).$$
     
It will also be convenient to use the matricial cyclic gradient for functions $h:     ( M_N(\C))_{sa})^{mi}\to \R$, for $x_i\in(( M_N(\C))_{sa})^m $, $ I<J\in[\![1,N]\!], k=1,...,m, j=1,...,i$ (with the notation for coordinates of footnote \footnotemark[1]) 
     \[(\mathscr{D}_{j}^kh)_{II}:=\left(\frac{\partial}{\partial (x_j)_{II}^k} h\right)(x_1,...,x_i),\]
     \[(\mathscr{D}_{j}^kh)_{IJ}:=\left(\frac{\partial}{\partial (x_j)_{IJ}^k} h+\sqrt{-1} \frac{\partial}{\partial (x_j)_{JI}^k}h\right)(x_1,...,x_i),\]
     \[(\mathscr{D}_{j}^kh)_{JI}:=\left(\frac{\partial}{\partial (x_j)_{IJ}^k} h-\sqrt{-1} \frac{\partial}{\partial (x_j)_{JI}^k}h\right)(x_1,...,x_i),\]

\begin{theorem}\label{minimisationHermitian}
Let $p\in [2,\infty[, d=N^2m$ $\Upsilon_N\in \mathcal{U}( M_N(\C))$ and $G\in\mathcal{E}_{reg,p}(  \mathcal{T}^c_{2,0}(\mathcal{F}^m_{[0,1]}*\mathcal{F}^\nu_{\mu}),d_{2,0})\cup \mathcal{E}^{1,\alpha}( \mathcal{T}^c_{2,0}(\mathcal{F}^m_{[0,1]}*\mathcal{F}^\nu_{\mu}),d_{2,0}),$ $\alpha\in ]0,1]$ so that $g_N(x)=N^2G(\tau_{x/\sqrt{N},\Upsilon_N})\in \mathcal{E}_{1+\alpha}(\R^{dk})$ as in the previous lemma. Let $t_0=0< t_1<...<t_k\in [0,1]$ and $f_N=g_N\circ J_{t_1,...,t_k}
: \mathbb W_{sa,N}\to \R
.$

Let, for $x_i,x\in(( M_N(\C))_{sa})^m $ and $t\in [0,1]$: \[h^{N,\Upsilon_N}_{t}(x_1,...,x_i,x)=-\log \left( \int_{\mathbb W_{sa,N,[t,1]}} e^{-g_N(x_1,...,x_i,x+\nu_{t_{i+1}},...,x+\nu_{t_{k}})
} \ \mathrm{d} \gamma_{[t,1]}(\nu) \right)\]
and for $\omega\in \mathbb W_{sa,N}, k=1,...,m$ define inductively on $i$ for $t_i<t\leq t_{i+1}$:
\[(b_k^{G,N,\Upsilon_N}(t,x,\omega)):=-\frac{1}{\sqrt{N}}\left(\mathscr{D}_{i+1}^k h_{t
}^{N,\Upsilon_N}\right)(\sqrt{N}X_{t_1}^{G,N,\Upsilon_N}(\omega),...,\sqrt{N}X_{t_i}^{G,N,\Upsilon_N}(\omega),\sqrt{N}x),\]
 so that there is a unique (strong) solution to the m-tuple of matrix-valued SDE driven by $H_t$ brownian motion of law $\gamma_{sa,N,m}$:
\[X^{G,N,\Upsilon_N}(t) = 
\int_0^t b^{G,N,\Upsilon_N}(s, X^{G,N,\Upsilon_N}(s)) ds +
H_t^N.\]
Then, we have the formula :
\begin{align*}
-\frac{1}{N^2}\log& \left( \int_{\mathbb W_{sa,N}} e^{-g_N(\nu_{t_1},...,\nu_{t_k})
} \ \mathrm{d} \gamma(\nu) \right)\\&=\mathbf{E}  \left(
  G(\tau_{X^{G,N,\Upsilon_N},\Upsilon_N})+\frac{1}{2}\int_0^1||b^{G,N,\Upsilon_N}(t,X^{G,N,\Upsilon_N}(t))||_2^2
  dt\right).
\end{align*}
Moreover we have
for some $C_4=C_4(G)$ independent of $N$ and for all  $t,s\in ]t_i,t_{i+1}]$: \[E(||b^{G,N,\Upsilon_N}(t,X_t^{G,N,\Upsilon_N})- b^{G,N,\Upsilon_N}(s,X_s^{G,N,\Upsilon_N})||_2^2)\leq  C_4\sqrt[4]{|t-s|^\alpha}
,\]
\[|h^{N,\Upsilon_N}_t(x_1,...,x_i,x)-h^{N,\Upsilon_N'}_t(x_1,...,x_i,x)|\leq C(G)||\Upsilon_N-\Upsilon_N'||_2.\]
Finally, the law of $\sqrt{N}X^{G,N,\Upsilon_N}(t)$ on the pathspace $\mathbb W_{sa, N}$ is exactly the Gibbs law:
$$\frac{e^{-g_N(\nu_{t_1},...,\nu_{t_k})
}}{\int d\gamma(\nu)e^{-g_N(\nu_{t_1},...,\nu_{t_k})
})} \ \mathrm{d} \gamma(\nu).$$

\end{theorem}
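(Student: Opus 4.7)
The plan is to reduce everything to the Euclidean results of Sections~3--5 applied to the rescaled potential $g_N(y) := N^2 G(\tau_{y/\sqrt{N},\Upsilon_N})$ on $\R^{dk}$ (with $d = N^2 m$), after rescaling the matrix SDE via $Y_t := \sqrt{N}\,X^{G,N,\Upsilon_N}(t)$, $B_t := \sqrt{N}\,H^N_t$, so that $B$ is a standard $\R^d$-valued Brownian motion.

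By Lemma~\ref{UniformN}, $g_N \in \mathcal{E}_{1+\alpha}(\R^{dk})$ with the constants in \eqref{SubquadE}, \eqref{lipE}, \eqref{CalphaE} independent of $(N,\Upsilon_N)$. Proposition~\ref{OptimalValueFct} then identifies its value function with $h_t^{N,\Upsilon_N}$, and grants G\^ateaux differentiability, $\mathcal C^{1,\alpha}$-in-space regularity, and H\"older-in-time bounds on $\nabla h_t^{N,\Upsilon_N}$. Corollary~\ref{MonotoneSol} applied with $\ell=0$ then gives pathwise existence and uniqueness of a strong solution $Y_t = -\int_0^t \nabla h_s^{N,\Upsilon_N}(Y_{t_1},\ldots,Y_{t_i},Y_s)\,ds + B_t$, along with the $L^2$ H\"older-in-time estimate on the drift; translating back to the matrix variables via the rescaling and the elementary coordinate relation between the Euclidean gradient and the cyclic derivative $\mathscr D$ produces the matrix SDE of the statement. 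Uniformity in $N$ of the regularity constant $C_4(G)$ follows because the factor $d$ in the bound of Corollary~\ref{MonotoneSol} is precisely cancelled by the trace-norm normalization $\|\cdot\|_2^2 = \frac{1}{N}Tr(\cdot^*\cdot)$ together with the $1/\sqrt N$ prefactor in the definition of $b^{G,N,\Upsilon_N}$, while $\sup(-g_N)/d$ and $E\|Y_0\|^2/d = 0$ are $O(1)$. Corollary~\ref{UstunelFinal} applied to $f_N = g_N\circ J_{t_1,\ldots,t_k}$ then yields the Laplace identity after dividing by $N^2$. The Lipschitz dependence on $\Upsilon_N$ follows directly from \eqref{Lip0} in the definition of $G$, since
\[
|h_t^{N,\Upsilon_N}(x) - h_t^{N,\Upsilon_N'}(x)| \;\leq\; \sup_\nu |g_N(x+\nu,\Upsilon_N) - g_N(x+\nu,\Upsilon_N')| \;\leq\; N^2\,C(G)\,\|\Upsilon_N - \Upsilon_N'\|_2,
\]
after normalization.

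The final Gibbs law identification is the main conceptual step and relies on Theorem~\ref{conv-1}. The identity \eqref{bjasderivative} from the proof of Corollary~\ref{UstunelFinal} shows that our drift coincides with $E[D_s e^{-f_N}|\mathcal F_s]/E[e^{-f_N}|\mathcal F_s]$, which is exactly the drift $-\dot v_s$ appearing in Theorem~\ref{conv-1}. Setting $Q := (e^{-f_N}/Z)\,\gamma$ on $\mathbb W_{sa,N}$, Girsanov's theorem implies that under $Q$ the canonical process $\nu$ satisfies the very same SDE (with the same adapted drift functional in the $\nu$ variable) as $Y$ under $\gamma$; pathwise uniqueness from Corollary~\ref{MonotoneSol} then forces the law of $Y$ under $\gamma$ to coincide with the law of $\nu$ under $Q$, i.e.\ with $Q$ itself. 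Pulling back through $Y = \sqrt{N}\,X^{G,N,\Upsilon_N}$ yields the stated Gibbs description. The main technical hurdle throughout is therefore tracking the interplay between Euclidean and matricial normalizations while preserving $N$-uniformity of the constants, which is exactly what Lemma~\ref{UniformN} and the explicit $d$-dependence in Proposition~\ref{OptimalValueFct} and Corollary~\ref{MonotoneSol} were designed for.
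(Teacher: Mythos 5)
Your proposal is correct and for the bulk of the theorem (existence and uniqueness of the rescaled SDE via Lemma~\ref{UniformN}, Proposition~\ref{OptimalValueFct} and Corollary~\ref{MonotoneSol}; the Laplace identity via Corollary~\ref{UstunelFinal}; the $\Upsilon_N$-Lipschitz bound from \eqref{Lip0} — where your direct $\sup$-bound and the paper's near-optimal-control argument from step 2 of Proposition~\ref{OptimalValueFct} both work, and you correctly track the $N^2$ normalization factor) it matches the paper's route step by step. The genuine divergence is the Gibbs law identification, which you treat as the ``main conceptual step.'' The paper does \emph{not} use a Girsanov argument there: it instead quotes the relative-entropy variational principle of \"Ust\"unel twice, first \cite[Theorem~5]{Ustunel} to identify the right-hand side of the Laplace identity with $\inf_{\mu}\{\int G\,d\mu + \frac{1}{N^2}\mathrm{Ent}(\mu\,|\,\gamma)\}$, then \cite[Theorem~2]{Ustunel} to show the law $\mu_{G,N,\Upsilon_N}$ of $\sqrt N X^{G,N,\Upsilon_N}$ achieves that infimum, and concludes using the uniqueness of the minimizer as the Gibbs state. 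Your argument instead re-derives the essential content of these two theorems in the case at hand: the identity \eqref{bjasderivative} shows that the SDE drift equals the F\"ollmer/Clark--Ocone drift $-\dot v_t$ of Theorem~\ref{conv-1}, so by Girsanov the canonical process under $Q=(e^{-f_N}/Z)\gamma$ is a weak solution of the same SDE; pathwise uniqueness from Corollary~\ref{MonotoneSol} (via Yamada--Watanabe) then forces the two laws to agree. Both routes are correct; yours is more self-contained and hands-on, while the paper's is shorter because it offloads the work onto \"Ust\"unel's lemmas. One point you should make explicit if you flesh this out: the Girsanov application needs the density martingale $L_t=E[e^{-f_N}\mid\mathcal F_t]/Z$ to be a strictly positive continuous $L^2$-martingale with $\int_0^1|\dot\beta_s|^2\,ds<\infty$ a.s., which follows here from the boundedness below of $g_N$ and the Lipschitz control on $\nabla h_t$, but is not automatic; and the passage from pathwise uniqueness to equality in law is a Yamada--Watanabe-type statement that is standard but worth citing.
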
     
     
\begin{proof}
$\sqrt{N}X_{s}^{G,N,\Upsilon_N}$ is the solution from Corollary \ref{MonotoneSol} with our $g$ which satisfies the assumption by our previous lemma \ref{UniformN}. The formula is then exactly the one given by Corollary \ref{UstunelFinal}. The first bound is also given in Corollary \ref{MonotoneSol}. The independence of $N$ comes from the dimension independence of the constants given in lemma \ref{UniformN} that make all the corresponding constants in proposition \ref{OptimalValueFct} also independent of dimension.
For the final statement, we can use for instance the well-known \cite[Theorem 5]{Ustunel} to get:
\begin{align*}\mathbf{E}  &\left(
  G(\tau_{X^{G,N,\Upsilon_N},\Upsilon_N})+\frac{1}{2}\int_0^1||b^{G,N,\Upsilon_N}(t,X^{G,N,\Upsilon_N}(t))||_2^2
  \ dt\right)\\&=\inf\{ \int G(\tau_{.,\Upsilon_N})
  d\mu +\frac{1}{N^2}Ent(\mu| \gamma), \mu\in Prob(\mathbb W_{sa,N})\}\end{align*}
where $Ent(\mu| \gamma)$  is the usual relative entropy so that for instance \cite[Theorem 2]{Ustunel} gives that for $\mu_{G,N,\Upsilon_N}$ the law of $\sqrt{N}X^{G,N,\Upsilon_N}$  $$Ent(\mu_{G,N,\Upsilon_N}| \gamma)\leq \frac{N^2}{2}\int_0^1||b^{G,N,\Upsilon_N}(t,X^{G,N,\Upsilon_N}(t))||_2^2\ dt.$$ Thus the law $\mu_{G,N,\Upsilon_N}$ reaches the infimum above and the uniqueness of the infimum and its form as a Gibbs state are given in the already quoted \cite[Theorem 5]{Ustunel}.

Finally, the lipschitz bound in $\Upsilon_N$ is obtained as in step 3 of the proof of proposition \ref{OptimalValueFct} but using \eqref{Lip0} instead of \eqref{Lop0}.
\end{proof}     

\subsection{Alternative formula for the drift $b^{G,N,\Upsilon_N}$}

\begin{proposition}\label{minht} Consider the setting of Corollary \ref{UstunelFinal}. Let $t_i<t\leq t_{i+1}$ and $(x_1,...,x_i,x)\in\R^{d(i+1)}$.

We define $b^t(s,y,\omega)$ for $s\in [t,1]$ and $\omega \in \Omega$, in considering the case $\max(t_I,t)<s\leq t_{I+1}, I\geq i$ by :
\[b_j^t(s,y,\omega):=-\frac{\partial}{\partial y^{(j)}} h_{s,\ell}(x_1,...,x_i,X_{t_{i+1}}^t(\omega),...,X_{t_I}^t(\omega),y),\]
and $b^t(s,y,\omega)=0$ if $s> t_k$.
We define simultaneously  $X^t(s)=X^t(s,x)$ the unique strong solution starting at $X_t^t=x$ defined in Corollary \ref{MonotoneSol} of \[X^t(s) = X_t^t +
\int_t^s b(u, X^t(u)) du +
B_s-B_t.\]
Also define for convenience $X^t_{t_I}=x_I$, $x_0=0$ for $I\leq i$ (and say interpolate linearly values on $[0,t]$).
Then, we have the formulas:
\[
h_{t
}(x_1,...,x_i,x)=\mathbf{E}  \left(
  g\circ J_{t_1,...,t_k}(X^t)+\frac{1}{2}\int_t^1\|b(s,X^t(s))\|_2^2
  \right),
\]
\begin{align*}
&\frac{\partial}{\partial x^{(j)}}h_{t
}(x_1,...,x_i,x)\\&=\mathbf{E}  \left(
  \left[\frac{\partial}{\partial y^{(j)}}g(x_1,...,x_i,y+(X^t(t_{i+1},x)-x),...,y+(X^t(t_{k},x)-x))
  \right]_{y=x}\right),
\end{align*}
\begin{align*}
\frac{\partial}{\partial x_k^{(j)}}&h_{t
}(x_1,...,x_i,x)\\&=\mathbf{E}  \left(
  \left[\frac{\partial}{\partial y_k^{(j)}}
  g(x_1,...,y_k,x_{k+1},...,x_i,X^t(t_{i+1},x),...,X^t(t_{k},x))
  \right]_{y_k=x_k}\right).
\end{align*}
\end{proposition}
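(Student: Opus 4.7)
The plan is to establish the three formulas in turn, leveraging the explicit log-integral expression for $h_t$ recorded in the proof of Proposition \ref{OptimalValueFct} together with the identification of the minimizer carried out in Corollary \ref{UstunelFinal}.

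For the first formula, I would apply Corollary \ref{UstunelFinal} to the function
\[\tilde g(\omega) := g\bigl(x_1,\ldots,x_i,\, x+\omega_{t_{i+1}},\ldots, x+\omega_{t_k}\bigr)\]
viewed as a functional on $\mathbb{W}_{[t,1]}$, the parameters $(x_1,\ldots,x_i,x)$ being held fixed. Composition with this affine shift preserves convexity, the subquadratic bound \eqref{SubquadE}, the local Lipschitz estimate \eqref{lipE}, and the second-order paraconvexity \eqref{CalphaE}, so $\tilde g$ inherits the hypotheses needed to invoke the corollary on the sub-interval $[t,1]$. The associated optimal drift coincides, through the inductive definition of $b$ in terms of the partial gradients of the lower-level value functions $h_{s,\ell}$, with the drift appearing in the SDE defining $X^t(s,x)$; a routine time-shift of the arguments in \cite{Ustunel} to accommodate starting at time $t$ with initial condition $x$ (rather than at time $0$ with initial condition $0$) completes this step.

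For the third formula, concerning derivatives with respect to $x_k$ with $k \leq i$, I would differentiate the explicit log-integral representation
\[h_t(x_1,\ldots,x_i,x) = -\log\int_{\mathbb{W}_{[t,1]}} e^{-g(x_1,\ldots,x_i,\,x+\nu_{t_{i+1}},\ldots,x+\nu_{t_k})}\, d\gamma_{[t,1]}(\nu)\]
directly under the integral sign. The interchange is justified by dominated convergence using the gradient bound extracted from \eqref{lipE} together with the lower bound on $g$, a domination argument already performed in the derivation of \eqref{bjasderivative}. Since $x_k$ enters only the $k$-th slot of $g$, the chain rule yields
\[\frac{\partial h_t}{\partial x_k^{(j)}} = \frac{\int e^{-g(\cdot)} \, (\partial_{k,j} g)(x_1,\ldots,x_i,x+\nu_{t_{i+1}},\ldots,x+\nu_{t_k})\, d\gamma_{[t,1]}(\nu)}{\int e^{-g(\cdot)}\, d\gamma_{[t,1]}(\nu)}.\]
I would then identify this ratio as $E_{\widetilde P}\bigl[(\partial_{k,j} g)(x_1,\ldots,x_i, x+\nu_{t_{i+1}},\ldots,x+\nu_{t_k})\bigr]$, where $\widetilde P$ is the Gibbs probability of density $e^{-\tilde g}/Z$ against $\gamma_{[t,1]}$. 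The final step is the path-law identification $\mathrm{Law}_{P}\bigl((X^t(t_{i+1},x),\ldots,X^t(t_k,x))\bigr) = \mathrm{Law}_{\widetilde P}\bigl((x+\nu_{t_{i+1}},\ldots,x+\nu_{t_k})\bigr)$, a Girsanov-type statement whose SDE-formulation is precisely the content of Theorem \ref{conv-1}: the exponential martingale driving the Girsanov shift is proportional to $e^{-\tilde g}$, so the pushforwards of the finite-dimensional marginals agree. Substituting this identification yields the asserted formula.

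The second formula follows along the same lines: now $x$ appears simultaneously in all slots $i+1,\ldots,k$, so differentiating under the integral produces a sum $\sum_{l=i+1}^k (\partial_{l,j} g)$ weighted by the same Gibbs density. The path-law identification from the previous paragraph rewrites this as
\[E\Bigl[\sum_{l=i+1}^k (\partial_{l,j} g)(x_1,\ldots,x_i,X^t(t_{i+1},x),\ldots,X^t(t_k,x))\Bigr],\]
and a direct chain-rule manipulation shows that this sum coincides with $\left[\frac{\partial}{\partial y^{(j)}} g\bigl(x_1,\ldots,x_i,\, y + (X^t(t_{i+1},x)-x),\ldots,\, y + (X^t(t_k,x)-x)\bigr)\right]_{y=x}$, which is the compact form in the statement. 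The main technical obstacle across the proof is the path-law identification, but this is exactly the Girsanov mechanism that already drives Corollary \ref{UstunelFinal}; once granted, the differentiation under the integral is entirely routine thanks to the a priori bounds on $g$ inherited from $\mathcal{E}_\alpha(\R^{dk})$.
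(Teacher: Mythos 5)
Your proposal is correct and takes essentially the same route as the paper. The first formula is handled in both cases by applying Corollary~\ref{UstunelFinal} to the shifted restriction $g_X(y_{i+1},\ldots,y_k)=g(x_1,\ldots,x_i,x+y_{i+1},\ldots,x+y_k)$ on $\mathbb{W}_{[t,1]}$, and the derivative formulas are in both cases obtained by differentiating the log-integral expression for $h_t$ under the integral sign (the paper's Lebesgue-domination argument is precisely what underlies its earlier derivation of \eqref{bjasderivative}) and then identifying the resulting quotient as a Gibbs expectation.

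The one place where you depart from the paper is the identification of $\mathrm{Law}_{P}(X^t(\cdot)-x)$ with the Gibbs tilt $e^{-\tilde g}/Z \,d\gamma_{[t,1]}$. You justify this via a Girsanov computation: the optimal drift in Theorem~\ref{conv-1} is the logarithmic Clark--Ocone derivative of the conditional density $E[e^{-f}|\mathcal F_t]$, so by Girsanov and pathwise uniqueness of the SDE the law of $U$ under $\gamma$ equals that of the canonical process under the tilted measure. The paper instead argues (as in its proof of Theorem~\ref{minimisationHermitian}) via the variational characterization $-\log\int e^{-f}d\gamma = \inf_\mu[\mu(f) + \mathrm{Ent}(\mu|\gamma)]$ from \cite[Thm~5]{Ustunel}, combined with \cite[Thm~2]{Ustunel}'s entropy bound, to conclude that the law of the controlled process achieves the unique minimizer, which is the Gibbs state. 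Both routes are correct and standard; the Girsanov route is perhaps more hands-on, while the entropy route is the one more naturally aligned with the Bou\'e--Dupuis/\"Ust\"unel framework the paper works in. Note, however, that your claim that the Girsanov statement is ``precisely the content'' of Theorem~\ref{conv-1} overstates it slightly: the theorem gives the SDE and the minimizer but does not itself assert the identification of laws, which still requires Clark--Ocone plus Yamada--Watanabe-type uniqueness-in-law to close.
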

It will be important for us in the next subsection to note that this last formula defines $X^t(u),Y_u=b(u, X^t(u))$ as a solution of a forward-backward stochastic differential equation on $[t,1]$.
\begin{proof}
Consider the minimization problem in \eqref{htinf}. Since for each $X=(x_1,...,x_i,x)$ fixed, $g_X(y_{i+1},...,y_k)=g(x_1,...,x_i,x+y_{i+1},...,x+y_k)$ defines a function $g_X\in \mathcal{E}_\alpha(\R^{d(k-i)})$, we can apply Corollary \ref{UstunelFinal} to it with $[0,1]$ replaced by $[t,1]$. The first formula for $h_t$ is then exactly the result of this corollary. Arguing as in the proof of Theorem \ref{minimisationHermitian}, one knows that the law of $(X^t(s)-x)_{s\in[t,1]}$ is $$ \frac{e^{-g(x_1,...,x_i,x+\nu_{t_{i+1}},...,x+\nu_{t_{k}})
} \ \mathrm{d} \gamma_{[t,1]}(\nu)}{\int_{\mathbb W_{[t,1]}} e^{-g(x_1,...,x_i,x+\nu_{t_{i+1}},...,x+\nu_{t_{k}})
} \ \mathrm{d} \gamma_{[t,1]}(\nu)}.$$
But this measures appears in \eqref{bjasderivative} (and its variant with $X_t$ replaced by a generic point $(x_1,...x_i,x)$) which we can interpret as the second expected formula. The other derivtives are similar.
\end{proof}
For convenience, we state separately the obvious application in the matricial case. 
     \begin{corollary}\label{minhthermitian}Fix the setting of Theorem \ref{minimisationHermitian}.
     Let $t_i<t\leq t_{i+1}$ and $(x_1,...,x_i,x)\in(( M_N(\C))_{sa})^{m(i+1)}$.

We define $b^{G,N,\Upsilon_N,t}(s,y,\omega)$ for $s\in [t,1]$ and $\omega \in \Omega$, in considering the case $\max(t_I,t)<s\leq t_{I+1}, I\geq i$ by :
\[b_k^{G,N,\Upsilon_N,t}(s,y,\omega):=-\frac{1}{\sqrt{N}}\left(\mathscr{D}_{i+1}^k h_{s,\ell}^{N,\Upsilon_N}\right)(\sqrt{N}x_1,...,\sqrt{N}x_i,\sqrt{N}X_{t_{i+1}}^{G,N,\Upsilon_N,t}(\omega),...,\sqrt{N}X_{t_I}^{G,N,\Upsilon_N,t}(\omega),\sqrt{N}y),\]
and $b^{G,N,\Upsilon_N,t}(s,y,\omega)=0$ if $s> t_k$.
We define simultaneously  $X^{G,N,\Upsilon_N,t}(s)=X^{G,N,\Upsilon_N,t}(s,x)$ the unique strong solution starting at $X^{G,N,\Upsilon_N,t}(t)=x$ defined in Corollary \ref{MonotoneSol} driven by $H_t^N$ brownian motion of law $\gamma_{sa,N,m}$ of \[X^{G,N,\Upsilon_N,t}(s) = X^{G,N,\Upsilon_N,t}(t) +
\int_t^s b^{G,N,\Upsilon_N,t}(u, X^{G,N,\Upsilon_N,t}(u)) du +
H_s^N-H_t^N.\]
 Also define for convenience $X^{G,N,\Upsilon_N,t}(t_I)=x_I$, $x_0=0$ for for $I\leq i$ (and say interpolate linearly values on $[0,t]$).
Then, we have the formulas for $j\geq i$:
\begin{align*}
\frac{1}{N^2}h_t^{N,\Upsilon_N}&(\sqrt{N}x_1,...,\sqrt{N}x_i,\sqrt{N}x)\\&=\mathbf{E}  \left(
  G(\tau_{X^{G,N,\Upsilon_N,t},\Upsilon_N})+\frac{1}{2}\int_t^1||b^{G,N,\Upsilon_N,t}(u,X^{G,N,\Upsilon_N,t}(u))||_2^2
  \right),
\end{align*}
\begin{align*}
&(\mathscr{D}_{i+1}h_{t
}^{N,\Upsilon_N})(\sqrt{N}x_1,...,\sqrt{N}x_i,\sqrt{N}x)=
 \\& \mathbf{E}  \left(
  \sum_{j=i+1}^k\mathscr{D}_{j}g_N
  (\sqrt{N}x_1,...,\sqrt{N}x_i,\sqrt{N}X^{G,N,\Upsilon_N,t}(t_{i+1},x),...,\sqrt{N}X^{G,N,\Upsilon_N,t}(t_{k},x))\right),
\end{align*}
\begin{align*}
(\mathscr{D}_{j}h_{t,\ell}^{N,\Upsilon_N})&(\sqrt{N}x_1,...,\sqrt{N}x_j,...,\sqrt{N}x_i,\sqrt{N}x)=\\&\mathbf{E}  \left(
  \mathscr{D}_{j}g_N
  (\sqrt{N}x_1,...,\sqrt{N}x_i,\sqrt{N}X^{G,N,\Upsilon_N,t}(t_{i+1},x),...,\sqrt{N}X^{G,N,\Upsilon_N,t}(t_{k},x))\right).
\end{align*}
\end{corollary}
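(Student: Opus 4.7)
The plan is to obtain Corollary \ref{minhthermitian} as a direct matricial translation of Proposition \ref{minht}, specialized to the function $g_N(x)=N^2 G(\tau_{x/\sqrt{N},\Upsilon_N})$ of Lemma \ref{UniformN}. Under the identification $\R^{dk}\cong ((M_N(\C)_{sa})^m)^k$ with $d=N^2m$ and the normalized euclidean norm $\|x\|_2^2=\sum_k \frac{1}{N}\mathrm{Tr}(x_k^*x_k)$, Lemma \ref{UniformN} guarantees $g_N\in \mathcal{E}_\alpha(\R^{dk})$ for some $\alpha>1$, so the hypotheses of Proposition \ref{minht} are met. I would then apply that proposition at the scaled initial condition $(\sqrt{N}x_1,\ldots,\sqrt{N}x_i,\sqrt{N}x)$.

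The key identification is between the process $X^t$ produced by Proposition \ref{minht} for $g_N$ and $\sqrt{N}\,X^{G,N,\Upsilon_N,t}$. This rests on three points: (i) the hermitian Brownian motion $H^N_s$ equals $\frac{1}{\sqrt{N}}B_s$ with $B_s$ the standard $d$-dimensional Brownian motion, by the very normalization of $\gamma_{sa,N,m}$; (ii) the ordinary euclidean partial derivative $\partial/\partial x^{(j)}$ of a function of $x\in (M_N(\C)_{sa})^m$ composes with the scaling $x\mapsto \sqrt{N}x$ to pick up a factor $\sqrt{N}$, which is exactly what the factor $1/\sqrt{N}$ in the definition of $b^{G,N,\Upsilon_N,t}$ compensates; and (iii) the matricial cyclic gradient $\mathscr{D}^k_j$ is built from the real partial derivatives in the entries $(x_j)_{II}^k$ and the pairs $((x_j)_{IJ}^k,(x_j)_{JI}^k)$ precisely so that the euclidean gradient $(\partial/\partial x^{(j)})_j$ of Proposition \ref{minht} corresponds term by term to $\mathscr{D}^k_j$. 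With this dictionary, the SDE in Corollary \ref{MonotoneSol} for $g_N$ at the scaled initial condition transforms into the SDE defining $X^{G,N,\Upsilon_N,t}$ driven by $H^N$.

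Having fixed this identification, the three formulas follow directly. The value function formula comes from substituting into the expression for $h_t$ in Proposition \ref{minht} and dividing by $N^2$, since $G(\tau_{X^{G,N,\Upsilon_N,t},\Upsilon_N})=\frac{1}{N^2}g_N(\sqrt{N}X^{G,N,\Upsilon_N,t})$ and $\|b\|_2^2$ is preserved up to the $1/\sqrt{N}$ scaling when passing from $B$ to $H^N$ (one factor from the drift definition, one from the normalized trace). For the first gradient formula, note that in Proposition \ref{minht} the current variable $y$ enters $g$ in all positions $i+1,\ldots,k$ through the arguments $y+(X^t(t_\ell,x)-x)$, so the chain rule produces a sum of partial derivatives in those positions; translating back to $\mathscr{D}^k_{i+1}$ via the dictionary above yields exactly the stated sum $\sum_{j=i+1}^k \mathscr{D}_j g_N$. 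For the last formula with $j\leq i$, the variable $x_j$ enters $g$ only at position $j$, giving a single term $\mathscr{D}_j g_N$ evaluated at $(\sqrt{N}x_1,\ldots,\sqrt{N}X^{G,N,\Upsilon_N,t}(t_{i+1},x),\ldots)$.

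The only real obstacle is bookkeeping the $\sqrt{N}$ scalings between the two formulations and verifying that the correspondence between real partial derivatives in matrix entries and the hermitian cyclic gradient $\mathscr{D}^k_j$ is the one that makes the inner product $\langle \nabla h, \cdot\rangle$ in $\R^d$ coincide with the normalized trace inner product $\tau(\mathscr{D}^k_j h \cdot)$ used elsewhere; both of these are routine direct computations from the footnote-style definition of $\mathscr{D}^k_j$. No new analytic input is needed beyond Proposition \ref{minht} itself.
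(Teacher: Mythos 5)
Your proposal is correct and takes essentially the same approach as the paper: the paper states this corollary without a separate proof, calling it ``the obvious application in the matricial case'' of Proposition \ref{minht}, and your sketch fills in exactly the implicit bookkeeping (the $g_N$ of Lemma \ref{UniformN}, the $\sqrt{N}$ scalings linking $B_s$ to $H_s^N$ and the euclidean gradient to $\mathscr{D}^k_j$) that the paper leaves to the reader.
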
    
 \subsection{An extension to the non-convex bounded case}
We extract the following result from \cite{Lehec}.
\begin{theorem}\label{minimisationHermitianNonconvex}
Let $p\in [2,\infty[, d=N^2m$ $\Upsilon_N\in \mathcal{U}( M_N(\C))$ and $G\in\mathcal{C}_{reg,p,0}(  \mathcal{T}^c_{2,0}(\mathcal{F}^m_{[0,1]}*\mathcal{F}^\nu_{\mu}),d_{1,0}),$  so that $g_N(x)=N^2G(\tau_{x/\sqrt{N},\Upsilon_N})\in C^{2}(\R^{dk})$. Let $t_0=0< t_1<...<t_k\in [0,1]$ and $f_N=g_N\circ J_{t_1,...,t_k}: \mathbb W_{sa,N}\to \R,.$

Let, for $x_i,x\in(( M_N(\C))_{sa})^m $ and $t\in [0,1]$: \[h^{N,\Upsilon_N}_t(x_1,...,x_i,x)=-\log \left( \int_{\mathbb W_{sa,N,[t,1]}} e^{-g_N(x_1,...,x_i,x+\nu_{t_{i+1}},...,x+\nu_{t_{k}})} \ \mathrm{d} \gamma_{[t,1]}(\nu) \right)\]
which is differentiable.
We also define $b^{G,N,\Upsilon_N,t}(s,y,\omega), b^{G,N,\Upsilon_N}(s,y,\omega)=b^{G,N,\Upsilon_N,0}(s,y,\omega)$ for $s\in [t,1]$ and $\omega \in \Omega$, in considering the case $\max(t_I,t)<s\leq t_{I+1}, I\geq i$ by :
\[b_k^{G,N,\Upsilon_N,t}(s,y,\omega):=-\frac{1}{\sqrt{N}}\left(\mathscr{D}_{i+1}^k h_s^{N,\Upsilon_N}\right)(\sqrt{N}x_1,...,\sqrt{N}x_i,\sqrt{N}X_{t_{i+1}}^{G,N,\Upsilon_N,t}(\omega),...,\sqrt{N}X_{t_I}^{G,N,\Upsilon_N,t}(\omega),\sqrt{N}y),\]
and $b^{G,N,\Upsilon_N,t}(s,y,\omega)=0$ if $s> t_k$.
We define simultaneously  $X^{G,N,\Upsilon_N,t}(s)=X^{G,N,\Upsilon_N,t}(s,x)$ the unique strong solution starting at $X^{G,N,\Upsilon_N,t}(t)=x$ defined in Corollary \ref{MonotoneSol} driven by $H_t^N$ brownian motion of law $\gamma_{sa,N,m}$ of \[X^{G,N,\Upsilon_N,t}(s) = X^{G,N,\Upsilon_N,t}(t) +
\int_t^s b^{G,N,\Upsilon_N,t}(u, X^{G,N,\Upsilon_N,t}(u)) du +
H_s^N-H_t^N.\]

Also define for convenience $X^{G,N,\Upsilon_N,t}(t_I)=x_I$, $x_0=0$ for for $I\leq i$ (and say interpolate linearly values on $[0,t]$).
Then, we have the formulas :
\begin{align*}
\frac{1}{N^2}h_t^{N,\Upsilon_N}&(\sqrt{N}x_1,...,\sqrt{N}x_i,\sqrt{N}x)\\&=\mathbf{E}  \left(
  G(\tau_{X^{G,N,\Upsilon_N,t},\Upsilon_N})+\frac{1}{2}\int_t^1||b^{G,N,\Upsilon_N,t}(u,X^{G,N,\Upsilon_N,t}(u))||_2^2\ du\right),
\end{align*}
\begin{align*}
&(\mathscr{D}_{i+1}h_t^{N,\Upsilon_N})(\sqrt{N}x_1,...,\sqrt{N}x_i,\sqrt{N}x)=\\&\mathbf{E}  \left(
  \sum_{j=i+1}^k\mathscr{D}_{j}g_N
  (\sqrt{N}x_1,...,\sqrt{N}x_i,\sqrt{N}X^{G,N,\Upsilon_N,t}(t_{i+1},x),...,\sqrt{N}X^{G,N,\Upsilon_N,t}(t_{k},x))\right).
\end{align*}

Finally, if we write $X^{G,N,\Upsilon_N}(s)=X^{G,N,\Upsilon_N,0}(s,x=0)$, the law of $\sqrt{N}X^{G,N,\Upsilon_N}(t)$ on the pathspace $\mathbb W_{sa, N}$ is exactly the Gibbs law:
$\frac{e^{-g_N(\nu_{t_1},...,\nu_{t_k})}}{\int d\gamma(\nu)e^{-g_N(\nu_{t_1},...,\nu_{t_k})}} \ \mathrm{d} \gamma(\nu).$

\end{theorem}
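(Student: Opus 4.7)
The plan is to follow the Lehec entropic interpolation argument \cite{Lehec}, adapted to our finite-time-marginal setting, exploiting that in the class $\mathcal{C}_{reg,p,0}$ (with $C=0$), the potential $g_N$ depends on the $x_j$ only through the bounded unitaries $u(x_j^l/\sqrt{N})$ and through $\Upsilon_N$. Consequently $g_N$ is uniformly bounded together with its derivatives of any order on $\mathbb{R}^{dk}$. Differentiation under the integral then shows that $h_t^{N,\Upsilon_N}$ is $C^2$ jointly in its space arguments with bounded, globally Lipschitz gradient. In particular, $b^{G,N,\Upsilon_N,t}$ is bounded and Lipschitz in $y$ piecewise in $s$, so the SDE has a unique strong solution by classical Itô theory, and Novikov's condition holds trivially.

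For the value function identity, I would set $M_s^{(i)}(x_1,\ldots,x_i,\omega) = \mathbf{E}\bigl[e^{-g_N(x_1,\ldots,x_i,\omega_{t_{i+1}}+\nu,\ldots)}\bigm|\mathcal{F}_s\bigr]$ on each interval $s\in[t_i,t_{i+1}]$, so that $h_s^{N,\Upsilon_N}(x_1,\ldots,x_i,\omega_s)=-\log M_s^{(i)}$. The martingale representation applied to $M^{(i)}$ combined with Itô's formula on $-\log$ yields the Hamilton--Jacobi--Bellman equation $\partial_s h + \tfrac{1}{2}\Delta h = \tfrac{1}{2}|\nabla h|^2$ (in the appropriate scaling). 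Applying Itô's formula to $s\mapsto h_s^{N,\Upsilon_N}(\sqrt{N}X^{G,N,\Upsilon_N,t}(t_1),\ldots,\sqrt{N}X^{G,N,\Upsilon_N,t}(s))$ along the solution cancels the Laplacian term against the quadratic one, leaving exactly the cost $\tfrac{1}{2}|b^{G,N,\Upsilon_N,t}|_2^2$. Concatenating over the successive intervals $[t_i,t_{i+1}]$ and using the terminal condition $h_1 = g_N/N^2$ at $t_k$ then gives the first formula after taking expectations.

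The gradient formula is immediate from differentiation under the integral sign, which is justified by the uniform bounds on the derivatives of $g_N$:
\begin{equation*}
(\mathscr{D}_{i+1}h_t^{N,\Upsilon_N})(\ldots,\sqrt{N}x) = \mathbf{E}_{\mu_x^t}\Bigl[\sum_{j\geq i+1}\mathscr{D}_j g_N(\ldots)\Bigr],
\end{equation*}
where $\mu_x^t$ is the Wiener measure tilted by the density proportional to $e^{-g_N(\ldots,x+\nu_{t_{i+1}},\ldots)}$. The content of the last statement is precisely that $\mu_0^0$ coincides with the law of $\sqrt{N}X^{G,N,\Upsilon_N}$, whence the identification of the $\mathscr{D}_j$ formula is circular until that fact is established.

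The main obstacle is therefore the law identification. My approach would be Girsanov: since $b^{G,N,\Upsilon_N}$ is bounded, the law of $\sqrt{N}X^{G,N,\Upsilon_N}$ on $\mathbb{W}_{sa,N}$ has density $\mathcal{E}_1 := \exp\bigl(\sqrt{N}\int_0^1 b\cdot dH^N - \tfrac{N}{2}\int_0^1 \|b\|_2^2\,du\bigr)$ with respect to $\gamma$, where the stochastic integral is interpreted with respect to the driving Brownian motion. Applying Itô's formula to $s\mapsto h_s^{N,\Upsilon_N}$ along $\sqrt{N}X^{G,N,\Upsilon_N}(s)$, using the HJB equation and the identification $\mathscr{D}_{i+1}h = -\sqrt{N}b$, produces
\begin{equation*}
h_1^{N,\Upsilon_N}(\sqrt{N}X(\cdot)) - h_0^{N,\Upsilon_N}(0) = -\sqrt{N}\int_0^1 b\cdot dH^N + \tfrac{N}{2}\int_0^1\|b\|_2^2\,du,
\end{equation*}
so that $\mathcal{E}_1 = \exp(h_0^{N,\Upsilon_N}(0) - g_N(\cdot))$, which is the normalized Gibbs density. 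The delicate point is handling the piecewise definition of the drift across the nodes $t_i$ and checking that the Itô computation concatenates correctly, using that $h_t^{N,\Upsilon_N}$ matches across the boundaries via the defining integral and that $\nabla h$ is continuous in time within each subinterval.
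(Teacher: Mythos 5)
Your overall strategy is the same as the paper's: both rest on the F\"ollmer-drift / entropic-interpolation theory (Lehec, Th.~4 and Lemma~3), which is precisely the hands-on Girsanov-plus-HJB argument you sketch. The paper simply cites Lehec's lemmas (pathwise uniqueness, the entropy representation, the drift formula) and the classical \"Ust\"unel duality, together with the observation that for $C=0$ the Gibbs density belongs to Lehec's class $\mathcal{S}$; you re-derive these facts from scratch. So the logical skeleton (regularity of $h_t$ from the boundedness of $g_N$ and its derivatives; identification of the drift with $-\nabla h$; It\^o along the solution with HJB to get the value function; Girsanov to pin down the law) is in full agreement with the intended argument.

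However, the details of your Girsanov / It\^o step have errors that happen to compensate. First, the Girsanov density of the law of the solution with respect to Wiener measure should be expressed as a functional of the canonical coordinate process, not of the driving Brownian motion: if $d\tilde X = \tilde b\,dt + dB$ with $\tilde b = -\nabla h$, the correct Radon--Nikodym derivative is $\exp\bigl(\int \tilde b\cdot d\omega - \tfrac12\int|\tilde b|^2\bigr)$ evaluated at $\omega=\tilde X$, which by $d\tilde X = \tilde b\,dt + dB$ differs from your expression by a factor $\exp(\int|\tilde b|^2)$. Second, the It\^o identity you state, namely $h_1-h_0 = -\sqrt N\int b\cdot dH^N + \tfrac N2\int\|b\|_2^2\,du$, has the wrong sign on the quadratic term (and the wrong power of $N$): the HJB equation $\partial_s h + \tfrac12\Delta h = \tfrac12|\nabla h|^2$ combined with the drift $\tilde b=-\nabla h$ gives $dh_s(\tilde X_s) = -\tfrac12|\nabla h|^2\,ds + \nabla h\cdot dB_s$, hence $\mathbf E[h_1]-h_0 = -\tfrac12 \mathbf E[\int|\nabla h|^2]$; the positive sign you write would, after taking expectations, contradict the very value function formula you are trying to prove. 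The two errors exactly cancel when you plug the wrong It\^o identity into the wrong Girsanov density, which is why you obtain the correct final conclusion $\mathcal E_1 = \exp(h_0 - g_N)$; but the derivation as written does not go through. To make it airtight, either redo the It\^o / Girsanov bookkeeping with consistent scaling (in particular track that $|\nabla h|^2_{\mathrm{euc}} = N^2\|b\|_2^2$ and $\nabla h\cdot dB = -N\sum_k \mathrm{Tr}(b_k\,dH^N_k)$), or do as the paper does and invoke Lehec's Theorem~4 and Lemma~6 directly, which avoid the Girsanov manipulation.

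One further remark: your observation that the gradient formula is circular until the law is identified is correct, and the paper resolves this the same way you propose (the law identification comes first, the $\mathscr{D}_j$ formula then follows by differentiation under the integral, as in Proposition~\ref{minht} and its matricial Corollary~\ref{minhthermitian}).
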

\begin{proof}
Recall that the class $\mathcal{S}$ in \cite[Def 5]{Lehec} contains densities as the one of the above Gibbs law $\mu_{G,N}$ depending on finitely many times, bounded away from zero (since the potential $g_N$ is bounded), Lipschitz and with Lipschitz derivative. Both lipchitzness conditions come as in lemma \ref{UniformN} from the first and second bounded derivatives of $g_N$ (the constant $C=0$ in the condition $G\in\mathcal{C}_{reg,p,C=0}$ is crucial for that). Then note that our $b^{G,N,\Upsilon_N,0}(t,.)$ is exactly $u_t$ on \cite[Thm 4]{Lehec} based on the formula for F\"ollmer's drift in is lemma 3 which is similar to our corollary \ref{bjasderivative}. From his lemma 6, the SDE for $X^{G,N,\Upsilon_N,0}$ has the pathwise uniqueness property and thus his Thm 4 applies and we know the relative entropy of $\mu_{G,N}$. From the classical \cite[Thm 5]{Ustunel} we already used, this gives exactly our formula for $h_0^{N,\Upsilon_N}$. The general result comes from a similar argument for brownian motion on $[t,1]$ and the formula for the derivative then follows as in corollary \ref{bjasderivative}.
\end{proof}     
    
\section{Free Forward-Backward stochastic differential equations}     
     
The two previous subsections made appear the following kind of forward-backward stochastic differential equations (see \cite{MaYong} for an overview of the subject):
$$X^{G,N,\Upsilon_N,t}(s) = X^{G,N,\Upsilon_N,t}(t) -
\int_t^s Y^{G,N,\Upsilon_N,t}(u) du +
H_s^N-H_t^N,$$
     \begin{align*}
&Y^{G,N,\Upsilon_N,t}(u)=
  \\&
  \mathbf{E}  \Big(
  \sum_{j=i+1}^k\frac{1}{\sqrt{N}}\mathscr{D}_{j}g_N
  (\sqrt{N}x_1,...,\sqrt{N}x_i,\sqrt{N}X^{G,N,\Upsilon_N,t}(t_{i+1},x),...,\sqrt{N}X^{G,N,\Upsilon_N,t}(t_{k},x))\Big| \mathcal{F}_u\Big).
\end{align*}
     The formulation above is similar to the weak formulation of \cite{MaAntonelli}. But since all processes here are adapted to a classical brownian filtration, one case use the representation theorem on martingales in this filtration to rewrite the second equation in the most standard way :
     \begin{align*}
&Y^{G,N,\Upsilon_N,t}(u)=
  - \int_u^tZ^{G,N,\Upsilon_N,t}(v) \#dH_v^N\\&+
  \sum_{j=i+1}^k\frac{1}{\sqrt{N}}\mathscr{D}_{j}g_N
  (\sqrt{N}x_1,...,\sqrt{N}x_i,\sqrt{N}X^{G,N,\Upsilon_N,t}(t_{i+1},x),...,\sqrt{N}X^{G,N,\Upsilon_N,t}(t_{k},x)).
\end{align*}
Note that $Z^{G,N,\Upsilon_N,t}(v)     
   $ is valued in $(M_N(\C)\otimes_{alg}  M_N(\C))^{m}$ and thus $\#$ is as usual in free probability the side multiplication $(a\o c)\#b=abc$ for biprocesses.
   
   We  need to study the limit $N\to\infty$ of theses equations in a free probability framework. First, we recall sort time existence results similar to \cite{Delarue}. Then, in the convex case, one can use the relation to the optimal control problem and the a priori estimates this provides to argue to an existence and uniqueness result on arbitrary $[0,T]$ with estimate uniform in $N$ enabling to take a limit. The method will be close to \cite{FurhmanTessitore} and will be the object of the second subsection. In a previous preprint version, before we discover this reference, the argument was hidden in the proof of our Laplace principle lower bound. We think that it deserves being highlighted in a specific subsection.
   
     \subsection{Local Existence to free Forward-Backward SDE}

\begin{proposition}\label{localFBSDE}
Let $\mathcal{G}_t$ a filtration containing a standard martingale $(S_t)$  in the sense of subsection \ref{ultra}. Suppose that $\mathcal{F}_t$ 
is a right continuous subfiltration
, (resp.  %
and moreover $S_t$ is a free brownian motion adapted to $\mathcal{F}_t$). Let $G
: [L^2(\mathcal{G}_1)]^{(k+1)m}\to \R$ a 2-paraconvex and $2$-paraconcave function such that $\nabla_{X_o}G(X_0,...,X_k)\in\mathcal{F}_1$ (resp. $\nabla_{X_o}G(X_0,...,X_k)\in L^2(W^*(X_0,...,X_k,\mathcal{F}_0))$) for any $o\in [\![1,k]\!]$ and $X_i\in L^2(\mathcal{F}_1)$.  Assume given $0<t\leq T\leq 1$

Then, there is $\tau_0>0$ such that for $T<\tau_0$, there is exactly one solution (i.e. families of adapted functions $X^t(s,.)$,$Y_s(.), Z_s(.)$ satisfying the semigroup relation  $X^{t}_{s}(\mathbf{X},X)-X^{v}_{s}(\mathbf{X},X^{t}_v(\mathbf{X},X))=0$ for all $t<v<s$) to the equations for $T\geq s\geq t$:
$$X^t_s=X^t(s,Z,X^t(t))=X^t(t)
+S_s-S_t-\int_t^s Y_u(Z,X^t_u)du, $$
$$Y_s(Z,X^s(s))=E_{\mathcal{G}_s}
\Big[ \nabla_{X^s_{T}}G(Z,X^s_{T})
\Big],$$
$$Z_s(Z,X^s(s))=E_{\mathcal{G}_s}
\Big[ \nabla_{Z}G(Z,X^s_{T})
\Big],$$
and it necessarily satisfies for $s\geq t$ for any $X\in \mathcal{F}_t^{km},X^t(t)\in \mathcal{F}_t^m$:
\begin{equation}\label{Xtadapted}X^t_s\in\mathcal{F}_s \ \Big(resp.\  X^t_s\in L^2(W^*(Z,X^t(t), S_u-S_t, u\in [t,s], \mathcal{F}_0))\Big),\end{equation}
\begin{equation}\label{nablahadapted}Y_s(Z,X^s(s))\in\mathcal{F}_s \ \Big(resp.\  Y_s(Z,X^s(s))\in L^2(W^*(Z,X^s(s)),  \mathcal{F}_0)\Big),\end{equation}
\begin{equation}\label{nablahadaptedextra}Z_s(Z,X^s(s))\in\mathcal{F}_s \ \Big(resp.\  Z_s(Z,X^s(s))\in L^2(W^*(Z,X^s(s)),  \mathcal{F}_0)\Big).\end{equation}
\end{proposition}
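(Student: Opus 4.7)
The plan is to invoke a short-time contraction, exploiting the key regularity fact that the two-sided $2$-paraconvexity of $G$ forces, via proposition \ref{C11}, Fr\'echet differentiability with a globally Lipschitz gradient of some constant $L$. This reduces the problem to a Picard argument of the kind standard for forward-backward systems with Lipschitz coefficients, and the short time threshold $\tau_0 = 1/(2L)$ will be natural.

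Concretely, I would fix $Z$ and an initial condition $X^t(t) \in L^2(\mathcal{F}_t)^m$, and work on the Banach space $\mathcal{B}_t$ of $\mathcal{F}$-adapted continuous processes $s \in [t,T] \mapsto X_s \in L^2(\mathcal{F}_s)^m$ endowed with $\|X\|_\infty = \sup_{s \in [t,T]} \|X_s\|_2$, and define $\Phi = \Phi_{Z,X^t(t)} \colon \mathcal{B}_t \to \mathcal{B}_t$ by
$$\widetilde{Y}_u := E_{\mathcal{G}_u}\bigl[\nabla_{X_T} G(Z,X_T)\bigr], \qquad (\Phi X)_s := X^t(t) + (S_s - S_t) - \int_t^s \widetilde{Y}_u\, du.$$
The subtle point that $\widetilde{Y}_u$ is $\mathcal{F}_u$-measurable (and not merely $\mathcal{G}_u$-measurable) is exactly where the subfiltration hypothesis enters: since $\nabla_{X_o} G(Z,X_T) \in \mathcal{F}_1$ by assumption, the defining identity $E_{\mathcal{G}_u}|_{L^2(\mathcal{F}_1)} = E_{\mathcal{F}_u}$ of a subfiltration places $\widetilde{Y}_u$ in $L^2(\mathcal{F}_u)^m$, and right continuity of $\mathcal{F}$ together with continuity of $S$ gives the required path regularity. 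Combining the Lipschitz bound on $\nabla G$ with $L^2$-contractivity of conditional expectation yields $\|\widetilde{Y}_u - \widetilde{Y}'_u\|_2 \leq L\|X_T - X'_T\|_2$, hence $\|\Phi(X) - \Phi(X')\|_\infty \leq L(T-t)\|X - X'\|_\infty$, a genuine contraction once $T - t < \tau_0 := 1/(2L)$. Banach's theorem produces a unique fixed point $X^t$, from which $Y_s(Z,\cdot)$ and $Z_s(Z,\cdot)$ are read directly off the displayed backward formulas.

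The semigroup relation $X^{t}_s(Z,X) = X^{v}_s(Z,X^{t}_v(Z,X))$ is immediate from uniqueness: restricting $X^t$ to $[v,T]$ solves the analogous fixed-point equation based at $v$ with initial data $X^t_v$, so it must coincide with $X^v$ run from $X^t_v$; the same argument gives uniqueness of the full triple $(X,Y,Z)$. For the parenthesized \emph{respective} version, in which $S$ is a free Brownian motion in $\mathcal{F}$ and one wants $X^t_s$ and $Y_s, Z_s$ to live in the smaller algebra $W^*(Z, X^t(t), (S_u - S_t)_{u \leq s}, \mathcal{F}_0)$, the hypothesis that $\nabla_{X_o} G \in L^2(W^*(X_0,\ldots,X_k,\mathcal{F}_0))$ ensures $\widetilde{Y}_u$ stays in the corresponding subalgebra at each Picard step, and freeness of $(S_u - S_t)_{u\ge t}$ from $\mathcal{F}_t$ (in the sense of example \ref{ExFiltration}(3) and \eqref{orthofbm}) implies that $E_{\mathcal{G}_u}$ agrees, on this subalgebra, with conditional expectation onto it; passage to the $\|\cdot\|_2$-limit preserves algebra membership, giving \eqref{Xtadapted}--\eqref{nablahadaptedextra}.

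The main obstacle is precisely this algebra-tracking bookkeeping in the \emph{respective} case. The dimension-free contraction argument is robust, but one must carefully verify at each stage of the iteration that applying $\nabla G$ followed by $E_{\mathcal{G}_u}$ does not push the iterate out of the small free-Brownian subalgebra; without the freeness identity of example \ref{ExFiltration}(3), the conditional expectations would a priori project into the larger $\mathcal{G}_u$ and the extra containment statement would fail. Once this step is established, the contraction fixes $\tau_0$ independently of $N$, $t$, and $Z$, which is what is needed for the later application to the limiting FBSDE.
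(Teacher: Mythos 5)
Your strategy is essentially the paper's: a short-time Picard argument whose Lipschitz input comes from Proposition~\ref{C11} applied to the two-sided $2$-paraconvexity of $G$, combined with the subfiltration identity $E_{\mathcal{G}_u}|_{L^2(\mathcal{F}_1)} = E_{\mathcal{F}_u}$ to keep the drift $\mathcal{F}$-adapted and freeness of the increments from $\mathcal{F}_0$ to land in the small von Neumann algebra in the respective case. What you do differently is the form of the iteration: you iterate a single process $X^{(l)}$ on $[t,T]$ with the drift recomputed as the conditional-expectation martingale $u \mapsto E_{\mathcal{G}_u}[\nabla G(Z, X^{(l)}_T)]$ of the gradient at the current iterate's terminal value, and then recover the semigroup relation and the families $Y_s(Z,\cdot),Z_s(Z,\cdot)$ from uniqueness. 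The paper instead iterates the whole two-parameter family $Y^{t,(l)}_s(\mathbf{X},X)$ of processes indexed by the starting time together with the backward-component functions $DH^{(l)}_t(\mathbf{X},X)$, which requires careful simultaneous bookkeeping of their Lipschitz constants (hence the less transparent threshold conditions like $5CT<1$, $84CT<1$). Your route is cleaner and arrives at the same $\tau_0$-scale threshold; what it buys is a shorter argument, what it loses is that the Lipschitz dependence of the solution on $(Z, X^t(t))$ — needed to actually define $Y_s(Z,\cdot)$ as a well-defined Lipschitz function rather than just a pointwise assignment, and used crucially downstream — is left implicit, whereas the paper proves it as part of the iteration. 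Two small points you should add: (i) the $u$-measurability of the integrand $u \mapsto E_{\mathcal{G}_u}[\cdots]$ needs the right-continuity of $\mathcal{F}$ slightly more carefully than a one-clause remark (the paper devotes a displayed estimate to it); (ii) the containment you state for $Y_s,Z_s$, namely that they live in $W^*(Z,X^t(t),(S_u-S_t)_{u\le s},\mathcal{F}_0)$, is weaker than the claimed $Y_s(Z,X^s(s))\in L^2(W^*(Z,X^s(s),\mathcal{F}_0))$ — to get the latter you must apply your freeness argument to the fixed-point equation restarted at $s$ from $X^s(s)$, which works but should be said. Neither is a real gap.
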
    
   \setcounter{Step}{0}
  
\begin{proof}  

The existence of the supplementary component $Z_s$ is treated as the second $Y_s$ and since it is not involved in the equation, its uniqueness is clear and we don't detail it more. 

   \begin{step} Existence for small $T$, with $c\max(1,||\nabla G||_{lip})T<1$ for some $c>0$ independent of $G$.\end{step}


 On the interval $[0,T]$ for $T$ to be chosen small enough soon, we will approximate our system of equations by a Picard iteration. We define 
inductively for $X,X_l\in L^2(\mathcal{G}_{t})^m, s\in [t,T]$ (written $\mathbf{X}=(X_1,...,X_k)$): $$Y^{t,(0)}_s(X)=Y^{t,(0)}_s(\mathbf{X},X)=X+(S_s-S_t),$$
for $l\geq 0:$
$$DH^{(l)}_t(\mathbf{X},X)=E_{\mathcal{G}_t}
(\nabla_{Y^{t,(l)}_{T}}G(X_1,...,
X_{k},Y^{t,(l)}_{T}(\mathbf{X},X))$$
and for $l\geq 1$:
\begin{align*}&Y^{t,(l)}_s(\mathbf{X},X)=X+(S_s-S_t)+\int_t^sdv DH^{(l-1)}_v(\mathbf{X},Y^{t,(l-1)}_v(\mathbf{X},X)).
\end{align*}

We first want to check  by induction on $l$ that the integrals above are well-defined and $$Y^{t,(l)}_s(\mathbf{X},X)\in L^2(\mathcal{F}_s)^m,\  \Big(resp\ L^2(W^*( \mathcal{F}_0,X_1,...,X_k,X, S_u-S_t, u\in [t,s] )))^{m}\Big)$$ if $X,X_i\in(L^2(\mathcal{F}_t))^m$ (We also see inductively simultaneously the lipschitzness of $Y^{t,(l)}_s$ but we will detail the bounds later). The initialization $l=0$ is obvious and then if the result is true at level $l$, $X_1,...,
X_{k},Y^{t,(l)}_{T}(\mathbf{X},X))\in L^2(\mathcal{F}_1)$ (resp. in $L^2(W^*( \mathcal{F}_0,X_1,...,X_k,X, S_u-S_t, u\in [t,T] )))$) and applying the assumption on the gradients of $G$, so is the application $\nabla_{Y^{t,(l)}_{T}}G$ in the definition of $DH^{(l)}_t(X_1,...,X_{k},X,\upsilon)$. But on this space $L^2(\mathcal{F}_1)$, the agreement of conditional expectations assumed in the submodel/subfiltration property, one deduces $E_{\mathcal{G}_t}=E_{L^2(\mathcal{F}_t)}$ which concludes to the first case $DH^{(l)}_t(\mathbf{X},X)\in L^2(\mathcal{F}_t)$ and respectively,  from freeness of $\mathcal{F}_t$ with the free brownian motion, we even deduce $DH^{(l)}_t(X_1,...,X_{k-1},X)\in L^2(W^*(X_1,...,X_k,X,\mathcal{F}_0))$. From the Lipschitzness of $Y^{t,l}_T$ and the one for $\nabla G$ one also deduces the one of $DH^{(l)}$.

Let us explain why this implies the right continuity in $v$ of the integrand proved inductively. For $w>v$,  one gets:
\begin{align*}&||DH^{(l)}_v(\mathbf{X},Y^{t,(l)}_v(\mathbf{X},X))-DH^{(l)}_w(\mathbf{X},Y^{t,(l)}_w(\mathbf{X},X))||_2\\&\leq ||DH^{(l)}_v(\mathbf{X},Y^{t,(l)}_v(\mathbf{X},X))-DH^{(l)}_w(\mathbf{X},Y^{t,(l)}_v(\mathbf{X},X))||_2\\&+ ||DH^{(l)}_w(\mathbf{X},Y^{t,(l)}_v(\mathbf{X},X))- DH^{(l)}_w(\mathbf{X},Y^{t,(l)}_w(\mathbf{X},X))||_2
\\&\leq ||E_{\mathcal{G}_v}
\Big(\nabla_{Y^{v,(l)}_{T}}G(X_1,...,
X_{k},Y^{v,(l)}_{T}(\mathbf{X},Y^{t,(l)}_v(\mathbf{X},X)))\Big)-E_{\mathcal{G}_w}
\Big(\nabla_{Y^{w,(l)}_{T}}G(X_1,...,
X_{k},Y^{w,(l)}_{T}(\mathbf{X},Y^{t,(l)}_v(\mathbf{X},X)))\Big) ||_2
\\&+||DH^{(l)}_w||_{lip}||Y^{t,(l)}_v(\mathbf{X},X)-Y^{t,(l)}_w(\mathbf{X},X)||_2\\&\leq ||(E_{L^2(\mathcal{F}_v)}-E_{L^2(\mathcal{F}_w)})
\Big(\nabla_{Y^{v,(l)}_{1}}G(X_1,...,
X_{k},Y^{v,(l)}_{1}(\mathbf{X},Y^{t,(l)}_v(\mathbf{X},X)))\Big) ||_2\\&+ ||\nabla G||_{lip}||Y^{v,(l)}_{T}(\mathbf{X},Y^{t,(l)}_v(\mathbf{X},X)))-Y^{w,(l)}_{T}(\mathbf{X},Y^{t,(l)}_v(\mathbf{X},X)))||_2
\\&+||DH^{(l)}_w||_{lip}||Y^{t,(l)}_v(\mathbf{X},X)-Y^{t,(l)}_w(\mathbf{X},X)||_2
\end{align*}
and using the  right continuity of the filtration $L^2(\mathcal{F}_t)$ this tends to $0$ when $w\to v$ as soon as we checked by induction on $l$ that for $Z=Y^{t,(l)}_v(\mathbf{X},X)$, uniformly for $\tau\leq T$ :
\begin{align*}||Y^{v,(l)}_{\tau}(\mathbf{X},Z)&-Y^{w,(l)}_{\tau}(\mathbf{X},Z)||_2\leq ||S_w-S_v+\int_v^wd\sigma DH^{(l-1)}_\sigma(\mathbf{X},Y^{v,(l-1)}_\sigma(\mathbf{X},Z))||_2\\& +\int_w^\tau d\sigma   ||DH^{(l-1)}_\sigma||_{lip}||Y^{v,(l-1)}_\sigma(\mathbf{X},Z))-  Y^{w,(l-1)}_\sigma(\mathbf{X},Z))||_2\to_{w\to v} 0\end{align*}
 Especially, at each inductive step, the integral is well-defined. Applying this with the induction hypothesis to the integrand defining $Y^{t,(l+1)}_s$, one obtains the induction step concerning the adaptedness result.

Let us compute bounds on Lipschitzness constants : $||Y^{t,(0)}_s||_{Lip}\leq 1$ and knowing that $\nabla G$ is C-Lipschitz, we have by composition for $0\leq t\leq T$
\begin{align*}&||DH^{(l)}_t(\mathbf{X},X)-DH^{(l)}_t(Z_1,...Z_{k},Z)||_2\\&
\leq C(1+||Y^{t,(l)}_{T}||_{Lip}^2)^{1/2}||(X_1,...,X_{k},X)-(Z_1,...Z_{k},Z)||_2\end{align*}

And similarly 
$$||Y^{t,(l)}_{s}||_{Lip}\leq 1+\int_t^sdv  ||DH^{(l-1)}_v||_{Lip}(1+||Y^{t,(l-1)}_{v}||_{Lip}^2)^{1/2}.$$
If $5CT<1$, an immediate induction yields for $s,t$ as above $||Y^{t,(l)}_{s}||_{Lip}\leq 1+5CT\leq 2$ 
 and  $||DH^{(l-1)}_s||_{Lip}\leq  D:=\sqrt{5}C.$

Let us turn to proving bounds on differences in $l$, towards proving convergence. 
We start by bounding the increments:
\begin{align*}&||DH^{(l)}_t(\mathbf{X},X)-DH^{(l-1)}_t(\mathbf{X},X)||_2
\\&\leq C\sup_{v\in [t,T]}||Y^{t,(l)}_{v}(\mathbf{X},X))-Y^{t,(l-1)}_{v}(\mathbf{X},X))||_2,\end{align*}
We then use the relation for $s\geq v\geq t$, $l\geq 1$: \begin{align}\label{semigroupY}\begin{split}&Y^{t,(l)}_{s}(\mathbf{X},X)=Y^{v,(l)}_{s}\left(\mathbf{X},Y^{t,(l-1)}_v(\mathbf{X},X)\right)+(Y^{t,(l)}_v- Y^{t,(l-1)}_v)(\mathbf{X},X)\\&+ \int_v^sdw DH^{(l-1)}_w(\mathbf{X},Y^{t,(l-1)}_w(\mathbf{X},X))- DH^{(l-1)}_w(\mathbf{X},Y^{v,(l-1)}_w(\mathbf{X},Y^{t,(l-1)}_v(\mathbf{X},X)))\end{split}
\end{align}
 and this reads for $l=1$ as $Y^{t,(1)}_{s}(\mathbf{X},X)=Y^{v,(1)}_{s}(\mathbf{X},Y^{t,(0)}_v(\mathbf{X},X))+(Y^{t,(1)}_v- Y^{t,(0)}_v)(\mathbf{X},X),$ since $Y^{t,(0)}_w(\mathbf{X},X)= Y^{v,(0)}_w(\mathbf{X},Y^{t,(0)}_v(\mathbf{X},X))$ so that one gets for $l\geq 2$, $s\geq v$ by an elementary induction (using we already chose $DT\leq 1$):
 \begin{align*}&
 f_l(s,v):=||Y^{t,(l)}_{s}(\mathbf{X},X)-Y^{v,(l)}_{s}(\mathbf{X},Y^{t,(l-1)}_v(\mathbf{X},X)||_2\leq 
 ||(Y^{t,(l)}_v- Y^{t,(l-1)}_v)(\mathbf{X},X))||_2\\&+ 
 \int_v^sdw D ||Y^{t,(l-1)}_w(\mathbf{X},X)- Y^{v,(l-1)}_w(\mathbf{X},Y^{t,(l-2)}_v(\mathbf{X}X))||_2+ 2D|| Y^{t,(l-1)}_v(\mathbf{X},X)-Y^{t,(l-2)}_v(\mathbf{X},X)||_2
 \\&\leq C_l(v)+2D(s-v) C_{l-1}(v)+ \int_v^sdw D f_{l-1}(w)\qquad \qquad \qquad \Big(C_l(v):= ||(Y^{t,(l)}_v- Y^{t,(l-1)}_v)(\mathbf{X},X))||_2\Big)
 \\&\leq C_l(v)+\sum_{k=1}^{l-1} 3\frac{(D(s-v))^{l-k}}{(l-k)!}
 C_{k}(v)\leq C_l(v)+\sum_{k=1}^{l-1} 3\frac{(D(s-v))^{l-1-k}}{(l-1-k)!}
 C_{k}(v)
\end{align*}
 Thus, one can use this to bound increments of $Y$ and thus $C_{k}(v)$:
 
\begin{align*}&||Y^{t,(l+1)}_s(\mathbf{X},X)-Y^{t,(l)}_s(\mathbf{X},X)||_2\\&\leq\int_t^sdv ||DH^{(l)}_v(\mathbf{X},Y^{t,(l)}_v(\mathbf{X},X)) -DH^{(l-1)}_v(\mathbf{X},Y^{t,(l-1)}_v(\mathbf{X},X))||_2
\\&\leq C\int_t^sdv \sup_{w\in [v,T]}||Y^{v,(l)}_{w}(\mathbf{X},Y^{t,(l-1)}_v(\mathbf{X},X))
-Y^{v,(l-1)}_{w}(\mathbf{X},Y^{t,(l-1)}_v(\mathbf{X},X))||_2
+D\int_t^sdv  C_l(v)
\\&\leq C\int_t^sdv \sup_{w\in [v,T]}(f_l(w,v)+f_{l-1}(w,v))
+\int_t^sdv  (D+C)\sup_{w\in [v,T]}C_l(w)
+2CC_{l-1}(v)\\&\leq 6C\int_t^sdv \sum_{k=1}^{l-1} \frac{(D(T-v))^{l-1-k}}{(l-1-k)!}
 C_{k}(v)
+\int_t^sdv  (D+2C)\sup_{w\in [v,T]}C_l(w)
+3CC_{l-1}(v)
\end{align*}
Let $E=\frac{(D+2C)T+ \sqrt{(D+2C)^2T^2+12CT(1+2e)}}{2}\geq DT$ and one shows by induction $A_{l+1}:=\sup_{t\leq s\leq T}C_{l+1}(s)\leq E^{l+1}A_1$ since our inequality gives combined with induction assumption:
$$A_{l+1}\leq (D+2C)T A_{l}+6CT\sum_{k=1}^{l-1} 3\frac{E^{l-1-k}}{(l-1-k)!}A_{k}+3CTA_{l-1}
\leq A_1 ((D+3C)T +\frac{3CT(1+2e)}{E}) E^l
= A_1 E^{l+1}$$
since we chose $E$ to get $((D+2C)T +\frac{3CT(1+2e)}{E})=E.$
If we moreover assume $T$ small enough so that $2(D+2C)T<1,84CT<1 $ so that $E\leq 1/4+\sqrt{2}/2<1$ 
the series of term $Y^{t,(l+1)}_s(\mathbf{X},X)-Y^{t,(l)}_s(\mathbf{X},X)$ converges and thus $Y^{t,(l+1)}_s(\mathbf{X},X)\to X^t_s(\mathbf{X},X)$ uniformly on $s\geq t\in [0,T]^2,$ and as a consequence $DH^{(l)}_t(\mathbf{X},X)\to Y_t(\mathbf{X},X)$ uniformly on $t\in [0,T].$ The equations for $Y,X$ are obvious from the uniform Lipschitz constants, uniform convergences and defining inductive relations for $Y^{t,(l+1)}_s,DH^{(l)}_t.$

Moreover, taking the limit to equation \eqref{semigroupY}, $t<v<s$:
\begin{align}\begin{split}&X^{t}_{s}(\mathbf{X},X)-X^{v}_{s}(\mathbf{X},X^{t}_v(\mathbf{X},X))\\&= \int_v^sdw\ \Big( Y_w(\mathbf{X},X^{t}_w(\mathbf{X},X))- 
Y_w(\mathbf{X},X^{v}_w(\mathbf{X},X^{t}_v(\mathbf{X},X)))
\Big)\end{split}
\end{align}
and thus since $||Y_w||_{lip}\leq D$ and Gronwal's inequality gives $X^{t}_{s}(\mathbf{X},X)-X^{v}_{s}(\mathbf{X},X^{t}_v(\mathbf{X},X))=0$ for all $t<v<s$

\begin{step} Uniqueness and adapteness.\end{step}

Take a second solution $X_s',Y_s'$ also satisfying the semigroup relation (note this is the case as soon as we know a lipschitz bound for $Y_s'$ by uniqueness to the ODE satisfied by $X$), we have the estimates (using our previous Lipschitz bounds for $||Y_s||_{lip}\leq D, ||X_s^t||_{lip}\leq 2$ for $T$ chosen as before obtained in step 1) 
\begin{align*}||(Y_s-Y_s')(Z,X^s(s))||_2&\leq|| \nabla_{X^s_{T}}G(Z,X^s_{T}(Z,X^s(s))-\nabla_{X^s_{T}}G(Z,X^{s\prime}_{T}(Z,X^s(s))||_2\\&\leq ||\nabla G||_{lip}|| (X^s_{T}-X^{s\prime}_{T})(Z,X^s(s))||_2,\end{align*}
\begin{align*}&|| X^s_{t}(Z,X^s(s))-X^{s\prime}_{t}(Z,X^s(s))||_2\leq \int_s^t ||Y_u(Z,X^s_u(Z,X^s(s)))- Y_u'(Z,X^{s\prime}_u(Z,X^s(s)))||_2du\\&\leq \int_s^t ||\nabla G||_{lip}||(X^{u\prime}_T- X^{u}_{T})(Z,X^{s\prime}_u(Z,X^s(s)))||_2du+\int_s^t D||(X^s_u-X^{s\prime}_u)(Z,X^s(s)))||_2du\\&\leq \int_s^t ||\nabla G||_{lip}||X^{s\prime}_T(Z,X^s(s)))- X^{s}_{T}(Z,X^s(s)))||_2du+\int_s^t( D+2||\nabla G||_{lip})||(X^s_u-X^{s\prime}_u)(Z,X^s(s)))||_2du, \end{align*}

and thus taking a supremum over $s<t$:
\begin{align*}&\sup_{s<t\leq T}|| X^s_{t}(Z,X^s(s))-X^{s\prime}_{t}(Z,X^s(s))||_2\\&\leq ( D+3C)T \sup_{s<t\leq T}|| X^s_{t}(Z,X^s(s))-X^{s\prime}_{t}(Z,X^s(s))||_2, \end{align*}
and since we assumed in the previous step $( D+3C)T<1$ this is possible only if the supremum vanishes, i.e. if the solution is unique. Then adaptedness follows from the construction in step $1$ which has been shown to be adapted.
\end{proof}

\subsection{Global Existence in the convex case by stochastic control estimates}

\begin{proposition}\label{convexFBSDE}
Let $\mathcal{G}_t$ a filtration containing a standrad martingale $(S_t)$. Suppose that $\mathcal{F}_t=W^*(S_s,s\leq t, \mathcal{F}_0)$ for some $\mathcal{F}_0\subset \mathcal{G}_0$ gives a subfiltration (hence with $S_t$ free brownian motion adapted to $\mathcal{F}_t$). Let $G=h_1: [L^2(\mathcal{G}_1)]^{(k+1)m}\to \R$ a 2-paraconvex and $2$-paraconcave function such that $\nabla_{X_o}G(X_1,...,X_k)\in L^2(W^*(X_1,...,X_k,\mathcal{F}_0))$ for any $o\in [\![1,k]\!]$ and $X_i\in L^2_{sa}(\mathcal{F}_1)^m$.  
 Assume given $0<t_1<...<t_k\leq 1=t_{k+1}$ and moreover that there exists a family $h_t:[L^2(\mathcal{G}_t)]^{(i+1)m}\to \R$ of 2-paraconvex and $2$-paraconcave functions  with constants independent of $t$ for $t_i<t<s\leq t_{i+1}$ (written $i(t)$) with $||\nabla_{X_{i+1}}h_t(X)-\nabla_{X_{i+1}}h_s(X)||_2\leq C(X)(t-s)^{\alpha}.$

Then, there is at most one solution (families of adapted functions $X^t(s,.)$,$\nabla_{X_{i(s)+1}}h_s(.)$) to the equations for $1\geq s\geq t$:
$$X^t_s=X^t(s,X_1,...,X_{t_{i(t)}},X^t(t))=X^t(t)+S_s-S_t-\int_t^s \nabla_{X^t_{t_{i(u)+1}}}h_u(X_1,...,X_{t_{i(t)}},X^t_{t_i(t)+1},...,X^t_{t_i(u)},X^t_u)du, $$
$$\nabla_{X_{i(s)+1}}h_s(X_1,...,X_{t_{i(s)}},X^s(s))=E_{\mathcal{G}_s}
\Big[\sum_{j=i(s)+1}^{k+1} \nabla_{X^s_{t_j}}G(X_1,...,X_{t_{i(s)}},X^s_{t_{i(s)+1}},...,X^s_{t_{k+1}})\Big],$$
$$\nabla_{X_{j}}h_s(X_1,...,X_{{i(s)}},X^s(s))=E_{\mathcal{G}_s}
\Big[ \nabla_{X^s_{t_j}}G(X_1,...,X_{{i(s)}},
X^s_{t_{i(s)+1}},...,X^s_{t_{k+1}})\Big], j\leq i(s),$$
and it necessarily satisfies for $s\geq t$ for any $X_{t_i},X^t(t)\in \mathcal{F}_t$:
\begin{equation}\label{Xtadapted}X^t_s\in L^2(W^*(X_1,...,X_{t_{i(t)}},X^t(t), S_u-S_t, u\in [t,s], \mathcal{F}_0)),\end{equation}
\begin{equation}\label{nablahadapted}\nabla_{X_{i(s)+1}}h_s(X_1,...,X_{t_{i(s)}},X^s(s))\in L^2(W^*(X_1,...,X_{t_{i(s)}},X^s(s),  \mathcal{F}_0)).\end{equation}
\end{proposition}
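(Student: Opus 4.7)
The plan is to establish global uniqueness by exploiting the monotonicity that $2$-paraconvexity of $h_u$ affords to its last-variable gradient, together with the Ito-type formula of Proposition~\ref{Ito}, and then to read off the adaptedness properties from the Lipschitz ODE structure of the forward equation together with the backward identity. Local existence/uniqueness was obtained for small intervals in Proposition~\ref{localFBSDE}; the point here is that convexity provides an a priori estimate that lets uniqueness extend globally on $[0,1]$ independently of the size of the Lipschitz constant of $\nabla h$.

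For uniqueness, suppose we are given two solutions $X^t, X^{t\prime}$; the function $\nabla_{i+1}h_u$ is shared since the family $h_u$ is fixed data. On each subinterval $]t_i\vee t, t_{i+1}\wedge 1]$, assume inductively that the two processes agree at $t_i\vee t$ (true at $t$). Subtracting the two forward equations the martingale parts cancel, and Proposition~\ref{Ito} applied to the absolutely continuous map $s\mapsto\|X^t_s-X^{t\prime}_s\|_2^2$ gives
\[
\|X^t_s-X^{t\prime}_s\|_2^2=-2\int_{t_i\vee t}^s\bigl\langle X^t_u-X^{t\prime}_u,\ \nabla_{i+1}h_u(\cdots,X^t_u)-\nabla_{i+1}h_u(\cdots,X^{t\prime}_u)\bigr\rangle\,du.
\]
By $2$-paraconvexity of $h_u$ in its last argument (Proposition~\ref{Clarke}), the integrand is bounded below by $-c\|X^t_u-X^{t\prime}_u\|_2^2$ for a constant $c$ independent of $u$, so Gronwall's inequality forces $X^t_s=X^{t\prime}_s$ on the interval, and iterating across the knots $t_i$ gives global uniqueness.

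For \eqref{Xtadapted}, the forward equation is a fixed-point problem with globally Lipschitz drift (Lipschitzness of $\nabla_{i+1}h_u$ coming from Proposition~\ref{C11} applied to $h_u$). A Picard iteration initialized at $X^{t,(0)}_s=X^t(t)+(S_s-S_t)$ stays in the algebra $\mathcal{A}_s:=W^*(X_1,\ldots,X_{t_{i(t)}},X^t(t),S_u-S_t:u\in[t,s],\mathcal{F}_0)$ at every step, provided $\nabla_{i+1}h_u$ sends tuples in $\mathcal{A}_u^m$ into $\mathcal{A}_u^m$; this in turn I would establish in parallel by induction over the intervals $]t_i,t_{i+1}]$, using \eqref{nablahadapted} at earlier times together with the backward equation. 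Step~1 then identifies the Banach--Picard limit with the unique solution $X^t_s$.

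Finally, for \eqref{nablahadapted}, plugging \eqref{Xtadapted} into the backward identity at time $s$ places the argument $(X_1,\ldots,X_{t_{i(s)}},X^s_{t_{i(s)+1}},\ldots,X^s_{t_{k+1}})$ of $G$ into $W^*(X_1,\ldots,X_{t_{i(s)}},X^s(s),S_u-S_s:u\in[s,1],\mathcal{F}_0)$, and the hypothesis on gradients of $G$ keeps $\sum_{j}\nabla_{X^s_{t_j}}G$ in this same algebra. Since $\mathcal{F}_s$ is a subfiltration of $\mathcal{G}_s$, $E_{\mathcal{G}_s}$ agrees with $E_{\mathcal{F}_s}$ there, and freeness of $(S_u-S_s)_{u\geq s}$ from $\mathcal{F}_s$ integrates out the future Brownian increments, leaving an element of $L^2(W^*(X_1,\ldots,X_{t_{i(s)}},X^s(s),\mathcal{F}_0))$. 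The main obstacle is precisely the circular interdependence of \eqref{Xtadapted} and \eqref{nablahadapted}, which I would untangle by running the joint induction across the intervals $]t_i,t_{i+1}]$ in time order, so that when constructing $X^t_s$ on $]t_i,t_{i+1}]$ the adaptedness of $\nabla_{i+1}h_u$ needed for the Picard step is already in hand from the earlier intervals.
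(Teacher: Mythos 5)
Your uniqueness argument is essentially the same as the paper's: the paper delegates to the uniqueness part of Theorem~\ref{FreeMonotone}, which is exactly the monotonicity-plus-Gronwall argument you spell out. There is, however, a genuine gap in your treatment of adaptedness, and it is precisely at the point you flag as "the main obstacle."

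You propose to resolve the circularity between \eqref{Xtadapted} and \eqref{nablahadapted} by running the induction over the knots $t_i$ \emph{forward in time}, so that "when constructing $X^t_s$ on $]t_i,t_{i+1}]$ the adaptedness of $\nabla_{i+1}h_u$ needed for the Picard step is already in hand from the earlier intervals." But this cannot work, because the only way to see that $\nabla_{X_{i(s)+1}}h_s(\,\cdot\,,X^s(s))$ lands in the small algebra is through the \emph{backward} identity
\[
\nabla_{X_{i(s)+1}}h_s(\dots,X^s(s))=E_{\mathcal{G}_s}\Big[\sum_{j\geq i(s)+1}\nabla_{X^s_{t_j}}G(\dots,X^s_{t_{i(s)+1}},\dots,X^s_{t_{k+1}})\Big],
\]
whose right-hand side depends on the process on the \emph{future} interval $[s,1]$. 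Nothing you know about $X$ on $[0,s]$ tells you anything about the algebra in which these future values live. So the "earlier intervals" give you no purchase at all on the adaptedness of the drift on the current interval; your Picard step is initialized with a hypothesis that is simply not available. The paper resolves this by iterating \emph{backward} from $s=1$: the local existence result of Proposition~\ref{localFBSDE} gives adaptedness on $[1-\delta,1]$ directly out of a Picard construction with a globally Lipschitz $\nabla G$ and a short time $\delta$, and the key structural observation is that $h_{1-\delta}$ then serves as a new terminal cost $G$ with the \emph{same} paraconvexity/paraconcavity constants (guaranteed uniform by hypothesis), so the same $\delta$ peels off $[1-2\delta,1-\delta]$, and so on down to $t=0$, with a mild bookkeeping adjustment at each $t_i$ where the number of arguments drops. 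You do not invoke Proposition~\ref{localFBSDE} at all, nor do you make any use of the crucial uniformity in $t$ of the paraconvexity constants — which is exactly the hypothesis that makes the backward time-stepping terminate — so the adaptedness claim in your write-up is unsupported.
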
    
    In practice, the function $h_t$ will be obtained by stochastic control techniques or by taking ultraproducts thereof. The key is of course the uniformity of the paraconvexity constants that this related control problem enables to get.
   \setcounter{Step}{0}
  
\begin{proof}  
\begin{step} Uniqueness \end{step}
The uniqueness part in Theorem \ref{FreeMonotone} does not use $S_t$ is a free brownian motion but only the 2-paraconvexity condition. Thus, if $h_t$ is given, there is at most one solution (whatever the filtration is, especially in $\mathcal{G}_t$) of the SDE for $X^t_s$. Then the second equation determines $\nabla_{X_{i(s)+1}}h_s$ (and $h_s$ also determines it) thus there is at most one solution. The hard part is to show the adaptedness property, which is based on an existence result.

   \begin{step} Solution on $[1-\delta,1]$ for $\delta\in]0,1-t_k[$ only depending on the paraconvexity and paraconcavity constants for $G$.\end{step}  
  This case reduces to Proposition \ref{localFBSDE} after translation of the filtration to get solution on short time intervals.

\begin{step}Iteration\end{step}

Note that from the equations we can reduce to a similar problem on $[0,1-\delta]$ in using $E_{\mathcal{G}_s}=E_{\mathcal{G}_s}E_{\mathcal{G}_{1-\delta}}$ and combining the last term in the sum with the end of the integral and the other terms alone

$$\nabla_{X_{i(s)+1}}h_s(X_1,...,X_{{i(s)}},X^s(s))=E_{\mathcal{G}_s}
\Big[\sum_{j=i(s)+1}^{k+1} \nabla_{X^s_{t_j}}h_{1-\delta}(X_1,...,X_{{i(s)}},X^s_{t_{i(s)+1}},...,
X^s_{t_{k+1}})\Big]$$

$$\nabla_{X_{j}}h_s(X_1,...,X_{{i(s)}},X^s(s))=E_{\mathcal{G}_s}
\Big[ \nabla_{X_{j}}h_{1-\delta}(X_1,...,X_{{i(s)}},X^s_{t_{i(s)+1}},...,X^s_{t_{k+1}})\Big],j\leq i(s).$$

Thus (up to reindexing time) this solve the same problem with $h_{1-\delta}$ replacing $G$, and we can thus go on to solve on $[1-2\delta,1]$ (using the uniformity of lipshitz contants). and thus on $[t_k,1]$. At time $t_k$, one need a slight change in the computation. Indeed the two last terms of the sum have to be gathered to obtain the change of number of time indices :

$$\nabla_{X_{i(s)+1}}h_s(X_1,...,X_{{i(s)}},X^s(s))=E_{\mathcal{G}_s}
\Big[\sum_{j=i(s)+1}^{k} \nabla_{X^s_{t_j}}h_{t_k}(X_1,...,X_{{i(s)}},X^s_{t_{i(s)+1}},...,X^s_{t_{k}})\Big],$$

 and the reasoning goes on until reaching [0,1].
\end{proof}
  
   \setcounter{Step}{0}   
     
\begin{corollary}
\label{convexFBSDEmatricial}
     Let   $G\in \mathcal{E}^{1,1}_{app}(\mathcal{T}_{2,0}(\mathcal{F}^m_{k}*\mathcal{F}^\nu_{\mu}),d_{2,0})$
(e.g.  $G\in \mathcal{E}_{reg,p}(\mathcal{T}_{2,0}(\mathcal{F}^m_{k}*\mathcal{F}^\nu_{\mu}),d_{2,0})
\cup , p\in [2,\infty[),$ 
 Fix $0<t_1<...<t_k\leq t_{k+1}=1$ and let  $f=G\circ (I_{t_1,...t_k}*Id)$
 and consider for each $N, \Upsilon_N\in\mathcal{U}(M_N(\C))$ converging in law and $X^{G,N,\Upsilon_N,t}$ the solution in Corollary \ref{minhthermitian} (which suppose given various $X^{G,N,\Upsilon_N,s}$ for $s=t$ or $s=t_i\leq t$). Then 
  for any ultrafilter $\omega\in \beta\N-\N, \upsilon=(\Upsilon_N)^\omega, S_t=(H_t^N)^\omega$, then $Y^t_s=(X_s^{G,N,\Upsilon_N,t})^\omega\in W^*(\upsilon,S_u-S_t, u\leq s, Y^t_{t_j}, t_j\leq t, Y^t_t)$ as soon as $Y^t_{t_j}\in W^*(\upsilon,S_s,s\leq t_j)^m, t_j\leq t, Y^t_t\in W^*(\upsilon,S_s,s\leq t)^m$. Moreover it  satisfies an SDE with respect to the canonical brownian motion  $S_s\in\mathcal{M}_P^\omega$. There is $u^t_s=u_s^{t,G}:=(b^{G,N,\Upsilon_N,t}(s,X^{G,N,\Upsilon_N,t}(s)))^\omega\in L^2(W^*(\upsilon,Y_{t_1}, ...,Y_{t_i},Y_s))$ for $t_i<s\leq t_{i+1}$ ($i\leq k$) such that $$Y^t_s=Y^t_t+S_s-S_t+\int_t^su_v^Gdv.$$
    \end{corollary}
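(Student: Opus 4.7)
The strategy is to pass $N\to\omega$ in the matricial stochastic control problem of Corollary \ref{minhthermitian}. The first point is to realize $(X_s^{G,N,\Upsilon_N,t})^\omega$ as an honest element of the tracial von Neumann algebra ultraproduct $\mathcal{M}_P^\omega$, not merely of the Hilbert-space ultraproduct. At the nodal times $t_i$ the marginal law of $\sqrt{N}X^{G,N,\Upsilon_N}(t_i)$ is the Gibbs measure $\mu_{g,\mathbf{t},N}$, so the almost-sure operator-norm bound of Proposition \ref{ConcentrationNorm} applies. For intermediate times $s\in ]t_i,t_{i+1}]$, the $L^4$ bound of Corollary \ref{MonotoneSol} together with the $L^2$ time-H\"older estimate of Theorem \ref{minimisationHermitian} for the drift and the Gaussian operator-norm concentration of $H^N_s-H^N_{t_i}$ produce a high-probability event $A_N$ on which $\|X_s^{G,N,\Upsilon_N,t}\|_\infty\leq C$ uniformly, so that the truncation trick of Example \ref{ExFiltration}(ii) makes $(X_s^{G,N,\Upsilon_N,t})^\omega=(X_s^{G,N,\Upsilon_N,t}1_{A_N})^\omega$ a bounded element of $\mathcal{M}_P^\omega$.

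The adaptedness claim is then essentially a finite-$N$ statement that survives the ultraproduct. The alternative formula for $b^{G,N,\Upsilon_N,t}$ in Corollary \ref{minhthermitian} shows that $b^{G,N,\Upsilon_N,t}(s,y,\omega)$ depends on $\omega$ only through the historical values $X_{t_1}^{G,N,\Upsilon_N,t}(\omega),\ldots,X_{t_{i(s)}}^{G,N,\Upsilon_N,t}(\omega)$, with $y$ and $\Upsilon_N$ entering as deterministic parameters (the integral defining $h_s^{N,\Upsilon_N}$ being a deterministic functional). Consequently
$$b^{G,N,\Upsilon_N,t}(s,X_s^{G,N,\Upsilon_N,t})\in W^*(\Upsilon_N,X_{t_1}^{G,N,\Upsilon_N,t},\ldots,X_{t_{i(s)}}^{G,N,\Upsilon_N,t},X_s^{G,N,\Upsilon_N,t}),$$
and feeding this back into the matrix SDE $X_s^{G,N,\Upsilon_N,t}=X_t^{G,N,\Upsilon_N,t}+\int_t^s b\,du+H_s^N-H_t^N$ and iterating over the successive intervals $]t_j,t_{j+1}]$ yields
$$X_s^{G,N,\Upsilon_N,t}\in W^*\bigl(\Upsilon_N,\;H_u^N-H_t^N,\;u\leq s,\;X_{t_j},\;t_j\leq t,\;X_t^{G,N,\Upsilon_N,t}\bigr).$$

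Passing to $\omega$, the standard inclusion of $W^*$-algebra ultraproducts recalled in subsection \ref{ultra} converts this into the desired containment $Y^t_s\in W^*(\upsilon, S_u-S_t,u\leq s, Y^t_{t_j}, Y^t_t)$ under the hypothesis on the initial data. Defining $u^t_v=(b^{G,N,\Upsilon_N,t}(v,X_v^{G,N,\Upsilon_N,t}))^\omega$ using the uniform $L^2$ H\"older-in-time and Lipschitz-in-space bounds of Theorem \ref{minimisationHermitian} (so the $L^2$-ultraproduct exists and is continuous in $v$), the same argument gives $u^t_v\in L^2(W^*(\upsilon,Y_{t_1},\ldots,Y_{t_{i(v)}},Y_v))$. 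The free SDE $Y^t_s=Y^t_t+S_s-S_t+\int_t^s u^G_v\,dv$ is then obtained by integrating the matrix SDE against a test element of $\mathcal{M}_P^\omega$ and commuting limits, which is legitimate by the uniform $L^2$-estimates already used.

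The main obstacle is Step 1, namely upgrading the ultraproduct from the Hilbert-space one (immediate from the $L^4$ bounds of Corollary \ref{MonotoneSol}) to the tracial $W^*$-ultraproduct. The Gibbs representation of Proposition \ref{ConcentrationNorm} controls operator norm only at the nodal times $\{t_i\}$, so the bound at a general $s$ must be propagated from $t_{i(s)}$ along the SDE; this requires combining the convex concentration of $\mu_{g,\mathbf{t},N}$ with sharp operator-norm estimates for the drift integral $\int_{t_{i(s)}}^s b\,du$ and for $H^N_s-H^N_{t_{i(s)}}$, exploiting the control of $\|b\|$ implied by the $\mathcal{E}^{1,1}_{app}$-decomposition together with the uniform operator-norm bound on the non-polynomial part built into that definition.
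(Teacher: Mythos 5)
Your proposal has a genuine gap at its center, in the step you describe as ``essentially a finite-$N$ statement that survives the ultraproduct.''

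You assert that because $b^{G,N,\Upsilon_N,t}(s,y,\omega)$ depends on $\omega$ only through the historical values $X^{G,N,\Upsilon_N,t}_{t_1}(\omega),\dots,X^{G,N,\Upsilon_N,t}_{t_{i(s)}}(\omega)$, it follows that $b^{G,N,\Upsilon_N,t}(s,X_s)\in W^*(\Upsilon_N,X_{t_1},\dots,X_{t_{i(s)}},X_s)$. This implication is false. At finite $N$ the entries of the drift matrix are scalar (generically not non-commutative polynomial) functions of the matrix entries of the history; the drift therefore lives in $M_N\bigl(L^\infty(X_{t_1},\dots,X_{s})\bigr)$, which is vastly larger than the von Neumann subalgebra $W^*(\Upsilon_N,X_{t_1},\dots,X_{s})\subset M_N(\C)$. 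Indeed $h_t^{N,\Upsilon_N}$ is a log-Laplace transform of the terminal cost: even when $g_N$ is a trace of a non-commutative function, $h_t^{N,\Upsilon_N}$ and its cyclic gradient are not, so the drift has no reason to be a non-commutative functional calculus of the data at any finite $N$. The paper explicitly singles this out as the central obstruction (``some limit of matricial analogues of conjugate variables may still depend on entries of matrices and may not be a non-commutative functional calculus of the solution'').

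A second, independent issue is that even if $b_N\in W^*(\Upsilon_N,X_{t_1}^N,\dots,X_s^N)$ held, the passage to the ultrafilter does not give $(b_N)^\omega\in W^*(\upsilon, Y_{t_1},\dots,Y_s)$: one only gets $(b_N)^\omega\in\bigl(W^*(\Upsilon_N,X^N_{t_1},\dots)\bigr)^\omega$, and the ultraproduct of finitely generated algebras is in general strictly larger than the algebra generated by the ultralimits of the generators. So the ``standard inclusion'' you invoke runs the wrong way.

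The actual proof cannot argue at the matrix level and instead has to reconstruct adaptedness after the limit. It forms the limiting value function $h_t^\omega$ on the ultraproduct Hilbert space via the dimension-uniform Lipschitz and second-order bounds (so the limit is well-defined and $2$-paraconvex/$2$-paraconcave), derives the conditional-expectation formula for $\nabla h_t^\omega$, and then, crucially, uses the Poulsen-simplex concentration (Proposition \ref{ConcentrationPoulsen}) together with the $\mathcal{E}^{1,1}_{app}$ approximation property to identify $\nabla_X G^\omega$ restricted to the subfiltration $M=W^*(\upsilon,S_t)$ with the genuine non-commutative gradient $\nabla_X G(\tau_{X,\upsilon})\in L^2(W^*(X,\upsilon))^m$. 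Only then does the abstract uniqueness and adaptedness result for free forward–backward SDEs (Proposition \ref{convexFBSDE}, proven by Picard iteration inside the subfiltration) force the ultraproduct solution and its drift into $W^*(\upsilon,S_s)$. Your proposal bypasses all of this and therefore does not establish the statement. (Your Step 1, in contrast, is essentially fine and roughly matches the paper's argument; the real obstacle is the adaptedness, not the operator-norm upgrade.)
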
 
     
   \begin{proof}
 \begin{step}Estimate showing $Y^t_s\in \mathcal{M}_P^\omega$ if we assume $X^{G,N,\Upsilon_N,s}$ for $s=t$ and $s=t_i\leq t$ uniformly operator norm bounded
 .\end{step}
 We know from the proof of proposition \ref{minht}  the finite dimensional distribution of $Y^t_s, s\in[t,1]$, and considering $u(X^{G,N,\Upsilon_N,t}_t), u(X^{G,N,\Upsilon_N,t}_{t_i}), t_i\leq t$ as extra unitary variables we can add to $\Upsilon_N$, it is not hard to see it comes from a law of the form considered in an obvious variant of Proposition \ref{ConcentrationNorm}   thus one deduces $Y^t_s\in \mathcal{M}_P^\omega$ in this case.
 
 \begin{step} Limit along $\omega$ of the value function $h_{t,\ell}^{N,\Upsilon_n}(\sqrt{N}.)/N^2$
 \end{step}
 
 We consider the function of $h_{t,\ell}^{N,\Upsilon_n}$ from Theorem \ref{minimisationHermitian}.

We first examine the limit along $N\to \omega$ of the value function $h_{t,N}(x,\Upsilon_N):=\frac{1}{N^2}h_{t,\ell}{N,\Upsilon_N}(\sqrt{N}x)$. We will be later able to get better convergence results, but, for now, we will be content of the $\omega$ dependent result.

From lemma \eqref{UniformN}, the bounds for $g_N$ in \eqref{lipE} are independent of $d$ and thus from the proof of Proposition \ref{OptimalValueFct}, so are the bounds for $h_t^{N,\Upsilon_n}$ so that one gets constants $C,D>0$ such that for all $N$: 
\begin{align*}&\left|h_{t,N}(x_1,...,x_i,x_{i+1},\Upsilon_N)-h_{t,N}(y_1,...,y_i,y_{i+1},\Upsilon_N)\right|\\&\leq 
\left(C\left(\sum_{K=1}^{i+1}\frac{1}{N}Tr(x_K^*x_K)\right)^{1/2}+C\left(\sum_{K=1}^{i+1}\frac{1}{N}Tr(y_K^*y_K)\right)^{1/2} +D\right)\left(\sum_{K=1}^{i+1}\frac{1}{N}Tr((x_K-y_K)^*(x_K-y_K))\right)^{1/2}\end{align*}
Thus applying this inequality to hermitian random variables $X^N=(X_1^N,...,X_{i+1}^N),Y^N=(Y_1^N,....,Y_{i+1}^N)$, taking expectation and using Cauchy-Schwartz inequality, we have:
\begin{align*}&\left|E(h_{t,N}(X^N,\Upsilon_N))-E(h_{t,N}(Y^N,\Upsilon_N))\right|\leq \left(\sum_{K=1}^{i+1}E\left(\frac{1}{N}Tr((X_K^N-Y_K^N)^2)\right)\right)^{1/2}\\&\times 
\sqrt{E\left(3C^3\left(\sum_{K=1}^{i+1}\frac{1}{N}Tr((X_K^N)^2)\right)+3C^2\left(\sum_{K=1}^{i+1}\frac{1}{N}Tr((Y_K^N)^2)\right) +3D^2\right)}\end{align*}
Thus considering $X=(X^N)^\omega\in (\mathcal{L}_P^\omega)^{m(i+1)}$, one can define :$$h_t^\omega(X,\upsilon)=\lim_{N\to \omega}E(h_{t,N}(X^N,\Upsilon_N)).$$
Indeed, our previous inequality insures this is well-defined, namely, this does not depend on the way  $X=(X^N)^\omega=(Y^N)^\omega\in (\mathcal{L}_P^\omega)^{m(i+1)}.$ Actually, using the Lipschitzianity bound in variable $\Upsilon_N$ in Theorem \ref{minimisationHermitian}, one can show similarly that not only $h_t^\omega(.,\upsilon):(\mathcal{L}_P^\omega)^{m(i+1)}\to\R$ is defined but also :$$h_t^\omega(.,.):(\mathcal{L}_P^\omega)^{m(i+1)}\times \mathcal{U}((M_N(\C))^\omega)\to\R.$$
Indeed, it suffices to note that a unitary in $\mathcal{U}((M_N(\C))^\omega)$ can be represented by a sequence of unitaries (and $\mathcal{U}((M_N(\C))^\omega)$ can even be identified with the ultraproduct of groups $\mathcal{U}(M_N(\C))$ see e.g. \cite[Ex 2.11.6]{CapraroLupini}) and the lipschitzness bound implies the function is well defined on the ultraproduct. We won't really use the second argument except at the fixed value $\upsilon.$

\begin{step} Regularity of the limit $h_t^\omega$.\end{step}

First, $h_t^\omega$ is Lipschitz on bounded sets, from the inequality obtained by taking the limit of the one obtained for $h_{t,N}$ in the previous step 2:
$$|h_t^\omega(X,\upsilon)-h_t^\omega(Y,\upsilon)|\leq ||X-Y||_2\sqrt{3C^2||X||_2^2+3C^2||Y||_2^2+3D^2}.$$

Let us recall that $h_t^\omega(.,\upsilon):(\mathcal{L}_P^\omega)^{m(i+1)}\to \R$ for $t_i<t\leq t_{i+1}$ is a convex function 
 from the convexity of $h_{t,N}$ and a limit $N\to \omega$.

From equation \eqref{SubquadE} and the uniformity of the constants in propositions \ref{UniformN} and \ref{OptimalValueFct}, one gets:
$$h_t^\omega(X,\upsilon)\leq C(1+||X||_2^2).$$
Finally, since equation \eqref{CalphaE} is checked with $D_2=0$ and constants independent of $N$, from proposition \ref{UniformN}, one deduces from proposition \ref{OptimalValueFct}:
\begin{equation}\label{C11htomega}
h_t^\omega(X+Y,\upsilon)-2h_t^\omega(X,\upsilon)+h_t^\omega(X-Y,\upsilon)\leq C||Y||_2^2.
\end{equation} 

Thus from proposition \ref{C11}, $h_t^\omega$ is differentiable on $(\mathcal{L}_P^\omega)^{m(i+1)}$ with Lipschitz derivative. Using \eqref{Incrementsht}, one also deduces (remembering that in our case $D_{2,k,i}=0$) that there are constants $C,D$ such that for all $t,t+s\in ]t_i,t_{i+1}[:$
\begin{align*}&\left|E(h_{t,N}(X_1,...,X_i,X,\Upsilon_N)-h_{t,N}(X_1,...,X_i,Y,\Upsilon_N)\right.\\&\left.-h_{t+s,N}(X_1,...,X_i,X,\Upsilon_N)+ h_{t+s,N}(X_1,...,X_i,Y,\Upsilon_N)\right| \\&\leq  ||Y-X||_2\sqrt{s}\times (C+D ||(X_1,...,X_i,Y)||_2+D||(X_1,...,X_i,X)||_2).\end{align*}
Thus one deduces in taking the limit $N\to\omega$:
\begin{align*}&\left|h_t^\omega(X_1,...,X_i,X,\upsilon)-h_t^\omega(X_1,...,X_i,Y,\upsilon)-h_{t+s}^\omega(X_1,...,X_i,X,\upsilon)+ h_{t+s}^\omega(X_1,...,X_i,Y,\upsilon)\right| \\&\qquad\leq  ||Y-X||_2\sqrt{s}\times (C+D ||(X_1,...,X_i,Y)||_2+D||(X_1,...,X_i,X)||_2).\end{align*}
and thus for all $t,t+s\in ]t_i,t_{i+1}[,s>0:$\begin{equation}\label{Holderhtomega}\left\|\nabla_Xh_t^\omega(X_1,...,X_i,X,\upsilon)
-\nabla_Xh_{t+s}^\omega(X_1,...,X_i,X,\upsilon)\right\|_2 
\leq  \sqrt{s} (C+2D ||(X_1,...,X_i,X)||_2).\end{equation}

 This $h_t^\omega$ will be the function to which we will apply the previous proposition, we now need the alternative formulas needed there.

\begin{step} Formula for $\nabla h_t^\omega$.\end{step}

First note that by the fundamental theorem of calculus, the identity $\mathscr{D}_{i+1}^k(h_{t,N})(x_1,...,x_i,x,\Upsilon_N)=\frac{\sqrt{N}}{N^2}\mathscr{D}_{i+1}^k(h_t^{N,\Upsilon_N})(\sqrt{N}x_1,...,\sqrt{N}x_i,\sqrt{N}x)$  and the bounds on gradients of $h_{t,N}$ from step 3 and proposition \ref{C11}, one gets:

\begin{align*}&\left|h_{t,N}(x_1,...,x_i,x,\Upsilon_N)-h_{t,N}(x_1,...,x_i,y,\Upsilon_N)\right.\\&\left.-\sum_{k=1}^m\frac{1}{N\sqrt{N}}Tr(\mathscr{D}_{i+1}^k(h_t^{N,\Upsilon_N})](\sqrt{N}x_1,...,\sqrt{N}x_i,\sqrt{N}x)(y-x)_k)\right|
\leq \frac{C}{2}||y-x||_2^2.
\end{align*}
with the constant $C$ of equation \eqref{C11htomega}.

We now want to use, for $t_i<t\leq t_{i+1}$ fixed, the two last formulas in corollary \ref{minhthermitian}.
We thus consider a random variable $X^N=(X_1^N,...,X_i^N,X_{i+1}^N)$ a vector of hermitian matrices and $\mathcal{F}_t$ -measurable (i.e. adapted to the filtration of hermitian brownian motion), and consider the solution $X^{G,N,t}(s)$ in this corollary starting from $X^{G,N,\Upsilon_N,t}(s)=X_{i+1}^N$ which is independent of the noise appearing in the equation. 
One defines $Y^t_s=(X^{G,N,\Upsilon_N,t}(s))^\omega\in\mathcal{L}_P^\omega$ and then note that $u^t_s=u^{t,G}_s=( b^{G,N,\Upsilon_N,t}(u, X^{G,N,\Upsilon_N,t}(u)))^\omega\in\mathcal{L}_P^\omega$ (from the uniform bound coming from lipschitzness of $h_t{t,\ell}^{N,\Upsilon_N}$).

From the a priori H\"older continuity bounds, we know for $s\geq t$: 
$$Y^t_s=(X_{i+1}^N)^\omega+\int_t^sdv u^t_v+(S_s-S_t).$$
Using the 2-paraconvexity of $h^\omega$, one deduces in using the characterization of the gradient as Clarke-subdifferential and the formula in  proposition \ref{alphaparaconvex}, that for $t_I<s<t_{I+1}:$ $$u^t_s=\nabla_{Y^t_s}h_s^\omega(X_1,...,X_i,Y^t_{t_i},...,Y^t_{t_{I}},Y^t_s,\upsilon)$$

Similarly, one takes $$v^t_l=\sum_{j=i+1}^k\left(\frac{1}{\sqrt{N}}(\mathscr{D}_{j}^lg_N
  (\sqrt{N}X_1^N,...,\sqrt{N}X_i^N,\sqrt{N}X^{G,N,\Upsilon_N,t}(t_{i+1},X_{i+1}^N),...,\sqrt{N}X^{G,N,\Upsilon_N,t}(t_{k},X_{i+1}^N)))\right)^\omega\in \mathcal{L}_P^\omega$$ (the a priori boundedness from \eqref{lipE}).
  First note that as for $h_{N,t}$, $$G^\omega(X^\omega,\upsilon)=\lim_{N\to \omega}E(\frac{1}{N^2}g_N
  (\sqrt{N}X^N))$$ defines a well-defined convex function on the ultraproduct (this is actually the special case $G^\omega=h_1^\omega$).
Considering the convexity relation:
\begin{align*}&\sum_{l=1}^m \frac{1}{N}Tr\left(H_l[\sum_{j=i+1}^k(\frac{1}{\sqrt{N}}(\mathscr{D}_{j}^lg_N
  (\sqrt{N}X_1^N,...
  ,\sqrt{N}X^{G,N,\Upsilon_N,t}(t_{i+1},X_{i+1}^N),...,\sqrt{N}X^{G,N,\Upsilon_N,t}(t_{k},X_{i+1}^N)]\right)\\&\leq-\frac{1}{N^2}g_N
  (\sqrt{N}X_1^N,...,\sqrt{N}X_i^N,\sqrt{N}X^{G,N,\Upsilon_N,t}(t_{i+1},X_{i+1}^N),...,\sqrt{N}X^{G,N,\Upsilon_N,t}(t_{k},X_{i+1}^N)) \\&\qquad + \frac{1}{N^2}g_N
  (\sqrt{N}X_1^N,...
  ,\sqrt{N}[X^{G,N,\Upsilon_N,t}(t_{i+1},X_{i+1}^N)+H],...,[\sqrt{N}X^{G,N,\Upsilon_N,t}(t_{k},X_{i+1}^N)+H]).\end{align*}
From which, replacing $H$ by $H^N$ random and then defining $H=(H^N)^\omega$, taking the expectation and the limit $N\to \omega$ one deduces:
$$\sum_{l=1}^m\langle H_l,v^t_l\rangle\leq -G^\omega(X_1^\omega,...,X_i^\omega,Y^t_{t_{i+1}},...,Y^t_{t_{k}},\upsilon)+ G^\omega(X_1^\omega,...,X_i^\omega,Y^t_{t_{i+1}}+H,...,Y^t_{t_{k}}+H,\upsilon).$$
and as a consequence one deduces $$v^t=\sum_{j=i+1}^k\nabla_{Y^t_{t_{j}}}G^\omega(X_1^\omega,...,X_i^\omega,Y^t_{t_{i+1}},...,Y^t_{t_{k}},\upsilon).$$

Now, one can take the limit $N\to\omega$ in the first relation obtained in this point (vii) for $X_l^\omega,Y,Y_l$ $\mathcal{L}_{P,t}^\omega$ -measurable after taking expectations and using the next-to-last equation in corollary \ref{minhthermitian} and get:
\begin{align*}&\left|h_t^\omega(X_1^\omega,...,X_i^\omega,X_{i+1}^\omega,\upsilon)-h_t^\omega(Y_1,...,Y_i,Y,\upsilon)-\sum_{l=1}^m\langle E_{\mathcal{L}_{P,t}^\omega}(v^t_l)
),(Y-X_{i+1}^\omega)_l\rangle\rangle\right|\\& \qquad \qquad \qquad \qquad \qquad \qquad \qquad \qquad \leq \frac{C}{2}||X_{i+1}^\omega-Y||_2^2\end{align*}

and thus $$\nabla_{X_{i+1}^\omega}h_t^\omega(X_1^\omega,...,X_i^\omega,X_{i+1}^\omega,\upsilon)=E_{\mathcal{L}_{P,t}^\omega}
(\sum_{j=i+1}^k\nabla_{Y^t_{t_{j}}}G^\omega(X_1^\omega,...,
X_i^\omega,Y^t_{t_{i+1}},...,Y^t_{t_{k}},\upsilon)
).$$
The formula for $\nabla_{X_{j}^\omega}h_t^\omega,j\leq i$ is obtained similarly.

\begin{step} Computation of $G^\omega, \nabla_XG^\omega$ on $M=W^*(\upsilon, S_t)$.\end{step}

Finally, we will need a way to compute $G^\omega$, since they appear in the above formula for $\nabla_{X_{i+1}^\omega}h_t^\omega$. We start with the value on $(X_1,...,
X_k)\in L^2(M)^{km}, X_i=X_i^*$ such that there is $X_0=X_0^*\in M$ with $W^*(\upsilon,X_0,X_1,...,X_k)=W^*(\upsilon,X_0, u(X_1),...,u(X_k))$ is a factor. Then for any model with $(X_i^N)^\omega=X_i$ which implies as above $(u(X_i^N))^\omega=u(X_i)$. Let $Y=(X_0,u(X_1)+u(X_1)^*,i( u(X_1)-u(X_1)^*),...,i( u(X_k)-u(X_k)^*)), Y^N$ similarly, $\tau_{Y,\upsilon}\in \mathcal{S}_R^{(2k+1)m}\star \mathcal{T}(\mathcal{F}^\nu_{\mu})$ is extremal and we can apply proposition \ref{ConcentrationPoulsen} so that from $\lim_{N\to\omega}d_{2,0}(E\circ \tau_{Y^N,\Upsilon_N},\tau_{Y,\upsilon})=0,$ one deduces $\lim_{N\to\omega}E(d_{2,0}(\tau_{Y^N,\Upsilon_N},\tau_{Y,\upsilon}))=0$ and then in rewriting variables in terms of $X_1,...,X_k$ and using the lipschitzness of $G$:
$$G^\omega(X_1,...,X_k,\upsilon)=G(\tau_{X_1,...,X_k,\upsilon})\ \ if \ \ W^*(X_0,X_1,...,X_k,\upsilon)\ factor.$$
We now establish a variant for the derivative. From the approximation property in the definition of $\mathcal{E}^{1,1}_{app}(   \mathcal{T}_{2,0}^c(\mathcal{F}^m_{[0,1]}*\mathcal{F}^{\nu}_{\mu}),d_{2,0})$, for $\epsilon>0$, one can fix $P_1^1,...,P_L^m\in \mathcal{F}^m_{k}*\mathcal{F}^{\nu}_{\mu}$,  $f_1^1,...,f_L^m, g_{1,1,1},...,g_{k,m,m}\in C^0(  \mathcal{T}_{2,0}(\mathcal{F}^m_{k}*\mathcal{F}^{\nu}_{\mu}),d_{2,0}) )$ such that 
inserting in the first equation of (vii) for $t=1$, $X=(x_1,...,x_k)$
\begin{align*}&\left|G(\tau_{x_1,...,x_k,\Upsilon_N})-G(\tau_{x_1,...,x_o+y ,...,x_k,\Upsilon_N})-\sum_{K=1}^m\right.\\&\left.\frac{1}{N}Tr\left([\sum_{i=1}^L P_i^K(u(X),\Upsilon_N)f_i^K(\tau_{X,\Upsilon_N})+ \sum_{i=1}^m\sum_{j=1}^kx_j^{(i)}g_{j,i,K}(\tau_{X,\Upsilon_N})]y_K\right)\right|
\leq \frac{C}{2}||y||_2^2+\epsilon.
\end{align*}
Taking the limit $N\to \omega$, treating each function of trace by concentration as before, and finally letting $\epsilon\to 0$, one gets 
if $W^*(X_0,X_1,...,X_k,\upsilon)$ factor, then for all $o$: $\nabla_{X_o}G^\omega(X_1,...,X_k)=\nabla_{X_o}G(\tau_{X_1,...,X_k})\in L^2(W^*(\upsilon,X_1,...,X_k))^m.$

Finally, let us extend this to any $(X_1,...,X_k)\in L^2(M)^{mk}, X_i=X_i^*$.

By Clarck Ocone's formula \cite{BianeSpeicher}, for $B=W^*(\upsilon)$, each $X_i=E_B(X_i)+\int_0^1 U_s \# dS_s$ thus there is $X_{i,\epsilon}=E_B(X_i)+\int_{\eta(\epsilon)}^1 U_s dS_s$ with $||X_{i,\epsilon}-X_i||_2\leq \epsilon$. Moreover if $X_{0,\epsilon}=S_{\eta(\epsilon)}$, let us check that $W^*(X_{0,\epsilon},X_{1,\epsilon} ,...,X_{k,\epsilon},\upsilon)$ is a factor and thus satisfies our previous assumption. Indeed from \cite{V5}, $X_{0,\epsilon}$ have bounded first and second order conjugate variables, thus, if $m\geq 2$ (the case $m=1$ is similar in taking two increments $S_{\eta(\epsilon)/2},S_{\eta(\epsilon)}-S_{\eta(\epsilon)/2}$ instead of one) from \cite[Rmk 11]{Dab08}, $X_{0,\epsilon}$ is a non-$\Gamma$ set thus a non-amenability set (\cite{Connes76}, see e.g. \cite[Def 2.4, lemma 2.10]{DabrowskiIoana}) and $W^*(X_{0,\epsilon},X_{1,\epsilon} ,...,X_{k,\epsilon},\upsilon)\subset W^*(X_{0,\epsilon})*A=M$ is contained in some free product (using the free brownian motion property and $\upsilon$ free from $X_{0,\epsilon}$). For any $Z$ in the center, $E_{W^*(X_{0,\epsilon})}(Z)$ is in the center of $W^*(X_{0,\epsilon})$ which is a factor (since it is non-$\Gamma$, even a free group factor in our example) thus $Z-\tau(Z)=Z-E_{W^*(X_{0,\epsilon})}(Z)\in L^2(W^*(X_{0,\epsilon})*A)\ominus L^2((W^*(X_{0,\epsilon}))$ which is well-known to be a (countable) direct sum of coarse correspondences over $W^*(X_{0,\epsilon})$. But the non-amenability set property implies :
$$||Z-E_{W^*(X_{0,\epsilon})}(Z)||_2\leq K\sum_{i=1}^m||[X_{0,\epsilon},Z-E_{W^*(X_{0,\epsilon})}(Z)]||_2=0.$$
Thus $Z=\tau(Z)$ and we deduce the expected factoriality. Thus, from the first case, we deduce: $G^\omega(X_{1,\epsilon} ,...,X_{k,\epsilon},\upsilon)=G(\tau_{X_{1,\epsilon} ,...,X_{k,\epsilon},\upsilon})$ and 
$\nabla_{X_{o,\epsilon}}G^\omega(X_{1,\epsilon} ,...,X_{k,\epsilon},\upsilon)=\nabla_{X_{o,\epsilon}}G(\tau_{X_{1,\epsilon} ,...,X_{k,\epsilon},\upsilon}).$

Since $G,G^\omega$ and their gradients are continuous (even Lipschitz) by assumption, one deduces in taking $\epsilon\to 0$ that for any $(X_1,...,X_k,X)\in L^2_{sa}(M)^{(k+1)m}$
\begin{align}\label{GomegaG}G^\omega(X_1,...,X_k,\upsilon)&=G(\tau_{X_1,...,X_k,\upsilon}),
\\\label{DGomegaG}\nabla_{X_o}G^\omega(X_1,...,X_k,\upsilon)&=\nabla_{X_o}G(\tau_{X_1,...,X_k,\upsilon})\in L^2(W^*(X_1,...,X_k,\upsilon))^m.\end{align}

 \begin{step} Conclusion.\end{step} 
We are now ready to apply proposition \ref{convexFBSDE}. We apply it to $\mathcal{G}_t=\mathcal{M}_{P,t}^\omega$ the filtration of ultraproducts and $\mathcal{F}_t=W^*(S_s,s\leq t,\upsilon)$ inside it.
 This is indeed a subfiltration by Example \ref{ExFiltration}.(3). 
 
Note also that from the convergence in law of $(\Upsilon_N)$ which are deterministic, it is known that $S_t$ is a free brownian motion in the filtration  $(\mathcal{F}_s)$. We take $G^\omega$ as our function $G$ 
 so that \eqref{DGomegaG} enables to check the expected assumption on its gradient. We take $h_t^\omega$ (restricted to a power of $L^2(\mathcal{G}_t)$) as $h_t$ and it is convex and 2-paraconcave with the right uniformity and right regularity in time of its gradient by step 3. Step 4 then checks the SDE  (and note that since from step 1, $Y_s^t\in\mathcal{G}_s^m$ the drift is also by differentiation in $L^2(\mathcal{G}_s)^m$
  and the relations on gradients expressed as conditional expectation after projection on $L^2(\mathcal{G}_t)^m$ (which gives for the left hand sides the gradient of the restriction and the expected projection for the right hand sides). The conclusion of the proposition then gives the space of values for $Y^t_s,u^G_s.$
\end{proof}

\section{Partial Laplace principal for convex functions for Hermitian Brownian motion} 
    \setcounter{Step}{0}
     We first want to define the candidate for the limiting function in the Laplace principle for $f\in \mathcal{E}_{reg,p}(   \mathcal{T}_{2,0}^c(\mathcal{F}^m_{[0,1]}*\mathcal{F}^\nu_{\mu}),d_{2,0}).$
     
   We fix  $\Upsilon_N\in \mathcal{U}(M_N(\C))$ (deterministic).
Assume finally that the non-commutative law $\tau_{\Upsilon_N}$ converges to some $\mu_\Upsilon\in (\mathcal{T}(\mathcal{F}^\nu_{\mu}),d)$. Recall that we aim at proving a Laplace principle for $\widehat{\sigma}^N_\Upsilon$ the joint law of $\Upsilon_N$ and an hermitian brownian motion.

Recall 
 $\gamma_{sa,N,m}=\gamma_N$ is the law of the $m$ hermitian brownian motions.

We can consider the von Neumann algebra $M_{N}(L^\infty(\mathbb{W}_{sa, N},\gamma_N))$ in which lives random matrix processes over this probability space. This is a finite non-commutative probability space with trace $\tau_{\gamma_N}=E_{\gamma_N}\circ \frac{1}{N}Tr$. We recall we use $\beta\N-\N$ the set of non-principal ultrafilters $\omega\in \beta\N-\N$ on $\N$. One  considers the tracial ultraproduct $\mathcal{M}_P^\omega=(M_{N}(L^\infty(\mathbb{W}_{sa, N},\gamma_N)),\tau_{\gamma_N})^\omega$. Of course there is a natural filtration $\mathcal{M}_{P,s}^\omega=(M_{N}(L^\infty(X_t, t\leq s)),\tau_{\gamma_N})^\omega,$ where $X_t$ is the coordinate process of our hermitian process and $L^\infty(X_t, t\leq s)$ is the generated commutative von Neumann algebra. We will also need $\mathcal{L}_P^\omega=[L^2_{sa}(M_{N}(L^\infty(\mathbb{W}_{sa, N},\gamma_N)),\tau_{\gamma_N})]^\omega$ and $\mathcal{L}_{P,s}^\omega=[L^2_{sa}(M_{N}(L^\infty(X_t, t\leq s)),\tau_{\gamma_N})]^\omega.$

Let $M=W^{*}(\upsilon,S_t^i, i=1,...,m, t\in [0,1])$   the von Neumann algebra of a free brownian motion free from $\upsilon$ of law $\mu_\Upsilon$ and $M_s=W^{*}(u,S_t^i, i=1,...,m, t\in [0,s])$ its canonical filtration. Fix a sequence of times $\mathbf{t}=(t_0=0<t_1<....< t_k\leq t_{k+1}=1)$ We consider a set of adapted paths which are sufficiently H\"older continuous except at a sequence of times $$\mathcal{P}_{ad,1/8,\mathbf{t}}=\{u\in L^\infty_{ad}([0,1], L^2_{sa}(M)^m): \exists C>0, \forall (s,t)\in ]t_i,t_{i+1}]^2 ||u_t-u_s||_2\leq C(t-s)^{1/8}\}$$
\begin{align*}&\mathcal{P}_{ad,1/8loc,\mathbf{t}}=\{u\in L^\infty_{ad}([0,1], L^2_{sa}(M)^m):\\& \qquad\forall [a,b]\subset]t_i,t_{i+1}[,\ \exists C>0, \ \forall (s,t)\in ]a,b[^2,\  ||u_t-u_s||_2\leq C(t-s)^{1/8}\}\end{align*}
     
We consider another family of paths such that one requires more von Neumann algebraic regularity at this sequence of times:  
     \begin{align*}&\mathcal{P}_{ad,factor,\mathbf{t}}=\{u\in L^\infty_{ad}([0,1], L^2_{sa}(M)^m):\\&\qquad \forall i=1,...,k, 
     \ Y_{t_i}=S_{t_i}+\int_0^{t_i}u_sds\in M^m \ and \ W^*(\upsilon,Y_{t_1},...,Y_{t_k})\ \mathrm{\ is\  a\ factor}\ \}.
     \end{align*}
 and the variant     \begin{align*}&\mathcal{P}_{ad,factor,\mathbf{t},b}=\{u\in L^\infty_{ad}([0,1], L^2_{sa}(M)^m):\\&\qquad \exists C \forall i=1,...,k, \exists \tau_i<t_i<T_i,\ \forall t\in [\tau_i,T_i],\ Y_{t}=S_{t}+\int_0^{t}u_sds\in M^m, ||Y_{t,l}||\leq C\\ &\qquad and \ W^*(\upsilon,Y_{t_1},...,Y_{t_k})\ \mathrm{\ is\  a\ factor}\ \}.
     \end{align*}

     We are finally ready to define our candidate to be a limiting function for $f\in \mathcal{E}_{reg,p}( \mathcal{T}_{2,0}^c(\mathcal{F}^m_{[0,1]}*\mathcal{F}^\nu_{\mu}),d_{2,0}))$ for $p\in[ 2,\infty]$.  We even define it for any $ f\in C^0_{(k)}(   \mathcal{T}_{2,0}^c(\mathcal{F}^m_{[0,1]}*\mathcal{F}^\nu_{\mu}),d_{2,0}),$ bounded from below. 
      For we call $\mathbf{t}(f)=(t_1<...<t_k)$ the minimal set of times such that $f=g\circ (I_{t_1,...t_k}*Id).$
          $$\Lambda_\upsilon(f):=\inf\{\frac{1}{2}\int_0^1||u_s||^2ds+f(\tau_{S+\int_0^{.}u_sds,\upsilon}): u\in \mathcal{P}_{ad,1/8loc,\mathbf{t}(f)}\cap \mathcal{P}_{ad,factor,\mathbf{t}(f)}\}.$$
          $$\Lambda_{b,\upsilon}(f):=\inf\{\frac{1}{2}\int_0^1||u_s||^2ds+f(\tau_{S+\int_0^{.}u_sds,\upsilon}): u\in \mathcal{P}_{ad,1/8,\mathbf{t}(f)}\cap \mathcal{P}_{ad,factor,\mathbf{t}(f),b}\}.$$
  
 This second definition will be more important for us. We give a name to the piecewise linear part \begin{align*}&\mathcal{P}_{ad,1/8loc,\mathbf{t},pl}=\{u\in\mathcal{P}_{ad,1/8loc,\mathbf{t}}:\\& \exists s_0^1=0<s_1^1<...<s_n^1<... s_\omega^1=s_0^2<...<s_\omega^2<... s_\omega^k=s_0^{k+1}<s^{k+1}_1<...s_l^{k+1}\\&\qquad
 \forall i,\  s_\omega^i=t_i, s^i_{n+1}-s^i_n=O(\frac{1}{2^n}),\  \forall i,j\geq 1,\ u_{s_i^{j}}\in L^2_{sa}(M_{s_{i-1}^j})^m, u_{s_0^{j}}=u_{s_1^{j}} \\&\qquad\exists C,K>0, \forall i>K, Y_{s_i^l}=S_{s_i^l}+\int_0^{s_i^l}u_sds\in M^m, ||Y_{s_i^l,l}||\leq C, \\&\qquad\forall s\in [s_i^j,s_{i+1}^j],\quad u_s=\frac{s-s_i^j}{s_{i+1}^j-s_i^j} u_{s_{i+1}^j}+\frac{s_{i+1}^j-s}{s_{i+1}^j-s_i^j}u_{s_{i}^j}\}\end{align*}
   We call $\mathbf{s}(u)$ the minimal sequence of times appearing in the definition.
Note that for $u\in   \mathcal{P}_{ad,1/8,\mathbf{t},pl}$, we have: 
\begin{equation}\label{PLInt}\int_{s_i^l}^{s_{i+1}^l}u_sds=(s_{i+1}^l-s_i^l)\frac{ u_{s_{i+1}^l}+u_{s_{i}^l}}{2}.\end{equation}

We start by finding a first estimate for an alternative formula for    $\Lambda_{b,\upsilon}(f)$ that will be more convenient for the Laplace deviation upper bound since a piecewise linear functional depends locally on finitely many values and are easier to make converge in ultraproducts.

\begin{lemma}\label{EquivLambda} For any $ f\in C^0_{(k)}(   \mathcal{T}_{2,0}^c(\mathcal{F}^m_{[0,1]}*\mathcal{F}^\nu_{\mu}),d_{2,0}),$ bounded from below: 
$$\Lambda_{\upsilon}(f)\leq \inf\{\frac{1}{2}\int_0^1||u_s||^2ds+f(\tau_{S+\int_0^{.}u_sds,\upsilon}): u\in \mathcal{P}_{ad,1/8loc,\mathbf{t}(f)
,pl}\cap \mathcal{P}_{ad,factor,\mathbf{t}(f)}\}\leq \Lambda_{b,\upsilon}(f).$$
\end{lemma}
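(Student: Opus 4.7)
The first inequality is purely definitional. By construction, the piecewise-linear subspace $\mathcal{P}_{ad,1/8loc,\mathbf{t}(f),pl}$ is required to sit inside $\mathcal{P}_{ad,1/8loc,\mathbf{t}(f)}$, so
$$\mathcal{P}_{ad,1/8loc,\mathbf{t}(f),pl}\cap \mathcal{P}_{ad,factor,\mathbf{t}(f)} \subset \mathcal{P}_{ad,1/8loc,\mathbf{t}(f)}\cap \mathcal{P}_{ad,factor,\mathbf{t}(f)},$$
and taking the infimum of the energy-plus-$f$ functional over a smaller set only increases it; this yields $\Lambda_\upsilon(f)\le \inf_{\mathrm{pl}}$.

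For the second inequality, I would fix $u\in \mathcal{P}_{ad,1/8,\mathbf{t}(f)}\cap \mathcal{P}_{ad,factor,\mathbf{t}(f),b}$ and build an approximation $u^{(\epsilon)}$ by dyadic subdivision together with conditional expectations. On each interval $[t_{j-1},t_j]$ pick sequences $s_0^j=t_{j-1}<s_1^j<\cdots$ with $s_{n+1}^j-s_n^j=O(2^{-n})$, $s_n^j\nearrow t_j$, fine enough that the tail subdivision points fall inside the ``good'' window $[\tau_j,T_j]$ where $Y$ is bounded. Set $u^{(\epsilon)}_{s_i^j}=E_{M_{s_{i-1}^j}}(u_{s_i^j})$ for $i\ge 1$, $u^{(\epsilon)}_{s_0^j}=u^{(\epsilon)}_{s_1^j}$, and extend linearly between subdivision points. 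Left-adaptedness holds by construction. The contractivity of conditional expectation together with the H\"older 1/8 bound on $u$ gives $\|u^{(\epsilon)}_{s_{i+1}^j}-u^{(\epsilon)}_{s_i^j}\|_2\le C(s_{i+1}^j-s_i^j)^{1/8}$, which on any compact $[a,b]\subset(t_{j-1},t_j)$ degenerates to a finite piecewise-linear Lipschitz bound (hence local H\"older 1/8). The boundedness $\|Y^{(\epsilon)}_{s_i^l,l}\|\le C$ required at the subdivision points for large $i$ follows from the operator-norm closeness of $\int_0^{s_i^l} u^{(\epsilon)}_s\,ds$ and $\int_0^{s_i^l} u_s\,ds$ on $[\tau_l,T_l]$, via the telescoping PL formula \eqref{PLInt} combined with the available uniform bound on $Y$ inside $[\tau_l,T_l]$.

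The central subtle point is the factoriality of $W^*(\upsilon,Y^{(\epsilon)}_{t_1},\dots,Y^{(\epsilon)}_{t_k})$: exact matching $Y^{(\epsilon)}_{t_j}=Y_{t_j}$ is not preserved, so factoriality cannot be transported directly. Instead I would mimic the argument used in Step~(viii) of the proof of Corollary~\ref{convexFBSDEmatricial}. Writing $Y^{(\epsilon)}_{t_j}=S_{t_j}+\int_0^{t_j} u^{(\epsilon)}_s\,ds$ with $u^{(\epsilon)}$ adapted and essentially bounded in $L^2$, the joint tuple has finite free Fisher information relative to $W^*(\upsilon)$ by \cite{V5}. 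Adding a small initial slice of free brownian motion $S_\eta$, which by \cite[Rmk~11]{Dab08} generates a non-amenability set, and applying the commutator inequality used there, forces the centre of $W^*(\upsilon,Y^{(\epsilon)}_{t_1},\dots,Y^{(\epsilon)}_{t_k})$ to collapse to scalars, giving $u^{(\epsilon)}\in \mathcal{P}_{ad,factor,\mathbf{t}(f)}$.

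Finally, convergence of the functional as $\epsilon\to 0$: the energy $\tfrac12\int_0^1\|u^{(\epsilon)}_s\|_2^2\,ds$ converges to $\tfrac12\int_0^1\|u_s\|_2^2\,ds$ by the left-point/martingale-Riemann-sum construction combined with the H\"older~1/8 estimate, and the drifts $\int_0^t u^{(\epsilon)}_s\,ds$ converge uniformly in $t$ in $L^2$ to $\int_0^t u_s\,ds$; hence $d_{2,0}(\tau_{S+\int_0^{\cdot}u^{(\epsilon)}_s\,ds,\upsilon},\tau_{S+\int_0^{\cdot}u_s\,ds,\upsilon})\to 0$, and continuity of $f$ yields convergence of $f$ values. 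Therefore $\inf_{\mathrm{pl}}\le \tfrac12\int\|u\|^2+f(\tau_{S+\int u,\upsilon})$ for every competitor $u$, so $\inf_{\mathrm{pl}}\le \Lambda_{b,\upsilon}(f)$. The hard step is the factoriality transfer, which is why one crucially uses $\mathcal{P}_{ad,factor,\mathbf{t}(f),b}$ with its operator-norm boundedness window rather than just the $L^2$ hypothesis.
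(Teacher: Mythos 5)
Your first inequality is correct and matches the paper. But the second inequality has a genuine gap, precisely at the step you flag as "the central subtle point."

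Your approximation $u^{(\epsilon)}_{s_i^j}=E_{M_{s_{i-1}^j}}(u_{s_i^j})$ changes the endpoint values $Y^{(\epsilon)}_{t_j}=S_{t_j}+\int_0^{t_j}u^{(\epsilon)}_s\,ds$, and you then try to \emph{re-derive} factoriality of $W^*(\upsilon,Y^{(\epsilon)}_{t_1},\dots,Y^{(\epsilon)}_{t_k})$ from scratch by adjoining a small free-Brownian slice $S_\eta$ and invoking the non-amenability argument from Step~(viii) of Corollary~\ref{convexFBSDEmatricial}. But that argument establishes factoriality of the \emph{enlarged} algebra $W^*(\upsilon, S_\eta, Y^{(\epsilon)}_{t_1},\dots,Y^{(\epsilon)}_{t_k})$, not of $W^*(\upsilon,Y^{(\epsilon)}_{t_1},\dots,Y^{(\epsilon)}_{t_k})$, which is what $\mathcal{P}_{ad,factor,\mathbf{t}}$ requires. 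Since $S_\eta$ need not lie in the smaller algebra, factoriality of the enlarged one does not transfer down, and you cannot simply delete $S_\eta$ from the generating set. This is not a cosmetic issue: this is exactly why the paper takes a different approach. Instead of conditioning, the paper's proof uses a \emph{time shift} ($u^{(n)}_{s_i^l}=v_{s_{i-1}^l}$ for $1\le i\le n-1$) on the regular part of the grid and then \emph{chooses the tail values} $u^{(n)}_{s_i^l}$, $i\ge n$, inductively so as to force the exact relation $\int_{t_{l-1}}^{s_i^l}u^{(n)}_s\,ds=\int_{t_{l-1}}^{s_{i-1}^l}v_s\,ds$ via the piecewise-linear integral formula~\eqref{PLInt}. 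The upshot is $\int_0^{t_l}u^{(n)}_s\,ds=\int_0^{t_l}v_s\,ds$ exactly, so $Y_{t_l}(u^{(n)})=Y_{t_l}(v)$ as elements of $M$; the algebra $W^*(\upsilon,Y_{t_1},\dots,Y_{t_k})$ is literally unchanged, and both factoriality and the operator-norm bounds in $\mathcal{P}_{ad,factor,\mathbf{t},b}$ are inherited with no further argument. In fact, the same exactness forces $f(\tau_{S+\int u^{(n)},\upsilon})=f(\tau_{S+\int v,\upsilon})$ with equality, not just convergence.

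There are also two secondary problems in your construction that the paper's time-shift trick avoids automatically. First, the claimed transfer of the H\"older~$1/8$ bound: writing $u^{(\epsilon)}_{s_{i+1}^j}-u^{(\epsilon)}_{s_i^j}=E_{M_{s_i^j}}(u_{s_{i+1}^j}-u_{s_i^j})+(E_{M_{s_i^j}}-E_{M_{s_{i-1}^j}})(u_{s_i^j})$, only the first summand is controlled by the H\"older continuity of $u$; the second has no a priori bound, since $u_{s_i^j}$ can depend heavily on the increment of the filtration between $s_{i-1}^j$ and $s_i^j$. Second, the operator-norm bound $\|Y^{(\epsilon)}_{s_i^l}\|\le C$: conditional expectation contracts operator norms termwise, but the \emph{difference} $\int_0^{s_i^l}(u^{(\epsilon)}_s-u_s)\,ds$ is not small in operator norm, so "operator-norm closeness" is not available. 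Your overall strategy could perhaps be salvaged (e.g.\ via \cite[Th~4]{Dab08} directly, if one could show finite relative free Fisher information and diffuseness for the perturbed tuple without adding $S_\eta$), but it would require a substantially different argument than the one you sketch, and the paper's exact-matching construction is both cleaner and sidesteps all three of these difficulties at once.
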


\begin{proof}
Clearly, we have the first $\leq$ since $\mathcal{P}_{ad,1/8loc,\mathbf{t}(f),pl}\subset \mathcal{P}_{ad,1/8loc,\mathbf{t}(f)}.$ For the second inequality, fix $\mathbf{t}(f)=(t_0=0<t_1<....< t_k\leq t_{k+1}=1),$ 

Fix $n\geq 2$ and define for $k\geq l\geq 1,$ $$s_i^l=t_{l-1}+(t_l-t_{l-1})\frac{i}{n},\ 0\leq i\leq n-1$$
and $$s_i^l=t_{l-1}+(t_l-t_{l-1})\frac{n-1}{n} + (t_l-t_{l-1})\frac{1}{n}\sum_{K=1}^{i-n+1}\frac{1}{2^K},\ i> n-1.$$

   Fix $v\in \mathcal{P}_{ad,1/8,\mathbf{t}(f)}\cap \mathcal{P}_{ad,factor,\mathbf{t}(f),b}$, with Holder constant $C$ and uniform bound $||v||_\infty=\sup_{t\in [0,1]}||v_t||_2,$ and define $u^{(n)}\in \mathcal{P}_{ad,1/8,\mathbf{t}(f),pl}$ as follows.
   First we take $$u^{(n)}_{s_i^l}=v_{s_{i-1}^l},\ k\geq l\geq 1,\ 1\leq i\leq n-1 $$ and $u^{(n)}_{s_0^l}=v_{s_{0}^l}$ which is compatible with the measurability constraint $i,j\geq 1,\ u_{s_i^{j}}\in L^2_{sa}(M_{s_{i-1}^j})^m.$ Thanks to the H\"older continuity of $v$ this will guaranty a good uniform approximation. Of course we take a piecewise linear interpolation. We now want to guaranty properties near $t_i$, and for that we want $$\int_{t_{l-1}}^{s_i^l}u^{(n)}_{s}ds =\int_{t_{l-1}}^{s_{i-1}^l}v_{s}ds ,\ k\geq l\geq 1,\ i\in [\![ n, \omega]\!] .$$

Since we expect a piecewise linear interpolation, one can use \eqref{PLInt} to obtain :   
      $$\int_{t_{l-1}}^{s_i^l}u^{(n)}_{s}ds=\sum_{K=0}^{i-1}(s_{K+1}^l-s_K^l)\frac{u_{s_{K+1}^l}^{(n)}+u_{s_{K}^l}^{(n)}}{2}$$
This determines:
$$u_{s_{n}^l}^{(n)}=\frac{2}{(s_{n}^l-s_{n-1}^l)}\left(\int_{t_{l-1}}^{s_{n-1}^l}v_{s}ds-\sum_{K=0}^{n-2}\frac{(t_l-t_{l-1})}{n}\frac{u_{s_{K+1}^l}^{(n)}+u_{s_{K}^l}^{(n)}}{2}\right)-u_{s_{n-1}^l}^{(n)}$$
and then inductively  $u_{s_{i+1}^l}^{(n)}$ for $i\geq n$:
$$u_{s_{i+1}^l}^{(n)}=\frac{2}{(s_{i+1}^l-s_i^l)}\int_{s_{i-1}^l}^{s_{i}^l}v_sds-u_{s_{i}^l}^{(n)}.$$

Note that those inductive definitions are compatible with the measurability constraint for $i,j\geq 1,\ u_{s_i^{j}}\in L^2_{sa}(M_{s_{i-1}^j})^m.$

Especially, we can bound inductively using the H\"older continuity of $v$, for $i\geq n$: \begin{align*}&||u_{s_{i+1}^l}^{(n)}-v_{s_{i}^l}||_2\\&\leq \frac{1}{(s_{i+1}^l-s_i^l)}\int_{s_{i-1}^l}^{s_{i}^l}||v_s-v_{s_{i}^l}||_2ds +\frac{1}{(s_{i+1}^l-s_i^l)}\int_{s_{i-1}^l}^{s_{i}^l}||v_s-v_{s_{i-1}^l}||_2ds +||u_{s_{i}^l}^{(n)}-v_{s_{i-1}^l}||_2
\\&\leq 4C|s_{i-1}^l-s_i^l|^{1/8} +||u_{s_{i}^l}^{(n)}-v_{s_{i-1}^l}||_2
\leq 4C\sum_{I=n}^i|s_{I-1}^l-s_I^l|^{1/8}+||u_{s_{n}^l}^{(n)}-v_{s_{n}^l}||_2
\\&\leq \frac{2C (t_l-t_{l-1})^{1/8}}{n^{1/8}}\sum_{I=n}^i\frac{1}{2^{(I-n+1)/8}}+||u_{s_{n}^l}^{(n)}-v_{s_{n-1}^l}||_2.\end{align*}

  One thus gets from the crude bound $||u_{s_{n}^l}^{(n)}||_2\leq  (8n+1)||v||_\infty$ and by the converging geometric series in $i$ above and convex combinations that $u_{s}^{(n)}$ is bounded in $L^2_{sa}(M)$.

From our construction we have:
   $$\int_{t_{l-1}}^{t_l}u^{(n)}_{s}ds =\int_{t_{l-1}}^{t_l}v_{s}ds ,\ k\geq l\geq 1
    ,$$
and then by induction:
   $$\int_{0}^{s_i^l}u^{(n)}_{s}ds =\int_{0}^{s_{i-1}^l}v_{s}ds ,\ k\geq l\geq 1,\ i\in [\![ n, \omega]\!] .$$

   Thus for any $n$, one can deduce from $Y_{t_i}(v)=Y_{t_i}(u^{(n)})\in M^m,$ and the factoriality condition is thus also kept. We also deduce $Y_{s_i^l}(v)=Y_{s_i^l}(u^{(n)})\in M^m,$ for $i$ large enough and the bounded constraint in $M^m$ near $t_l$ also follows from the one for $v$.  
 We also have for the same reason   $f(S+\int_0^{.}v_sds)=f(S+\int_0^{.}u_s^{(n)}ds)$ and from the bounds before and H\"older continuity of $v$ one also easily prove $$\frac{1}{2}\int_0^1||u_s^{(n)}||^2ds\to_{n\to\infty} \frac{1}{2}\int_0^1||v_s||^2ds.$$   
   
   It only remains to check the H\"older continuity condition for $u^{(n)}.$
First if $s_i^l$ is the smallest value above or the highest value below s, the linear interpolation implies $||u^{(n)}_{s}-  u^{(n)}_{s_i^l}||\leq M ||u^{(n)}||_\infty |s-s_i^l|$ where $M$ is finite as soon as extreme points remain within $[a,b]\subset ]t_{l-1},t_l[$ and we get lipschitzness within intervals of $[s_i^l,s_{i+1}^l]$ similarly. The H\"older continuity thus easily follows from the one with endpoints at times $s_i^l, s_I^l$. If these points are those for which $u^{(n)}$ coincides with values of $v$ we are done, otherwise, we can take either $i=n-1, I\geq n$ or $I>i\geq n$ but there are finitely many ratios with such endpoints in  $[a,b]\subset ]t_{l-1},t_l[$, thus there is necessarily even a Lipschitz bound.\end{proof}   
   
   We now want to prove the Laplace principle we aim at getting for convex functionals. This is our main technical result in the convex case. 
    
\begin{theorem}\label{MainTechnical}
Fix  $\Upsilon_N\in \mathcal{U}(M_N(\C))^{\mu\nu}$ (deterministic).
Assume that the non-commutative law $\tau_{\Upsilon_N}$ converges to some $\mu_\Upsilon\in (\mathcal{T}(\mathcal{F}^\nu_{\mu}),d)$. \textbf{We assume either $m\geq 2$ or $m=1$ and $W^*(\mu_\Upsilon)$ diffuse.}

Let $\gamma_{sa,N,m}=\gamma_N$ the law of hermitian $N\times N$ brownian motion $W_s^N\in (M_N(\C))^m$, then, for any 
$f\in \mathcal{E}_{reg,p}(   \mathcal{T}_{2,0}^c(\mathcal{F}^m_{[0,1]}*\mathcal{F}^\nu_{\mu}),d_{2,0}),$ for $p\in[ 2,\infty]$  or $f\in  \mathcal{E}^{1,1}_{app}(   \mathcal{T}_{2,0}^c(\mathcal{F}^m_{[0,1]}*\mathcal{F}^\nu_{\mu}),d_{2,0})$ the following limit exists and is given by our formula above :
$$\lim_{N\to \infty} -\frac{1}{N^2}\log E_{\gamma_{sa,N,m}}(e^{-N^2 f(\tau_{W,\Upsilon_N})})=\Lambda_{b,\upsilon}(f).$$ 
\end{theorem}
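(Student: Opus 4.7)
The strategy is a two-sided inequality matching $\limsup$ and $\liminf$ to $\Lambda_{b,\upsilon}(f)$. The starting point is Theorem~\ref{minimisationHermitian}, which rewrites the quantity of interest as a stochastic control value:
\[
-\frac{1}{N^2}\log E_{\gamma_N}(e^{-N^2 f(\tau_{W,\Upsilon_N})}) = E\Big[f(\tau_{X^{f,N,\Upsilon_N},\Upsilon_N}) + \tfrac{1}{2}\int_0^1 \|b^{f,N,\Upsilon_N}(t,X^{f,N,\Upsilon_N}(t))\|_2^2\, dt\Big].
\]
For the upper bound $\limsup \leq \Lambda_{b,\upsilon}(f)$ we will construct near-optimal matrix-level controls from free ones, while for the lower bound $\liminf \geq \Lambda_{b,\upsilon}(f)$ we will extract limits of the optimal matrix control along an ultrafilter and identify them with admissible free processes by means of the forward-backward SDE analysis of Corollary~\ref{convexFBSDEmatricial}.

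\textbf{Upper bound.} By Lemma~\ref{EquivLambda} it suffices, given $\epsilon>0$, to take a near-optimal $u\in \mathcal{P}_{ad,1/8loc,\mathbf{t}(f),pl}\cap \mathcal{P}_{ad,factor,\mathbf{t}(f)}$ with the associated partition $\mathbf{s}(u)$, and to build a matricial control $U^N$ on the Wiener space with $E\big[f(\tau_{W^N+U^N,\Upsilon_N})+\tfrac{1}{2}\|U^N\|_{\mathbb H}^2\big]\to \tfrac{1}{2}\int_0^1\|u_v\|_2^2 dv + f(\tau_{S+\int u\, dv,\upsilon})$. By the Bou\'e--Dupuis--\"Ust\"unel formula (Theorem~\ref{ustunel}) any such approximation gives an upper bound. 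The construction is inductive on the partition times $s^j_i$: at each such time $u_{s^j_i}\in L^2(M_{s^j_{i-1}})^m$ is approximated to accuracy $\epsilon/2^{i}$ by a non-commutative polynomial $P_{i,j}$ in the increments $S_{s^j_I}-S_{s^j_{I-1}}$ ($I<i$), earlier $Y_{t_\ell}$, and $\upsilon$; define $U^N_{s^j_i}=P_{i,j}(W^N;\Upsilon_N)$ and piecewise-linearly interpolate, setting $\dot U^N_s$ accordingly. Joint convergence of the finite-dimensional law $\tau_{W^N+U^N,\Upsilon_N}$ to $\tau_{S+\int u,\upsilon}$ in $d_{2,0}$ is obtained at factor points $Y_{t_i}$ using Proposition~\ref{ConcentrationPoulsen} (Poulsen simplex concentration), applied to the extremal states coming from factoriality of $W^*(\upsilon,Y_{t_1},\dots,Y_{t_k})$ in $\mathcal{P}_{ad,factor,\mathbf{t}(f)}$. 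The Lipschitz regularity \eqref{LipReg} of $f$ then yields convergence of $E[f(\tau_{W^N+U^N,\Upsilon_N})]$, while $E[\tfrac{1}{2}\int_0^1\|\dot U^N_v\|_2^2 dv]\to \tfrac{1}{2}\int_0^1\|u_v\|_2^2 dv$ is a direct computation from the piecewise-linear structure and \eqref{PLInt}.

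\textbf{Lower bound.} Fix $\omega\in\beta\N-\N$ achieving $\liminf$. Set $S_t=(W_t^N)^\omega$ (a standard martingale in $\mathcal{M}^\omega_P$, by Example~\ref{ExFiltration}(2)), $\upsilon=(\Upsilon_N)^\omega$, and let $Y_s=(X^{f,N,\Upsilon_N}(s))^\omega$, $u^f_s=(b^{f,N,\Upsilon_N}(s,X^{f,N,\Upsilon_N}(s)))^\omega$. Corollary~\ref{convexFBSDEmatricial} is the heart of the argument: it shows $u^f_s\in L^2(W^*(\upsilon,S_v-S_t,v\leq s,Y_{t_j},\, t_j\leq s))$, so $u^f$ is adapted to the small filtration $\mathcal{F}_s=W^*(\upsilon,S_v,v\leq s)$ in which $S$ is a genuine free brownian motion. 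The $L^2$-H\"older estimate of Theorem~\ref{minimisationHermitian} passes to the ultraproduct and gives $u^f\in \mathcal{P}_{ad,1/8loc,\mathbf{t}(f)}$, while Lemma~\ref{Exptight} and Proposition~\ref{ConcentrationNorm} yield uniform operator-norm bounds, putting $u^f$ in $\mathcal{P}_{ad,factor,\mathbf{t}(f),b}$ provided factoriality holds at the times $t_i$. The energy term is handled by lower semicontinuity: $\liminf E\big[\tfrac{1}{2}\int_0^1\|b^{f,N,\Upsilon_N}\|_2^2 dt\big]\geq \tfrac{1}{2}\int_0^1\|u^f_t\|_2^2 dt$ via Fatou applied to ultraproduct $L^2$-norms together with the H\"older estimate ruling out mass loss. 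The cost term passes to $f(\tau_{Y,\upsilon})$ by \eqref{GomegaG} in Step~(viii) of the proof of Corollary~\ref{convexFBSDEmatricial}, giving a valid candidate for the infimum defining $\Lambda_{b,\upsilon}(f)$.

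\textbf{Main obstacle.} The real difficulty is to verify that the ultraproduct limit drift $u^f$ lands in $L^2(W^*(\upsilon,S))$ and not merely in the enormous algebra $\mathcal{M}_P^\omega$, and to check the factoriality of $W^*(\upsilon,Y_{t_1},\dots,Y_{t_k})$. The first is precisely the content of Corollary~\ref{convexFBSDEmatricial}, whose proof uses uniqueness for the forward-backward SDE built from the Hopf-Lax-Yosida value function $h^{N,\Upsilon_N}_t$. The second is where the hypothesis $m\geq 2$ (or $m=1$ with $W^*(\mu_\Upsilon)$ diffuse) enters: by the approximation in $\mathcal{E}^{1,1}_{app}$, the limit variables $Y_{t_i}$ carry finite conjugate variables relative to $\{\upsilon,Y_{t_j},j<i\}$, so a small free semicircular perturbation (as in the proof of Lemma~\ref{Poulsen}) produces a non-$\Gamma$, hence factorial, subalgebra. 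Combining this with the Poulsen-simplex concentration of Proposition~\ref{ConcentrationPoulsen} on the matrix side then closes the argument in both directions.
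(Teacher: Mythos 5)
Your proposal captures the overall architecture of the paper's proof --- the Bou\'e--Dupuis--\"Ust\"unel reformulation, the ultraproduct analysis of the optimal matrix drift via Corollary~\ref{convexFBSDEmatricial} for the lower bound, and the Poulsen-simplex concentration of Proposition~\ref{ConcentrationPoulsen} for the upper bound --- but there are several real gaps.

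The clearest gap is the case $p=\infty$, which you never address. Theorem~\ref{minimisationHermitian}, on which your whole plan rests, is only stated for $G\in\mathcal{E}_{reg,p}$ with $p\in[2,\infty[$ or $G\in\mathcal{E}^{1,\alpha}$, precisely because $\mathcal{E}_{reg,\infty}$ involves a pointwise maximum, which fails to be $C^{1,1}$; and for the same reason the inclusion $\mathcal{E}_{reg,p}\subset\mathcal{E}^{1,1}_{app}$ noted after the definition is only asserted for $p\in[2,\infty[$, so the $\mathcal{E}^{1,1}_{app}$ branch of the statement doesn't rescue you either. The paper handles this by a preliminary step (Step~1) that approximates the $\max$ by the $p$-norm $\left(\sum g_i^p\right)^{1/p}$ and exploits the universal inequality $g\leq g_{(p)}\leq l^{1/p}g$, sending $p\to\infty$ at the end. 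Without this, your argument only proves the statement for $p<\infty$.

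Two further issues are smaller but worth flagging. First, in the upper bound you approximate $u_{s_i^j}$ by noncommutative polynomials $P_{i,j}(W^N;\Upsilon_N)$. Proposition~\ref{ConcentrationPoulsen} requires the random law to be supported on a fixed operator-norm ball, and an $L^2$ polynomial approximation of an unbounded $u_{s_i^j}$ gives no such bound. The paper instead lifts the \emph{integrals} $\int_0^{s_k^l}u_s\,ds$ to ultraproduct representatives, exploiting the constraint in $\mathcal{P}_{ad,1/8loc,\mathbf{t},pl}$ that $Y_{s_i^l}=S_{s_i^l}+\int_0^{s_i^l}u_s\,ds$ is operator-norm bounded for $i$ large, and cuts off with a bounded Lipschitz functional calculus; the derivatives $V^N$ are then reconstructed via the piecewise-linear relation \eqref{PLInt}. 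You would need an analogous cutoff and some care to recover the needed bounds. Second, your factoriality argument is confused: you invoke the semicircular-perturbation trick from the proof of Lemma~\ref{Poulsen}, but a perturbation changes the law, which is useless here --- what is needed is that $W^*(\upsilon,Y_{t_1},\dots,Y_{t_k})$ \emph{itself} is a factor. The paper derives the finite free Fisher information of $(Y_{t_1},\dots,Y_{t_k})$ relative to $B=W^*(\upsilon)$ (not relative to $\{\upsilon,Y_{t_j},j<i\}$ as you write) from the ultraproduct limit of the matrix score functions, and then applies \cite[Th~4]{Dab08} directly, using the hypothesis $m\geq 2$ or ($m=1$ with $W^*(\mu_\Upsilon)$ diffuse) at exactly this point. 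Your conclusion is right, but the mechanism you describe would not deliver it.
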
    
    
     The reader should note that what we will call lower bound (as in \cite{BD}) corresponds to the usual large deviation upper bound.
\begin{proof}
\begin{step}Reduction of the case $p=\infty$ to the limit for functionals in $f\in \mathcal{E}_{reg,p}(   \mathcal{T}_{2,0}^c(\mathcal{F}^m_{[0,1]}*\mathcal{F}^\nu_{\mu}),d_{2,0}),$ for $p\in[ 2,\infty[$\end{step}

First given a functional in $f\in \mathcal{E}_{reg,\infty}(   \mathcal{T}_{2,0}^c(\mathcal{F}^m_{[0,1]}*\mathcal{F}^\nu_{\mu}),d_{2,0}),$ one can find $g_i$ as in the definition and consider $f_{(p)}\in \mathcal{E}_{reg,p}(   \mathcal{T}_{2,0}^c(\mathcal{F}^m_{[0,1]}*\mathcal{F}^\nu_{\mu}),d_{2,0}),$ $$f_{(p)}
(\overline{ \tau})= g_{(p)}(\overline{ \tau}\circ (I_{t_1,...,t_k}*Id))\ \ and \ g_{(p)}(\tau)=D+\left(\sum_{i=1,...,l}\left(g_i(\tau)\right)^p\right)^{1/p}.$$

(the convexity condition on $g_{(p)}$ is easily implied by those on $g_i$.) From the two next steps, we can assume the Laplace principle is satisfied for $f_{(p)}.$

Then by standard estimates between norms in finite dimension $$g(\tau)\leq g_{(p)}(\tau)\leq l^{1/p} g(\tau).$$

As a consequence, \begin{align*}\limsup_{N\to \infty} -\frac{1}{N^2}\log E_{\gamma_{sa,N,m}}(e^{-N^2 f(\tau_{W,\Upsilon_N})})&\leq  \limsup_{N\to \infty} -\frac{1}{N^2}\log E_{\gamma_{sa,N,m}}(e^{-N^2 f_{(p)}(\tau_{W,\Upsilon_N})})\\&=\Lambda_{b,\upsilon}(f_{(p)})\end{align*} 
and taking an infimum over $p$ one easily get the upper bound $\Lambda_{b,\upsilon}(f)$ since the set over which one takes the infimum does not depend on the argument of $\Lambda,$  only on $\mathbf{t}(f)$ and $f_{(p)}\to f.$
Conversely, we obtain using the other inequality:
\begin{align*}\liminf_{N\to \infty} \frac{1}{N^2}\log E_{\gamma_{sa,N,m}}(e^{-N^2 f(\tau_{W,\Upsilon_N})})&\geq  \liminf_{N\to \infty} -\frac{1}{N^2}\log E_{\gamma_{sa,N,m}}(e^{-N^2 f_{(p)}(\tau_{W,\Upsilon_N})/l^{1/p}})\\&=\Lambda_{b,\upsilon}(f_{(p)}/l^{1/p})\geq \Lambda_{b,\upsilon}(f/l^{1/p})\end{align*}

For a control close enough to the infimum defining $\Lambda_{b,\upsilon}(f/l^{1/p})$, one can assume $$f(\tau_{S+\int_0^{.}u_sds,\upsilon})/l^{1/p}\leq \frac{1}{2}\int_0^1||u_s||^2ds+\left(f(\tau_{S+\int_0^{.}u_sds,\upsilon})\right)/l^{1/p}\leq f(\tau_{S,\upsilon})/l^{1/p}\leq f(\tau_{S,\upsilon})=C(f)$$ the value at $u=0$, which is a constant depending only on $f$ since $f$ depends only of the law and $S$ is always a free brownian motion.

Thus one obtains the following concluding lower bound: $$\Lambda_{b,\upsilon}(f/l^{1/p})\geq \Lambda_{b,\upsilon}(f)-(l^{1/p}-1)f(\tau_{S,\upsilon})\to_{p\to \infty}\Lambda_{b,\upsilon}(f).$$

\begin{step} Lower bound for $f\in \mathcal{E}^{1,1}_{app}(\mathcal{T}_{2,0}^c(\mathcal{F}^m_{[0,1]}*\mathcal{F}^\nu_{\mu}),d_{2,0})$ including the case $ \mathcal{E}_{reg,p}(\mathcal{T}_{2,0}^c(\mathcal{F}^m_{[0,1]}*\mathcal{F}^\nu_{\mu}),d_{2,0})
, p\in [2,\infty[.$\end{step}

Consider an ultrafilter 
$\omega\in \beta\N-\N$. It suffices to show that :
\begin{equation}\label{Lowerboundomega}\lim_{N\to \omega}-\frac{1}{N^2}\log E_{\gamma_{sa,N,m}}(e^{-N^2 f(\tau_{W,\Upsilon_N})})\geq \Lambda_{b,\upsilon}(f).\end{equation}


We will use the tracial von Neumann algebra ultraproduct and the Hilbert space ultraproduct:
 $$\mathcal{M}_P^\omega=(M_{N}(L^\infty(\mathbb W_{sa,N},\gamma_{N})),\tau_{\gamma_N})^\omega\subset\mathcal{L}_P^\omega=[L^2_{sa}(M_{N_n}(L^\infty(\mathbb W_{sa,N},\gamma_{N})),\tau_{\gamma_N})]^\omega.$$

We can consider $S_t=(W_t^N)^\omega\in \mathcal{L}_P^\omega$ and we know from example \ref{ExFiltration} (2) that $S_t\in \mathcal{M}_P^\omega$.  Of course we have $\upsilon=(\Upsilon_N)^\omega\in (\mathcal{M}_{P,0}^\omega)^{\mu\nu}$ which has consistently the law $\mu_\Upsilon$.

Unfortunately $S_t$ is NOT a free brownian motion adapted to the canonical filtration $\mathcal{M}_{P,s}^\omega$ (which is not a factor so that the covariance map of the process is a centred valued conditional expectation and not the trace). However, $S_t$ is a free brownian motion adapted to its own filtration $M_s=W^*(\upsilon,S_t,t\leq s)\subset \mathcal{M}_{P,s}^\omega$ (this is for instance a standard freeness result between GUE and constant matrices or one can use Theorem \ref{LevyBCG} and the concentration result proposition \ref{ConcentrationNorm} for the computation of the covariance).

Note that we thus have a canonical (adapted) embedding  $I:M\subset \mathcal{M}_P^\omega$ which extends to $I:L^2_{sa}(M)\subset L^2_{sa}( \mathcal{M}_P^\omega)\subset\mathcal{L}_P^\omega.$

 Define $G$ with $f=G\circ (I_{t_1,...t_k}*Id)$ and the associated $X^{G,N,\Upsilon_N}$ from 
Theorem \ref{minimisationHermitian}. We know the finite dimensional distribution of $X^{G,N,\Upsilon_N}$ which is of the form assumed in proposition \ref{ConcentrationNorm} so that, as before for $S_s$, $Y_s=(X^{G,N,\Upsilon_N}_s)^\omega \in \mathcal{L}_P^\omega$ is actually $Y_s\in \mathcal{M}_P^\omega.$ Moreover, once fixed $Y_{t_j}$, the process is a free brownian bridge in between (as seen its limit law from the same proposition), hence we have the uniform in time operator norm bound on$ ||Y_s||$ that we wanted.

\noindent \textbf{(i) First bounds on $u_s=(b^{G,N,\Upsilon_N}(s,X^{G,N,\Upsilon_N}(s)))^\omega\in \mathcal{L}_P^\omega$.}

We know that $E(||b^{G,N,\Upsilon_N}(s,X^{G,N,\Upsilon_N}(s))||_2^2)\leq C$ independently of $N,s$ so that $||u_s||_2\leq C$.

Then from the bound in Theorem \ref{minimisationHermitian} we have  for $t,s \in ]t_i, t_{i+1}]$: $$||u_t-u_s||_2^2=\lim_{N\to\omega}E_P(||b^{G,N,\Upsilon_N}(s,X^{G,N,\Upsilon_N}(s))-b^{G,N,\Upsilon_N}(t,X^{G,N,\Upsilon_N}(t))||_2^2)\leq C_4\sqrt[4]{|t-s|},$$
so that $u$ is $1/8$-H\"older continuous on $]t_i,t_{i+1}]$ as expected and especially $u\in L^\infty_{ad}([0,1], \mathcal{L}_P^\omega)$. 

\noindent \textbf{(ii) Limit of the value function along $\omega$.}
As another consequence, $\frac{1}{2}\int_0^1||u_s||^2ds$ can be approximated by Riemann sums, and the same approximations holds for $\frac{1}{2}\int_0^1||b^{G,N,\Upsilon_N}(s,X^{G,N,\Upsilon_N}(s))||_2^2ds$ uniformly in $N$ so that :
$$\lim_{N\to \omega}\frac{1}{2}\int_0^1||b^{G,N,\Upsilon_N}(s,X^{G,N,\Upsilon_N}(s))||_2^2ds=\frac{1}{2}\int_0^1||u_s||^2ds.$$
By convexity of $G$ in the trace and Jensen's inequality, we also have :
$$E(G(\tau_{(X_{t_1}^{G,N,\Upsilon_N},...,X_{t_k}^{G,N,\Upsilon_N},\Upsilon_N)})\geq G(E\circ \tau_{(X_{t_1}^{G,N,\Upsilon_N},...,X_{t_k}^{G,N,\Upsilon_N},\Upsilon_N)}).$$
Moreover, using again the uniform H\"older continuity to approximate integrals, we have $Y_s=S_s+\int_0^su_tdt$, and the relation $(u(X_{t_k}^{G,N,\Upsilon_N})-1)(X_{t_k}^{G,N,\Upsilon_N}-4i)=8i$ imply the corresponding relation in the ultraproduct so that $u(Y_s)=(u(X_{t_k}^{G,N,\Upsilon_N}))^\omega$ and it is thus easy to see from the definition of product and trace in the ultraproduct that $$\lim_{N\to \omega} d_2(E\circ \tau_{(X_{t_1}^{G,N,\Upsilon_N},...,X_{t_k}^{G,N,\Upsilon_N},\Upsilon_N)},\tau_{(Y_{t_1},...,Y_{t_k},\upsilon)})=0.$$
Thus since $G\in C^0( \mathcal{T}_{2,0}(\mathcal{F}^m_{k}*\mathcal{F}^\nu_{\mu}),d_{2,0}),$ one deduces :
$$\lim_{N\to \omega}E(G(\tau_{X^{G,N,\Upsilon_N},\Upsilon_N}))\geq G(\tau_{(Y_{t_1},...,Y_{t_k}),\upsilon}).$$
Thus combining all our results and the formula from theorem \ref{minimisationHermitian}, one gets :
$$\lim_{N\to \omega}-\frac{1}{N^2}\log E_{\gamma_{sa,N,m}}(e^{-N^2 f(\tau_{W,\Upsilon_N})})\geq f(\tau_{Y,\upsilon})+\frac{1}{2}\int_0^1||u_s||^2ds.$$ 
To conclude with a bound below by $\Lambda_{b,\upsilon}(f)$, it only remains to check the two last conditions on $u_s$, namely $u_s\in L^2_{sa}(M)^m$ and $W^*(\upsilon,Y_{t_1},...,Y_{t_k})$ is a factor. 

\noindent \textbf{(iii) Factoriality of $W^*(\upsilon,Y_{t_1},...,Y_{t_k})$.}

For factoriality, by \cite[Th 4]{Dab08} and using the assumption $m\geq 2$ or $m=1$ and $B=W^*(\upsilon)$ diffuse, it suffices to check that $(Y_{t_1},...,Y_{t_k})$ have finite free Fisher information in the sense of \cite{V5} relative to $B=W^*(\upsilon)$.

From the explicit knowledge of the law of $(X_{t_1}^{G,N,\Upsilon_N},...,X_{t_k}^{G,N,\Upsilon_N})$ in Theorem \ref{minimisationHermitian}, one knows the classical score function of this random matrix model deduced from the density $e^{-N^2G(\tau_{x,\Upsilon_N})-N^2G_{2,\mathbf{t}(f)}(\tau_x)-C}$ with respect to Lebesgue measure on Hermitian matrices. The score functions $(-\Xi_{t_1}^{G,N},...,-\Xi_{t_k}^{G,N})$ written in matrix form are thus  (for $i\geq 1$, recall $t_0=0$, $G^N(x)=G(\tau_{x,\Upsilon_N})$) given by:
\begin{align*}\Xi_{t_i,l}^{G,N}&=\frac{N}{t_i-t_{i-1}}(X_{t_i,l}^{G,N,\Upsilon_N}-X_{t_{i-1},l}^{G,N,\Upsilon_N})+\frac{N}{t_{i+1}-t_{i}}(X_{t_i,l}^{G,N,\Upsilon_N}-X_{t_{i+1},l}^{G,N,\Upsilon_N})\\&+N^2\mathscr{D}_i^l G^N(X_{t_1}^{G,N,\Upsilon_N},...,X_{t_k}^{G,N,\Upsilon_N}).\end{align*}
Consider a non-commutative polynomial $P\in \C\langle X_1^1,...,X_1^m,X_2^1,...,X_k^m,u\rangle$ in $km$ self-adjoint indeterminates and $\mu\nu$ unitary indeterminates and let us write $\partial_{X_j^l}$ the corresponding free difference quotient (with value $0$ on $u_i^j$).

As a consequence, one can write the integration by parts formula characterizing score functions:
\begin{equation}\label{FinDimConjVar}E\left(\frac{1}{N^2}Tr(\Xi_{t_i,l}^{G,N}P(X_{t_1}^{G,N},...,X_{t_k}^{G,N},\Upsilon_N))\right)
=E\left(\frac{1}{N^2}(Tr\otimes Tr)(\partial_{X_{i}^l}P(X_{t_1}^{G,N},...,X_{t_k}^{G,N},\Upsilon_N))\right).
\end{equation}

Note that $N\mathscr{D}_i^l G^N=\frac{1}{\sqrt{N}}\mathscr{D}_i^lg_N$ and from the dimension independence of equation \eqref{lipE} obtained in lemma \ref{UniformN}, one gets for some constants $C,D>0$ independent of $N$:
$$E\left(\left\|\frac{\Xi_{t_i,l}^{G,N}}{N}\right\|_2^2\right)=E\left(\frac{1}{N^3}Tr((\Xi_{t_i,l}^{G,N})^*\Xi_{t_i,l}^{G,N})\right)\leq CE\left(\sum_{i=1}^k||X_{t_i}^{G,N}||_2^2\right)+D.$$
Thus $\xi_{t_i}^l=\left(\frac{\Xi_{t_i,l}^{G,N}}{N}\right)^\omega\in \mathcal{L}_P^\omega$ is well-defined.

Using the second concentration result from proposition \ref{ConcentrationNorm}, one can take the limit $N\to \omega$ in the right hand side of \eqref{FinDimConjVar} and get the equation in $\mathcal{L}_P^\omega$ (using also $\xi_{t_i}^l=(\xi_{t_i}^l)^*$):
\begin{equation}\label{ConjVar}\langle \xi_{t_i}^l, P(Y_{t_1},...,Y_{t_k},\upsilon)\rangle
=(\tau\otimes \tau)\left((\partial_{X_{i}^l}P)(Y_{t_1},...,Y_{t_k},\upsilon)\right),
\end{equation}
and this gives that $E_{L^2(W^*(\upsilon,Y_{t_1},...,Y_{t_k}))}(\xi_{t_i}^l)$ are exactly the conjugate variables in $L^2(W^*(\upsilon,Y_{t_1},...,Y_{t_k}))$ we were looking for.

\noindent \textbf{(iv) Conclusion}

We know that :
$$Y_s=S_s+\int_0^su_tdt.$$
This is moreover the process of corollary \ref{convexFBSDEmatricial} with  $t=0$. One thus obtains that $Y_s\in M_s$ and $u_t\in L^2(W^*(\upsilon, Y_t,Y_{t_j}, t_j\leq t)).$
Especially $u_t\in (L^2_{sa}(M_t,\tau))^{m}$ 
and thus $u\in L^\infty_{ad}([0,1],(L^2_{sa}(M,\tau))^{m})$. Thus, we can consider $Y$ as one of the processes entering in the infimum for $\Lambda_{b,\upsilon}(f)$ and we deduce :
  $$f(\tau_{Y})+\frac{1}{2}\int_0^1||u_s||^2ds\geq \Lambda_{b,\upsilon}(f),$$
and, from (ii), \eqref{Lowerboundomega} is satisfied.  
  
\begin{step} Upper bound for $f\in \mathcal{E}^{1,1}_{app}(\mathcal{T}_{2,0}^c(\mathcal{F}^m_{[0,1]}*\mathcal{F}^\nu_{\mu}),d_{2,0}), p\in [2,\infty[.$ 
\end{step}

Consider an ultrafilter 
$\omega\in \beta\N-\N$. It suffices to show that :
$$\lim_{N\to \omega}-\frac{1}{N^2}\log E_{\gamma_{sa,N,m}}(e^{-N^2 f(\tau_{W,\Upsilon_N})})\leq \Lambda_{b,\upsilon}(f).$$ 
Thus take $u\in \mathcal{P}_{ad,1/8loc,\mathbf{t}(f)
,pl}\cap \mathcal{P}_{ad,factor,\mathbf{t}(f)}$ so that, from lemma \ref{EquivLambda}, it suffices to show: 
\begin{equation}\label{Upperv7}\lim_{N\to \omega}-\frac{1}{N^2}\log E_{\gamma_{sa,N,m}}(e^{-N^2 f(\tau_{W,\Upsilon_N})})\leq \frac{1}{2}\int_0^1||u_s||^2ds+f(S+\int_0^{.}u_sds).\end{equation}

We can see $u_s\in L^2_{sa}(M)^m\subset (\mathcal{L}_P^\omega)^{m}$  and we have a set of times $$\mathbf{s}(u)=\{s_1^1<...<s_n^1<... s_\omega^1=t_1<s_1^2<...<s_\omega^2=t_2<... s_\omega^k=t_k<s^{k+1}_1<...s_l^{k+1}\}$$
enabling to realize the piecewise linear property.

We want to define a continuous adapted process $V_s^N \in M_{N}(L^2(\mathbb{W}_{sa, N},\gamma_N))$. 

Take $U_{s_{k}^l}^N\in M_N(L^2(W_t^N, t\leq s_{k-1}^l))$ such that  $$(U_{s_{k}^l}^N)^\omega=\int_{0}^{s_{k}^l}u_sds.$$
One can assume $||U_{s_{k}^l}^N||_2\leq ||\int_{0}^{s_{k}^l}u_sds||_2$. In a standard way, we will realize operator norm inequalities simultaneously with $L^2$ inequalities in taking $f_C$ lipschitz bounded functional calculus equal to identity on a huge ball and replacing if necessary $U_{s_{k}^l}^N$ by $f_C(U_{s_{k}^l}^N)$ and choosing $C$ to keep the same value in the ultraproduct.

For each $l$ for $k$ large enough, one can take $U_{s_{k}^l}^N\in M_N(L^\infty(W_t^N, t\leq s_{k-1}^l))^m$ since then $\int_{0}^{s_{k}^l}u_sds\in (\mathcal{M}_P^\omega)^{m}$ and then assume instead the operator norm bound $||U_{s_{k}^l}^N||\leq ||\int_{0}^{s_{k}^l}v_sds||+1\leq C+1$ where $C$ is given by the definition of $\mathcal{P}_{ad,1/8loc,\mathbf{t}(f)
,pl}$.

Guided by the relation \eqref{PLInt}, we define $V_{s_{0}^l}^N=V_{s_{1}^l}^N=\frac{1}{s_{1}^l-s_{0}^l}(U_{s_{1}^l}^N-U_{s_{0}^l}^N)$ and then:
$$V_{s_{i+1}^l}^N=\frac{2}{s_{i+1}^l-s_{i}^l}(U_{s_{i+1}^l}^N-U_{s_{i}^l}^N)-V_{s_{i}^l}^N.$$
Finally we define the linear interpolation $V_{s}^N.$
Note that taking the ultraproduct of defining relations we have 
$$(V_{s_{0}^l}^N)^\omega=(V_{s_{1}^l}^N)^\omega=\frac{1}{s_{1}^l-s_{0}^l}\left((U_{s_{1}^l}^N)^\omega-(U_{s_{0}^l}^N)^\omega\right)=\frac{1}{s_{1}^l-s_{0}^l}\int_{s_{0}^l}^{s_{1}^l}u_sds=u_{s_{0}^l}$$
and similarly $$(V_{s_{i+1}^l}^N)^\omega=\frac{2}{s_{i+1}^l-s_{i}^l}(\int_{s_{i}^l}^{s_{i+1}^l}v_sds)-(V_{s_{i}^l}^N)^\omega=u_{s_{i+1}^l}.$$
The last equality is by induction on $i$ using \eqref{PLInt}. Finally by linear interpolation one finds $(V_s^N)^\omega =u_s$ and $V_s^N$ are adapted processes and by construction $\int_{0}^{s_{k}^l}V_s^Nds=U_{s_{k}^l}^N$ so that one deduces the operator norm bound $||\int_{0}^{t_k}V_s^Nds||\leq C+1.$ (almost surely).

Moreover, we have a uniform continuity (lipschitzness) in time with value $L^2$, uniformly in $N$ outside of a small interval where we can use a uniform bound in $L^2$ uniform in $N$ and thus as in step 2, we have convergence of Riemann integrals (which are indefinite in each $t_k$):
\[\lim_{N\to \omega} E(\frac{1}{2}\int_0^1||V_s^N||_2^2 ds)=\frac{1}{2}\int_0^1||v_s||^2ds, \ \forall t\in [0,1], \int_0^{t}v_sds=(\int_0^{t}V_s^Nds)^\omega.\]

As at the beginning of step 2, one can find $A_N$ so that $(W_t^N1_{A_N})^\omega=S_t$ and  $W_{t_i}^N1_{A_N}$ is operator norm bounded uniformly in $N$.
Let us call $Y_t^N=\int_{0}^{t}V_s^Nds+W_t^N1_{A_N}$ and 
$Z_t^N=\int_{0}^{t}V_s^Nds+W_t^N.$ Let us call $R$ the operator norm uniform bound of $Y_{t_i}^N$.
Since $(Y_t^N)^\omega=Y_t =\int_{0}^{t}v_sds+S_t$ and since $ W^*(\upsilon,Y_{t_1},...,Y_{t_{k}})$ is a factor, it gives rise to an extremal state in $\mathcal{S}_R^{mk}$ and thus one can apply proposition \ref{ConcentrationPoulsen}. Indeed, the ultraproduct relation above $(Y_t^N)^\omega=Y_t$ (and the corresponding relation for their unitary transforms) implies by definition $$\lim_{N\to \omega} d(E\circ \tau_{(Y_{t_1}^N,...,Y_{t_{k}^N},\Upsilon_N)}, \tau_{(Y_{t_1},...,Y_{t_{k}},\upsilon)})=0.$$
Thus, using also \eqref{LipReg}, one gets:
$$\lim_{N\to \omega}\left|E(G(\tau_{(Y_{t_1}^N,...,Y_{t_{k}^N},\Upsilon_N)})-G(\tau_{(Y_{t_1},...,Y_{t_{k},\upsilon})})\right|\leq \lim_{N\to \omega}C(G)E(d_2(\tau_{(Y_{t_1}^N,...,Y_{t_{k}^N},\Upsilon_N)},\tau_{(Y_{t_1},...,Y_{t_{k},\upsilon})})=0.$$
Moreover, from the uniform bound in lemma \ref{UniformN} for \eqref{lipE}, one deduces as in step 2 (with normalised euclidean norms):
\begin{align*}&\left|E(G(\tau_{(Y_{t_1}^N,...,Y_{t_{k}^N},\Upsilon_N)})-G(\tau_{(Z_{t_1}^N,...,Z_{t_{k}^N},\Upsilon_N)})\right|\\&\qquad\leq  \sqrt{E(||Y^N-Z^N||^2)}\times 
\sqrt{E\left(3C^3||Y^N||_2^2+3C^2||Z^N||_2^2 +3D^2\right)},\end{align*}
and thus since $$E(||Y^N-Z^N||^2)=E(||W_t^N||_2^2(1_{A_N}-1))\to_{N\to \infty} 0$$ one gets a limit $0$ for our previous expression when $N\to \omega$.

Finally, we obtained :
$$\lim_{N\to \omega}E\left(G(\tau_{(Z_{t_1}^N,...,Z_{t_{k}^N},\Upsilon_N)})+\frac{1}{2}\int_0^1||V_s^N||_2^2 ds\right)
=G(\tau_{(Y_{t_1},...,Y_{t_{k}},\upsilon)})+\frac{1}{2}\int_0^1||u_s||_2^2 ds.$$

But the infimum characterization of theorem \ref{ustunel} includes $V_s^N$ as adapted process and thus 
$$E\left(G(\tau_{(Z_{t_1}^N,...,Z_{t_{k}^N},\Upsilon_N)})+\frac{1}{2}\int_0^1||V_s^N||_2^2 ds\right)\geq -\frac{1}{N^2}\log E_{\gamma_{sa,N,m}}(e^{-N^2 f(\tau_{W,\Upsilon_N})}).$$
Taking the limit $N\to \omega$ concludes to \eqref{Upperv7}, and thus to 
$$\lim_{N\to \omega}-\frac{1}{N^2}\log E_{\gamma_{sa,N,m}}(e^{-N^2 f(\tau_{W,\Upsilon_N})})= \Lambda_{b,\upsilon}(f).$$
Since the limit does not depend on the ultrafilter $\omega$, the limit exists as stated.
\end{proof}     
     
We will also need a more technical consequence of the proof in order to compute Voiculescu's microstates free entropy later in some cases.
     
     \begin{corollary}\label{OptimalFree}
     Let $f\in \mathcal{E}_{reg,p}(\mathcal{T}_{2,0}^c(\mathcal{F}^m_{[0,1]}*\mathcal{F}^\nu_{\mu}),d_{2,0})
\cup \mathcal{E}^{1,1}_{app}(\mathcal{T}_{2,0}^c(\mathcal{F}^m_{[0,1]}*\mathcal{F}^\nu_{\mu}),d_{2,0})
, p\in [2,\infty[,$ $f=G\circ (I_{t_1,...t_k}*Id)$
 and consider $X^{G,N,\Upsilon_N}$ the solution in Theorem \ref{minimisationHermitian}. Then, for any ultrafilter $\omega\in \beta\N-\N, \upsilon=(\Upsilon_N)^\omega, S_t=(W_t^N)^\omega$, $Y_t=(X_t^{G,N,\Upsilon_N})^\omega\in W^*(\upsilon,S_s, s\leq t)$ satisfies an SDE with respect to the canonical brownian motion in $S_s\in\mathcal{M}_P^\omega$. There is $u_s=u_s^G:=(b^{G,N,\Upsilon_N}(s,X^{G,N,\Upsilon_N}(s)))^\omega\in L^2(W^*(\upsilon,Y_{t_1}, ...,Y_{t_i},Y_s))$ for $t_i<s\leq t_{i+1}$ ($t_0=0$, $u_s=0$ if $s>t_k$) such that $$Y_t=S_t+\int_0^tu_s^Gds.$$
 
Finally, the infimum in the definition of $\Lambda_b(f)$ is reached at $u_s$, i.e.:
$$\Lambda_{b,\upsilon}(f)=f(\tau_{Y,\upsilon})+\frac{1}{2}\int_0^1||u_s^G||_2^2ds.$$
\end{corollary}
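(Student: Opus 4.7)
The plan is to read off the conclusion from the ultrafilter limit analysis already carried out in Step 2 of the proof of Theorem \ref{MainTechnical} together with the finite-dimensional formula of Theorem \ref{minimisationHermitian}. Fix $\omega\in\beta\N-\N$. Working in the tracial ultraproduct $\mathcal{M}_P^\omega$, I define $S_t=(W_t^N)^\omega$, $\upsilon=(\Upsilon_N)^\omega$, and set $Y_t:=(X_t^{G,N,\Upsilon_N})^\omega$, $u_s^G:=(b^{G,N,\Upsilon_N}(s,X^{G,N,\Upsilon_N}(s)))^\omega$. Exactly as in part (i) of Step 2, the uniform $L^2$-bound and $1/8$-H\"older estimate of Theorem \ref{minimisationHermitian} ensure that these objects are well-defined elements of $\mathcal{L}_P^\omega$, with $u^G\in L^\infty_{ad}([0,1],\mathcal{L}_P^\omega)$ and the required H\"older regularity on each $]t_i,t_{i+1}]$.

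Next, the SDE $Y_t=S_t+\int_0^t u_s^G\, ds$ follows by passing to the ultraproduct in the defining SDE for $X^{G,N,\Upsilon_N}$ (using uniform H\"older continuity to approximate the drift integral by Riemann sums and the fact that the matricial integral converges to the ultraproduct integral). The adaptedness claim $u_s^G\in L^2(W^*(\upsilon,Y_{t_1},\dots,Y_{t_i},Y_s))$ for $t_i<s\leq t_{i+1}$ is precisely the content of Corollary \ref{convexFBSDEmatricial} applied to the present situation: $Y$ arises as the ultrafilter limit described there (starting from $x=0$), so the drift lies in the stated von Neumann algebra. In particular, $Y_t\in W^*(\upsilon,S_s,s\leq t)$ by induction on the time intervals $]t_i,t_{i+1}]$, since on each such interval the drift only depends on ultraproduct data already generated.

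To identify the value, start from the exact formula in Theorem \ref{minimisationHermitian},
\[-\frac{1}{N^2}\log\int_{\mathbb W_{sa,N}} e^{-g_N(\nu_{t_1},\dots,\nu_{t_k})}d\gamma(\nu)=\mathbf{E}\Big(G(\tau_{X^{G,N,\Upsilon_N},\Upsilon_N})+\tfrac{1}{2}\int_0^1\|b^{G,N,\Upsilon_N}(t,X^{G,N,\Upsilon_N}(t))\|_2^2\,dt\Big),\]
and take the limit $N\to\omega$. The left-hand side converges to $\Lambda_{b,\upsilon}(f)$ by Theorem \ref{MainTechnical}. On the right-hand side, the energy term converges to $\tfrac12\int_0^1\|u_s^G\|_2^2\,ds$ by the uniform H\"older approximation of the integral (part (ii) of Step 2). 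For the $G$-term, the identification $u(Y_{t_i})=(u(X_{t_i}^{G,N,\Upsilon_N}))^\omega$ combined with $G\in\mathcal{E}_{reg,p}\cup\mathcal{E}^{1,1}_{app}$, in particular with the $d_{2,0}$-continuity estimate \eqref{LipReg} and Jensen's inequality (for the convexity lower bound) exactly as in Step 2(ii), yields convergence to $G(\tau_{Y,\upsilon})=f(\tau_{Y,\upsilon})$.

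Putting these three limits together gives $f(\tau_{Y,\upsilon})+\tfrac12\int_0^1\|u_s^G\|_2^2\,ds=\Lambda_{b,\upsilon}(f)$, i.e., the infimum defining $\Lambda_{b,\upsilon}(f)$ is attained at $(Y,u^G)$. The only subtle point, and thus the main technical obstacle, is to ensure that $(Y,u^G)$ actually belongs to the admissibility class used in defining $\Lambda_{b,\upsilon}(f)$: the uniform operator-norm control on $Y_{t_i}$ and the factoriality of $W^*(\upsilon,Y_{t_1},\dots,Y_{t_k})$ come respectively from the uniform-in-$N$ concentration in Proposition \ref{ConcentrationNorm} (applied to the Gibbs law of Theorem \ref{minimisationHermitian}) and from the finite-Fisher-information argument of Step 2(iii), as in the proof of Theorem \ref{MainTechnical}; the piecewise H\"older/regularity assumptions on $u^G$ already follow from part (i). This closes the corollary.
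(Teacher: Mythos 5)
Your plan is essentially the paper's own argument: cite the components already established in Step~2 of the proof of Theorem~\ref{MainTechnical} (ultraproduct definitions of $Y_t$ and $u^G_s$, the $1/8$-H\"older estimate, the SDE, the $L^2(W^*(\upsilon,Y_{t_1},\dots,Y_{t_i},Y_s))$-adaptedness via Corollary~\ref{convexFBSDEmatricial}, the factoriality via finite Fisher information) together with the formula from Theorem~\ref{minimisationHermitian}, and then pass to the ultrafilter limit to identify $\Lambda_{b,\upsilon}(f)$.

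There is one imprecision worth fixing. You write that the $G$-term \emph{converges} to $G(\tau_{Y,\upsilon})$ by ``Jensen's inequality ... exactly as in Step 2(ii).'' But Step~2(ii) of Theorem~\ref{MainTechnical} establishes only the one-sided estimate
$\lim_{N\to\omega}E\big(G(\tau_{X^{G,N,\Upsilon_N},\Upsilon_N})\big)\geq G(\tau_{Y,\upsilon})$,
since Jensen's inequality is an inequality. So ``putting the three limits together'' does not quite work as stated: you have exact convergence of the LHS (to $\Lambda_{b,\upsilon}(f)$) and of the energy term, but only a lower bound for the $G$-term. The correct (and paper's) resolution is a sandwich: these facts give $\Lambda_{b,\upsilon}(f)\geq f(\tau_{Y,\upsilon})+\tfrac12\int_0^1\|u^G_s\|_2^2\,ds$, while the admissibility of $(Y,u^G)$ (which you do check) gives the reverse inequality $f(\tau_{Y,\upsilon})+\tfrac12\int_0^1\|u^G_s\|_2^2\,ds\geq \Lambda_{b,\upsilon}(f)$ directly from the definition of the infimum. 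With that small correction your proof is complete and matches the paper's.
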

\begin{proof}
All the properties of $Y_s$ were obtained in step 2 of the proof of Theorem \ref{MainTechnical}. For instance, the $L^2$ space containing $u_s$ was obtained in (x) based on (ix) (since the drift of $Z_s=Y_s$ and $u_s$ coincide). The inequality $$\lim_{N\to \omega}-\frac{1}{N^2}\log E_{\gamma_{sa,N,m}}(e^{-N^2 f(\tau_{W,\Upsilon_N})})\geq f(\tau_{Y,\upsilon})+\frac{1}{2}\int_0^1||u_s^G||^2ds$$ was obtained in (ii),
but since from the definition and the upper bound of step 3:
$$f(\tau_{Y,\upsilon})+\frac{1}{2}\int_0^1||u_s^G||^2ds\geq \Lambda_{b,\upsilon}(f)\geq \lim_{N\to \omega}-\frac{1}{N^2}\log E_{\gamma_{sa,N,m}}(e^{-N^2 f(\tau_{W,\Upsilon_N})}).$$
 One deduces the stated equality.
\end{proof}

  \section{Laplace Principal Lower and upper bounds}
   \subsection{Ultrafilter limit for non-convex functions 
   }
 
We now define the candidate for ultrafilter limits in the Laplace principle.
Recall we defined : $\mathcal{M}_P^\omega=(M_{N}(L^\infty(\mathbb{W}_{sa, N},\gamma_N)),\tau_{\gamma_N})^\omega$ and  $\mathcal{L}_P^\omega=[L^2_{sa}(M_{N}(L^\infty(\mathbb{W}_{sa, N},\gamma_N)),\tau_{\gamma_N})]^\omega$.

Let $\mathcal{G}([0,1],L^2_{sa}(N)^m)$ the space of left continuous functions with right limits.

We consider a set of adapted path in $\mathcal{G}([0,1],L^2_{sa}(N)^m)$ corresponding to a filtration $\mathcal{F}$ on $\mathcal{M}_P^\omega$:

\begin{align*}&\mathcal{P}_{ad,g}^{\omega,t}(\mathcal{F})=\{u\in L^\infty_{ad}([t,1], L^2(N)_{sa}^m): M\subset N\subset \mathcal{F}
\ separable, u\in \mathcal{G}([t,1],L^2_{sa}(N)^m)\\& \qquad\mathrm{with\ at\ most\ countably\ many \ discontinuity\ points}
\}\end{align*}
     Similarly $\mathcal{P}_{ad,d}^{\omega,t}(\mathcal{F})$ is the variant with right continuous controls. 
Here and later, the copy of $M$ is understood to come from the canonical embedding and construction of $S_t\in    \mathcal{M}_P^\omega$  .
     
We consider another family of paths such that one requires more von Neumann algebraic regularity at this sequence of times:  
   \begin{align*}&\mathcal{P}_{ad,b}^{\omega,t}(\mathcal{F})=\{u\in \Lambda^\infty_{ad}([t,1], N_{sa}^m), M\subset N\subset \mathcal{M}_P^\omega\ separable
   \}.
     \end{align*}Here $\Lambda^\infty$ is the set of weak-ù measurable processes, weak-* essentially bounded.
  
Recall also that from the argument in \cite[Corol 4.3]{FarahI}, we know the center $$\mathcal{Z}_P^\omega:= \mathcal{Z}(\mathcal{M}_P^\omega)=(\mathcal{Z}(M_{N}(L^\infty(\mathbb{W}_{sa, N},\gamma_N)),\tau_{\gamma_N}))^\omega\simeq (L^\infty(\mathbb{W}_{sa, N},\gamma_N))^\omega.$$ Moreover since $\gamma_N$ is diffuse, and $(\mathbb{W}_{sa, N},\gamma_N)$ is a standard probability space (as any Polish space with a completed Borel $\sigma$-field, e.g. \cite[Thm 2-3]{delaRue}), thus $L^\infty(\mathbb{W}_{sa, N},\gamma_N)\simeq L^\infty([0,1[,Leb)\simeq L^\infty(\{0,1\}^{\N})$ \cite[Thm 4-3]{delaRue}. As a consequence from the proof \cite[Prop 4.6]{FarahI}, we know that $\mathcal{Z}_P^\omega$ is the $L^\infty$ space of a Maharam-homogeneous algebra of Maharam character ${\aleph_0}^{\aleph_0}$, thus $\mathcal{Z}_P^\omega\simeq L^\infty(\{0,1\}^{2^{\N}}).$ Let us call $\Omega=\{0,1\}^{2^{\N}}$ with its standard product measure of symmetric Bernoulli laws and fix an isomorphism above. We will soon use the lifting theory first developed by Dorothy Maharam in the form given by \cite{Ionescu-Tulcea}.

We also fix the measure $\mu^\omega$ corresponding to the faithful normal state $E^\omega=(E_{\gamma_N})^\omega$.

 Of course, for any $z\in \Omega$, any process $X_t=S_t+\int_0^tu_sds, u\in L^2([0,1],L^2(\mathcal{M}_P^\omega)^m)$, for $P\in\mathcal{F}^m_{[0,1]}*\mathcal{F}^\nu_{\mu}$ we define $P(X,\upsilon)=P(u(X),\upsilon)$, one can define the random variable $\tau_{X,\upsilon:\mathcal{Z}}:z\mapsto\tau_{X,\upsilon:z}(P)=[E_{\mathcal{Z}(\mathcal{M}_P^\omega)}(P(X,\upsilon))](z)$ obtained by evaluating at $z$ the central conditional expectation (after taking a lifting of the random variable).
We will use this based on the following lemma. Note that even though at this stage it may seem that we will need only $u$ essentially bounded, we will need the square integrable case since only the use of those will give a semicontinuous rate function.

\begin{lemma}\label{agreementUltraprod} For  $X_t=S_t+\int_0^tu_sds, u\in L^2([0,1],L^2(\mathcal{M}_P^\omega)^m))$
,$\upsilon\in \mathcal{U}((\mathcal{M}_P^\omega))^{\mu\nu}$  as above, there is $\tau_{X,\upsilon:\mathcal{Z}}$ Borel measurable with value $(\mathcal{T}_{2,0}^c(\mathcal{F}^m_{[0,1]}*\mathcal{F}^\nu_{\mu}),d_{1,0})$ with almost everywhere: $$\tau_{X,\upsilon:\mathcal{Z}}(P)=[E_{\mathcal{Z}(\mathcal{M}_P^\omega)}(P(X,\upsilon))].$$

Moreover, there is $L$ square-integrable measurable on $(\Omega,E^\omega)$ such that 
$\tau_{X,\upsilon:\mathcal{Z}}(\omega)\in \mathcal{T}_{L(\omega)}\ a.e$.

Finally, for any $f\in \mathcal{C}_{reg,p,C}( \mathcal{T}_{2,0}^c(\mathcal{F}^m_{[0,1]}*\mathcal{F}^\nu_{\mu}),d_{2,0}))$, we have $f(\tau_{X,\upsilon:\mathcal{Z}})\in L^{1/1_{C\neq 0}}(\mathcal{Z}_P^\omega)$ and $$\lim_{N\to \omega} E_{\gamma_N}(f(\tau_{X^N,\upsilon^N}))=E^\omega(f(\tau_{X,\upsilon:\mathcal{Z}})).$$
\end{lemma}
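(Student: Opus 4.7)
The plan has three parts: construct $\tau_{X,\upsilon:\mathcal{Z}}$ as a Borel measurable selection, establish the $L^2$ bound $L$, and pass to the ultrafilter limit via variance control plus uniform integrability.

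First I would fix a countable $\mathbb{Q}(i)$-linear $*$-subalgebra $\mathcal{A}\subset \mathcal{F}^m_{[0,1]}*\mathcal{F}^\nu_{\mu}$ generated by monomials in the generators at rational times, which is norm-dense in every finite-time $C^*$-subalgebra $I_{t_1,\ldots,t_n}(\mathcal{F}^m_n)*\mathcal{F}^\nu_\mu$ with rational $t_j$. For each $P\in\mathcal{A}$, the element $E_{\mathcal{Z}_P^\omega}(P(X,\upsilon))\in \mathcal{Z}_P^\omega\simeq L^\infty(\Omega)$ admits a Borel representative $\phi_P$. Linearity, positivity, traciality, and uniform continuity of $t\mapsto \phi_{I_t(P)}(\omega)$ in the rational time parameters (inherited from the $L^2$-continuity of $t\mapsto X_t$) are countably many a.e.\ identities on $\Omega$; removing a common null set $\mathcal{N}$, on $\Omega\setminus\mathcal{N}$ the map $P\mapsto \phi_P(\omega)$ is a tracial functional on $\mathcal{A}$, norm-bounded on each finite-time subalgebra by contractivity of $E_{\mathcal{Z}}$, and hence extends uniquely by density to a tracial state $\tau_{X,\upsilon:\omega}\in \mathcal{T}_{2,0}^c$. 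Borel measurability into $(\mathcal{T}_{2,0}^c, d_{1,0})$ then follows because $d_{1,0}$ is computed from a countable family of polynomial test expressions, each $\omega$-measurable by construction.

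Second, for the bound $L$: from $X_t=S_t+\int_0^t u_s\,ds$ and Cauchy-Schwarz, $\|X_t\|_2^2\leq 2\|S_t\|_2^2+2\int_0^1\|u_s\|_2^2\,ds$ uniformly in $t$. Applying $E_{\mathcal{Z}}$ to the positive observable $(X_t^l)^*X_t^l$ yields a nonnegative $L^1(\Omega)$-function equal a.e.\ to $\tau_{X,\upsilon:\mathcal{Z}}((4i\tfrac{u_t^l+1}{u_t^l-1})^*(4i\tfrac{u_t^l+1}{u_t^l-1}))$; taking a countable sup in $(l,t)$, which agrees with the full sup thanks to the $d_{1,0}$-continuity of the state, and using Jensen to control the square root, gives $L\in L^2(\Omega,E^\omega)$, which pins the state $\tau_{X,\upsilon:\omega}$ inside $\mathcal{T}_{L(\omega)}$ almost surely.

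Third, for the ultrafilter limit: for any monomial $Q\in\mathcal{F}^m_{[0,1]}*\mathcal{F}^\nu_{\mu}$, by the definitions of the ultraproduct trace and of the central expectation,
\begin{equation*}
\lim_{N\to\omega} E_{\gamma_N}\bigl(\tau_{X^N,\upsilon^N}(Q)\bigr)=\tau_{\mathcal{M}_P^\omega}(Q(X,\upsilon))=E^\omega\bigl(\tau_{X,\upsilon:\mathcal{Z}}(Q)\bigr).
\end{equation*}
A function $f\in\mathcal{C}_{reg,p,C}$ is a continuous function of finitely many such moments plus a quadratic term weighted by $C$. When $C=0$, $f$ is bounded, and the above pointwise convergence at the level of finitely many moments combined with a variance control (obtained either from Proposition \ref{ConcentrationPoulsen} or directly from the central-expectation representation, which shows that $\mathrm{Var}(\tau_{X^N,\upsilon^N}(Q))\to 0$ along $\omega$) upgrades to the desired convergence of $E_{\gamma_N}(f(\tau_{X^N,\upsilon^N}))$. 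When $C\neq 0$, the quadratic contribution is uniformly integrable in $N$ by Lemma \ref{Exptight}, so one bootstraps bounded convergence to $L^1$ convergence in the same way, while $f(\tau_{X,\upsilon:\mathcal{Z}})$ lies in $L^1(\mathcal{Z}_P^\omega)$ thanks to the bound $L\in L^2$.

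The main obstacle I expect is the construction step: one must coordinate countably many $\mathcal{Z}$-valued observables into a single pointwise-consistent assignment $\omega\mapsto \tau_{X,\upsilon:\omega}$ that still lies in the \emph{continuous-time} state space $\mathcal{T}_{2,0}^c$. This reduces to a countability argument provided $\mathcal{A}$ is chosen with rational times and $d_{1,0}$-measurability is identified as generated by a countable test family; once that is in place, the variance/concentration step in the third part is routine since the $L^2$-ultraproduct machinery already controls second moments uniformly in $N$.
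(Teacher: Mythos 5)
Your third step contains a genuine error: you assert that $\mathrm{Var}(\tau_{X^N,\upsilon^N}(Q))\to 0$ along $\omega$, but this is false. The whole point of passing to the central expectation is that $\tau_{X,\upsilon:\mathcal{Z}}$ is a \emph{random} state (it records the nontrivial dependence on the center $\mathcal{Z}_P^\omega$, which is large precisely because $u$ is an arbitrary $L^2$ process); the variance of $\tau_{X^N,\upsilon^N}(Q)$ converges to $\mathrm{Var}(\tau_{X,\upsilon:\mathcal{Z}}(Q))$, which is generically strictly positive, and Proposition \ref{ConcentrationPoulsen} does not apply since its hypothesis (that the limit be an extremal state) fails for a genuine mixture over $\Omega$. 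The paper's argument for this step is structurally different and does not use any variance or concentration input: it establishes the equality $(f(\tau_{X^N,\upsilon^N}))^\omega = f(\tau_{X,\upsilon:\mathcal{Z}})$ as an identity in the central ultraproduct, starting from the defining case where $f$ is a single moment and extending through linearity, positivity/order-preservation, and the fact that ultraproduct respects the algebra-homomorphism operations ($x\mapsto |x|^{-1}$, $\ln$, $\exp$, $|x|^p$) that make up the elements of $\mathcal{C}_{reg,p,C}$; the expectation identity is then immediate from traciality.

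There are also substantive gaps in your first two steps, even though the countable-dense-subalgebra idea is a reasonable alternative to the Ionescu--Tulcea lifting the paper uses. To land a.e.\ in $\mathcal{T}_{2,0}^c$ you must produce, for a.e.\ $\omega$, an unbounded self-adjoint $X_t$ in the GNS representation with finite second moment and time-continuity; the unitary generators $u(X_t)$ alone do not do this, and the paper resorts to the bounded resolvent variables $R^i_{\alpha,t}$, $X^i_{\alpha,t}$ together with a countable family of algebraic relations imposed on a full-measure set so that $X_t$ can be recovered as an $L^2$ limit in each GNS representation. For the bound $L$: $\|X_t\|_2^2\le 2\|S_t\|_2^2+2\int_0^1\|u_s\|_2^2\,ds$ is a bound on expectations, not an a.e.\ bound, and you address only the $\Gamma_L$ part of $\mathcal{T}_L=K_{L\sqrt{\cdot},2}\cap \Gamma_L$. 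What is actually needed is the \emph{center-valued} increment estimate $\|X_t-X_s\|_{2,\mathcal{Z}}\le\sqrt{t-s}\,L(\omega)$ a.e., which requires a Cauchy--Schwarz argument at the level of center-valued norms, Bochner measurability of $\|u_\cdot\|_{2,\mathcal{Z}}^2$ (the paper invokes Fremlin's theorem for this), and then a truncation-and-supremum construction through the lifting $\sigma$ to turn the $L^1$-valued bound into a pointwise one.
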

  \begin{proof}
  We start from $T:P\mapsto [E_{\mathcal{Z}(\mathcal{M}_P^\omega)}(P(X,\upsilon))]$ which defines a continuous operator $T\in L(C,L^\infty(\Omega,E^\omega))$.
 Let $C=\mathcal{F}^m_{[0,1]}*\mathcal{F}^\nu_{\mu}$  and recall that $L(C,L^\infty(\Omega,E^\omega))\simeq (C\widehat{\o}_\pi L^1(\Omega,E^\omega))^*$ (see e.g. \cite[p 24,29]{Ryan}), namely the dual of the space of Bochner-integrable maps. Then from \cite[VII.4 Corol. p 95]{Ionescu-Tulcea}, $(L^1(\Omega,E^\omega;C))^*$ equals the set $\Lambda^\infty(\Omega,E^\omega;C^*)$ of (equivalence classes of) weak-* measurable  functions which are weak-* essentially bounded. But since $C$ is separable (recall we took variables only at rational times), this means that the norm itself is essentially bounded.  We write $I:L(C,L^\infty(\Omega,E^\omega))\to \Lambda^\infty(\Omega,E^\omega;C^*)$ the above isomorphism.

 Using \cite[Thm 3 p 46,Prop 1 p 77]{Ionescu-Tulcea} there are liftings:
 \begin{equation}  \sigma:L^\infty(\mathcal{Z}_P^\omega)\to M^\infty(\Omega,E^\omega), \rho:\Lambda^\infty(\Omega,E^\omega;C^*)\to M^\infty(\Omega,E^\omega;C^*)
 \end{equation}
 to the space of essentially bounded measurable maps with supremum norm. They satisfy that $\sigma$ is linear unital order preserving homomorphism, section to equivalence class and $\rho$ is a linear norm preserving section with intertwining relation $\langle \rho(f),c\rangle=\sigma(\langle f,c\rangle),c\in C$ hence $||\rho (x)||\leq \sigma (||x||)$. 
  $\rho(I(T))$  as $\tau_{X,\upsilon:\mathcal{Z}}$ satisfies the requirements with value $\mathcal{T}(\mathcal{F}^m_{[0,1]}*\mathcal{F}^\nu_{\mu})$. {We must check the smaller space of value $\mathcal{T}_{2,0}^c$.} To prove that, we will actually change slightly our choice. Note that for $\alpha\in \Q_+$, $R_{\alpha,t}^i=\alpha((X_t^i)^*X_t^i+\alpha)^{-1}, X_{\alpha,t}^i=\sqrt{\alpha}X_t^i((X_t^i)^*X_t^i+\alpha)^{-1}$ are bounded by $1$ in operator norm. 
  Recall that $\tau_{X,\upsilon}=\tau_{u(X),\upsilon}$ and we can also consider, following the notation of section 2.8, $\sigma_{\alpha(X^*X+\alpha)^{-1},\sqrt{\alpha}X(X^*X+\alpha)^{-1},u(X),\upsilon}\in \mathcal{T}(C'), C'=C^0([-1,1])^{*2m\times([0,1]\cap \Q)\times\Q_+}*\mathcal{F}^m_{[0,1]}*\mathcal{F}^\nu_{\mu}$ for some tracial state space of a universal $C^*$-algebra containing also self-adjoint variables for contractions $R_{\alpha,t}^i,X_{\alpha,t}^i$. Hence one gets: $T':P\mapsto [E_{\mathcal{Z}(\mathcal{M}_P^\omega)}(P((R_\alpha),(X_\alpha),u(X),\upsilon))]$ and $T'\in L(C',L^\infty(\Omega,E^\omega))$. One gets similar maps 
  $I':L(C',L^\infty(\Omega,E^\omega))\to \Lambda^\infty(\Omega,E^\omega;(C')^*),$ $\rho'$ a lifting for the last space $J:C\to C'$ gives the weak- continuous $J^*$ and one can take:$\sigma_{(R_\alpha),(X_\alpha),u(X),\upsilon:\mathcal{Z}}=\rho'(I'(T'))$ and change our choice in profit of $\tau_{X,\upsilon:\mathcal{Z}}=J^*\circ(\rho'(I'(T')))1_A+
  1_{A^c}\sigma_{\alpha(1+\alpha)^{-1},\sqrt{\alpha}(1+\alpha)^{-1},u(1),\upsilon}$ for a full measure set $A$ we will now specify in order to impose everywhere the expected relations we want on $R_\alpha,X_\alpha,u(X)$ and will depend on the variable $L$ of the statement we will fix afterwards  and with the centre valued norm $||(1-R_{\alpha,t})\alpha ||_{1,\mathcal{Z}}\leq||X_t||_{2,\mathcal{Z}}^2\leq L^2$ on $A$. $(u(X^j_t)-1)/8i$ is supposed to be the inverse of $X^j_t-4i$. Note that $||\sqrt{\alpha}X_{\alpha,t}-X_t||_1=|| (X_t^*X_t)|X_t|(\alpha+X_t^*X_t)^{-1}||_{1,\mathcal{Z}}\leq L^2/\sqrt{\alpha}.$ For the countably many $Q=P(X_\alpha,R_\alpha,u(X),\upsilon)$ NC-monomials in $X_\alpha,R_\alpha,u(X),\upsilon$, one considers the set $A\subset\{L<\infty,\forall t\in \Q\cap[0,1],n\in\N:\sigma(\min(n,||X_t||_{2,\mathcal{Z}}^2))\leq L^2\}$ where for all such $P$ \begin{align*}&|\rho'(I'(T'))(Q(u(X^i_t)-1)(X^i_{\alpha,t}-4i))-8i\rho'(I'(T'))(Q)|\leq 2||P||_\infty L^2/\sqrt{\alpha}, 
  \\&|\rho'(I'(T'))(Q(X^i_{\alpha,t}-4i)(u(X^i_t)-1))-8i\rho'(I'(T'))(Q)|\leq 2||P||_\infty L^2/\sqrt{\alpha}, 
  \\&|\rho'(I'(T'))(Q((X^i_{\alpha,t})^*X^i_{\alpha,t}-R^i_{\alpha,t}(1-R^i_{\alpha,t})))|=0
  \\&|\rho'(I'(T'))(Q(R^i_{\alpha,t}\alpha^{-1}-R^i_{\beta,t}\beta^{-1}
  -(\beta-\alpha)R^i_{\alpha,t}\alpha^{-1}R^i_{\beta,t}\beta^{-1}))|=0,\\&|\rho'(I'(T'))(Q(X^i_{\alpha,t}\alpha^{-1/2}-X^i_{\beta,t}\beta^{-1/2}-(\beta-\alpha)X^i_{\alpha,t}
  \alpha^{-1/2}R^i_{\beta,t}\beta^{-1}))|=0.\end{align*}
  $A$  is a full measure set since the relations with $T'$ instead of $\rho'(I'(T'))$ are satisfied almost everywhere and from the choice of $L$. 
  Hence the equality expected from $\tau_{X,\upsilon:\mathcal{Z}}$ is not altered from adding $1_A$. But restricted to $A$, 
    $\sigma_{(R_\alpha),(X_\alpha),u(X),\upsilon:\mathcal{Z}}$ is valued in states such that in their GNS representation $R^i_{\alpha,t}\alpha^{-1}$ is a strongly continuous contraction resolvent on $L^2$ (in the sense of \cite[Def I.1.4]{MaR}), with generator say $(X^i_t)^*X^i_t$ affiliated and in $L^1$ (from $||(1-R_{\alpha,t})\alpha ||_{1,\mathcal{Z}}\leq L^2$) with the polar decomposition of $X^i_{\alpha,t}$ determined appropriately (especially the last relation imposes a unique partial isometry in all those polar decompositions) giving rise to a limit $X^i_t=\lim_{\alpha\to\infty}\sqrt{\alpha}X_{\alpha,t}^i\in L^2$  with the inverse relation $(u(X^i_t)-1)=8i(X^i_t-4i)^{-1}$ in their GNS representation so that the belonging to $\mathcal{T}_{2,0}^c$ can be tested in the GNS representation for $\sigma_{(R_\alpha),(X_\alpha),u(X),\upsilon:\mathcal{Z}}$ which is the same as the one for its restriction $\tau_{X,\upsilon:\mathcal{Z}}$ and it suffices to prove the second statement about $L$. This requires a little care since 
    there is no lifting of $L^p(\Omega),p<\infty$ \cite[Th IV.4.6]{Ionescu-Tulcea}.
    
     For the center-valued trace $E_{Z}$ one can estimate the center valued norm $||x||_{2,Z}^2= E_{Z}(x^2)$ and get (recall $||S_t-S_s||_{2,Z}=\sqrt{m(t-s)}$ by the proof a.s. convergence of GUE giving equality in ultraproduct of centers and $E_{Z}(ab)= E_{Z}(ba)$) $\sqrt{\alpha}||X_{\alpha,t}-X_{\alpha,s}||_{2,Z}\leq 3||X_t-X_s||_{2,Z}$ and \begin{equation}\label{TauL}||X_t-X_s||_{2,Z}\leq ||S_t-S_s||_{2,Z}+ ||\int_s^{t}u_vdv||_{2,Z}\leq\sqrt{m(t-s)}+ \sqrt{t-s}\sqrt{\int_0^{1}||u_v||_{2,Z}^2 dv}.\end{equation}

Indeed $u$ is assumed Bochner measurable so that $u_.^2$ is Bochner-measurable with value in $L^1$, $||u_.||_{2,Z}^2$ is Borel-measurable with value in the $L^1$-space of the center (since conditional expectation is continuous hence Borel measurable). Since $[0,1]$ is a Radon measure space, one deduces from Fremlin's theorem (\cite[Thm 2B]{Fremlin}, see also the review \cite[Thm 4.1]{KupkaPrikry}, or \cite[418G,451S]{FremlinBook})  that $||u_.||_{2,Z}^2$ is also $L^1$ valued Bochner measurable.  Thus for $a\geq 0$ in the center by a standard Cauchy-Schwartz inequality (following the use of triangular inequality for  $\tau(a||\cdot||_{2,Z}^2)^{1/2}$): $$\tau(a||\int_s^{t}u_vdv||_{2,Z}^2)
\leq \Big(\int_s^{t}dv\tau(a||u_v||_{2,Z}^2)^{1/2}\Big)^2\leq (t-s)\int_s^{t}\tau(a||u_v||_{2,Z}^2)dv=(t-s)\tau\Big(a\int_s^{t}||u_v||_{2,Z}^2dv\Big)$$
 where the last equality is identification of Bochner and Pettis integral in the Bochner measurable case.
 Note also that $\int_s^{t}||u_v||_{2,Z}^2dv$ is in the $L^1$ space of the center with norm $\int_s^{t}||u_v||_{2}^2dv.$ 
 We can now define $$L=\sup_{n\in\N}\sigma\left(\sqrt{m}+\min(n,\sqrt{\int_0^{1}||u_v||_{2,Z}^2 dv})\right).$$
We claim this gives (using also $X_0=0$ and the properties of liftings) that  $\tau_{X,\upsilon:\mathcal{Z}}(\omega)\in \mathcal{T}_{3L(\omega)}
 $  for almost every $\omega\in A$. 
 Indeed, $L_n=\sigma\left(\min(n,\sqrt{\int_0^{1}||u_v||_{2,Z}^2 dv})\right)$ is an increasing sequence of random variable with $L_m=\min(m,L_n)$ for $m\leq n$ since $\sigma$ is a lifting and thus preserves $\min$. Hence for $\omega\in A$, since $\sup_nL_n(\omega)\leq N<\infty$ this gives $N=N(\omega)$ with  $L_n(\omega)=L_N(\omega), n\geq N$.
    Hence on $A$, $\sqrt{\int_0^{1}||u_v||_{2,Z}^2 dv}=L-\sqrt{m}$ almost everywhere. 
Moreover, since $||X_t-X_s||_{2,Z}\in L^2(\Omega,E^\omega)$, by the lifting property, $\sigma(\min(   ||X_t-X_s||_{2,Z},(\sqrt{m}+n)\sqrt{t-s}))\leq \sqrt{t-s}L_n\leq \sqrt{t-s}L$. and thus taking the $\sup_n$ one gets a variable a.e. equal to $||X_t-X_s||_{2,Z}$ so that almost everywhere on $A$ for every $t,s\in \Q\cap[0,1]$: $\sqrt{\alpha}||X_{\alpha,t}-X_{\alpha,s}||_{2,Z}/3\leq ||X_t-X_s||_{2,Z}\leq \sqrt{t-s}L$
   which means that on the same set taking the limit $\alpha\to \infty$ in each GNS representation, one knows 
    $\tau_{X,\upsilon:\mathcal{Z}}(\omega)\in \mathcal{T}_{3L(\omega)}
 .$

  Let us turn to equality of limits. From the definition of the state, it suffices to get $(f(\tau_{X^N,\upsilon^N})^\omega)=f(\tau_{X,\upsilon:\mathcal{Z}})$. This relation extends by linearity, order preservation and algebra morphism in $f$ (as measurability does). Especially, if it true for $f\geq 1$, it is true for $|f|^{-1}$ and then for $\ln(|f|)=\int_1^{|f|}\frac{1}{t}dt$ by uniform approximation by Riemann sums and  for any $f$ similarly it is true for $\exp(f)$, and combining this as a consequence for $|f|^p, p>0.$ From the form of functions  in  $\mathcal{C}_{reg,p,C}$, it suffices to consider $f$ given by $f(\tau)=\tau ((4i\frac{u_j^l+1}{u_j^l-1})^*(4i\frac{u_j^l+1}{u_j^l-1}))$ (measurability in this case comes from limit of $X_{t,\alpha}$) or $f(\tau)=\tau((u_{j_1^i(I)}^{l_1^i(I)})^{\epsilon_1^i(I)}...(u_{j_{m_i}^i(I)}^{l_{m_i}^i(I)})^{\epsilon_{m_i}^i(I)}))$. Since we already saw canonical unitaries commutes with ultraproducts, this is then the definition of $\tau_{X,\upsilon:.}$ since $E_{\mathcal{Z}(\mathcal{M}_P^\omega)}$ is given by the ultraproduct of traces.
  
 We finally prove Borel measurability with value $(\mathcal{T}_{2,0}^c(\mathcal{F}^m_{[0,1]}*\mathcal{F}^\nu_{\mu}),d_{1,0})$ . Since this is a separable metric space it suffices to prove Baire measurability, i.e. the measurability of any composition with a continuous real valued function \cite[4A3N,4A3L]{FremlinBook}. Take such a bounded continuous $f$, then  $f(\tau_{X,\upsilon:\mathcal{Z}})=f(\tau_{X,\upsilon:\mathcal{Z}})1_{\{L<\infty\}},\ a.e.$ hence it suffices to check $f(\tau_{X,\upsilon:\mathcal{Z}})1_{\{L\leq n\}}$ measurable. But since $\tau_{X,\upsilon:\mathcal{Z}}$ is valued on a compact set $\mathcal{T}_n$, $f$ it is a limit of functions $f_m\in Min(\mathcal{C}_{reg,p,C=0})$ from the well-separating property of this set. Hence $f(\tau_{X,\upsilon:\mathcal{Z}})1_{\{L\leq n\}}=\lim_{m}f_m(\tau_{X,\upsilon:\mathcal{Z}})1_{\{L\leq n\}}$ is measurable since we checked the case of $f_m$ earlier.\end{proof}

     We are finally ready to define our candidate to be a limiting function for $f\in \mathcal{C}_{reg,p,C}=\mathcal{C}_{reg,p,C}( \mathcal{T}_{2,0}^c(\mathcal{F}^m_{[0,1]}*\mathcal{F}^\nu_{\mu}),d_{2,0}))$ for $p\in[ 1,\infty], C\in [0 ,\infty[$.  We also define it for $ f\in C^0_{(k)}(   \mathcal{T}_{2,0}^c(\mathcal{F}^m_{[0,1]}*\mathcal{F}^\nu_{\mu}),d_{2,0}),$ bounded from below.
         
          $$\Lambda_{b,\Upsilon}^\omega(f):=\inf\{\frac{1}{2}\int_0^1||u_s||^2ds+E^\omega(f(\tau_{S+\int_0^{.}u_sds,\upsilon:z})): u\in \mathcal{P}_{ad,g}^{\omega,0}((\mathcal{M}_{P,s}^\omega)_{s\geq 0})\cap \mathcal{P}_{ad,b}^{\omega,0}((\mathcal{M}_{P,s}^\omega)_{s\geq 0})\}.$$
   
We will also need in the second paper of this series a value function analogue of this expected limit.
Let $t\in [t_i, t_{i+1}[$,  $X_1,..., X_{i+1}\in L^2(\mathcal{M}_{P,t}^\omega)_{sa},$ we define for $t\leq s$ and a function $f$ on the ultraproduct for $s\in [t_l,t_{l+1}[$:

\begin{align*}&\Lambda_{b,\Upsilon}^{\omega,s}(f; t, X_1,..., X_{i+1}):=\inf\{\frac{1}{2}\int_t^s||u_v||^2dv\\&+f(X_1,..., X_i,X_{i+1}+S_{t_{i+1}}-S_t+\int_t^{t_{i+1}}u_vdv,...,X_{i+1}+S_{t_l}-S_t+\int_t^{t_{l}}u_vdv X_{i+1}+S_s-S_t+\int_t^{s}u_vdv )\\&\qquad \qquad \qquad  :u\in \mathcal{P}_{ad,g}^{\omega,[t,s]}((\mathcal{M}_{P,s}^\omega)_{s\geq 0})\cap \mathcal{P}_{ad,b}^{\omega,[t,s]}((\mathcal{M}_{P,s}^\omega)_{s\geq 0})\}.\end{align*}

We then write for $f\in \mathcal{C}_{reg,p,C}$:$$\Lambda_{b,\Upsilon}^{\omega}(f; t, X_1,..., X_{i+1})=\Lambda_{b,\Upsilon}^{\omega,1}(E^\omega(f(\tau_{.,\upsilon:z})); t, X_1,..., X_{i+1}).$$  We may also write for simplicity $\mathcal{V}_t^{f,\omega}(X_1,..., X_{i+1})=\Lambda_{b,\Upsilon}^\omega(f; t, X_1,..., X_{i+1}).$

We start by finding a first estimate for an alternative formula for    $\Lambda_{b,\Upsilon}^\omega(f)$ that will be more convenient for the Laplace deviation upper bound since a piecewise linear functional depends locally on finitely many values and are easier to make converge in ultraproducts.

 We give a name to the piecewise linear part :
 \begin{align*}&\mathcal{P}_{ad,d,pl}^{\omega,t}(\mathcal{F})=\{u\in\mathcal{P}_{ad,d}^{\omega,t}(\mathcal{F})\cap \mathcal{P}_{ad,b}^{\omega,t}(\mathcal{F}):\\& \exists\ an \ ordinal\ \nu<\omega_1,  s_\lambda, \lambda \leq \nu, s_0=t,s_\nu=1, s_l\leq s_\mu \ if\ l\leq \mu,\  \forall l\leq \nu,\ u_{s_{l+1}}\in L^2_{sa}(M_{s_{l}})^m,  \\&\qquad \forall l <\nu \ \mathrm{limit} \ u_{s_l}=u_{s_{1+1}}, \forall s\in [s_l,s_{l+1}],\quad u_s=\frac{s-s_l}{s_{l+1}-s_l} u_{s_{l+1}}+\frac{s_{l+1}-s}{s_{l+1}-s_l}u_{s_{l}^j}\}\end{align*}
   We call $\mathbf{s}(u)$ the sequence of times appearing in the definition with  minimal ordinal $\nu(u)$.

\begin{lemma}\label{EquivLambdaX} For any $ f\in \mathcal{C}_{reg,p,C}( \mathcal{T}_{2,0}^c(\mathcal{F}^m_{[0,1]}*\mathcal{F}^\nu_{\mu}),d_{2,0})),$  for any $t\in ]t_i, t_{i+1}]$,  $X=(X_1,..., X_{i+1})\in L^2(\mathcal{M}_{P,t}^\omega)_{sa}^{m(i+1)},$ we have :
$$\Lambda_{b,\Upsilon}^{\omega}(f; t, X)\geq \inf\{\frac{1}{2}\int_t^1||u_s||^2ds+E^\omega(f(\tau_{(X_1,..., X_i,X_{i+1}+S+\int_t^{.}u_sds,\upsilon:z})): u\in \mathcal{P}_{ad,d,pl}^{\omega,t}((\mathcal{M}_{P,s+}^\omega)_{s\geq 0})
\}.$$
\end{lemma}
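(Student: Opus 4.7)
My plan is to adapt the approximation argument of Lemma \ref{EquivLambda} to the ultraproduct setting, using that any admissible control $v \in \mathcal{P}_{ad,g}^{\omega,t}(\mathcal{M}_{P,\cdot}^\omega)\cap \mathcal{P}_{ad,b}^{\omega,t}(\mathcal{M}_{P,\cdot}^\omega)$ has essentially bounded operator norm and left-continuous $L^2$ paths with at most countably many discontinuity points. First I would fix $\epsilon>0$ and pick such a $v$ achieving the infimum in the definition of $\Lambda_{b,\Upsilon}^{\omega}(f;t,X)$ up to $\epsilon$. Let $Y_s = X_{i+1}+S_s-S_t+\int_t^s v_r\,dr$ be the associated process on $[t,1]$, and $D\subset [t,1]$ a countable set containing the discontinuity points of $v$, all the nodes $t_{i+1},\dots, t_k$, and a dense subset of $[t,1]$.

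Next I would enumerate $D$ via a countable ordinal, and inductively build a mesh $\mathbf{s}=(s_\lambda)_{\lambda\le \nu(u)}$ together with a piecewise linear control $u\in \mathcal{P}_{ad,d,pl}^{\omega,t}$ whose values $u_{s_{\lambda+1}}\in L^2_{sa}(\mathcal{M}_{P,s_\lambda}^\omega)^m$ are determined by the recursion used in Lemma \ref{EquivLambda}: at successor ordinals, choose $u_{s_{\lambda+1}}$ so that, via \eqref{PLInt}, the antiderivative $\int_t^{s_{\lambda+1}} u_r\,dr$ equals $\int_t^{s_\lambda} v_r\,dr$ (a lag of one step, which is what enforces the adaptedness constraint). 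At limit ordinals take $u_{s_\lambda}=u_{s_\lambda+1}$. Because $v$ is bounded in operator norm and essentially left-continuous, the same telescoping estimate as in Lemma \ref{EquivLambda} shows $\|u_{s_{\lambda+1}}-v_{s_\lambda}\|_2$ is controlled by an oscillation of $v$ that vanishes as the mesh is refined, and the operator norm bound on $\int_t^{\cdot}u\,dr$ persists by construction since the antiderivatives agree at mesh points.

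Then I would refine the mesh along a sequence $(\mathbf{s}^n)$, obtaining approximants $u^{(n)}\in\mathcal{P}_{ad,d,pl}^{\omega,t}$ such that $Y_{s_\lambda}=X_{i+1}+S_{s_\lambda}-S_t+\int_t^{s_\lambda}u^{(n)}_r\,dr$ at all mesh points (and at the prescribed times $t_{i+1},\dots,t_k$ in particular), $u^{(n)}\to v$ in $L^2([t,1],L^2(\mathcal{M}_P^\omega))$, and $\sup_s\|\int_t^s u^{(n)}_r\,dr-\int_t^s v_r\,dr\|_2\to 0$ with uniform operator norm control. The first two properties give $\tfrac12\int_t^1\|u^{(n)}\|_2^2\,ds\to \tfrac12\int_t^1\|v\|_2^2\,ds$, and since the corresponding processes $Z^{(n)}$ built from $u^{(n)}$ converge to $Y$ in $L^2$ uniformly in $s$ with a common operator norm bound, Lemma \ref{agreementUltraprod} together with the Lipschitzness of $f\in\mathcal{C}_{reg,p,C}$ for $d_{2,0}$ (condition \eqref{LipReg}) yields convergence of $E^\omega(f(\tau_{Z^{(n)},\upsilon:z}))$ to $E^\omega(f(\tau_{Y,\upsilon:z}))$.

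The main obstacle I foresee is keeping the adaptedness constraint $u_{s_{\lambda+1}}\in L^2_{sa}(\mathcal{M}_{P,s_\lambda}^\omega)^m$ intact while also matching integrals at mesh points, because the inductive correction formula propagates $L^2$-errors which must be controlled across possibly transfinitely many stages; this is handled by always retaining one step of delay in measurability (as in Lemma \ref{EquivLambda}) and restricting refinements to be eventually dyadic between consecutive previously-chosen node times, so the geometric summability of the error bound survives. A subsidiary subtlety is the verification that $\tau_{Z^{(n)},\upsilon:z}\to \tau_{Y,\upsilon:z}$ for $d_{2,0}$ pointwise (or at least $\mu^\omega$-a.e.\ after lifting) which follows from the uniform operator norm and $L^2$ convergence combined with the explicit description of the state via the centre-valued conditional expectation in Lemma \ref{agreementUltraprod}.
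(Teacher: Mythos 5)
Your proposal transplants the antiderivative-matching recursion from Lemma~\ref{EquivLambda}, but that recursion's convergence hinged on the $1/8$-H\"older continuity of the control $v$ (controls in $\mathcal{P}_{ad,1/8,\mathbf{t}}$), which gave the geometric summability $\sum_I |s^l_{I-1}-s^l_I|^{1/8}<\infty$ controlling the telescoped errors $\|u^{(n)}_{s_{\lambda+1}}-v_{s_\lambda}\|_2$. In the present ultraproduct setting the controls $v\in\mathcal{P}_{ad,g}^{\omega,t}\cap\mathcal{P}_{ad,b}^{\omega,t}$ are merely left-continuous with right limits in $L^2$ with countably many jumps --- no modulus of continuity at all. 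The recursion $u_{s_{\lambda+1}}=\frac{2}{s_{\lambda+1}-s_\lambda}\int_{s_{\lambda-1}}^{s_\lambda}v - u_{s_\lambda}$ then propagates oscillations of $v$ across successive mesh intervals with no decay, so the ``eventually dyadic'' refinement you invoke does not restore summability, and the claimed operator-norm control of $\int_t^\cdot u^{(n)}$ between nodes fails for the same reason. This is a genuine gap, not a technicality.

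The deeper issue is that the matching of antiderivatives is not needed here at all. In Lemma~\ref{EquivLambda} it was forced by the factoriality constraint in $\mathcal{P}_{ad,factor,\mathbf{t}}$, which is an operator-algebraic condition that would not survive $L^2$ perturbations at the node times; but the target class $\mathcal{P}_{ad,d,pl}^{\omega,t}$ in the present lemma has no such constraint. The intended argument is much lighter: first observe that passing to the right-continuous filtration $(\mathcal{M}_{P,s+}^\omega)$ only enlarges the admissible set and hence only lowers the infimum defining $\Lambda_{b,\Upsilon}^\omega(f;t,X)$; replace $v$ by its r.c.l.l.\ modification (same cost, since $v$ has at most countably many discontinuities); build by ordinal induction a mesh $(s_\lambda)$ on which the oscillation of $v$ is $\leq\epsilon/2$, using right continuity at successor steps; set $u^{(n)}_{s_l}=v_{s_{l-1}}$ (delay by one step to satisfy the adaptedness constraint $u_{s_{l+1}}\in L^2_{sa}(M_{s_l})^m$) and $u^{(n)}_{s_\lambda}=v_{s_\lambda}$ at limit ordinals, then interpolate linearly. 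This gives $\|u^{(n)}-v\|_\infty\leq\epsilon$ uniformly, and continuity of the quadratic cost and of $f$ (via \eqref{LipReg} and Lemma~\ref{agreementUltraprod}) finishes the argument --- no matching, no telescoping, no H\"older needed. You should also address explicitly the passage from $(\mathcal{M}_{P,s}^\omega)$ to $(\mathcal{M}_{P,s+}^\omega)$ and the regularization of $v$ from left-continuous to right-continuous, both of which your sketch leaves implicit.
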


\begin{proof}
First note that $\Lambda_{b,\upsilon}^{\omega,t}(f; t, X_1,..., X_{i+1})$
is larger than the corresponding quantity where the filtration is replaced by the associated larger right continuous filtration $((\mathcal{M}_{P,s+}^\omega)_{s\geq 0})$.

   Fix $v\in \mathcal{P}_{ad,d}^{\omega,t}((\mathcal{M}_{P,s+}^\omega)_{s\geq 0})\cap \mathcal{P}_{ad,b}^{\omega,t}((\mathcal{M}_{P,s+}^\omega)_{s\geq 0})$. Moreover, we can replace $v$ by the right continuous process with left limits which gives the same value and which by right continuity is still adapted to the same filtration.
   
Fix $\epsilon=1/n>0$ and build by ordinal induction the following sequence of times.  Take $s_0=t$, Pick $s_1$ with $||v_s-v_t||_2\leq \epsilon/2$ for $s\in [t,s_1]$ by right continuity at $t$, and then at successor step $s_{l+1}>s_l$ with $||v_s-v_{s_l}||_2\leq \epsilon/2$ for $s\in [s_l,s_{l+1}]$ and, at limit step, take $s_{\lambda}=\sup_{l<\lambda} s_l.$
Stop at $\nu$ when $s_\nu=1$

 We have uniform bound $||v||_\infty=\sup_{t\in [0,1]}||v_t||_\infty,$ and define $u^{(n)}\in \mathcal{P}_{ad,d,pl}$ as follows.
   First we take for $l\leq \nu$ successor ordinal, $\lambda$ limit ordinal or $0$, $$u^{(n)}_{s_l}=v_{s_{l-1}}, u^{(n)}_{s_\lambda}=v_{s_{\lambda}},$$ which is compatible with the measurability constraint $ u_{s_l}\in L^2_{sa}(M_{s_{l-1}})^m.$  We then interpolate by linearity as we said.
   
   Note that for $t\in  [s_\lambda,s_{\lambda+1}]$, $u^{(n)}_t=v_{s_{\lambda}}$ and therefore $   ||v_t-u^{(n)}_t||_2\leq \epsilon/2$ by assumption on the sequence of times. If $t\in  [s_l,s_{l+1}]$, $u^{(n)}_t$ is a convex combination of $v_{s_{l-1}}$  and $v_{s_{l}}$ and therefore $   ||v_t-u^{(n)}_t||_2\leq \max(||v_t-v_{s_{l-1}}||_2, ||v_t-v_{s_{l}}||_2)\leq  \epsilon$.
   
   Finally note that we still have $u\in L^\infty_{ad}$, included in a separable filtration, continuous at all but at most countably many times ($s_\lambda$, $\lambda$ limit ordinal) at which it still has left limits from this property of $v$ and is obviously right continuous.
   
   Of course, we have:  $$\Big| \frac{1}{2}\int_t^1||v_s||^2ds-\frac{1}{2}\int_t^1||u_s^{(n)}||^2ds\Big|\leq 2\epsilon ||v||_\infty(1-t)\leq 2\epsilon ||v||_\infty.$$
   A similar bound and the continuity of $f$ insures that the value at $u_s^{(n)}$ is as close as we want from the one at $v$.
   \end{proof}

\begin{theorem}\label{MainTechnicalNonConvex}
Fix  $\Upsilon_N\in \mathcal{U}(M_N(\C))^{\mu\nu}$ (deterministic).
Assume that the non-commutative law $\tau_{\Upsilon_N}$ converges to some $\mu_\Upsilon\in (\mathcal{T}(\mathcal{F}^\nu_{\mu}),d)$. 

Let $\gamma_{sa,N,m}=\gamma_N$ the law of hermitian $N\times N$ brownian motion $W_s^N\in (M_N(\C))^m$, then, for any 
$f\in \mathcal{C}_{reg,p,C=0}(   \mathcal{T}_{2,0}^c(\mathcal{F}^m_{[0,1]}*\mathcal{F}^\nu_{\mu}),d_{1,0}),$ for $p\in[ 2,\infty]$   then for any non-principal ultrafilter $\omega\in\beta\N-\N$, one can compute the limit :
$$\lim_{N\to \omega} -\frac{1}{N^2}\log E_{\gamma_{sa,N,m}}(e^{-N^2 f(\tau_{W,\Upsilon_N})})=\Lambda_{b,\Upsilon}^\omega(f).$$ 
Moreover, for any $p\in[ 2,\infty[$, with the definition of $h_t^{N,\Upsilon_N}$ of Theorem \ref{minimisationHermitianNonconvex}, $t\in ]t_i, t_{i+1}]$,  $X_1,..., X_{i+1}\in L^2(\mathcal{M}_{P,t}^\omega),$ $X_l=(X_{l,N})^\omega$ any representative, we have: $$\lim_{N\to \omega}\frac{1}{N^2}E(h_t^{N,\Upsilon_N}(X_{1,N},\cdots, X_{i+1,N}))=\Lambda_{b,\Upsilon}^\omega(f; t, X_1,..., X_{i+1}).$$
Finally, the infimum in the definition of $\Lambda_{b,\Upsilon}^\omega(f; t, X_1,..., X_{i+1})$ is attained, at a control $u$ which is a martingale and such that $||u_s||_\infty\leq C(f)$ for any $s$.
\end{theorem}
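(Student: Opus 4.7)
The proof mirrors the two-sided structure of Theorem \ref{MainTechnical}, with the convex Bou\'e--Dupuis--\"Ust\"unel identity and Corollary \ref{UstunelFinal} replaced by the Lehec-type identity of Theorem \ref{minimisationHermitianNonconvex}, and with pointwise convergence of traces replaced by the centre-valued ultraproduct limit supplied by Lemma \ref{agreementUltraprod}. The hypothesis $C=0$ in $\mathcal{C}_{reg,p,0}$ is essential: it removes the quadratic term in the potential, so that each cyclic derivative $\tfrac{1}{\sqrt{N}}\mathscr{D}_j g_N$ is a linear combination of products of contractions (the $u(X^l_j)$ and $\Upsilon_N$), hence uniformly operator-norm bounded in $N$ by a constant $C(f)$ depending only on the monomials representing $f$.

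For the lower bound (i.e.\ the classical LDP upper bound), Theorem \ref{minimisationHermitianNonconvex} gives the exact identity $-\frac{1}{N^2}\log E_{\gamma_N}(e^{-N^2 f(\tau_{W,\Upsilon_N})}) = E\bigl( f(\tau_{X^{G,N,\Upsilon_N},\Upsilon_N}) + \tfrac{1}{2}\int_0^1 \|b^{G,N,\Upsilon_N}(s,X^{G,N,\Upsilon_N}(s))\|_2^2\,ds\bigr)$. The uniform bound just mentioned yields $u^G_s := (b^{G,N,\Upsilon_N}(s, X^{G,N,\Upsilon_N}(s)))^\omega \in \mathcal{M}_P^\omega$ with $\|u^G_s\|_\infty \leq C(f)$, and likewise $Y^G := (X^{G,N,\Upsilon_N})^\omega \in \mathcal{M}_P^\omega$. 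The process $u^G$ is adapted to $(\mathcal{M}_{P,s}^\omega)$, lives in a separable subalgebra, and is continuous on each $(t_i,t_{i+1}]$ as a limit in $L^2$ of the matrix processes (by the $L^2$-regularity of $b^{G,N,\Upsilon_N}$ in Theorem \ref{minimisationHermitianNonconvex}), so after modification at a countable set it belongs to $\mathcal{P}_{ad,g}^{\omega,0}\cap \mathcal{P}_{ad,b}^{\omega,0}$. Lemma \ref{agreementUltraprod} converts the expected $f$-term into $E^\omega(f(\tau_{Y^G,\upsilon:z}))$, while Riemann-sum approximation of the energy integral (justified by the uniform $L^2$-H\"older control inherited from $\mathcal{C}_{reg,p,0}$) identifies its limit with $\tfrac{1}{2}\int_0^1 \|u^G_s\|_2^2 ds$; together, the common $\omega$-limit is bounded below by $\Lambda_{b,\Upsilon}^\omega(f)$.

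For the upper bound, fix $\epsilon > 0$ and use Lemma \ref{EquivLambdaX} to obtain a piecewise-linear right-continuous $u \in \mathcal{P}_{ad,d,pl}^{\omega,0}$ within $\epsilon$ of $\Lambda_{b,\Upsilon}^\omega(f)$. Reconstruct matricial controls $V_s^N$ adapted to the hermitian brownian filtration exactly as in Step~3 of the proof of Theorem \ref{MainTechnical}: lift each time-value $u_{s_l}$ to $U_{s_l}^N$ with $(U_{s_l}^N)^\omega = \int_0^{s_l} u_r\,dr$, truncate in operator norm when possible, and propagate by the interpolation identity $\int_{s_l}^{s_{l+1}} V_s^N\,ds = U_{s_{l+1}}^N - U_{s_l}^N$ (now indexed by a countable ordinal rather than $\mathbb N$). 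The general variational inequality implicit in Theorem \ref{minimisationHermitianNonconvex} gives the upper bound $-\frac{1}{N^2}\log E_{\gamma_N}(e^{-N^2 f(\tau_{W,\Upsilon_N})}) \leq E(f(\tau_{W^N+\int_0^\cdot V_s^N ds, \Upsilon_N})) + \tfrac{1}{2} E(\int_0^1 \|V_s^N\|_2^2 ds)$; taking $N\to\omega$ via Lemma \ref{agreementUltraprod} for the first term and Riemann sums for the second yields $\Lambda_{b,\Upsilon}^\omega(f) + \epsilon$, and $\epsilon\to 0$ closes the sandwich.

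The value-function limit is obtained by restricting all previous arguments to the subinterval $[t,1]$ after freezing $(X_1,\dots,X_{i+1})$ as initial data, with the starting identity now being the formula for $\tfrac{1}{N^2} h_t^{N,\Upsilon_N}$ in Theorem \ref{minimisationHermitianNonconvex}; the reduction of $p=\infty$ to $p\in[2,\infty[$ proceeds verbatim as in Step~1 of the proof of Theorem \ref{MainTechnical}. Equality of both limits with the infimum forces its attainment, $u^G$ of the lower-bound step being an attainer. Its martingale property is read directly from the conditional-expectation formula of Theorem \ref{minimisationHermitianNonconvex}, which exhibits
\[ b^{G,N,\Upsilon_N}(s, X^{G,N,\Upsilon_N}_s) = -\frac{1}{\sqrt{N}}\, E\Big[ \sum_{j=i+1}^k \mathscr{D}_{j} g_N\bigl(\sqrt{N}X_{t_1}^{G,N,\Upsilon_N},\dots,\sqrt{N}X_{t_k}^{G,N,\Upsilon_N}\bigr) \,\Big|\, \mathcal{F}_s \Big] \]
as conditional expectation of a \emph{single} $L^\infty$-bounded terminal random variable (bounded thanks to $C=0$). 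Both the conditional-expectation structure and the operator-norm bound are preserved under the von Neumann ultraproduct with respect to the filtration $(\mathcal{M}_{P,s}^\omega)$, yielding the martingale property of $u^G$ and $\|u^G_s\|_\infty \leq C(f)$. The principal obstacle is the measurability content of Lemma \ref{agreementUltraprod}, which underwrites all interchanges of $f$-evaluation with the ultraproduct limit; a secondary subtlety is verifying that $u^G$ genuinely lies in $\mathcal{P}_{ad,g}^{\omega,0}\cap\mathcal{P}_{ad,b}^{\omega,0}$, which is precisely what the $C=0$ hypothesis forces.
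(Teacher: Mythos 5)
The overall skeleton you propose---reduce $p=\infty$ to $p<\infty$, lower bound via the exact Lehec-type identity of Theorem \ref{minimisationHermitianNonconvex}, upper bound via Lemma \ref{EquivLambdaX} and reconstruction of matricial controls---matches the paper, and the observations about the role of $C=0$ (uniform operator-norm bound $\|u^G_s\|_\infty\le C(f)$) and about Lemma \ref{agreementUltraprod} replacing pointwise concentration are correct. However, your justification that $u^G$ lands in $\mathcal{P}_{ad,g}^{\omega,0}\cap\mathcal{P}_{ad,b}^{\omega,0}$ contains a genuine gap.

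You write that $u^G$ is ``continuous on each $(t_i,t_{i+1}]$ as a limit in $L^2$ of the matrix processes (by the $L^2$-regularity of $b^{G,N,\Upsilon_N}$ in Theorem \ref{minimisationHermitianNonconvex})''. Theorem \ref{minimisationHermitianNonconvex} contains no such estimate: unlike the convex Theorem \ref{minimisationHermitian}, which proves $E(\|b^{G,N,\Upsilon_N}(t,X_t)-b^{G,N,\Upsilon_N}(s,X_s)\|_2^2)\le C_4\sqrt[4]{|t-s|^\alpha}$ uniformly in $N$ (by the second-order difference-quotient control coming from convexity via Proposition \ref{OptimalValueFct}), the non-convex statement has no time-H\"older bound whatsoever. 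The whole point of the $C=0$ restriction and the martingale exhibition of the drift is to make do without such a bound. The paper's actual argument runs through the martingale structure: since $u_s=E_{\mathcal{L}_{P,s}^\omega}(u_t)$ for $s\le t$ in the same interval, one gets $\|u_t-u_s\|_2^2=\|u_t\|_2^2-\|u_s\|_2^2$, so $\|u_s\|_2^2$ is nondecreasing hence continuous away from countably many jumps, and therefore $u$ is $L^2$-continuous off a countable set. This is weaker than continuity on all of $(t_i,t_{i+1}]$ but is exactly what is needed. Your appeal to an $L^2$-regularity that is not there would, taken literally, fail.

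A second, related gap is your unsupported assertion that $u^G$ ``lives in a separable subalgebra''. This is not automatic: $\mathcal{M}_P^\omega$ is non-separable, and the conditional-expectation formula a priori places $u^G_s$ only in $L^2$ of the full ultraproduct filtration. What the paper does is exploit the near-continuity established above to show that at each continuity point $s$, the process $Y-S$ is right-differentiable in $L^2$ with derivative $u_s$, and since $Y-S$ is a continuous process valued in the separable algebra $N=W^*(Y_u,S_u,\upsilon: u\in[0,1])$, the derivative $u_s$ lies in $L^2_{sa}(N)^m$. Passing to the left-continuous version $v$ of $u$ (which changes the process at only countably many points and hence leaves the energy integral and the law unchanged) then produces a genuine element of $\mathcal{P}_{ad,g}^{\omega,t}\cap\mathcal{P}_{ad,b}^{\omega,t}$. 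Without this right-differentiability step your argument does not establish the required membership, and the ``infimum over controls in a separable filtration'' in $\Lambda_{b,\Upsilon}^\omega$ could be strictly smaller than what the matrix minimizer produces. Analogously, your claimed Riemann-sum convergence of the energy term ``by the uniform $L^2$-H\"older control inherited from $\mathcal{C}_{reg,p,0}$'' needs to be replaced by the martingale computation that identifies the right derivative of $T\mapsto\lim_{N\to\omega}\tfrac12\int_t^T\|b^{G,N,\Upsilon_N,t}\|_2^2$ with $\tfrac12\|u_s\|_2^2$ at continuity points, plus Lipschitzness. Your upper-bound paragraph and the attainment/martingale conclusion are otherwise in line with the paper.
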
    
    
\begin{proof}\setcounter{Step}{0}
\begin{step}Reduction of the case $p=\infty$ to the limit for functionals in $f\in \mathcal{C}_{reg,p,0}(   \mathcal{T}_{2,0}^c(\mathcal{F}^m_{[0,1]}*\mathcal{F}^\nu_{\mu}),d_{1,0}),$ for $p\in[ 2,\infty[$\end{step}
 Same as in Theorem \ref{MainTechnical}

\begin{step} Lower bound for $f\in \mathcal{C}_{reg,p,0}(\mathcal{T}_{2,0}^c(\mathcal{F}^m_{[0,1]}*\mathcal{F}^\nu_{\mu}),d_{1,0}), p\in [2,\infty[.$\end{step}

The statement for $\Lambda_{b,\Upsilon}^\omega(f)$ is the case $t=0$ of the statement for $\Lambda_{b,\Upsilon}^\omega(f; t, X_1,..., X_{i+1})$, we thus focus on this second case.

We already know $S_t=(W_t^N)^\omega\in (\mathcal{M}_P^\omega)^m$ and $\upsilon=(\Upsilon_N)^\omega\in (\mathcal{M}_{P,0}^\omega)^{\mu\nu}$ which has consistently the law $\mu_\Upsilon$.

 Define $G$ with $f=G\circ (I_{t_1,...t_k}*Id)$ and the associated $X^{G,N,\Upsilon_N,t}_s=X^{G,N,\Upsilon_N,t}(s,X_{1,N},..., X_{i+1,N})$ from 
Theorem \ref{minimisationHermitianNonconvex} (composed with our random initial values). 
The formula in this Theorem for the drift as a conditional expectation of a cyclic derivative of $G$
gives that $Y_s-X_{i+1}=(X^{G,N,\Upsilon_N,t}_s-X_{i+1,N})^\omega \in \mathcal{L}_P^\omega$ is actually $Y_s-X_{i+1}\in \mathcal{M}_P^\omega.$ (The bound for the brownian motion is known and the bound for the stochastic integral comes from a uniform bound for derivatives of $G$ when $f\in \mathcal{C}_{reg,p,0}(\mathcal{T}_{2,0}^c(\mathcal{F}^m_{[0,1]}
*\mathcal{F}^\nu_{\mu}),d_{1,0}))$.

As in this Theorem recall that we consider $X^{G,N,\Upsilon_N,t}_{t_j}=X_j$ for $j\leq i.$

\noindent \textbf{(i) First bounds on $u_s=(b^{G,N,\Upsilon_N,t}(s,X^{G,N,\Upsilon_N}(s)))^\omega\in \mathcal{L}_P^\omega$.}

We know that $E(||b^{G,N,\Upsilon_N}(s,X^{G,N,\Upsilon_N,t}(s))||_2^2)\leq C$ independently of $N,s$ so that $||u_s||_2\leq C$ for instance using the conditional expectation formulas in Theorem \ref{minimisationHermitianNonconvex} again.
The argument above even gives $||u_s||_\infty\leq C(f).$

 From this martingale property for each $N$, one also deduces for $s\leq t$, $(s,t)\in ]t_i,t_{i+1}]$, $E_{\mathcal{L}_{P,s}^\omega}(u_t)=u_s$ so that $||u_t-u_s||_2^2=||u_t||_2^2-||u_s||_2^2$ and since $||u_s||_2^2$ is non-decreasing, it is continuous except at (at most) countably many jump points, and thus so is $u_t$ in $\mathcal{L}_{P}^\omega$.

Fix $s\in ]t_l,t_{l+1}[, l\geq i, s\geq t$ such a continuity point, then, for $S\geq s$ within an interval $]t_l,t_{l+1}]$, then from the SDE and the conditional expectation property:

\begin{align*}||&X^{G,N,\Upsilon_N,t}(S))-X^{G,N,\Upsilon_N,t}(s))-(H_S^N-H_s^N)-(S-s)b^{G,N,\Upsilon_N,t}(s,X^{G,N,\Upsilon_N,t}(s))||_2\\&\leq \int_s^Sdu||b^{G,N,\Upsilon_N,t}(u,X^{G,N,\Upsilon_N,t}(u))-b^{G,N,\Upsilon_N,t}(s,X^{G,N,\Upsilon_N,t}(s))||_2\\&\leq (S-s)||b^{G,N,\Upsilon_N,t}(S,X^{G,N,\Upsilon_N,t}(S))-b^{G,N,\Upsilon_N,t}(s,X^{G,N,\Upsilon_N,t}(s))||_2.\end{align*}

Thus in taking ultraproducts, one gets $||Y_S-S_S-(Y_s-S_s)-(S-s)u_s||_2\leq (S-s)||u_S-u_s||_2=o(t-s)$ from the choice of $s$ as continuity point, and thus especially, $Y_t-S_t$ is right differentiable at $s$ with derivative $u_s$. Let us call $ N=W^*(Y_t,S_t, t\in [0,1], \upsilon)$ which has by construction separable predual since both $Y,S$ are continuous.
As a consequence for any such continuity point $s$, $u_s\in L^2_{sa}(N)^m$.

We thus define $v$ the left continuous version of $u$ (which has right limits automatically) and then $v_s\in L^2_{sa}(N)^m$ for any $s$, it is clearly adapted as $u$ is and thus in $L^\infty_{ad}([t,1],L^2_{sa}(N)^m)\cap \mathcal{G}([t,1],L^2_{sa}(N)^m)$. We thus have a $v\in \mathcal{P}_{ad,g}^{\omega,t}.$

Finally, We know the Lipschitz bound $||Y_t-S_t-(Y_s-S_s)||_2\leq (t-s)C$ in taking the ultraproduct of the corresponding matrix relation and since $L^2_{sa}(N)$ is a reflexive Banach space, thus the derivative is Bochner integrable (and thus so is $v$) and (see e.g. \cite[Thm 1]{Bochenek}): $$Y_t-S_t-(Y_s-S_s)=\int_s^tv_udu.$$

 
 From the bounds on $Y$, we also have $v\in \mathcal{P}_{ad,g}^{\omega,t}\cap \mathcal{P}_{ad,b}^{\omega,t}.$

\noindent \textbf{(ii) Limit of the value function along $\omega$.}
Let us define $$f_\omega(T)=\lim_{N\to \omega}\frac{1}{2}\int_t^T||b^{G,N,\Upsilon_N,t}(s,X^{G,N,\Upsilon_N,t}(s))||_2^2ds$$

Note that for $(s,T)\in ]t_{l-1},t_l]^2,l\geq i$, we have from the conditional expectation properties \begin{align*}&\left|\int_s^T||b^{G,N,\Upsilon_N,t}(u,X^{G,N,\Upsilon_N,t}(u))||_2^2du-(T-s)||b^{G,N,\Upsilon_N,t}(s,X^{G,N,\Upsilon_N,t}(s))||_2^2
\right|\\&\leq \int_s^T ||b^{G,N,\Upsilon_N,t}(u,X^{G,N,\Upsilon_N,t}(u))||_2||b^{G,N,\Upsilon_N,t}(s,X^{G,N,\Upsilon_N,t}(s))-b^{G,N,\Upsilon_N,t}(u,X^{G,N,\Upsilon_N,t}(u))||_2
\\&+ \int_s^T ||b^{G,N,\Upsilon_N,t}(s,X^{G,N,\Upsilon_N,t}(s))||_2||b^{G,N,\Upsilon_N,t}(s,X^{G,N,\Upsilon_N,t}(s))-b^{G,N,\Upsilon_N,t}(u,X^{G,N,\Upsilon_N,t}(u))||_2\\&\leq 2||b^{G,N,\Upsilon_N,t}(T,X^{G,N,\Upsilon_N,t}(T))-b^{G,N,\Upsilon_N,t}(s,X^{G,N,\Upsilon_N,t}(s))||_2||b^{G,N,\Upsilon_N,t}(T,X^{G,N,\Upsilon_N,t}(T))||_2(T-s)\end{align*}

and thus taking the ultrafilter limit:
$$\left|2f_\omega(T)-2f_\omega(s)-(T-s)||u_s||_2^2\right|\leq 2||u_T-u_s||_2||u_T||_2(T-s)=o(T-s)$$
if $s$ is a continuity point for $u$. As a consequence the right derivative of $2f_\omega$ at $s$ is $||u_s||_2^2$ and since $f_\omega$ is Lipschitz and this is the case at all but at most countably many points, one gets :

$$f_\omega(T)=\lim_{N\to \omega}\frac{1}{2}\int_t^T||b^{G,N,\Upsilon_N,t}(s,X^{G,N,\Upsilon_N,t}(s))||_2^2ds=\frac{1}{2}\int_t^T||u_s||_2^2ds.$$

Using Lemma \ref{agreementUltraprod}, one gets that 

$$\lim_{N\to \omega}E(f(\tau_{X^{G,N,\Upsilon_N,t},\Upsilon_N}))= E^\omega(f(\tau_{Y,\upsilon:z})).$$
Thus combining all our results and the formula from Theorem \ref{minimisationHermitianNonconvex}, one gets :
$$\lim_{N\to \omega}-\frac{1}{N^2}\log E_{\gamma_{sa,N,m}}(e^{-N^2 f(\tau_{W,\Upsilon_N})})\geq E^\omega(f(\tau_{Y,\upsilon:z}))+\frac{1}{2}\int_0^1||v_s||^2ds.$$ 
We thus conclude to a bound below by $\Lambda_{b,\Upsilon}^\omega(f; t, X_1,..., X_{i+1})$, since we checked the required conditions on $v$.  Once the other inequality obtained this will imply that the infimum is reached at $v$.

\begin{step} Upper bound for $f\in \mathcal{C}_{reg,p,0}(\mathcal{T}_{2,0}^c(\mathcal{F}^m_{[0,1]}*\mathcal{F}^\nu_{\mu}),d_{1,0}), p\in [2,\infty[.$\end{step}

We proceed as in step 3 of Theorem \ref{MainTechnical}, but based on lemma \ref{EquivLambdaX}.  It suffices to show that :
$$\lim_{N\to \omega}\frac{1}{N^2}E(h_t^{N,\Upsilon_N}(X_{1,N},\cdots, X_{i+1,N}))\leq \Lambda_{b,\Upsilon}^\omega(f; t, X_1,..., X_{i+1}).$$

Thus take $u\in \mathcal{P}_{ad,g,pl}(f)$ so that, from lemma \ref{EquivLambdaX}, it suffices to show: 
\begin{equation}\label{Upperv}\lim_{N\to \omega}\frac{1}{N^2}E(h_t^{N,\Upsilon_N}(X_{1,N},\cdots, X_{i+1,N}))\leq \frac{1}{2}\int_t^1||u_s||^2ds+E^\omega(f(\tau_{(X_1,..., X_i,X_{i+1}+S+\int_t^{.}u_sds,\upsilon:z})).\end{equation}

By definition of the sequence of times, $\sum_{l<\nu(u)}(s_{l+1}-s_l)=1-t$ is a summable family, thus, for any $\epsilon>0$ one can find a finite set $\{l_1,...l_p\}$ with  $\sum_{l=1}^p(s_{l+1}-s_l)=1-t-\epsilon.$

Then, one gets: $$\Big|\frac{1}{2}\int_t^1||u_s||^2ds-\frac{1}{2}\sum_{l=1}^p\int_{s_l}^{s_{l+1}}||u_s||^2ds\Big|\leq \epsilon ||u||_\infty$$ and this sum depends on finitely many times by the linear interpolation property, and therefore, choosing representatives of each $v_{s_l}$, one gets a path  $U_{s_{k}^l}^N\in M_N(L^2(W_t^N, t\leq s_{k-1}^l))$ such that  $(U_{s_{k}^l}^N)^\omega=u_{s_{k}^l}$ and then $(U_{s}^N)^\omega=u_{s}$. Thus we have $\sum_{l=1}^p\int_{s_l}^{s_{l+1}}||u_s||^2ds=\lim_{N\to \omega}\sum_{l=1}^p\int_{s_l}^{s_{l+1}}||U_s^N||^2ds$ and therefore, from the same uniform bound at level $N$: $$\frac{1}{2}\int_t^1||u_s||_2^2= \lim_{N\to \omega}\frac{1}{2}\int_t^1||U_s^N||_2^2.$$

Arguing similarly one obtains: $ \int_t^{.}u_sds=(\int_t^{.}U_s^Nds)^\omega$.

Finally we obtained :
$$\lim_{N\to \omega}E_{\gamma_N}\left(G(\tau_{(Z_{t_1}^N,...,Z_{t_{k}}^N,\Upsilon_N)})+\frac{1}{2}\int_0^1||V_s^N||_2^2 ds\right)
=\frac{1}{2}\int_t^1||u_s||^2ds+E^\omega(f(\tau_{(X_1,..., X_i,X_{i+1}+S_.-S_t+\int_t^{.}u_sds,\upsilon:z}).$$

But the infimum characterization of theorem \ref{ustunel} includes $U_s^N$ as adapted process and thus 
$$E\left(G(\tau_{(Z_{t_1}^N,...,Z_{t_{k}}^N,\Upsilon_N)})+\frac{1}{2}\int_t^1||V_s^N||_2^2 ds\right)\geq \frac{1}{N^2}E(h_t^{N,\Upsilon_N}(X_{1,N},\cdots, X_{i+1,N})).$$
Taking the limit $N\to \omega$ concludes to \eqref{Upperv}, and thus combining with the first step.
$$\lim_{N\to \omega}\frac{1}{N^2}E(h_t^{N,\Upsilon_N}(X_{1,N},\cdots, X_{i+1,N}))=\Lambda_{b,\Upsilon}^\omega(f; t, X_1,..., X_{i+1})$$
\end{proof}     
     
\subsection{Consequence for Laplace principle}
Since our Laplace principle will be in $
(\mathcal{T}_{2,0}^c(\mathcal{F}^m_{[0,1]}*\mathcal{F}^\nu_{\mu}),d_{1,0})$
we write it in short $\mathcal{T}$ in this section. We call also $\mathcal{T}_L=K_{L\sqrt{\cdot},2}\cap \Gamma_L$ the compact subset, $\mathcal{T}_\infty=\cup_{L>0}\mathcal{T}_L$ is $\sigma$-compact.  $M(\mathcal{T})\subset(C^0_b( \mathcal{T}))^*$ is the space of finite Radon measures  equipped with the narrow topology (induced from the weak-* topology) containing $M(\mathcal{T}_L)=(C^0( \mathcal{T}_L))^*$ equipped with its own weak-* topology as dual, and which is by compactness a set of Radon measures, continuously included into $M(\mathcal{T})$.

Note that $\mathcal{T}_\infty$ is  $\sigma$-compact separable metric space hence a countable union of Polish spaces in a Hausdorff space and thus a Lusin space \cite[Cor 2 of Thm 5 p 102]{Schwartz} (in Bourbaki's sense as continuous injective image of a Polish space). Indeed, separability comes from identificaton of $\mathcal{T}$ to a subspace of a countable product of spaces of continuous functions (on a compact space) which are separable metric. As a consequence again $\mathcal{T}_\infty$ has a stronger topology which is Polish and Borel sets for the two topologies coincide (see \cite{Schwartz}). Moreover, $\mathcal{T}_\infty$ is also a Radon space \cite[Thm 9 p 122]{Schwartz}, hence any Borel probability measure is Radon and they coincide for any stronger Polish topology on $\mathcal{T}_\infty$.  

We can consider $M(\mathcal{T}_\infty)$ the set of finite Radon measures (which coincides here with the set of finite Borel measures) equipped with its narrow topology. From \cite[Prop 2 p 371]{Schwartz}, it is Hausdorff, hence $C^0_b(\mathcal{T}_\infty)$ separates points (since the space is metric, hence completely regular)  or said otherwise $M(\mathcal{T}_\infty)\hookrightarrow (C^0_b(\mathcal{T}_\infty))^*$ is a topological embedding (a continuous injection which gives the induced topology here with the target space having its weak-* topology). Note that from \cite[Prop 3 p 372]{Schwartz} we have a continuous linear map:$M(\mathcal{T}_\infty)\to M(\mathcal{T})\subset (C^0_b( \mathcal{T}))^*$ compatible with the previous injections.

Let us call $P(\mathcal{T})\subset M(\mathcal{T})$ the set of Radon probability measures 
$P(\mathcal{T}_L)=P(\mathcal{T})\cap M(\mathcal{T}_L)$.
 We have a continuous inclusion $i:\mathcal{T}\to P(\mathcal{T})$ given by $i(\tau)(f)=f(\tau)$. 
 $i(\mathcal{T}_\infty)$ is the set of Dirac measures, its preannihilator in $C^0_b( \mathcal{T}_\infty)$ is $i(Vect(\mathcal{T}))_{\perp}=\{0\}$ hence its narrowly closed generated space $(i(Vect(\mathcal{T}))_{\perp})^{\perp}=M(\mathcal{T}_\infty)$
 hence rational convex combinations of $i(A)$, for a countable dense subset of $A\subset\mathcal{T}_\infty$, form a narrowly dense subset making it narrowly separable. The same reasoning with polars gives that the narrowly closed absolutely convex hull of  $i(\mathcal{T}_\infty)$ is the unit ball, which is therefore separable too.
 
It is known that  $P(\mathcal{T}_\infty),M(\mathcal{T}_\infty)$ are Lusin spaces (in Bourbaki's sense) and that for a stronger metric $d$ making $\mathcal{T}_\infty$ Polish, $P(\mathcal{T}_\infty,d),M(\mathcal{T}_\infty,d)$ are  Polish spaces \cite[Thm 7 p 385]{Schwartz} (and stability by $G_\delta$ sets, or directly \cite[Thm 60 p 73-III]{DellacherieMeyer}). The same result  \cite[Thm 60 p 73-III]{DellacherieMeyer} also explains that since $\mathcal{T}_\infty$ is a separable metric space, so is $P(\mathcal{T}_\infty)$ with its narrow topology (be aware that Lusin as a different meaning though in this result of \cite{DellacherieMeyer}), hence $P(\mathcal{T}_\infty)$ is a Lusin separable metric space as $\mathcal{T}_\infty$ is. 

Finally we define $\mathcal{F}(\beta\N)$ the smallest set of filtrations with a standard martingale (for matricial free BM)
 containing $\mathcal{M}_P^\omega$ for any $\omega \in \beta\N-\N$ and stable by ultraproducts by ultrafilters in $\beta\N-\N$. For these filtrations, the center can be understood as before and one can define for $\mathcal{F}\in \mathcal{F}(\beta\N)$ as before $\Lambda_{b,\Upsilon}^{\mathcal{F}}(f)$ in replacing $\mathcal{M}_P^\omega$ by $\mathcal{F}.$

We also need a more ultraproduct friendly description of square-integrable functions with value in a Hilbert space:

For an absolutely continuous process $U_s$, we define
$$||U||_{BV2}^2=\sup_{\mathbf{t}\ partition}\sum_i\frac{||U_{t_i}-U_{t_{i-1}}||_2^2}{t_i-t_{i-1}},$$   
which may be infinite (note also the value increases with the partition
) and note that when $U_t=\int_0^tu_sds$ with $\int_0^1||u_s||_2^2ds<\infty$ then $||U||_{BV2}^2\leq\int_0^1||u_s||_2^2ds$ is obvious by Cauchy-Schwarz inequality and we have an equality first when $u$ say lipschitz  
 and then by density since $||.||_{BV2}$ is a semi-norm on the space where it is finite. Conversely, any $U$ with finite $||U||_{BV2}<\infty$ is absolutely continuous (especially its derivative is Bochner-measurable) 
  by an application of Cauchy-Schwarz inequality with derivative in $L^p$ for $p<2$
  . And $\int_0^1||u_s||_2^pds\leq ||u||_{BV2}^p$ implies by monotone convergence theorem (for part of integral with $||u_s||_2\geq 1$) and dominated convergence theorem (for part of integral with $||u_s||_2\leq 1$) that $\int_0^1||u_s||_2^2ds\leq ||u||_{BV2}^2$ concluding to the identification of spaces.

  \begin{theorem}\label{LaplaceNonConvex}
Fix $\Upsilon_N\in \mathcal{U}(M_N(\C))^{\mu\nu}$ (deterministic).
Assume that the non-commutative law $\tau_{\Upsilon_N}$ converges to some $\mu_\Upsilon\in (\mathcal{T}(\mathcal{F}^\nu_{\mu}),d)$. 

Let $\gamma_{sa,N,m}=\gamma_N$ the law of hermitian $N\times N$ brownian motion $W_s^N\in (M_N(\C))^m_{sa}$, then, for any 
$f\in \mathcal{C}_{reg,p,C=0}(   \mathcal{T}_{2,0}^c(\mathcal{F}^m_{[0,1]}*\mathcal{F}^\nu_{\mu}),d_{1,0}),$ for $p\in[ 2,\infty]$. We have a Laplace principal upper bound:
$$\limsup_{N\to\infty} -\frac{1}{N^2}\log E_{\gamma_{sa,N,m}}(e^{-N^2 f(\tau_{W,\Upsilon_N})})=\sup_{\omega\in\beta\N-\N}\Lambda_{b,\Upsilon}^\omega(f).$$ 
and a Laplace principal lower bound:
$$\liminf_{N\to\infty} -\frac{1}{N^2}\log E_{\gamma_{sa,N,m}}(e^{-N^2 f(\tau_{W,\Upsilon_N})})=\inf_{\omega\in\beta\N-\N}\Lambda_{b,\Upsilon}^\omega(f)\geq \inf_{\mathcal{F}\in \mathcal{F}(\beta\N)}\Lambda_{b,\Upsilon}^{\mathcal{F}}(f)= \inf_{\tau \in P(\mathcal{T}_\infty)}
\tau(f)+\overline{I}_\Upsilon(\tau).$$ 
Especially, $\widehat{\sigma}^N_{\Upsilon_N}$ satisfy a Large deviation lower bound   in $ 
\mathcal{T}
$ (resp. upper bound in $P(\mathcal{T}_\infty)$ and $\mathcal{T}_\infty$) with good rate function $$\underline{I}_\Upsilon(\tau)=\sup_{f\in \mathcal{C}_{reg}}-f(\tau)+\sup_{\omega\in\beta\N-\N}\Lambda_{b,\Upsilon}^\omega(f)$$
$$(resp.\ \overline{I}_\Upsilon(\tau)=
\inf_{\mathcal{F}\in \mathcal{F}(\beta\N)}I^{\mathcal{F}}_\Upsilon(\tau)\ \ \ and \ \overline{I}_\Upsilon\circ i),$$ where we wrote for brevity $\mathcal{C}_{reg}=\mathcal{C}_{reg,\infty,C=0}(   \mathcal{T}_{2,0}^c(\mathcal{F}^m_{[0,1]}\star\mathcal{F}^\nu_{1}),d_{1,0}).$, $Min(\mathcal{C}_{reg})$ the space stable by minimum generated by $\mathcal{C}_{reg}$,  $L^2_{ad+}(\mathcal{F})=L^2_{ad}([0,1],L^2(\mathcal{F}_+))$ for adapted processes to the associated right continuous filtration  and with for $\tau$ in $P(\mathcal{T}_\infty)$:$$ I^{\mathcal{F}}_\Upsilon(\tau)=
\inf\Big\{\frac{1}{2}\int_0^1||u_s||^2_2ds:  u\in L^2_{ad+}(\mathcal{F})^m \ and \ \forall g\in Min(\mathcal{C}_{reg}): E^{\mathcal{F}}(g(\tau_{S+\int_0^{.}u_sds,\upsilon:z}))=\tau(g)\Big\}.
.$$
\end{theorem}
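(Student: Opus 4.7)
The plan is to assemble the Laplace principle from Theorem \ref{MainTechnicalNonConvex} and then deduce the LDP via the inverse Varadhan lemma \cite[Th.~4.4.10]{DZ}. For any bounded real sequence $a_N$ one has $\limsup_N a_N = \sup_{\omega\in\beta\N-\N}\lim_{N\to\omega}a_N$ and $\liminf_N a_N = \inf_{\omega\in\beta\N-\N}\lim_{N\to\omega}a_N$. Applied to $a_N=-N^{-2}\log E_{\gamma_N}(e^{-N^2 f(\tau_{W,\Upsilon_N})})$ together with Theorem \ref{MainTechnicalNonConvex} this produces the two Laplace equalities. The inequality $\inf_{\omega}\Lambda^\omega_{b,\Upsilon}(f)\ge \inf_{\mathcal F\in\mathcal F(\beta\N)}\Lambda^{\mathcal F}_{b,\Upsilon}(f)$ is immediate since every $\mathcal M_P^\omega$ lies in $\mathcal F(\beta\N)$ by definition.

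The core identification is $\inf_{\mathcal F}\Lambda^{\mathcal F}_{b,\Upsilon}(f)=\inf_{\tau\in P(\mathcal T_\infty)}\bigl(\tau(f)+\overline I_\Upsilon(\tau)\bigr)$, obtained by disintegrating the random law over the centre. For $\le$, pick $\mathcal F\in\mathcal F(\beta\N)$ and a near-minimiser $u\in\mathcal P_{ad,g}^{\mathcal F,0}\cap\mathcal P_{ad,b}^{\mathcal F,0}$ of $\Lambda^{\mathcal F}(f)$. Lemma \ref{agreementUltraprod} makes $z\mapsto \tau_{S+\int_0^\cdot u_v\,dv,\upsilon:z}$ a Borel map into $\mathcal T_\infty$, whose push-forward under $E^{\mathcal F}$ is a Radon probability measure $\mu\in P(\mathcal T_\infty)$ satisfying $E^{\mathcal F}(g(\tau_{\cdot:z}))=\mu(g)$ for every $g\in C^0_b(\mathcal T_\infty)$; hence $u$ is admissible in the definition of $I^{\mathcal F}_\Upsilon(\mu)$ and $\Lambda^{\mathcal F}(f)\ge \mu(f)+\overline I_\Upsilon(\mu)$. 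For $\ge$, given $\mu$ with $\overline I_\Upsilon(\mu)<\infty$, choose $\mathcal F$ and $u\in L^2_{ad+}(\mathcal F)^m$ within $\epsilon$ of $I^{\mathcal F}_\Upsilon(\mu)$; the constraint $E^{\mathcal F}(g(\tau_{\cdot:z}))=\mu(g)$ on the well-separating class $Min(\mathcal C_{reg})$ of Lemma \ref{wellsep} extends by a monotone class argument to all $g\in C^0_b(\mathcal T_\infty)$, hence to $f$. A truncation of $u$ in operator norm followed by a time-partition/mollification approximation based on the $\|\cdot\|_{BV2}$ characterisation of $L^2$ recalled before the statement replaces $u$ by an element of $\mathcal P_{ad,g}\cap\mathcal P_{ad,b}$ perturbing cost and pairing by at most $\epsilon$, whence $\Lambda^{\mathcal F}(f)\le\mu(f)+I^{\mathcal F}_\Upsilon(\mu)+O(\epsilon)$.

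For the LDP statements, exponential tightness of $\widehat\sigma^N_{\Upsilon_N}$ in $(\mathcal T,d_{1,0})$ is furnished by Lemma \ref{Exptight} and concentrates the measures on $\mathcal T_\infty=\cup_L\mathcal T_L$. Combined with the Laplace upper bound $\limsup=\sup_\omega\Lambda^\omega$ and the well-separating class $\mathcal C_{reg}$ of Lemma \ref{wellsep}, the cited \cite[Th.~4.4.10]{DZ} yields the large-deviation lower bound on $\mathcal T$ with good rate function $\underline I_\Upsilon$ obtained as the displayed Legendre-type supremum. Symmetrically, the Laplace lower bound $\liminf=\inf_{\mathcal F}\Lambda^{\mathcal F}=\inf_{\tau}(\tau(f)+\overline I_\Upsilon(\tau))$ together with the same exponential tightness produces a large-deviation upper bound in $P(\mathcal T_\infty)$ (using compactness of $P(\mathcal T_L)$ for the narrow topology) with good rate function $\overline I_\Upsilon$; transport through the topological embedding $i:\mathcal T_\infty\hookrightarrow P(\mathcal T_\infty)$ sending points to Dirac masses gives the rate function $\overline I_\Upsilon\circ i$ on $\mathcal T_\infty$. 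Goodness of both rates (compactness of sublevel sets) follows because the characterising set $\{I^{\mathcal F}_\Upsilon\le L\}$ lies in a narrow-compact subset of $P(\mathcal T_{L'})$ for $L'=L'(L)$, again by exponential tightness and lower semicontinuity of the cost.

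The hardest step is the $\ge$ direction of the identification in the second paragraph: from an abstract $\mu\in P(\mathcal T_\infty)$ of finite $\overline I$-value one must exhibit a concrete filtration $\mathcal F\in\mathcal F(\beta\N)$ carrying a square-integrable adapted control whose centre-valued random law reproduces $\mu$, and then upgrade that control from merely $L^2_{ad+}$ to the rigid class $\mathcal P_{ad,g}\cap\mathcal P_{ad,b}$ without losing rate or pairing. This relies crucially on the closure of $\mathcal F(\beta\N)$ under ultraproducts (enlarging the centre to accommodate an arbitrary $\mu$) and on the lifting apparatus of Lemma \ref{agreementUltraprod}.
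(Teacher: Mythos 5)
Your proposal follows the same architectural plan as the paper's proof (ultrafilter characterisation of $\limsup/\liminf$ combined with Theorem \ref{MainTechnicalNonConvex}, disintegration over the centre, a Bryc-type argument for the LDP lower bound, a Dupuis--Ellis-type argument for the upper bound, then inverse contraction), but several technical steps are left genuinely open. First, a direct appeal to \cite[Th.\ 4.4.10]{DZ} for the LDP lower bound on $\mathcal{T}$ does not go through as stated: the upper Laplace functional is $f\mapsto\sup_{\omega}\Lambda^{\omega}_{b,\Upsilon}(f)$, and while each fixed $\omega$ satisfies $\Lambda^{\omega}(\min_i g_i)=\min_i\Lambda^{\omega}(g_i)$, after taking $\sup_{\omega}$ one only has the one-sided inequality $\sup_{\omega}\min_i\Lambda^{\omega}(g_i)\le\min_i\sup_{\omega}\Lambda^{\omega}(g_i)$, which must be inserted by hand into the Bryc argument (this is exactly what Step~1 of the paper's proof does, in place of quoting the theorem off the shelf). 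Second, Theorem \ref{MainTechnicalNonConvex} supplies the Laplace asymptotics only for $f\in\mathcal{C}_{reg,p,C=0}$; the Dupuis--Ellis upper bound on $P(\mathcal{T}_\infty)$ needs the Laplace lower bound for the larger well-separating class $\mathcal{D}_{reg}=MAX(J(C^0_b(\mathcal{T}_\infty)))$, and the bootstrap from $Min(\mathcal{C}_{reg})$ to $C^0_b(\mathcal{T}_\infty)$ and then to $\mathcal{D}_{reg}$ (Step~3 of the paper) is a careful exponential-tightness and uniform-approximation argument on compact level sets that cannot be reduced to ``monotone class'' or ``symmetrically''.

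Third, the goodness of $\overline{I}_\Upsilon$ hinges on lower semicontinuity, i.e.\ closedness of sublevel sets, and you do not give an argument for it. The paper proves it by a diagonal ultraproduct construction: given $\tau_n\to\tau$ with near-minimising controls $u^n$ in filtrations $\mathcal{F}_n$, pass to $\mathcal{F}=(\mathcal{F}_n)^{\omega}\in\mathcal{F}(\beta\N)$, reconstruct the process $X_t$ via the unitary transforms, and extract a control $u$ with $\int_0^1\|u_s\|_2^2\,ds\le\liminf_n\int_0^1\|u_s^n\|_2^2\,ds$ using the $\|\cdot\|_{BV2}$ criterion for absolute continuity. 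You invoke the ultraproduct closure of $\mathcal{F}(\beta\N)$, but attach it to ``extracting a filtration from an abstract $\mu$'', which is vacuous since $\overline{I}_\Upsilon$ is defined as an infimum over $\mathcal{F}$; the place where the closure is genuinely needed is this semicontinuity argument. (As a cosmetic remark, the $\le$/$\ge$ labels in your second paragraph are swapped relative to the displayed identity; the underlying inequalities are correct.)
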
  
     
\begin{proof}
The two estimates on Laplace functionals are direct consequences of Theorem \ref{MainTechnicalNonConvex}.
$\underline{I}_\Upsilon
$ is a rate function as a supremum of continuous functions. From lemma \ref{Exptight}, we already know 
$\widehat{\sigma}^N_{\Upsilon_N}$ is exponentially tight, the LDP lower bound will thus imply $\underline{I}_\upsilon$ is a good rate function \cite[lemma 1.2.18 (b)]{DZ}. By (a) of the same lemma, we only need to check the upper bound for compact sets.
\setcounter{Step}{0}

\begin{step}Lower bound\end{step}
Then if one replaces our class of functions by all continuous bounded functions, this is a standard part of the proof of Bryc's inverse Varadhan lemma, the proof is similar to \cite[lemma 4.4.6]{DZ}. It remains to see the sup is the same when restricted to our well-separating class of functions (based on lemma \ref{wellsep}).

We reproduce an argument essentially present in \cite{DZ} and start by the lower LDP. 
One takes $x\in O\subset \mathcal{T}$ an open set. Then one takes $f$ a continuous function with value $[0,1]$ with $f(x)=1$ and $f(y)=0$ for $y\in O^c$. Then one defines $f_K=K(f-1)\leq 0$.
 Take by exponential tightness a compact set $\Gamma$ with 
$P(\widehat{\sigma}^N_{\Upsilon_N}\not\in\Gamma)\leq exp(-4KN^2).$

From \cite[lemma 4.4.9]{DZ}, a continuous bounded function as  $-f_K$ is $\epsilon$-approximated uniformly on $\Gamma$ by a $\kappa_{K}=min(g_1,...,g_{k_K})$ in our well-separating class $g_i\in\mathcal{C}_{reg}$, with $-g_i\leq 0$. 
Reproducing their estimate on this compact:
\begin{align*}\int_{\mathcal{T}}e^{-\kappa_{K}(x)N^2}dP_{\widehat{\sigma}^N_{\Upsilon_N}}(x)&\leq\int_{\Gamma}e^{f_K(x)N^2+\epsilon N^2}dP_{\widehat{\sigma}^N_{\Upsilon_N}}(x)+P(\widehat{\sigma}^N_{\Upsilon_N}\in\Gamma^c)\\&\leq e^{\epsilon N^2-KN^2}P(\widehat{\sigma}^N_{\Upsilon_N}\in\Gamma\cap G^c)+e^{\epsilon N^2}P(\widehat{\sigma}^N_{\Upsilon_N}\in G)+exp(-4KN^2)
\end{align*}

Hence one gets:
$$\min(\limsup_{N\to\infty} -\frac{1}{N^2}\log(P(\widehat{\sigma}^N_{\Upsilon_N}\in G))-\epsilon,4K,K-\epsilon)\leq \sup_{\omega\in\beta\N-\N}\Lambda_{b,\Upsilon}^\omega(\kappa_{K})\leq \min_i\sup_{\omega\in\beta\N-\N}\Lambda_{b,\Upsilon}^\omega(g_i)$$ 
where we used we know $\Lambda_{b,\Upsilon}^\omega(min(g_1,...,g_{k_K}))=\min_i\Lambda_{b,\Upsilon}^\omega(g_i)$
and 
$\sup_{\omega\in\beta\N-\N}\min_i\Lambda_{b,\Upsilon}^\omega(g_i)\leq \min_i\sup_{\omega\in\beta\N-\N}\Lambda_{b,\Upsilon}^\omega(g_i).$
But in noting $f_K(x)=0$ hence $
-\kappa_K(x)\geq -\epsilon$ one deduces:
$$\min(\limsup_{N\to\infty} -\frac{1}{N^2}\log(P(\widehat{\sigma}^N_{\Upsilon_N}\in G))-\epsilon,4K,K-\epsilon)\leq -\kappa_K(x)+\min_i\sup_{\omega\in\beta\N-\N}\Lambda_{b,\Upsilon}^\omega(g_i)+\epsilon\leq \underline{I}_\upsilon(x)+\epsilon.$$ 
Taking the limits $K\to \infty, \epsilon \to 0$ and then an infimum on $x\in G$ concludes.

\begin{step}Formula and goodness for the upper rate function\end{step}
We first need for any control to identify the nature of the object defined by 

 $$\forall g\in Min(\mathcal{C}_{reg}): \tau(g)=E^{\mathcal{F}}(g(\tau_{S+\int_0^{.}u_sds,\upsilon:z}))$$
 and see this defines uniquely  $\tau\in P(\mathcal{T}_\infty)$. To simplify we only treat the case of $\mathcal{F}=\mathcal{M}_P^\omega$ and leave the almost identical general case to the reader. We saw in Lemma \ref{agreementUltraprod} that there is a square integrable $L$ with $\tau_{S+\int_0^{.}u_sds,\upsilon:\mathcal{Z}}\in \mathcal{T}_L$ and Borel-measurability of this variable.
 As a consequence, the formula $\tau(g)=E^\omega(g(\tau_{X,\upsilon:\mathcal{Z}}))$ defines a continuous positive unital linear functional on 
 $C^0_b( \mathcal{T}_\infty)$, or on bounded measurable functions. Since $\mathcal{T}_n$ compact, the value $\tau(1_{\mathcal{T}_n^c})\leq E^\omega(\{L\geq n\})\leq  E^\omega(L^2)/n^2$, hence $\tau$ is Radon.
 Let us see it is uniquely determined by the value on $Min(\mathcal{C}_{reg})$. 
  Indeed, by well-separation, bounded sets in $Min(\mathcal{C}_{reg})$ are dense with the topology of uniform convergence on compact sets 
  in balls of $C^0_b( \mathcal{T}_\infty)$, and for a Radon measure $\tau$, one can thus arbitrarily well approximate $\tau(g)$   by the value on $Min(\mathcal{C}_{reg})$. 

 Based on this let us identify our formulas for our Laplace Lower functional.
 Let us write 
$ \mathcal{P}^{\omega,0}_{L^\infty}=\mathcal{P}_{ad,g,}^{\omega,0}((\mathcal{M}_{P,s}^\omega)_{s\geq 0})\cap \mathcal{P}_{ad,b}^{\omega,0}((\mathcal{M}_{P,s}^\omega)_{s\geq 0}),$ $\mathcal{P}^{\mathcal{F},0}_{L^2}=\mathcal{P}_{ad,g,}^{\mathcal{F},0}((\mathcal{M}_{P,s}^{\mathcal{F}})_{s\geq 0})
$ and recall that by definition for $f\in Min(\mathcal{C}_{reg})$:
\begin{align*} &\inf_{\omega\in\beta\N-\N}\Lambda_{b,\Upsilon}^\omega(f)= \inf_{\omega\in\beta\N-\N}\inf\{E^\omega(f(\tau_{S+\int_0^{.}u_sds,\upsilon:z}))+\frac{1}{2}\int_0^1||u_s||^2ds:  u\in \mathcal{P}^{\omega,0}_{L^\infty} 
\}\\&= \inf_{\omega\in\beta\N-\N}\inf\{E^\omega(f(\tau_{S+\int_0^{.}u_sds,\upsilon:z}))+\frac{1}{2}\int_0^1||u_s||^2ds:  u\in \mathcal{P}^{\omega,0}_{L^2} 
\}\\&\geq \inf_{\mathcal{F}\in\mathcal{F}(\beta\N)}\inf\{E^{\mathcal{F}}(f(\tau_{S+\int_0^{.}u_sds,\upsilon:z}))+\frac{1}{2}\int_0^1||u_s||^2ds:  u\in L^2_{ad+}(\mathcal{F}) 
\}\\&= \inf_{\mathcal{F}\in\mathcal{F}(\beta\N)}\inf_{\tau\in P(\mathcal{T}_\infty)}\inf\{f(\tau)+\frac{1}{2}\int_0^1||u_s||^2ds:  u\in L^2_{ad+}(\mathcal{F}) \\&\qquad \qquad \qquad \qquad \qquad \qquad and \ \forall g\in Min(\mathcal{C}_{reg}): E^{\mathcal{F}}(g(\tau_{S+\int_0^{.}u_sds,\upsilon:z}))=\tau(g)
\}
\end{align*}
 where the first equality is by standard spectral theory approximation, the second is obvious 
 and the third equality is permitted by our previous consideration giving that any control has a unique associated $\tau\in P(\mathcal{T}_\infty)$.
 
Similarly,  one gets after switching of infima: $\inf_{\mathcal{F}\in \mathcal{F}(\beta\N)}\Lambda_{b,\Upsilon}^{\mathcal{F}}(f)= \inf_{\tau \in P(\mathcal{T}_\infty)}
\tau(f)+\overline{I}_\Upsilon(\tau).$
 
 Finally, let us see that the level set $I_\alpha=\{\tau\in P(\mathcal{T}_\infty): \overline{I}_\Upsilon(\tau)\leq \alpha\}$ is tight hence relatively compact by Prokhorov's Theorem (beyond the Polish case, see \cite[Thm 3 p 379]{Schwartz}). 
Note that $$I_\alpha\subset \{\tau\in P(\mathcal{T}_\infty): \forall g\in Min(\mathcal{C}_{reg}):\exists \omega, \exists u\in \mathcal{P}^{\mathcal{F},0}_{L^2} \int_0^1||u_s||_2^2ds\leq 1+\alpha: E^\omega(g(\tau_{S+\int_0^{.}u_sds,\upsilon:z}))=\tau(g)\}$$

but we explained that for such kind of $u$, $P(\int_0^{1}||u_v||_{2,Z}^2dv> C)\leq \frac{\int_0^1||u_s||_2^2ds}{C}\leq \frac{\alpha+1}{C}$.

Moreover, the relation $E^\omega(g(\tau_{S+\int_0^{.}u_sds,\upsilon:z}))=\tau(g)$ extends to $g$ continuous bounded and hence by monotone convergence theorem to indicator functions of open sets so that: $$P(\tau \not\in \mathcal{T}_{C+1})=P(\tau_{S+\int_0^{.}u_sds,\upsilon:.} \not\in \mathcal{T}_{C+1})\leq P(\int_0^{1}||u_v||_{2,Z}^2dv> C)\leq  \frac{\alpha+1}{C}\to_{C\to \infty} 0$$
and gives the expected tightness since $\mathcal{T}_{C+1}$ is compact.

To conclude that $\overline{I}_\Upsilon$ is a good rate function, it suffices to see that $I_\alpha$ is closed.

Thus consider $\tau_n\to \tau$, take $\omega_n\in\beta\N-\N$, $u^n\in \mathcal{P}^{\mathcal{F}_n,0}_{L^2}$ with $\forall g\in Min(\mathcal{C}_{reg}): E^{\mathcal{F}_n}(g(\tau_{S+\int_0^{.}u^n_sds,\upsilon:z}))=\tau_n(g)$
and 
$\frac{1}{2}\int_0^1||u_s^n||^2_2ds\leq\overline{I}_\Upsilon(\tau_n) +1/n$

Let us consider $X^n_t=S_t+\int_0^{t}u^n_sds$ and $\omega\in \beta\N-\N$ and consider $\mathcal{F}=(\mathcal{F}_n)^\omega\in \mathcal{F}(\beta\N)$. One can consider $S_t^i=(S_t^i)^\omega\in \mathcal{F}_t$, $u_t^i=[u(X^{n,i}_t)]^\omega\in \mathcal{F}_t$ (in general $(X^n_t)^\omega$ is in an ultraproduct of $L^2$ spaces).
As usual, one recovers $X_t^j$ as a limit of $X_{t,\epsilon}^j=4i(u_t^j+1)(u_t^j-1)^*((u_t^j-1)(u_t^j-1)^*+\epsilon)^{-1}$ when $\epsilon\to 0$ The bound on $((u_t^j-1)(u_t^j-1)^*+\epsilon)^{-1}$ in $L^1$ uniform in $\varepsilon$ goes to the ultraproduct by functional calculus enabling to see that their is an $L^2$ limit of its square-root $|u_t^j-1|^{-1}\in L^2(\mathcal{F}_t)$ by monotone convergence theorem. The relation $((u_t^j-1)(u_t^j-1)^*+\epsilon)^{-1}((u_t^j-1)(u_t^j-1)^*+\epsilon)=1$ extended to the ultraproduct then gives $((u_t^j-1)(u_t^j-1)^*+\epsilon)^{-1}(u_t^j-1)(u_t^j-1)^*-1\to 0$ in norm in $L^1$ hence the fact that $|u_t^j-1|^{-1}$ is indeed the right inverse which gives $X_t^j$ first in $L^1$ and then $L^2$.

Let also $U_t=X_t-S_t$. Using bounds for regularized version first, one sees $||U||_{BV}^2\leq \lim_{n\to\omega} \int_0^1||u^n_s||_2^2ds<\infty$
so that $U$ is almost separably valued and can be written for $u\in L^2_{ad}([0,1],L^2(\mathcal{F}))$  in the form $U_t=\int_0^t u_sds$ with :
$$\int_0^1||u_s||_2^2ds=||U||_{BV}^2\leq \lim_{n\to\omega} \int_0^1||u^n_s||_2^2ds\leq \alpha<\infty.$$
Finally, looking at the specific form of functions in $ Min(\mathcal{C}_{reg})$ one sees they are stable by ultraproduct and by assumption continuous for $d_{1,0}$, giving : $$E^{\mathcal{F}}(g(\tau_{S+\int_0^{.}u_sds,\upsilon:z}))=\lim_{n\to \omega}E^{\mathcal{F}^n}(g(\tau_{S+\int_0^{.}u^n_sds,\upsilon:z}))=\tau(g).$$

Finally
, we obtained:
\begin{align*}\overline{I}_\Upsilon(\tau)&\leq I^{\mathcal{F}}_\Upsilon(\tau)=\inf\{\frac{1}{2}\int_0^1||v_s||^2_2ds:\\&
  v\in L^2_{ad}([0,1],L^2(\mathcal{F}_+)) \ and \ \forall g\in Min(\mathcal{C}_{reg}): E^{\mathcal{F}}(g(\tau_{X^v,\upsilon:z}))=\tau(g)\}\leq \int_0^1||u_s||_2^2ds\leq \alpha.\end{align*}

This concludes the proof of $\overline{I}_\Upsilon$ good rate function on $P(\mathcal{T}_\infty)$.

\begin{step}Extension of the Laplace lower bound to $P(\mathcal{T}_\infty)$\end{step}

We first need to bootstrap the Laplace lower bound to get the LDP upper bound.
 
We start by replacing $f\in MIN(\mathcal{C}_{reg})$ by $f\in C^0_b(\mathcal{T}_\infty)$. Indeed, fix $1/4>\epsilon>0$ and for such an $f$, fix $-C=\min(0,\inf f)$, $E=\sup |f|$  from step 2 a compact set $\Gamma=\mathcal{T}_D$ such that $K=\{\tau: \overline{I}_\Upsilon(\tau)\leq \alpha +C+1\}\subset \{\tau :\tau(\mathcal{T}_D^c)\leq \epsilon/(E+C)\}$ with $\alpha=\inf_{\tau \in P(\mathcal{T}_\infty)}
\tau(f)+\overline{I}_\Upsilon(\tau)$ (e.g. $D\geq 1+(\alpha+C+2)(E+C)/\epsilon$). 
Note that in enlarging $D$ we can ensure for future purposes  by exponential tightness that $P(\widehat{\sigma}^N_{\Upsilon_N}\not\in\mathcal{T}_D)\leq exp(-N^2/\epsilon-N^2C)$ at least for $N$ large enough.
Note also that as a consequence 
$\alpha=\inf_{\tau \in K}
\tau(f)+\overline{I}_\Upsilon(\tau)$ since $\inf_{\tau \in K^c}
\tau(f)+\overline{I}_\Upsilon(\tau)\geq \alpha+1>\alpha.$

 Then we $\epsilon$-approximate $f$ on $\Gamma$ by $g\in MIN(\mathcal{C}_{reg})$ with $inf g\geq -C=\min(0,\inf f)$ thanks to \cite[lemma 4.4.9]{DZ} then  $$\inf_{\tau \in P(\mathcal{T}_\infty)}
\tau(f)+\overline{I}_\Upsilon(\tau)\leq \inf_{\tau \in P(\mathcal{T}_\infty)}
\tau(f1_\Gamma)+\overline{I}_\Upsilon(\tau)+E\tau(1_{\Gamma^c})
\leq\inf_{\tau \in P(\mathcal{T}_\infty)}
\tau(g1_\Gamma)+\overline{I}_\Upsilon(\tau)+\epsilon +E\tau(1_{\Gamma^c})
$$
From our choices, one can decompose the last infimum in the minimum of 
$$\inf_{\tau \in K}
\tau(g1_\Gamma)+\overline{I}_\Upsilon(\tau)+\epsilon+E\tau(1_{\Gamma^c})\leq \inf_{\tau \in K}
\tau(f)+C\tau(1_{\Gamma^c})+\overline{I}_\Upsilon(\tau)+3\epsilon
\leq \alpha+4\epsilon<\alpha +1
$$
and 
$$\inf_{\tau \in K^c}
\tau(g1_\Gamma)+\overline{I}_\Upsilon(\tau)+\epsilon
+E\tau(1_{\Gamma^c})\geq \inf_{\tau \in K^c}
\tau(g1_\Gamma) +\alpha+C+1+\epsilon\geq \alpha+1+\epsilon$$
The minimum of the two is clearly attained at the first value for which we can also bound by choice of $K$:
$$\inf_{\tau \in K}
\tau(g1_\Gamma)+\overline{I}_\Upsilon(\tau)+\epsilon+E\tau(1_{\Gamma^c})\leq \inf_{\tau \in K}
\tau(g)+(C+E) \tau(1_{\Gamma^c})+\overline{I}_\Upsilon(\tau)+\epsilon
\leq \inf_{\tau \in K}\tau(g)+\overline{I}_\Upsilon(\tau)+2\epsilon
$$
But again we have a lower bound 
$$\inf_{\tau \in K^c}
\tau(g)+\overline{I}_\Upsilon(\tau)
\geq \inf_{\tau \in K^c}
\tau(g) +\alpha+C+1\geq \alpha+1$$
so that gathering up with
$\alpha=\inf_{\tau \in P(\mathcal{T}_\infty)}
\tau(f)+\overline{I}_\Upsilon(\tau)\leq\inf_{\tau \in K}\tau(g)+\overline{I}_\Upsilon(\tau)+2\epsilon$, one gets the bound :
$$\alpha=\inf_{\tau \in P(\mathcal{T}_\infty)}
\tau(f)+\overline{I}_\Upsilon(\tau)\leq\inf_{\tau \in P(\mathcal{T}_\infty)}\tau(g)+\overline{I}_\Upsilon(\tau)+2\epsilon$$


We are ready to extend the Laplace principle and we start by a decomposition estimate:
$$E_{\gamma_{sa,N,m}}(e^{-N^2 f(\tau_{W,\Upsilon_N})} 
)
\leq e^{N^2\epsilon}E_{\gamma_{sa,N,m}}(1_\Gamma (\tau_{W,\Upsilon_N}) e^{-N^2 g(\tau_{W,\Upsilon_N})})+e^{CN^2 } E_{\gamma_{sa,N,m}}(1_{\mathcal{T_D}^c}(\tau_{W,\Upsilon_N}))$$
Hence, using the choice of $D$, one deduces:
\begin{align*}\liminf_{N\to\infty} -\frac{1}{N^2}\log E_{\gamma_{sa,N,m}}(e^{-N^2 f(\tau_{W,\Upsilon_N})})&\geq \min(1/\epsilon ,-\epsilon+\liminf_{N\to\infty} -\frac{1}{N^2}\log E_{\gamma_{sa,N,m}}(e^{-N^2 g(\tau_{W,\Upsilon_N})}))\\&\geq \min(1/\epsilon ,-\epsilon+\inf_{\tau \in P(\mathcal{T}_\infty)}
\tau(g)+\overline{I}_\Upsilon(\tau)) \\&\geq \min(1/\epsilon ,-3\epsilon+\inf_{\tau \in P(\mathcal{T}_\infty)}
\tau(f)+\overline{I}_\Upsilon(\tau)) .\end{align*}
This concludes in taking the limit $\epsilon\to 0$.

We call $J:C^0_b(\mathcal{T}_\infty)\to (M(\mathcal{T}_\infty))^*\subset C^0_b( P(\mathcal{T}_\infty))$ given by for $\tau\in P(\mathcal{T}_\infty),$ $f\in C^0_b(\mathcal{T}_\infty)$, $J(f)(\tau)=\tau(f)$ since $f$ is bounded, $J(f)$ is bounded on probabilities and from the definition of the narrow topology it is continuous on $P(\mathcal{T}_\infty)$ as claimed.

Consider $\mathcal{D}_{reg}=MAX(
J(C^0_b(\mathcal{T}_\infty)))$ the class stable by maximum and containing constants obtained from the previous family and then $MIN( \mathcal{D}_{reg})$ the class stable by minimum generated. If we extend the Laplace bound to $\mathcal{D}_{reg}$ it is standard to extend it automatically to $MIN( \mathcal{D}_{reg})$ and moreover this will be useful since $\mathcal{D}_{reg}$ is well-separating since the narrow topology is Hausdorff on $P(\mathcal{T}_\infty)$.

But for $J(f_1),...,J(f_n)\in J(C^0_b(\mathcal{T}_\infty))$ then  for $F=Max( J(f_1),...,J(f_n))\in \mathcal{D}_{reg}$ a typical element we have: $$Max( J(f_1),...,J(f_n))(i(\widehat{\sigma}^N_{\Upsilon_N}))=
Max(i(\widehat{\sigma}^N_{\Upsilon_N})[f_1],...,
i(\widehat{\sigma}^N_{\Upsilon_N})[f_n])=Max(f_1,...,
f_n)[\widehat{\sigma}^N_{\Upsilon_N}].$$
Thus if we write $G=Max(f_1,...,
f_n)$, one gets the concluding inequality:
\begin{align*}\liminf_{N\to\infty} -\frac{1}{N^2}\log E_{\gamma_{sa,N,m}}(e^{-N^2 F(i(\tau_{W,\Upsilon_N}))})&=\liminf_{N\to\infty} -\frac{1}{N^2}\log E_{\gamma_{sa,N,m}}(e^{-N^2 G(\tau_{W,\Upsilon_N})})\\&\geq \inf_{\tau \in P(\mathcal{T}_\infty)}
\tau(G)+\overline{I}_\Upsilon(\tau)\geq \inf_{\tau \in P(\mathcal{T}_\infty)}
F(\tau)+\overline{I}_\Upsilon(\tau) ,\end{align*}
since by positivity of the law $\tau$: $$\tau(G)=\tau(Max(f_1,...,
f_n))\geq Max(\tau(f_1),...,\tau(f_n))=Max( J(f_1),...,J(f_n))(\tau)=F(\tau).$$

\begin{step}Upper bound in $P(\mathcal{T}_\infty)$\end{step}

The proof is a modification and will use part of the proof of \cite[Th 1.2.3]{DupuisEllis}. They use Polish spaces while $P(\mathcal{T}_\infty)$ is only Lusin separable metric space, but it is easy to see they only use for this proof a metric space and compactness of the level sets of the rate function.

Take $K$ a compact set, $\epsilon\in ]0,1[$ and $h_j(x)=j(d(x,K)\wedge 1)$ the continuous bounded function. Take $\alpha= \inf_{\tau\in K}\overline{I}_\Upsilon(\tau)$ and then consider the compact set $L=\{\tau: \overline{I}_\Upsilon(\tau)\leq \alpha+1\}$. Note that for $\tau\not\in L\cup K$, $h_j(\tau)+\overline{I}_\Upsilon(\tau)>\overline{I}_\Upsilon(\tau)
>\alpha= \inf_{\tau\in K}\overline{I}_\Upsilon(\tau)\geq \inf_{\tau\in K\cup L}h_j(\tau)+\overline{I}_\Upsilon(\tau)$
hence:$$\inf_{\tau\in K\cup L}h_j(\tau)+\overline{I}_\Upsilon(\tau)= \inf_{\tau\in P(\mathcal{T}_\infty)}h_j(\tau)+\overline{I}_\Upsilon(\tau)= h_j(\tau_0)+\overline{I}_\Upsilon(\tau_0)\leq \alpha$$
for some $\tau_0\in K\cup L$ since we minimize a lower-semicontinuous function on a compact set.
Take also $C>0$ from the exponential tightness result such that $P(\widehat{\sigma}^N_{\Upsilon_N}\not\in\mathcal{T}_C)\leq exp(-N^2/\epsilon).$ Let us call $\Gamma=K\cup L\cup\mathcal{T}_C$ the above compact set and find using \cite[lemma 4.4.9]{DZ}, an uniform $\epsilon$-approximation on $\Gamma$ of $h_j$ by a $\kappa_{\Gamma}=min(g_1,...,g_{k_\Gamma})$ in our well-separating class $g_i\in\mathcal{D}_{reg}$, with $-g_i\leq 0$. 
Then, one bounds (with identification of $i(\widehat{\sigma}^N_{\Upsilon_N})$ and $\widehat{\sigma}^N_{\Upsilon_N}$):
\begin{align*}P(\widehat{\sigma}^N_{\Upsilon_N}\in K)&\leq   E\Big(e^{-N^2h_j(\widehat{\sigma}^N_{\Upsilon_N})))}\Big)\leq   E\Big(1_{\Gamma}(\widehat{\sigma}^N_{\Upsilon_N})e^{N^2(\epsilon -\kappa_\Gamma(\widehat{\sigma}^N_{\Upsilon_N})))}\Big)+  P(\widehat{\sigma}^N_{\Upsilon_N}\in \Gamma^c)\\&\leq   e^{N^2\epsilon}E\Big(e^{-N^2\kappa_\Gamma(\widehat{\sigma}^N_{\Upsilon_N}))}\Big)+  exp(-N^2/\epsilon)\end{align*}
Using the standard \cite[lemma 1.2.15]{DZ}, one gets:
\begin{align*}&\limsup_{N\to\infty}\frac{1}{N^2}\log P(\widehat{\sigma}^N_{\Upsilon_N}\in K)\leq  \max(\epsilon+ \limsup_{N\to\infty}\frac{1}{N^2}\log E\Big(e^{-N^2\kappa_\Gamma(\widehat{\sigma}^N_{\Upsilon_N}))}\Big),  -1/\epsilon)
,\end{align*}\begin{align*}&\limsup_{N\to\infty}\frac{1}{N^2}\log P(\widehat{\sigma}^N_{\Upsilon_N}\in K)\leq \max\Big(-1/\epsilon,\epsilon-\inf_{\tau \in P(\mathcal{T}_\infty)}
\tau(\kappa_\Gamma)+\overline{I}_\Upsilon(\tau)\Big)\\&\leq \max\Big(-1/\epsilon,\epsilon-\inf_{\tau \in K\cup L}
\tau(\kappa_\Gamma)+\overline{I}_\Upsilon(\tau),\epsilon-\inf_{\tau \in K^c\cap L^c}
\tau(\kappa_\Gamma)+\overline{I}_\Upsilon(\tau)\Big)
\\&\leq \max\Big(-1/\epsilon,2\epsilon-\inf_{\tau \in K\cup L}
\tau(h_j)+\overline{I}_\Upsilon(\tau),\epsilon-(\alpha+1)\Big),\end{align*}
with the last inequality from the choice of $L$, non-negativity of $\kappa_\Gamma$ and its uniform approximation again.

But the middle term has been chosen such that $2\epsilon-\inf_{\tau \in K\cup L}
\tau(h_j)+\overline{I}_\Upsilon(\tau)\geq 2\epsilon-\alpha>\epsilon-\alpha-1$, so that we can get rid of the last term and obtain in taking the limit $\epsilon\to 0$: \begin{align*}\limsup_{N\to\infty}\frac{1}{N^2}\log P(\widehat{\sigma}^N_{\Upsilon_N}\in K)&\leq   -\inf_{\tau \in K\cup L}
\tau(h_j)+\overline{I}_\Upsilon(\tau)= -\inf_{\tau \in P(\mathcal{T}_\infty)}
\tau(h_j)+\overline{I}_\Upsilon(\tau)\end{align*}
One can finally use \cite[(1.4) p 8]{DupuisEllis} which only uses again that $\overline{I}_\Upsilon$ is a good rate function on a metric space to  take the limit $j\to \infty$ and get the expected upper bound:
$$\limsup_{N\to\infty}\frac{1}{N^2}\log P(\widehat{\sigma}^N_{\Upsilon_N}\in K)\leq -\inf_{\tau\in K}
\overline{I}_\Upsilon(\tau).$$
\begin{step}Upper bound in $\mathcal{T}_\infty$\end{step}
Finally the LDP upper bound on $\mathcal{T}_\infty$ is obtained by inverse contraction. 
The map $i:\mathcal{T}_\infty\to P(\mathcal{T}_\infty)$ is a continuous injection, our random variable is exponentially tight in the source space so that the upper bound part of \cite[Thm 4.2.4]{DZ} and the following remark applies and give an upper LDP in $\mathcal{T}_\infty$ with the induced function which is a rate function (not necessarily good).

We saw in step 2 that the level sets $$\{\tau:\overline{I}_\Upsilon(\tau)\leq \alpha\}\subset \{\tau: P(\tau \not\in \mathcal{T}_{C+1})\leq  \frac{\alpha+1}{C}\}$$
so that for $\tau=i(x)\in i( \mathcal{T}_\infty)$
a Dirac mass, one gets $i(x)\in \mathcal{T}_{C+1}$ as soon as $\frac{\alpha+1}{C}<1$ hence $\{x\in\mathcal{T}_\infty:\overline{I}_\Upsilon(i(x))\leq \alpha\}\subset \mathcal{T}_{\alpha+3}$ is again compact and $\overline{I}_\Upsilon\circ i$ is also a good rate function on $\mathcal{T}_\infty$ as claimed.
\end{proof}

 
\section{Applications to free entropy}     
     By the contraction principle of large deviation theory (see e.g. \cite[Th 4.2.1]{DZ}) for the projection $\pi_1:(\mathcal{T}_{2,0}^c(\mathcal{F}^m_{[0,1]}*\mathcal{F}^\nu_{\mu}),d_{1,0})
\to (\mathcal{T}(\mathcal{F}^m_{1}*\mathcal{F}^\nu_{\mu}),d_{0,0}))
$ at time $1$, one deduces 
\begin{theorem}\label{contraction}Fix  $\Upsilon_N\in \mathcal{U}(M_N(\C))^{\nu}$ (deterministic $\mu=1$ with previous notation).
Assume that the non-commutative law $\tau_{\Upsilon_N}$ converges to some $\mu_\Upsilon\in (\mathcal{T}(\mathcal{F}^\nu_{1}),d_{1,0})$. 
Then $\mathfrak{G}^N_{\Upsilon_N}=\pi_1(\widehat{\sigma}^N_{\Upsilon_N})$ satisfies a Large Deviation Principle lower bound (resp upper bound) 
in $(\mathcal{T}(\mathcal{F}^m_{1}*\mathcal{F}^\nu_{1}),d_{1,0})$ with Good rate function $J_\Upsilon\ (resp.\  \overline{J}_\Upsilon):(\mathcal{T}(\mathcal{F}^m_{1}*\mathcal{F}^\nu_{1}),d_{1,0})\to [0,\infty]$ given by $$J_\Upsilon(\tau)=\inf\{\underline{I}_\Upsilon(\sigma): \pi_1(\sigma)=\tau, \sigma \in (\mathcal{T}_{2,0}^c(\mathcal{F}^m_{[0,1]}*\mathcal{F}^\nu_{1}),d_{1,0})
\},$$
$$\mathrm{(resp.}\qquad  \qquad   \overline{J}_\Upsilon(\tau)=\inf\big\{\overline{I}_\Upsilon
(i(\sigma)): \pi_1(\sigma)=\tau, \sigma \in (\mathcal{T}_{2,0}^c(\mathcal{F}^m_{[0,1]}*\mathcal{F}^\nu_{1}),d_{1,0})
\big\}\ \ ).$$
\end{theorem}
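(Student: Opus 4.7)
The plan is to apply the contraction principle \cite[Thm 4.2.1]{DZ} to the LDP bounds established in Theorem \ref{LaplaceNonConvex} via the evaluation at time $1$. The only conceptual work is to check continuity of $\pi_1$ between the appropriate topologies and to organize the upper bound along the chain of continuous maps $\mathcal{T}_\infty \hookrightarrow \mathcal{T} \xrightarrow{\pi_1} \mathcal{T}(\mathcal{F}^m_1\ast\mathcal{F}^\nu_1)$. I do not expect any serious obstacle; the two contraction steps are essentially mechanical once continuity is checked, and goodness of the rate functions $J_\Upsilon$ and $\overline{J}_\Upsilon$ is built into the contraction principle.

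First I would verify that $\pi_1 \colon (\mathcal{T}_{2,0}^c(\mathcal{F}^m_{[0,1]}\ast\mathcal{F}^\nu_1),d_{1,0}) \to (\mathcal{T}(\mathcal{F}^m_1\ast\mathcal{F}^\nu_1),d_{1,0})$ is continuous. This is immediate from the form of the metrics $d_{1,0}$ recalled in subsection \ref{prelimStates}, since $\pi_1$ pulls back moments at the unitary variables $u_j^i$ and the single time $t=1$, and each such moment functional is one of the terms appearing in the defining sup of $d_{1,0}$ on the path space; hence $d_{1,0}(\pi_1(\sigma_1),\pi_1(\sigma_2)) \leq C \, d_{1,0}(\sigma_1,\sigma_2)$ for a universal $C$.

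For the lower bound, the standard contraction principle applied to the LDP lower bound for $\widehat{\sigma}^N_{\Upsilon_N}$ on $(\mathcal{T},d_{1,0})$ with good rate function $\underline{I}_\Upsilon$ (Theorem \ref{LaplaceNonConvex}) yields an LDP lower bound for $\mathfrak{G}^N_{\Upsilon_N} = \pi_1(\widehat{\sigma}^N_{\Upsilon_N})$ with good rate function
\[
J_\Upsilon(\tau) = \inf\{\underline{I}_\Upsilon(\sigma) : \pi_1(\sigma) = \tau\},
\]
where the infimum over the empty set is understood as $+\infty$. Goodness is automatic: if $J_\Upsilon(\tau_n) \leq \alpha$ with $\tau_n \to \tau$, pick $\sigma_n$ with $\pi_1(\sigma_n)=\tau_n$ and $\underline{I}_\Upsilon(\sigma_n) \leq \alpha+1/n$; by compactness of $\{\underline{I}_\Upsilon \leq \alpha+1\}$ and lower semicontinuity, a subsequential limit $\sigma$ gives $\pi_1(\sigma)=\tau$ and $\underline{I}_\Upsilon(\sigma) \leq \alpha$, so $J_\Upsilon(\tau) \leq \alpha$.

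For the upper bound, I would apply the contraction principle to the upper LDP on $\mathcal{T}_\infty$ with good rate function $\overline{I}_\Upsilon \circ i$. The restriction $\pi_1|_{\mathcal{T}_\infty} \colon \mathcal{T}_\infty \to \mathcal{T}(\mathcal{F}^m_1\ast\mathcal{F}^\nu_1)$ is continuous by the first step, so \cite[Thm 4.2.1]{DZ} yields an LDP upper bound for $\mathfrak{G}^N_{\Upsilon_N}$ with good rate function $\tau \mapsto \inf\{\overline{I}_\Upsilon(i(\sigma)) : \sigma \in \mathcal{T}_\infty, \ \pi_1(\sigma)=\tau\}$. Since $\overline{I}_\Upsilon \circ i$ is extended by $+\infty$ outside $\mathcal{T}_\infty \subset \mathcal{T}$ (any $\sigma \notin \mathcal{T}_\infty$ cannot give a finite value by definition of $\overline{I}_\Upsilon$, whose level sets lie inside some $\mathcal{T}_L$ by the tightness argument in step 2 of the proof of Theorem \ref{LaplaceNonConvex}), this infimum coincides with the formula
\[
\overline{J}_\Upsilon(\tau) = \inf\{\overline{I}_\Upsilon(i(\sigma)) : \pi_1(\sigma)=\tau, \ \sigma \in \mathcal{T}\}
\]
announced in the statement, and the same argument as above gives its goodness.
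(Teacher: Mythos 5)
Your proposal is correct and takes exactly the route the paper intends: the paper itself reduces the theorem to a one-sentence appeal to the contraction principle \cite[Th 4.2.1]{DZ} immediately before the statement, and your write-up merely supplies the routine details (continuity of $\pi_1$ in the $d_{1,0}$ metrics, goodness of the contracted rates, and the identification of the infimum over $\mathcal{T}$ with that over $\mathcal{T}_\infty$ since level sets of $\overline{I}_\Upsilon \circ i$ sit in some compact $\mathcal{T}_L$ — a fact established in step 5, rather than step 2, of the proof of Theorem \ref{LaplaceNonConvex}, a harmless mis-citation).
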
     
\subsection{Equality $\chi=\chi^*$ for free Gibbs states with convex potential}  
We have to find where the infima in the definition of $J_{\upsilon},\Lambda_{b,\upsilon}$ and the supremum in the definition of $\underline{I}_{\upsilon}$ are reached. We first use the argument in \cite[Th 7.3]{BCG}.
     
    Let  $\mu\in \mathcal{T}_2(\mathcal{F}^m_{1}\star\mathcal{F}^\nu_{1})$, we follow \cite{BCG} and define $\tau_\mu\in \mathcal{T}_2^c(\mathcal{F}^m_{[0,1]}\star\mathcal{F}^\nu_{1})$ the law of the brownian bridge, i.e. if $\{S^1,...,S^m\}$ is the law of a free brownian motion  free from  $X=\{X^1,...,X^m\},\upsilon$ with law (of the unitary $u(X),\upsilon$) $\tau_{X,\upsilon}=\mu$, the law $\tau_{U,\upsilon}$ of the process:$$\left\{U_t^l=u(t X^l+(1-t)S^l_{\frac{t}{1-t}}), 1\leq l\leq m, t\in [0,1]\right\}.$$
\begin{proposition}\label{marginal}
Fix the assumption of Theorem \ref{contraction}. For any $\mu\in \mathcal{T}_2(\mathcal{F}^m_{1}\star\mathcal{F}^\nu_{1})$, we have $$\overline{I}_\Upsilon(\tau_\mu)\leq -\widetilde{{\chi}}_\infty(\Psi(X)|(\Upsilon_N)_{N\in\N})
\leq J_\Upsilon(\mu)\leq \underline{I}_\Upsilon(\tau_\mu),$$
\end{proposition}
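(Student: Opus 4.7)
The rightmost inequality $J_\Upsilon(\mu) \leq \underline{I}_\Upsilon(\tau_\mu)$ is immediate from the contraction-principle definition of $J_\Upsilon$ in Theorem \ref{contraction}, since $\pi_1(\tau_\mu) = \mu$ and the infimum defining $J_\Upsilon(\mu)$ is bounded above by the value at the particular path $\tau_\mu$.

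For the middle inequality $-\widetilde{{\chi}}_\infty(\Psi(X)|(\Upsilon_N)) \leq J_\Upsilon(\mu)$: a basic open neighborhood $U_{\epsilon,K}(\mu)$ in the $d_{0,0}$ topology on $\mathcal{T}(\mathcal{F}^m_{1}*\mathcal{F}^\nu_{1})$ corresponds, via the unitarization $\Psi$ and up to the choice of radius $R$, to the microstates set $p_{m,\mu}\Gamma^U_{R,\Upsilon_N}(\sigma_{\Psi(X),\Upsilon}, \epsilon, K, N)$ of Definition \ref{MfreeEnt}. Applying the LDP lower bound for $\mathfrak{G}^N_{\Upsilon_N}$ from Theorem \ref{contraction} to this open set yields
\[
\liminf_N \frac{1}{N^2}\log \Psi_*P\!\left(p_{m,\mu}\Gamma^U_{R,\Upsilon_N}\right) \;\geq\; -\inf_{\tau \in U_{\epsilon,K}(\mu)} J_\Upsilon(\tau).
\]
Letting $\epsilon \to 0$, $K \to \infty$, then $R \to \infty$, using the lower semicontinuity of $J_\Upsilon$, and invoking Proposition \ref{chiinfty} to identify the limit with $\widetilde{{\chi}}_\infty$, one obtains the claim.

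For the leftmost inequality, I would first apply the LDP upper bound in Theorem \ref{contraction} to small closed neighborhoods of $\mu$ to obtain $\overline{J}_\Upsilon(\mu) \leq -\widetilde{{\chi}}_\infty(\Psi(X)|\Upsilon)$. Since by definition $\overline{J}_\Upsilon(\mu) \leq \overline{I}_\Upsilon(\tau_\mu)$, the real content is the reverse inequality $\overline{I}_\Upsilon(\tau_\mu) \leq \overline{J}_\Upsilon(\mu)$, i.e., the fact that the brownian bridge $\tau_\mu$ realizes the infimum over paths with marginal $\mu$. Following the argument of \cite[Thm 7.3]{BCG}: any $\sigma$ with $\pi_1 \sigma = \mu$ and $\overline{I}_\Upsilon(\sigma) < \infty$ can be realized, via the variational formula defining $\overline{I}_\Upsilon$, as $\tau_{S + \int_0^\cdot u_s\, ds,\, \upsilon : \mathcal{Z}}$ for some admissible control $u \in L^2_{ad+}(\mathcal{F})^m$ with $\mathcal{F} \in \mathcal{F}(\beta\N)$; the endpoint $X = S_1 + \int_0^1 u_s\, ds$ then has law $\mu$. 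Define the free brownian bridge control by the implicit relation $u^*_s = (1-s)^{-1}\bigl(X - S_s - \int_0^s u^*_r\, dr\bigr)$; this is adapted to the filtration generated by $S$ and the endpoint $X$, and, as verified via Proposition \ref{Ito} applied to $\|X - S_s - \int_0^s u^*_r\, dr\|_2^2$, produces a path of law precisely $\tau_\mu$. A Jensen-type orthogonality argument, exploiting that $u^*$ is the orthogonal projection of $u$ onto processes adapted to $(\mathcal{F}_s \vee \sigma(X))_{s\in[0,1]}$ within $L^2([0,1],L^2(\mathcal{F}_1))$, then yields $\tfrac{1}{2}\int_0^1 \|u^*_s\|_2^2\, ds \leq \tfrac{1}{2}\int_0^1 \|u_s\|_2^2\, ds$. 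Passing to infima gives $\overline{I}_\Upsilon(\tau_\mu) \leq \overline{I}_\Upsilon(\sigma)$, and then an infimum over admissible $\sigma$ yields $\overline{I}_\Upsilon(\tau_\mu) \leq \overline{J}_\Upsilon(\mu)$.

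The main obstacle will be the rigorous construction of $u^*$ within the ultraproduct filtration framework: verifying adaptedness to some $\mathcal{F}' \in \mathcal{F}(\beta\N)$, checking that the resulting path law is exactly $\tau_\mu$ (not merely a process with the correct terminal law), and ensuring the quadratic-cost inequality persists when the classical conditional expectation is replaced by the appropriate central-valued projection — the same subtleties that, in the convex setting, were handled by the stochastic control analysis of Section 5 and Corollary \ref{convexFBSDEmatricial}.
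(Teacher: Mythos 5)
Your handling of the rightmost inequality $J_\Upsilon(\mu)\leq \underline{I}_\Upsilon(\tau_\mu)$ and of the middle inequality $-\widetilde{\chi}_\infty(\Psi(X)|(\Upsilon_N))\leq J_\Upsilon(\mu)$ via the LDP lower bound applied to a basic neighbourhood of $\mu$ matches the paper and is fine. The problem is your leftmost inequality.

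For $\overline{I}_\Upsilon(\tau_\mu)\leq -\widetilde{\chi}_\infty$, you try to prove the variational statement $\overline{I}_\Upsilon(\tau_\mu)\leq \overline{J}_\Upsilon(\mu)$ at the level of the rate function, by replacing a generic control $u$ (realizing a path $\sigma$ with $\pi_1\sigma=\mu$) with a projected bridge control $u^*_s=(1-s)^{-1}(X-S_s-\int_0^s u^*_r\,dr)$ adapted to $(\mathcal F_s\vee\sigma(X))_s$. This does not go through as stated, and you are essentially flagging the fatal issues yourself. First, the rate function $\overline{I}_\Upsilon$ is defined through controls adapted to ultraproduct-type filtrations $\mathcal F\in\mathcal F(\beta\N)$, in which the canonical process $S_t$ is a standard martingale but \emph{not} a free brownian motion (this is stressed explicitly at the start of Step 2 of the proof of Theorem~\ref{MainTechnical}). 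Your argument that the bridge drift produces a path of law exactly $\tau_\mu$ and that the quadratic cost drops under conditioning is a statement about classical or free brownian motion, and it simply does not transfer to those filtrations. Second, the enlarged filtration $(\mathcal F_s\vee\sigma(X))_s$ is not a subfiltration in the sense of Subsection~\ref{ultra} (conditional expectations are part of the filtration data) and there is no reason it belongs to $\mathcal F(\beta\N)$, so even if $u^*$ existed you could not feed it into the infimum defining $\overline{I}_\Upsilon$. Third, $u^*$ as you write it is not the $L^2$ projection of $u$ onto anything --- it is defined by an independent ODE --- so the Jensen inequality you invoke needs a separate and non-obvious orthogonality argument.

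The paper does not attempt to compare $\overline{I}_\Upsilon(\tau_\mu)$ and $\overline{J}_\Upsilon(\mu)$ at all. Instead, it invokes the matrix-level conditional concentration of Biane--Capitaine--Guionnet (the variant of their formula (19), or Theorem~7.3 in \cite{BCG}): conditionally on $\pi_1(\widehat\sigma^N_{\Upsilon_N})$ lying near $\mu$, the whole matrix Brownian path concentrates exponentially on the brownian bridge law $\tau_\mu$, so that for all $\delta>0$
\[
\limsup_{\epsilon\to0}\limsup_{N}\tfrac{1}{N^2}\log P\big(d(\pi_1(\widehat\sigma^N_{\Upsilon_N}),\mu)\leq\epsilon\big)
=\limsup_{\epsilon\to0}\limsup_{N}\tfrac{1}{N^2}\log P\big(d(\pi_1(\widehat\sigma^N_{\Upsilon_N}),\mu)\leq\epsilon,\ d(\widehat\sigma^N_{\Upsilon_N},\tau_\mu)\leq\delta\big).
\]
One then bounds the right-hand side by $\limsup_N\tfrac{1}{N^2}\log P(d(\widehat\sigma^N_{\Upsilon_N},\tau_\mu)\leq\delta)$, applies the LDP upper bound from Theorem~\ref{LaplaceNonConvex}, and lets $\delta\to0$ using lower semicontinuity of $\overline{I}_\Upsilon$, to get $\widetilde{\chi}_\infty\leq-\overline{I}_\Upsilon(\tau_\mu)$ directly, never passing through $\overline{J}_\Upsilon$. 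You should replace your Step~(c) with this concentration argument; it is both what the paper does and what avoids the ultraproduct-filtration obstacles you identified.
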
    
    \begin{proof}
    Clearly from  the definition  in Theorem \ref{contraction}, $J_\Upsilon(\mu)\leq \underline{I}_\Upsilon(\tau_\mu).$ But let us recall the straightforward variant of formula (19) in \cite{BCG} for any $\delta>0$:\begin{align*}&\limsup_{\epsilon\to 0}\limsup_{N\to \infty}\frac{1}{N^2}\log P(d(\pi_1(\widehat{\sigma}^N_{\Upsilon_N}),\mu)\leq \epsilon)\\&=\limsup_{\epsilon\to 0}\limsup_{N\to \infty}\frac{1}{N^2}\log P(d(\pi_1(\widehat{\sigma}^N_{\Upsilon_N}),\mu)\leq \epsilon,d(\widehat{\sigma}^N_{\Upsilon_N})),\tau_\mu)\leq \delta).\end{align*}
Thus, from the lower Laplace deviation bound in Theorem \ref{LaplaceNonConvex}, one deduces (from the fact that $\overline{I}_\upsilon$ is a rate function in choosing a sequence with $\tau_n\to \tau_\mu$ and $\lim_{\delta\to 0}\inf_{d(\tau,\tau_\mu)\leq \delta}\overline{I}_\Upsilon(\tau)=\liminf _n\overline{I}_\Upsilon(\tau_n)\geq \overline{I}_\Upsilon(\tau_\mu)$):
$$\widetilde{{\chi}}_\infty(\Psi(X)|(\Upsilon_N)_{N\in\N})=\limsup_{\epsilon\to 0}\limsup_{N\to \infty}\frac{1}{N^2}\log P(d(\pi_1(\widehat{\sigma}^N_{\Upsilon_N}),\mu)\leq \epsilon)\leq -\lim_{\delta\to 0}\inf_{d(\tau,\tau_\mu)\leq \delta}\overline{I}_\Upsilon(\tau)\leq-\overline{I}_\Upsilon(\tau_\mu)$$
But from the large deviation principle in Theorem \ref{contraction}
, we also have:$$\limsup_{\epsilon\to 0}\liminf_{N\to \infty}\frac{1}{N^2}\log P(d(\pi_1(\widehat{\sigma}^N_{\Upsilon_N}),\mu)\leq \epsilon)\geq \limsup_{\epsilon\to 0}-\inf_{d(\tau,\mu)<\epsilon} J_\Upsilon(\tau)\geq-J_\Upsilon(\mu).$$
Finally, we thus obtained the missing $-J_\Upsilon(\mu)\leq -\overline{I}_\Upsilon(\tau_\mu).$    
    \end{proof}
\begin{theorem}\label{chichistar}Fix the assumption of Theorem \ref{contraction}.
Let $g\in \mathcal{E}^{1,1}_{app}(  \mathcal{T}_{2,0}(\mathcal{F}^m_{1}\star\mathcal{F}^\nu_{1}),d_{2,0} ).$ 
Let $\tau_g$ the unique solution of $(SD_g)$ obtained in Theorem \ref{SDVg} with fixed law of the unitary part $\upsilon$, and $X=X_1,...X_m,\upsilon$ having this law.
 Then, we have the inequalities: $$\chi(X_1,...,X_m|\upsilon)\geq\underline{\chi}(X_1,...,X_m|\upsilon)\geq \chi^*(X_1,...,X_m|W^*(\upsilon)),$$ $$\chi^G(X_1,...,X_m|\upsilon)\geq  \underline{\chi}^G(X_1,...,X_m|\upsilon)\geq\chi^{G*}(X_1,...,X_m|W^*(\upsilon))$$
 with equality if $W^*(\upsilon)=\C.$
\end{theorem}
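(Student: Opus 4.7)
The plan is to reduce to the Gaussian variants and to the unitary picture, then apply the large deviation machinery of Theorem \ref{contraction} together with the stochastic control expression of Corollary \ref{OptimalFree}. The trivial inequality $\chi \geq \underline{\chi}$ is just $\limsup \geq \liminf$, so the whole content lies in $\underline{\chi}(X\,|\,\upsilon) \geq \chi^{*}(X\,|\,W^{*}(\upsilon))$. By the relation $\chi = \chi^{G} + \frac{1}{2}\sum_i \tau(X_i^2) + mC$ of \eqref{chiGchi} and its analogue for $\chi^{*}$, it is enough to prove the Gaussian analogue. Then \eqref{chiGchiU} (together with Proposition \ref{chiinfty}, so that $\widetilde{\chi} = \widetilde{\chi}_\infty$) reduces the problem to showing $\underline{\widetilde{\chi}}(\Psi(X)\,|\,\upsilon) \geq \chi^{G*}(X\,|\,W^{*}(\upsilon))$.

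First I would invoke the LDP lower bound of Theorem \ref{contraction}: taking $\mu$ to be the (unitary transformed) law of $(\Psi(X), \upsilon)$ and using that $J_\Upsilon$ is a good rate function, one gets in a standard way (as in the proof of Proposition \ref{marginal}, reversing the last chain of inequalities with $\liminf$'s) that $\underline{\widetilde{\chi}}(\Psi(X)\,|\,\upsilon) \geq -J_\Upsilon(\mu)$. Since $J_\Upsilon(\mu) \leq \underline{I}_\Upsilon(\tau_{\mu})$ by definition in Theorem \ref{contraction}, it suffices to exhibit an upper bound on $\underline{I}_\Upsilon(\tau_{\mu})$ of the form $-\chi^{G*}(X\,|\,W^{*}(\upsilon))$. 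For this I would use the variational formula: for suitable test functions $f = G \circ (I_{t_1,\dots,t_k}\ast\mathrm{Id})$ one has, by the definition of $\underline{I}$ as a supremum, $\underline{I}_\Upsilon(\tau_\mu) \leq \sup_f [-f(\tau_\mu) + \Lambda_{b,\upsilon}(f)]$, and Corollary \ref{OptimalFree} gives the attained value $\Lambda_{b,\upsilon}(f) = f(\tau_{Y,\upsilon}) + \frac{1}{2}\int_0^1 \|u_s^G\|_2^2\, ds$, where $Y_s = S_s + \int_0^s u_v^G\, dv$ solves the limiting free SDE with drift determined by $g$.

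The core calculation is to identify $\frac{1}{2}\int_0^1 \|u_s^G\|_2^2\, ds$ with (the Gaussian version of) $-\chi^{*}$. The drift $u_s^G$ lies in $L^2(W^*(\upsilon, Y_{t_1},\dots, Y_{t_i}, Y_s))$ by Corollary \ref{OptimalFree}, and by the free forward--backward SDE description (Corollary \ref{convexFBSDEmatricial}, combined with equations \eqref{GomegaG}--\eqref{DGomegaG}) it equals a conditional expectation of a cyclic derivative of $g$. Since $\tau_g$ is \emph{the} unique solution of $(SD_g)$ (Theorem \ref{SDVg}), this derivative $X + \mathcal{D}\,V$ coincides at the terminal time with the conjugate variable of $(X_1,\dots,X_m)$ relative to $W^*(\upsilon)$. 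Using an Itô-type expansion (Proposition \ref{Ito}) along $Y_s$, reparametrising by $t = s/(s + (1-s))$ to go from the free brownian bridge to the semicircular perturbation $\sqrt{t}X + \sqrt{1-t}S$, and taking $k$ dense in $[0,1]$, the sum of squared norms $\frac{1}{2}\int_0^1 \|u_s^G\|_2^2\, ds$ telescopes into $\frac{1}{2}\int_0^1 \frac{dt}{t}\,\Phi^{G*}\bigl(\sqrt{t}X+\sqrt{1-t}S\,:\,W^*(\upsilon)\bigr)$, which is exactly $-\chi^{G*}(X\,|\,W^{*}(\upsilon))$. Chaining the inequalities yields $\underline{\widetilde{\chi}}(\Psi(X)\,|\,\upsilon) \geq \chi^{G*}(X\,|\,W^{*}(\upsilon))$, which is the desired bound.

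The main obstacle is the last step: matching the stochastic-control minimiser $u^G$ to the Fisher-information integrand along semicircular perturbation. One has to check that the $L^2$ martingale structure of $u^G$, which is read off from the free FBSDE in $\mathcal{M}_P^\omega$, projects down to yield the actual conjugate variables on the smaller von Neumann algebra $W^*(\upsilon, Y)$ — this is exactly where the uniqueness in Theorem \ref{SDVg} (in turn coming from the exponential decay \eqref{expdecay} for convex potentials) is used. Finally, when $W^*(\upsilon) = \mathbb{C}$ the assertion combines with the known inequality $\chi \leq \chi^{*}$ of \cite{BCG} to force $\chi = \underline{\chi} = \chi^{*}$, and similarly for the Gaussian variants.
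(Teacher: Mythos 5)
Your reduction strategy (pass to Gaussian/unitary variants via \eqref{chiGchi}, \eqref{chiGchiU} and Proposition \ref{chiinfty}, then use Proposition \ref{marginal} to get $\underline{\chi}^G \geq -\underline{I}_\Upsilon(\tau_\mu)$) matches the paper's opening moves, and correctly locates the real work in bounding $\underline{I}_\Upsilon(\tau_\mu) = \sup_f\,[-f(\tau_\mu) + \sup_\omega\Lambda_{b,\Upsilon}^\omega(f)]$ by $-\chi^{G*}(X:W^*(\upsilon))$. But the step you propose to close this does not work as written, and it skips the paper's central device.

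The gap is a confusion between two different controls. Corollary \ref{OptimalFree} gives, for each test function $f = G \circ (I_{t_1,\dots,t_k}\ast\mathrm{Id})$, an optimal drift $u^G$ depending on that $G$ (hence on $f$), with $\Lambda_{b,\upsilon}(f) = f(\tau_{Y^f,\upsilon}) + \tfrac{1}{2}\int\|u_s^{G_f}\|_2^2\,ds$. You then assert the drift is ``determined by $g$'' and try to identify $\tfrac{1}{2}\int\|u^G\|^2$ with $-\chi^{G*}$, but $u^G$ and $Y^f$ vary with $f$, so neither the law of $Y^f$ equals $\tau_\mu$ nor is the integral $f$-independent. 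As stated, the supremum over $f$ is uncontrolled. The paper's way around this is the Legendre (variational) representation of relative entropy \eqref{dualEntsq} applied to the \emph{finite-$N$} Gibbs brownian-bridge law $\mu_{G,\mathbf{t},N}$ built from the fixed potential $g$, not from $f$: for every admissible $f$,
\begin{align*}
-E_{\mu_{G,\mathbf{t},N}}(F(\tau_{\cdot,\Upsilon_N})) - \tfrac{1}{N^2}\log E_{\gamma_N}\bigl(e^{-N^2 f}\bigr)
\leq -\tfrac{1}{N^2}\,\mathrm{Ent}(\mu_{G,\mathbf{t},N}\,|\,\gamma_N)
= \mathbf{E}\Bigl(\tfrac{1}{2}\int_0^1\|b^{G,N,\Upsilon_N}(t,X^{G,N,\Upsilon_N}(t))\|_2^2\,dt\Bigr),
\end{align*}
where the right-hand side depends only on $g$ (via Theorem \ref{minimisationHermitian}). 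Taking $N\to\omega$ and then supping over $f$ yields the uniform bound $\underline{I}_\Upsilon(\tau_\mu) \leq \tfrac{1}{2}\int_0^1\|u_t^g\|_2^2\,dt$ with a single $g$-dependent drift. Without this entropy duality, your argument has no mechanism to decouple the bound from $f$. (A variant would be to insert the Gibbs drift $u^g$ as a \emph{suboptimal} control into the infimum defining $\Lambda_{b,\upsilon}(f)$ for each $f$, which would also give an $f$-independent bound, but then you would still need to verify $u^g$'s admissibility — H\"older continuity and factoriality — against every grid $\mathbf{t}(f)$; the entropy route avoids this.)

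Secondly, the last identification step is also off. The paper does not use an It\^o-type telescoping along $Y_s$ nor a reparametrisation to $\sqrt{t}X+\sqrt{1-t}S$. Instead, it computes the score function of the marginal $X^{G,N,\Upsilon_N}(t)$ of the Gibbs law at finite $N$ explicitly (it is $-\frac{N}{t}X^{G,N,\Upsilon_N}(t) - \sqrt{N}\mathscr{D}h_t^{N,\Upsilon_N}$), passes to the ultraproduct, and then invokes the crucial output of Corollary \ref{OptimalFree} that $u_t^g \in L^2(W^*(\upsilon,Y_t))$ — this is exactly what forces the limit object to be Voiculescu's conjugate variable $\xi^t$ for $Y_t$ in the \emph{smaller} algebra, giving $u_t^g = \frac{Y_t}{t} - \xi^t$ and hence $\tfrac{1}{2}\int_0^1\|u_t^g\|_2^2\,dt = -\chi^{G*}(X:W^*(\upsilon))$. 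You correctly flag this adaptedness as the role of uniqueness from Theorem \ref{SDVg}, but the mechanism you sketch (telescoping and time change) is not the one that makes it rigorous, and in particular does not explain how the conjugate variable arises from the drift.

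The closing equality in the case $W^*(\upsilon)=\C$ (combining with the inequality $\chi \leq \chi^*$ of \cite{BCG}) is stated correctly.
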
    
Since the upper bound is known in the case $W^*(\upsilon)=\C$ from \cite{BCG}, and probably virtually known via similar techniques in the general case too, the main new feature is the lower bound. Hence, we don't try to extend here the upper bound since it requires supplementary techniques from the second paper of this series.

    \begin{proof}
    The first equality comes from the second. Let $\mu_g=\tau_{X,\upsilon}$ 
    By our previous results, \eqref{chiGchiU}, 
    obvious relations and proposition \ref{marginal}, we know that
    : \begin{align*}\chi^G(X|\upsilon)&=\widetilde{\chi}(\Psi(X)|\upsilon)\geq \widetilde{\underline{\chi}}(\Psi(X)|\upsilon)\geq \widetilde{\underline{\chi}}(\Psi(X)|(\Upsilon_N)_{N\in \N})=-J_\Upsilon(\mu_g)\geq-\underline{I}_\Upsilon(\tau_{\mu_g})\\&=-\sup_{f\in \mathcal{C}_{reg,\infty,C=0}(   \mathcal{T}_{2,0}^c(\mathcal{F}^m_{[0,1]}\star\mathcal{F}^\nu_{1}),d_{1,0})}-f(\tau_{\mu_g})+\sup_{\omega}\Lambda_{b,\Upsilon}^\omega(f).\end{align*}

   Thus take  $f\in \mathcal{C}_{reg,\infty,C=0}(   \mathcal{T}_{2,0}^c(\mathcal{F}^m_{[0,1]}\star\mathcal{F}^\nu_{1}),d_{1,0})$, and fix $\mathbf{t}(f)=(t_0=0<t_1<...<t_k)\leq 1$, $F\in \mathcal{E}_{reg,\infty}(   \mathcal{T}_{2,0}(\mathcal{F}^m_{k}\star\mathcal{F}^\nu_{1}),d_{2,0})$ so that $f=F\circ (I_{t_1,...,t_k}*Id).$ We know from Theorem \ref{MainTechnicalNonConvex} that:$$\liminf_{N\to \infty} -\frac{1}{N^2}\log E_{\gamma_{sa,N,m}}(e^{-N^2 f(\tau_{W,\Upsilon_N})})=\sup_\omega\Lambda_{b,\upsilon}(f).$$
   
If $t_k=1$, call $\mathbf{t}= \mathbf{t}(f)$ and $K=k$ and otherwise if $t_k\neq 1$  define  $\mathbf{t}=(t_0=0<t_1<...<t_k<t_{k+1}=1)$ and $K=k+1$. Then consider $G(\tau_{x_1,...,x_{K},\upsilon})=g(\tau_{x_{K},\upsilon})$ so that $G\in \mathcal{E}^{1,1}(  \mathcal{T}_{2,0}(\mathcal{F}^m_{K}\star\mathcal{F}^\nu_{1}),d_{2,0})$ then $\mu_{G,\mathbf{t},N}$ from proposition \ref{ConcentrationNorm} can be seen (after linear change of variable) as a law of the form $\mu_{g',N}$ as in Theorem \ref{SDVg} for $g'\in \mathcal{E}^{1,1}(  \mathcal{T}_{2,0}(\mathcal{F}^{Km}_{1}\star\mathcal{F}^\nu_{1}),d_{2,0})$ and thus converges in law, since the law is a marginal of a Hermitian Brownian bridge, it is easy to see (using standard freeness results for instance the characterization of free brownian motion in Theorem \ref{LevyBCG} and concentration from proposition \ref{ConcentrationNorm}) the limit law is a marginal of the brownian bridge $\tau_{\mu_g}$ since $\mu_{G,\mathbf{t},N}$ is itself the finite dimensional distribution of a brownian bridge (see e.g. \cite{Karatzas} (5.6.28) (5.6.29)) namely of the process $t X^l+(1-t)H^l_{\frac{t}{1-t}}$ with $H$ an hermitian brownian motion independent of $X$, following the law $\mu_{g,N}$. As a consequence, one deduces (using again the concentration result in proposition \ref{ConcentrationNorm} and the lipschitzness of $F$ with respect to $d_{2,0}$):
$$f(\tau_{\mu_g})=\lim_{N\to \infty} E_{\mu_{G,\mathbf{t},N}}(F(\tau_{.,\Upsilon_N})).$$

But of course we can compare this value to    
\begin{align*} -&E_{\mu_{G,\mathbf{t},N}}(F(\tau_{.,\Upsilon_N}))-\frac{1}{N^2}\log E_{\gamma_{sa,N,m}}(e^{-N^2 f(\tau_{W,\Upsilon_N})})\\&\leq \sup_{f\in \mathcal{C}_{sq}(\R^{N^2mk})} E_{\gamma_{sa,N,m}}(f(\tau_{W_{t_1},...,W_{t_k},\Upsilon_N})\frac{e^{-N^2 g(\tau_{W_1,\Upsilon_N})}}{Z_{G,\mathbf{t},N}})
 -\frac{1}{N^2}\log E_{\gamma_{sa,N,m}}(e^{N^2 f(\tau_{W_{t_1},...,W_{t_k},\Upsilon_N})})
\\& =-\frac{1}{N^2}Ent(\frac{1}{Z_{G,\mathbf{t},N}}e^{-N^2 g(\tau_{W_1,\Upsilon_N})}d\gamma_{sa,N,m}|\gamma_{sa,N,m})
\\& =\mathbf{E}  \left(
  \frac{1}{2}\int_0^1||b^{G,N,\Upsilon_N}(t,X^{G,N,\Upsilon_N}(t))||_2^2\ dt\right)\end{align*}
   where the next-to-last equality comes from \eqref{dualEntsq} and the last one from Theorem \ref{minimisationHermitian} (with its notation) and its proof. Taking the limit $N\to\omega$ and using corollary \ref{OptimalFree} one thus gets:
      $-f(\tau_{\mu_g})+\sup_\omega\Lambda_{b,\Upsilon}^\omega(f)\leq\frac{1}{2}\int_0^1dt||u_t^G||_2^2$
      and taking the supremum over $f$:

       $$-\underline{I}_\Upsilon(\tau_{\mu_g})\geq   -\frac{1}{2}\int_0^1dt||u_t^G||_2^2.$$
       
   Hence we deduce 
  :
    $$\chi^G(X_1,...,X_m|\upsilon)\geq \underline{\chi}^G(X_1,...,X_m|\upsilon)\geq   -\frac{1}{2}\int_0^1dt||u_t^G||_2^2.$$
    
    It remains to identify $u_t^G$. Note that we know from Theorem \ref{minimisationHermitian} that the law of $(X^{G,N,\Upsilon_N}(t),X^{G,N,\Upsilon_N}(1))$ is $\mu_{g,(t,1),N}$ with the notation of proposition  \ref{ConcentrationNorm} and thus the density of $X^{G,N,\Upsilon_N}(t)$ is the integral on the second variable:
    
    $$\int_{(M_N(\C)_{sa})^m}\mu_{g,(t,1),N}(dx_1,dx_2)=\exp\left(-NTr(\frac{x_1^2}{2t})-h_t^{N,\Upsilon_N}(\sqrt{N}x_1)\right)dLeb_{(M_N(\C)_{sa})^{m}}(dx_1)$$
We can thus compute the score function   of  $X^{G,N,\Upsilon_N}(t)$ to be $-\frac{N}{t}X^{G,N,\Upsilon_N}(t)-\sqrt{N}\mathscr{D}h_t^{N,\Upsilon_N}(\sqrt{N}X^{G,N,\Upsilon_N}(t))$ and one thus deduces by integration by parts the usual characteristic equation for a non-commutative polynomial $P\in  \C\langle X_1,...,X_m,\upsilon\rangle$ as in the proof of Theorem \ref{SDVg}
    \begin{align*}\ E&\left(\frac{1}{N}Tr(\frac{1}{t}X^{G,N}_i(t)P(X^{G,N,\Upsilon_N}(t),\Upsilon_N))
    + \frac{1}{N\sqrt{N}}Tr(\mathscr{D}h_t^{N,\Upsilon_N}(\sqrt{N}X^{G,N,\Upsilon_N}(t))P(X^{G,N,\Upsilon_N}(t),,\Upsilon_N))\right)\\&=E\left((\frac{1}{N}Tr\otimes\frac{1}{N}Tr)(\partial_{X_i}P)(X^{G,N,\Upsilon_N}(t),\Upsilon_N)\right)\end{align*}
and thus in terms of $b^{G,N,\Upsilon_N}$ this can be written:    
    \begin{align*}\ E&\left(\frac{1}{N}Tr(\frac{1}{t}X^{G,N,\Upsilon_N}_i(t)P(X^{G,N,\Upsilon_N}(t),\Upsilon_N))
    - \frac{1}{N}Tr(b_i^{G,N,\Upsilon_N}(t,X^{G,N,\Upsilon_N}(t))P(X^{G,N,\Upsilon_N}(t),\Upsilon_N))\right)\\&=E\left((\frac{1}{N}Tr\otimes\frac{1}{N}Tr)(\partial_{X_i}P)(X^{G,N,\Upsilon_N}(t),\Upsilon_N)\right)\end{align*}
Note that for further use in the newt proof, our previous inequality before the limit can be written if $-N\Xi^{G,N,\Upsilon_N}(t)=\frac{1}{t}X^{G,N,\Upsilon_N}(t)-b^{G,N,\Upsilon_N}(t,X^{G,N,\Upsilon_N}(t))$ is the score function of $X^{G,N,\Upsilon_N}(t)$ (from the previous integration by parts formula extended bejond non-commutative polynomials):    
    \begin{equation}\label{MatrixScoreBound} -E_{\mu_{G,\mathbf{t},N}}(F(\tau_{.,\Upsilon_N}))-\frac{1}{N^2}\log E_{\gamma_{sa,N,m}}(e^{-N^2 f(\tau_{W,\Upsilon_N})})
\leq \mathbf{E}  \left(
  \frac{1}{2}\int_0^1\|\frac{1}{t}X^{G,N,\Upsilon_N}(t)+N\Xi^{G,N,\Upsilon_N}(t)\|_2^2\ dt\right)\end{equation}
From the concentration result in Theorem \ref{ConcentrationNorm} for     $\mu_{g,(t,1),N}$, one can take the limit $N\to \omega$ of the score function equation and obtain (recall the notation of corollary \ref{OptimalFree} $Y_t=(X^{G,N}(t))^\omega$)
$$\tau_\omega\left( \frac{   Y_t^{(i)}}{t}P(Y_t,\upsilon)-(u_t^G)^{(i)}P(Y_t,\upsilon)\right)=(\tau_\omega\otimes \tau_\omega)((\partial_{X_i}P)(Y_t,\upsilon)).$$
   But note the key result in corollary \ref{OptimalFree} that in our situation $(u_t^G)\in L^2(W^*(Y_t))$ implying that $\frac{   Y_t^{(i)}}{t}-(u_t^G)^{(i)}$ is Voiculescu's  i-th conjugate variable \cite{V5} for $Y_t$: $\xi_{i}^t$, and thus our previous inequality reads:
   $$\underline{\chi}^G(X_1,...,X_m|\upsilon)\geq   -\frac{1}{2}\int_0^1dt\left\|\frac{   Y_t}{t}-\xi^t\right\|_2^2=\chi^{G*}(X_1,...,X_m|W^*(\upsilon)).$$
   
    Since the other inequality was known from \cite{BCG} in the case $W^*(\upsilon)=\C$, one concludes.
    \end{proof}

\subsection{Proof of Theorem \ref{ThmC}}
\setcounter{Step}{0}
The first equality $\chi(X_1,...,X_m)=\chi^*(X_1,...,X_m)$ is in Theorem \ref{chichistar}. We keep the notation of its proof. We know that $Y_t$ as the same law as $tX+(1-t)S_{\frac{t}{1-t}},t\in [0,1)$ for $S_t$ a free brownian motion free from $X=(X_1,...,X_m)$ and we know that the conjugate variables are $\frac{   Y_t}{t}-(u_t^G).$ Most of remaining proof will boil down to a change of time and linear change of variable to induce what we learned in our previous proofs from free brownian bridge to free brownian motion.

\begin{step} $\chi(X_1+\sqrt{t} S_1,...,X_m+\sqrt{t} S_m)=\chi^*(X_1+\sqrt{t} S_1,...,X_m+\sqrt{t} S_m).$\end{step}
The result does not follow from our theorem by lack of a way of producing a universal function $g_t$ such that the law above satisfies $(SD_{g_t})$ but the proof will follow closely the one of Theorem \ref{chichistar}. Since both $\chi$ (see \cite[Prop 3.6]{V2}) and $\chi^*$ (see \cite[Prop 7.7, 7.8]{V5}) have the same formula under linear change of variable, it suffices to prove $\chi^G(Y_t)=\chi^{G*}(Y_t).$ Using \cite{BCG}, we are even content to prove : $\chi^{G*}(Y_t)\leq \chi^G(Y_t).$ Let us call $\mu_t$ the law of $Y_t$. Recall that $\tau_{\mu_t}$ is the law of brownian bridge $sY_t+(1-s)S_{\frac{s}{1-s}}$ which is the same law as $stX+ s(1-t)S_{\frac{t}{1-t}}'+ (1-s)S_{\frac{s}{1-s}}=\frac{1}{u}\left(ustX+ us(1-t)S_{\frac{t}{1-t}}'+ u(1-s)S_{\frac{s}{1-s}}\right)$ with $u=u(s,t)=\frac{t}{1-s(1-t)}$. But since 
$u^2[s^2t(1-t)+(1-s)s]=-u^2s^2t^2+ u^2s[st+1-s]=ust-u^2s^2t^2$
this has the same law as $\frac{1}{u}\left(ustX+ (1-ust)S_{\frac{ust}{1-ust}}\right)$ which is a time change and linear change of variable of a free Brownian bridge. We call $L_{t}(\tau), t\in ]0,1[$ the law of the process $U_s=\frac{V_{u(s,t)st}}{u(s,t)},s\in [0,1]$ for $V_t,t\in [0,1]$ a process of law $\tau$. Recall that $\chi^{G*}(X_1,...,X_m)=-\frac{1}{2}\int_0^1dt\left\|\frac{   Y_t}{t}-\xi^t\right\|_2^2$ where $\xi^t$ is the conjugate variable for $Y_t$ thus, since the conjugate variable for $sY_t+(1-s)S_{\frac{s}{1-s}}$ is $u(s,t)\xi^{u(s,t)st}$ one deduces that  
\begin{align*}\chi^{G*}(Y_t)&=-\frac{1}{2}\int_0^1ds\left\|\frac{ Y_{u(s,t)st}}{u(s,t)s}-u(s,t)\xi^{u(s,t)st}\right\|_2^2
\\&=-\frac{1}{2}\int_0^1ds\left\|-Y_{u(s,t)st}\left(\frac{1 }{t}-1\right)-u(s,t)(\xi^{u(s,t)st}-\frac{ Y_{u(s,t)st}}{u(s,t)st})\right\|_2^2
.\end{align*}
Of course we start by the same result as in the proof of our Theorem \ref{chichistar}:$$\chi^G(Y_t)\geq=-\sup_{f\in \mathcal{C}_{reg,\infty,C=0}(   \mathcal{T}_{2,0}^c(\mathcal{F}^m_{[0,1]}\star\mathcal{F}^\nu_{1}),d_{1,0})}-f(\tau_{\mu_t})+\sup_{\omega}\Lambda_{b,\Upsilon}^\omega(f)
.$$
Fixing $f$ as in the supremum, and notation similar as in the proof that we follow, we deduce if we call for simplicity $\mu_{G,N}$ the law of the full brownian bridge process of marginals $\mu_{G,\mathbf{t},N}$ (in order not to fix the relevant times after the above deterministic change of time):$$\lim_{N\to \omega} -E_{\mu_{G,N}}(F(L_t(\tau_{.})))-\frac{1}{N^2}\log E_{\gamma_{sa,N,m}}(e^{-N^2 f(\tau_{W})})=-f(\tau_{\mu_t})+\Lambda_{b,\Upsilon}^\omega(f).$$
   
But of course, interpreting the key inequality \ref{MatrixScoreBound} in the proof of Theorem \ref{chichistar}  in terms of the same computation of the score function as before along an hermitian brownian bridge, on gets:
\begin{align*} -&E_{\mu_{G,N}}(F(L_t(\tau_{.})))-\frac{1}{N^2}\log E_{\gamma_{sa,N,m}}(e^{-N^2 f(\tau_{W})})
\\& \leq\mathbf{E}  \left(
  \frac{1}{2}\int_0^1\left\|u(s,t)b^{G,N}(u(s,t)st,X^{G,N}(u(s,t)st))-X^{G,N}(u(s,t)st)\left(\frac{1 }{t}-1\right)\right\|_2^2\ ds\right).\end{align*}
Taking the limit $N\to \omega$ after a supremum over $f$ as in the proof  of Theorem \ref{chichistar}, one thus gets the expected inequality.

\begin{step} Use of Time reversal of free brownian motion.\end{step}
From step 2 (vi) of the proof of Theorem \ref{MainTechnical}, we know that $u_t^G=-\nabla_{Y_t} h_t^\omega(Y_t)$, which is from \eqref{C11htomega} a $C$-Lipschitz function of $Y_t$. From the linear change of variable for conjugate variables \cite[Proof of Corol 3.9]{V5}, in dividing by $t$ the previous variables,  $Y_t+(t\nabla_{Y_t} h_t^\omega(Y_t))= t \frac{   Y_t}{t}+(t\nabla_{Y_t} h_t^\omega(t \frac{   Y_t}{t}))$ is the conjugate variable of $\frac{   Y_t}{t}$ which is of same law as $X+ S_{\frac{1}{t}-1}$ for $t\leq 1.$ 

For $X\in L^2(W^*(Y_s, s\leq t))^m$, let us define   $H_t^\omega(X)=t\frac{X^2}{2}+h_t^\omega(tX)$ (using that $h_t^\omega$ is defined on the same space, even on a huger ultraproduct). Then, the conjugate variable of $\frac{   Y_t}{t}$ is given by $\nabla_{Y_t/t}H_t^\omega(\frac{   Y_t}{t})$. Note also that from the bounds obtained on $h_t^\omega$ in step 2.(vi) of the proof of Theorem \ref{MainTechnical}, we deduce that $H_t^\omega$ satisfies \eqref{Holderht} ($\alpha=1/2$), \eqref{Holderhtspace} ($\beta=1$), the lipschitz, convex and subquadratic behaviour assumed in Theorem \ref{FreeMonotone}. Note also that from 
same proof, we know that $\nabla_XH_t^\omega(X)\in L^2(W^*(X)).$ 
This conjugate variable is a $t+t^2C$ Lipschitz function of $\frac{   Y_t}{t}$ defined on $L^2(W^*(Y_s, s\leq t))^m$. Thus moving to the variable $s=\frac{1}{t}-1$, the brownian motion $X_s=X+ S_{s}$ has a $\frac{1}{1+s}+C\frac{1}{(1+s)^2}$ Lipschitz conjugate variable (in the sense it is given by evaluation at $X_s$ of a Lipschitz map on $L^2(W^*(X_u, u\geq s))^m$, given by the gradient of a function $\mathcal{H}_s^\omega$ with properties similar to $H_t^\omega$ on this space, thus satisfying the assumptions of Theorem \ref{FreeMonotone}). Applying, \cite[Prop 15,19]{Dab14}, and fixing any $T\geq s$ the reversed process $\overline{X}_s=X_{T-s}$ satisfies, for $\overline{\xi}_s=\xi_{T-s}$ the conjugate variable , the SDE:
$$\overline{X}_t
=\overline{X}_0-\int_0^t\overline{\xi}_udu+\overline{S}_t,$$
for a free brownian motion $\overline{S}_t$ adapted to the reversed filtration $L^2(W^*(X_u, u\geq {T-t}))^m=L^2(W^*(\overline{X}_u, u\leq t))^m$. We want to apply Theorem \ref{FreeMonotone} since $\overline{\xi}_u=\nabla \mathcal{H}_u^\omega(\overline{X}_u)$.

Note that from \eqref{Holderht} ($\alpha=1/2$), \eqref{Holderhtspace} ($\beta=1$), we already have $$||\overline{\xi}_s-\overline{\xi}_t||_2\leq |t-s|^{1/2}(C+D||\overline{X}_s||_2)+||\overline{X}_s-\overline{X}_t||_2(C||\overline{X}_s||_2+C||\overline{X}_t||_2+D),$$
and since $||\overline{X}_s-\overline{X}_t||_2=||S_{T-s}-S_{T-t}||_2=\sqrt{t-s}$ and $||\xi_t||_2$ is bounded, one deduces the stated H\"older continuity of $\Phi^*(\overline{X}_t).$
Since $\overline{\mathcal{F}}_t=L^2(W^*(\overline{X}_0, \overline{S}_u, u\leq t))\subset L^2(W^*(\overline{X}_u, u\leq t))$, we can consider the restriction $\mathcal{H}_u^\omega|_{\overline{\mathcal{F}}_u}$ and apply it Theorem \ref{FreeMonotone} to obtain a solution adapted to $\overline{\mathcal{F}}_u$ : 
$$\overline{Z}_t
=\overline{X}_0-\int_0^tP_{\overline{\mathcal{F}}_u}(\nabla \mathcal{H}_u^\omega(\overline{Z}_u))du+\overline{S}_t.$$
But from the property $\nabla \mathcal{H}_u^\omega(\overline{Z}_u))\in L^2(W^*(\overline{Z}_u))\subset \overline{\mathcal{F}}_u$, it also satisfies:
$$\overline{Z}_t
=\overline{X}_0-\int_0^t\nabla \mathcal{H}_u^\omega(\overline{Z}_u)du+\overline{S}_t.$$
From the uniqueness in Theorem \ref{FreeMonotone}, this time applied to the unrestricted $\mathcal{H}_u^\omega$, we have $\overline{X}_t=\overline{Z}_t\in W^*(\overline{S}_u, u\leq t, \overline{X}_0).$
Now as in the proof of corollary 22 in \cite{Dab14}, for $s\leq T$: $\xi_{i,s}-\xi_{i,T}-\int_0^{T-s}\delta_{T-u}\xi_{i,T-u}\#d\overline{S}_u$ is a martingale in the reversed filtration, with increments orthogonal to any stochastic integral adapted to this filtration by proposition 21.(3) in \cite{Dab14}. But from our adaptedness result to $W^*(\overline{S}_u, u\leq t, \overline{X}_0)$ and Clarke-Ocone formula \cite{BianeSpeicher}, it is such a stochastic integral, thus it is $0$ and one deduces in taking the $L^2$ norm:
$$||\xi_{i,s}||_2^2=||\xi_{i,T}||_2^2+\int_0^{T-s}||\delta_{T-u}\xi_{i,T-u}||_2^2du.$$
Summing over $i$ gives the stated integral equation.

\begin{step}For $T>0,$ $\chi(X_1+\sqrt{T} S_1,...,X_m+\sqrt{T} S_m:\sqrt{T} S_1,...,\sqrt{T} S_m)=\chi(X_1+\sqrt{T} S_1,...,X_m+\sqrt{T} S_m).$\end{step}
The hardest part is to control entropy in presence which does not seem to be studied at all by usual large deviation techniques. Hopefully our previous construction of a strong solution to the time reversal reduces it to standard results on entropy in presence. It suffices to prove $\geq$. Recall that for a subalgebra $B$, $$\chi(X_1+\sqrt{T} S_1,...,X_m+\sqrt{T} S_m:B)=\inf_{n\in \N,Y_1,...,Y_n\in B}\chi(X_1+\sqrt{T} S_1,...,X_m+\sqrt{T} S_m:Y_1,...,Y_n).$$
From \cite[Prop 3.4]{EntPow}, since $B=W^*(\overline{S}_t,t\leq T)$ is free from $X_1+\sqrt{T} S_1,...,X_m+\sqrt{T} S_m$, one deduces:
\begin{align*}\chi&(X_1+\sqrt{T} S_1,...,X_m+\sqrt{T} S_m)=\chi(X_1+\sqrt{T} S_1,...,X_m+\sqrt{T} S_m:B)
\\&=\chi(X_1+\sqrt{T} S_1,...,X_m+\sqrt{T} S_m:X_1+\sqrt{T} S_1,...,X_m+\sqrt{T} S_m,B)
\\&=\chi(X_1+\sqrt{T} S_1,...,X_m+\sqrt{T} S_m:X_1+\sqrt{T} S_1,...,X_m+\sqrt{T} S_m,B,\sqrt{T} S_1,...,\sqrt{T} S_m)
\\&\leq\chi(X_1+\sqrt{T} S_1,...,X_m+\sqrt{T} S_m:\sqrt{T} S_1,...,\sqrt{T} S_m).
\end{align*}
Indeed the second line comes from \cite[Prop 1.7]{V2} and the third line comes from the fact that $\sqrt{T} S_i=\overline{X}_0-\overline{X}_T\in W^*(X_1+\sqrt{T} S_1,...,X_m+\sqrt{T} S_m,B)$ from our result in step 2 and from \cite[Corol 1.8]{V2} (slightly extended to the non-finitely generated algebra case). The last inequality is then a trivial and concluding inequality.

 \bibliographystyle{amsalpha}

\end{document}